\newtheorem{theorem}{Theorem}[section]
\newtheorem{lemma}[theorem]{Lemma}
\newtheorem{corollary}[theorem]{Corollary}
\newtheorem{proposition}[theorem]{Proposition}
\theoremstyle{remark}
\theoremstyle{definition}
\newtheorem{definition}[theorem]{Definition}
\newtheorem{example}[theorem]{Example}
\numberwithin{equation}{section} \makeatother
\DeclareMathOperator{\Kdb}{{\mathbb K}}
\DeclareMathOperator{\A}{{\it A}}
\DeclareMathOperator{\B}{{\it B}}
\DeclareMathOperator{\Cdb}{{\mathbb C}}
\DeclareMathOperator{\Rdb}{{\mathbb R}}
\DeclareMathOperator{\Ddb}{{\mathbb D}}
\DeclareMathOperator{\Ndb}{{\mathbb N}}
\begin{document}
\title[Ideals and structure of operator algebras]{Ideals
and structure of operator algebras}

\author[M. Almus, D. P. Blecher,  {\protect \and}
S. Sharma]{Melahat Almus, David P. Blecher,  {\protect \and}Sonia Sharma}

\address{Department of Mathematics, University of Houston, Houston, Texas, 
TX 77204-3008, USA} \email[Melahat Almus]{almus@math.uh.edu} \email[David P.
Blecher]{dblecher@math.uh.edu} \email[Sonia
Sharma]{sonia@math.uh.edu} 
\thanks{Partially supported by a grant DMS 0800674 from
the National Science Foundation.}
\begin{abstract}
We continue the study of r-ideals, $\ell$-ideals, and HSA's in
operator algebras.  Some applications are made to the structure of
operator algebras, including Wedderburn type theorems for a class of
operator algebras.  We also consider the one-sided $M$-ideal
structure of certain tensor products of operator algebras.
\end{abstract}
\maketitle

\section{INTRODUCTION}

An {\em operator algebra} (resp.\ {\em operator space}) is a norm
closed algebra (resp.\ vector space) of operators on a Hilbert space. 
The present paper is a continuation of a program (see e.g.\
\cite{Baus,BEZ,BHN,BZ,Hay,Sharma})
 studying the
structure of operator algebras and operator spaces using `one-sided
ideals'. We shall have nothing to say about general one-sided ideals
in an operator algebra $A$, indeed not much is known about general
closed ideals in some of the simplest classical function algebras.
However there is a tractable and often interesting class of
one-sided ideals in $A$, which corresponds to a kind of
`noncommutative topology' for $A$, and to a noncommutative variant
of the theory of peak sets and peak interpolation for function
algebras (see \cite{Hay,BHN}).  These are the {\em $r$-ideals},
namely the right ideals of $A$
 possessing a left contractive approximate identity (cai).
These ideals are in bijective correspondence with the {\em
$\ell$-ideals} (left ideals with right cai), and with the {\em
hereditary subalgebras (HSA's)} of $A$, defined below. Much of our
paper is a further development of the general properties and
behaviors of these objects, with applications to the structure of
operator algebras.
Thus in Section 2 we record many new general facts about one-sided
ideals and HSA's, as well as some other preliminary results. Section
3 mainly concerns the existence of nontrivial r-ideals, and of
maximal r-ideals. There are some interesting connections here with
the
remaining open problems from \cite{Hay,BHN,BEZ}.

In Section 4 we apply some of these ideas to
 study a class of operator algebras which we have
not seen in the literature, which we call {\em $1$-matricial
algebras.}  One of our original motivations for introducing these
algebras, is that they might perhaps lead to some insight into
important open questions about operator algebraic amenability and
related notions, for example because their second duals are also
easy to work with.   This class of algebras is large enough to
display some interesting behaviors,
 e.g.\ some $1$-matricial algebras are right ideals in their
biduals, and others are not.   Amongst other things we prove
Wedderburn type structure theorems for operator algebras, using
$1$-matricial algebras as the building blocks.  A key ingredient, as
one might expect given the history of Wedderburn type
decompositions, is played by minimal $r$-ideals.  In Section 5 we
continue this theme by deriving some new characterizations of
$C^*$-algebras consisting of compact operators.

 In the final section
we turn to the ideal structure of the Haagerup tensor product of
operator algebras. We also discuss some connections with the study
of operator spaces which are one-sided $M$- or $L$-ideals in their
biduals (initiated in \cite{Sharma}). Our results provide natural
examples of operator spaces which are right but not left ideals (or
$M$-ideals) in their second dual. Their duals are left but not right
$L$-summands in their biduals.

Turning to notation, we reserve the letters $H$ and $K$ for  Hilbert spaces.
  We will use basic concepts from operator space
theory, and from the operator space approach to operator algebras,
which may be found e.g.\ in \cite{BLM,Pis}.

A {\em topologically simple} algebra has no closed ideals. A {\em
semiprime} algebra is one in which $J^2 = (0)$ implies $J = (0)$,
 for closed ideals $J$.
For subsets $J, I$ of an algebra $A$ the {\em left} (resp.\ {\em right})
{\em  annihilators} are
 $L(J) = \{ a \in A : a J = (0) \}$ and $R(I) = \{ a \in A : I a = (0) \}$.
We recall that a Banach algebra $A$ is a {\em right} (resp.\ {\em left})
{\em  annihilator algebra} if $R(I) \neq (0)$
 (resp.\ $L(J)  \neq (0)$) for any proper closed left ideal $I$
(resp.\ right ideal  $J$) of $A$.   An {\em annihilator algebra} is
both a left and  right  annihilator algebra.
In \cite{Kap}, Kaplansky studied a class of algebras which he called
{\em dual}, these satisfy $R(L(J)) = J$ and $L(R(I)) = I$ for $J, I$
as in the last line. It is known that a $C^*$-algebra is an annihilator algebra
if and only if it is dual in Kaplansky's sense, and these are
precisely the $c_0$-direct sums of `elementary $C^*$-algebras',
where the latter term refers to the space of compact operators on
some Hilbert space. This class of  algebras has very many diverse
characterizations, some of which may be found in Kaplansky's works
or \cite[Exercise 4.7.20]{Dix}, and which we shall use freely. For
example, these are also the $C^*$-algebras $A$ which are ideals in their
bidual \cite{HWW}, or equivalently,
 $A^{**} = M(A)$.   We shall usually not use the word `dual', to
avoid any confusion with Banach space duality, and instead will call
these {\em annihilator $C^*$-algebras}, or {\em $C^*$-algebras
consisting of compact operators}.  In Section 4, we will be more
interested in several properties weaker than being an annihilator
algebra: {\em modular annihilator, compact, dense socle}.  See
\cite[Chapter 8]{Pal} for the definitions, and a thorough discussion
of these properties and their connections to each other.

A normed algebra is {\em unital} if it has an identity of norm $1$;
any operator algebra $A$ has a (unique) operator algebra unitization
$A^1$.
 A {\em bai} (resp.\ {\em cai}) is a bounded (resp.\ contractive)
two-sided approximate identity.  the existence of a bai implies
of course that $A$ is {\em left} and {\em right essential}, by which we mean
that the left or right multiplication by an element in $A$ induces
a bicontinuous injection of $A$ in $B(A)$.
An operator algebra $A$ is Arens
regular, and $A$ has a right bai (resp.\ right cai) iff $A^{**}$ has
a right identity (resp.\ right identity of norm $1$)--see e.g.\
\cite[Proposition 5.1.8]{Pal}, \cite[Section 2.5]{BLM}.
An algebra is {\em approximately unital} if it has a cai.
For an
approximately unital operator algebra $A$, we denote the left,
right, and two-sided, multiplier algebras by $LM(A), RM(A), M(A)$,
usually viewed as subalgebras of $A^{**}$ (see \cite[Section
2.5]{BLM}).
   If $A$ is an operator algebra, or
operator space containing the identity operator, then we write
$\Delta(A)$ for the {\em diagonal} $A \cap A^*$, and write $A_{\rm
sa}$ for the selfadjoint part of $\Delta(A)$.

The one-sided approximate identity in any $r$-ideal (resp.\
$\ell$-ideal)
 $J$ in an operator algebra $A$, converges weak* to the {\em support
projection} $p$ of $J$ in $A^{**}$, and $J^{\perp \perp} = pA^{**}$
(resp.\ $J^{\perp \perp} = A^{**}p$).  Indeed, we recall from
\cite{Hay,BHN} that $r$-ideals are precisely those right ideals of
the form $pA^{**} \cap A$ for a projection $p$ in $A^{**}$ which is
{\em open}.  By the latter term we mean that there is a net in
$pA^{**} p \cap A$ converging weak* to $p$; there are several other
equivalent characterizations in \cite{BHN}. Similarly for
$\ell$-ideals.   A {\em hereditary subalgebra (HSA)} of $A$ is an
approximately unital subalgebra $D$ of $A$ such that $DAD \subset D$:
 these are also the subalgebras of the form $pA^{**}p \cap A$ for
an open projection $p$ in $A^{**}$.

For subspaces $V_k$ of a vector space, we write $\sum_k \, V_k$ for the
vector subspace of finite sums of elements in $V_k$, for all $k$. A
{\em projection} in an operator algebra $A$ is always an orthogonal
projection. A projection in $A$ is called {\em $*$-minimal} if it
dominates no nontrivial projection in $A$.  We use this nonstandard
notation to distinguish it from the next concept: a projection or
idempotent $e$ in $A$ is called {\em algebraically minimal} if $eAe
= \Cdb e$.  Clearly an algebraically minimal projection  is
$*$-minimal.  In certain algebras the converse is true too, but this
is not common. In a semiprime Banach algebra the minimal left ideals
are all closed, and all of the form $Ae$ for an algebraically
minimal idempotent. Indeed, if $e$ is an idempotent then $Ae$ is a
minimal left ideal iff $e$ is algebraically minimal \cite{Pal}.
Similarly for right ideals.  Idempotents $e,f$ are {\em mutually orthogonal}
if $ef = fe = 0$.   Any $A$-modules appearing in our paper
will be operator spaces too; hence for morphisms we use
$CB(X,Y)_A$, the {\em completely bounded} right $A$-module maps.

We will also need in a couple of places some concepts from the
theory of operator space multipliers and one-sided $M$-ideals, which
can be found e.g.\ in \cite[Chapter 4]{BLM}, \cite{BEZ},  or the
first few pages of \cite{BZ}.  We write ${\mathcal M}_{\ell}(X)$ and
${\mathcal M}_{r}(X)$ for the unital operator algebras of {\em left
and right  operator space multipliers} of an operator space $X$. See
{\rm \cite[Chapter 4]{BLM}} for the definition of these. The
diagonal of these two operator algebras are
 the $C^*$-algebras ${\mathcal A}_{\ell}(X)$
and ${\mathcal A}_{r}(X)$ of {\em left and right adjointable
multipliers}. The operator space centralizer algebra $Z(X)$ is
${\mathcal A}_{\ell}(X) \cap {\mathcal A}_{r}(X)$.  The projections
in these three $C^*$-algebras are called respectively {\em left,
right,} and {\em complete, $M$-projections} on $X$. A subspace $J$
of $X$ is called, respectively, a {\em right, left, or complete,
$M$-ideal} in $X$, if $J^{\perp \perp}$ is, respectively, the range
of a left, right, or complete, $M$-projection on $X^{**}$. The right
(resp.\ left, complete) $M$-ideals in an approximately unital
operator algebra are precisely the $r$-ideals (resp.\ $\ell$-ideals,
closed ideals with cai).

If $X$ is an operator space then $C_\infty(X)$ (resp.\ $R_\infty(X)$)
denotes
the operator space of (countably) infinite columns (resp.\ rows) with
entries in $X$, formed by taking the closure of those columns
(resp.\ rows) with only finitely many nonzero entries.
Also, if $E_k$ is a subspace of $X$ for each $k$, then the
`column sum' $\oplus^c_k \, E_k$ consists of the
tuples $(x_k) \in C_\infty(X)$ with $x_k \in E_k$ for each $k$.
Similarly for the `row sum' $\oplus^r_k \, E_k$.

\section{HSA'S AND $r$-IDEALS}

In this section we collect several new results about $r$-ideals and
HSA's. The first is a generalization of \cite[Proposition 6.4]{BEZ}.

\begin{proposition}  \label{lm}  Let $A$ be an operator algebra with
right cai $(e_t)$.  Then we have ${\mathcal M}_r(A) = CB_A(A) = RM(\{ a \in
A : e_t a \to a \})$.  Also, the
 left $M$-ideals of $A$ are the
$\ell$-ideals in $A$. The  left $M$-summands of $A$ are precisely
the left ideals of form $Ae$, for a projection $e$ in the multiplier
algebra of $\{ a \in A : e_t a \to a \}$; and also coincide with the
ranges of completely contractive idempotent left $A$-module maps on
$A$.
\end{proposition}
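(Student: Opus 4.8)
The plan is to reduce everything to a sufficiently concrete description of the unital operator algebra ${\mathcal M}_r(A)$ and of its diagonal $C^*$-algebra ${\mathcal A}_r(A)$, and then to read off the two $M$-structure assertions from the general dictionary recalled above: a subspace is a left $M$-ideal (resp.\ left $M$-summand) of $A$ exactly when it is the range of a right $M$-projection on $A^{**}$ (resp.\ on $A$), and the right $M$-projections on an operator space are precisely the projections in its ${\mathcal A}_r$. Write $A_0 = \{a \in A : e_t a \to a\}$. The preliminary step is to record the basic structure of $A_0$: it is a norm-closed subalgebra and a right ideal of $A$ (the inclusions $A_0 A \subseteq A_0$ and $A_0 A_0 \subseteq A_0$ and norm-closedness are routine $\varepsilon/3$ arguments using $\|e_t\| \le 1$), it is approximately unital --- equivalently an $r$-ideal --- and it does not depend on the particular right cai; moreover, if $e$ denotes a weak* limit point of $(e_t)$ in $A^{**}$, then $e$ is a right identity of $A^{**}$, it is the support projection of $A_0$, and $A_0^{\perp\perp} = eA^{**} \cong A_0^{**}$. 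Note also that $ae = a$ for every $a \in A$.

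For the equality chain ${\mathcal M}_r(A) = CB_A(A) = RM(A_0)$, I would first prove ${\mathcal M}_r(A) = CB_A(A)$. The inclusion $\subseteq$ is immediate from the definition of the right operator space multipliers. For $\supseteq$, given $T \in CB_A(A)$ one must verify the defining multiplier condition, a complete contractivity condition relating $a$ and $T(a)$ in a column; following the proof of \cite[Proposition 6.4]{BEZ}, one bitransposes to $A^{**}$ and uses the right cai $(e_t)$, in place of a two-sided cai, together with Arens regularity, to realize $T^{**}$ as left multiplication by the weak* limit point $w$ of $(T(e_t))$, checking the matricial estimate there. This also shows that every $T \in {\mathcal M}_r(A)$ is right multiplication $a \mapsto aw$ by a unique $w \in A^{**}$ with $Aw \subseteq A$; since $ae = a$ we have $aw = a(ew)$, so we may and do take $w \in eA^{**} = A_0^{**}$. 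Under this identification the condition ``$w \in A_0^{**}$ and $Aw \subseteq A$'' is exactly the condition defining $RM(A_0)$, once one knows that for $w \in eA^{**}$ one has $Aw \subseteq A$ if and only if $A_0 w \subseteq A_0$; the forward implication is easy (for $b \in A_0$ one has $e_t(bw) = (e_t b)w \to bw$ in norm, so $bw \in A_0$), and the reverse is the crux discussed below. Granting it, one obtains the asserted completely isometric unital identification ${\mathcal M}_r(A) = RM(A_0)$.

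With the equality chain in hand the remaining assertions follow formally. For the left $M$-ideals, pass to the unitization $A^1$: since $A$ has a right cai it is an $\ell$-ideal, hence (by the quoted characterization in the approximately unital case) a left $M$-ideal, of the unital algebra $A^1$. If $J$ is a left $M$-ideal of $A$, then by transitivity of one-sided $M$-ideals \cite{BEZ} it is a left $M$-ideal of $A^1$, hence an $\ell$-ideal of $A^1$; as its right cai lies in $J \subseteq A$, it is an $\ell$-ideal of $A$. The converse reverses these steps. For the left $M$-summands: these are the ranges of the projections in ${\mathcal A}_r(A) = \Delta({\mathcal M}_r(A)) = \Delta(RM(A_0))$; since $\Delta(RM(A_0)) = \Delta(M(A_0))$ for the approximately unital algebra $A_0$, such a projection is a projection $e$ in the multiplier algebra $M(A_0)$, and under ${\mathcal M}_r(A) = CB_A(A)$ it corresponds to the completely contractive idempotent left $A$-module map $a \mapsto ae$ on $A$, whose range is $Ae$. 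Conversely, for any projection $e \in M(A_0)$, right multiplication by $e$ is a contractive idempotent in ${\mathcal M}_r(A)$, hence lies in its diagonal ${\mathcal A}_r(A)$ and so is a right $M$-projection with range the left $M$-summand $Ae$; and any completely contractive idempotent left $A$-module map on $A$, being a contractive idempotent in ${\mathcal M}_r(A)$, is likewise a right $M$-projection, so its range is a left $M$-summand.

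The step I expect to be the real obstacle is the reverse implication above: that $w \in eA^{**}$ with $A_0 w \subseteq A_0$ already forces $Aw \subseteq A$ (equivalently, that a right multiplier of the subalgebra $A_0$ extends to a right multiplier of all of $A$, with the correct complete bound). In the approximately unital case one simply factors elements of $A$ through $A$ itself by Cohen's theorem; here $A$ is \emph{not} the closed linear span of $A_0 A$ --- that span is only $A_0$ --- so elements of $A$ cannot be factored through $A_0$, and the argument must instead be routed through the weak* topology on $A^{**}$ and the one-sided identity $e$, carrying the matricial norm estimates carefully along.
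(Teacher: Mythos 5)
Your overall strategy---identify ${\mathcal M}_r(A)$ with $CB_A(A)$ and with right multiplications by elements $w\in eA^{**}$, then read the two $M$-structure assertions off the projections in ${\mathcal A}_r(A)$---has the right shape, and is essentially how the results the paper cites are proved. But the proposal has genuine gaps, one of which you flag yourself. First, your ``preliminary step'' already contains one of the deepest points. You record as routine that $A_0=\{a\in A: e_ta\to a\}$ is approximately unital and that $e$ is its support projection with $A_0^{\perp\perp}=eA^{**}$. Neither is an $\varepsilon/3$ argument: the $e_t$ need not lie in $A_0$ (that would require $e_se_t\to e_t$ in $s$, which is not part of being a \emph{right} cai), so $A_0$ having a left cai drawn from itself is exactly the ``quite nontrivial fact'' the paper's proof attributes to \cite{BHN} (resting ultimately on the peak-projection results of \cite{Hay}), namely that the relevant right ideals of the form $pA^{**}\cap A$ for open $p$ have left cais. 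Without this, $RM(A_0)$ is not even defined in the sense used here, and $A_0^{\perp\perp}=eA^{**}$ is unavailable.

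Second, you explicitly leave open the ``crux'': that $w\in eA^{**}$ with $A_0w\subset A_0$ forces $Aw\subset A$, with the correct matrix-norm identifications. That implication is the actual content of \cite[Theorem 6.1]{Baus}; gesturing at ``routing through the weak* topology and the one-sided identity $e$'' does not supply it, and your own observation that elements of $A$ do not factor through $A_0$ shows why the usual Cohen-factorization argument is unavailable. So the equality chain, and with it the summand assertion (which you derive from ${\mathcal A}_r(A)=\Delta(RM(A_0))=\Delta(M(A_0))$), is not actually proved. A further concern: for the left $M$-ideal assertion you invoke transitivity of one-sided $M$-ideals along $J\subset A\subset A^1$, where $A$ is only a \emph{left} $M$-ideal of $A^1$ (it need not be a complete $M$-ideal, since $A$ has only a right cai); the restriction/transitivity results in \cite{BZ} that the paper uses elsewhere (e.g.\ Proposition 5.30 there) require the intermediate space to be a two-sided $M$-ideal, so this step would itself need justification. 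The paper instead argues in the bidual: once one knows the right $M$-projections on $A^{**}$ (which has a right identity of norm $1$) biject with projections in $A^{**}$, the $M$-ideal assertion follows as in \cite[Proposition 6.4]{BEZ}. In short, the paper's proof consists precisely of citing \cite[Theorem 6.1]{Baus} together with the \cite{BHN} removal of its hypothesis, plus \cite[Proposition 5.1]{BHN}; the two steps your sketch treats as routine or defers are exactly the content of those citations.
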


\begin{proof}  That  ${\mathcal M}_r(A) = CB_A(A) = RM(\{ a \in A : e_t a \to
a \})$  follows from \cite[Theorem 6.1]{Baus}.  We are using the
quite nontrivial fact that the hypothesis of \cite[Theorem 6.1
(5)]{Baus} is removable, which was established in \cite{BHN}. This, together with Proposition
5.1 in \cite{BHN}, proves the summand assertion. It follows that if
$A$ has a right identity of norm $1$, then the right $M$-projections
correspond bijectively to the projections in $A$. Using this, the
proof of the left $M$-ideal assertion is just as in
\cite[Proposition 6.4]{BEZ}.
\end{proof}

{\sc Remark.}  It is not hard to see that the last result is not
true for general operator algebras.  Indeed, right $M$-summands or
right $M$-ideals of operator algebras with a right cai, will be
right ideals, but they need not have a right cai.

\begin{lemma}  \label{aus}  An $r$-ideal which has a left identity
has a left identity of norm $1$.
\end{lemma}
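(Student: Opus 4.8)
The plan is to show that the support projection of the ideal is the desired contraction, the real work being to verify that it actually lies in $A$. Write $J$ for the given $r$-ideal, fix a left cai $(e_t)$ for $J$ with $\|e_t\| \leq 1$, and let $p \in A^{**}$ be the support projection of $J$, so that $e_t \to p$ weak*, $J^{\perp\perp} = pA^{**}$, and $p$ is open. Let $f$ be a left identity for $J$. Applying $f$ to itself gives $f = f^{2}$, and since $fx = x$ for all $x \in J$ while $f \in J$, we get $J = fA$.

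The first step is to pin $f$ against $p$. Each $e_t$ lies in $J$, so $fe_t = e_t$; letting $t \to \infty$ and using that left multiplication by the fixed element $f$ is weak*-continuous on $A^{**}$ yields $fp = p$. Also $f \in J \subseteq J^{\perp\perp} = pA^{**}$, so $pf = f$. Hence $p$ is the unique projection with $pf = f$ and $fp = p$; it acts as a left identity on $pA^{**} = J^{\perp\perp}$, hence on $J$, and being a weak* limit of contractions it has $\|p\| \leq 1$, so $\|p\| = 1$ when $J \neq (0)$. It therefore suffices to prove $p \in A$, for then $p \in pA^{**} \cap A = J$ is a left identity for $J$ of norm one.

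Toward that, I would first observe that the elements $\phi_t := e_t + f - e_t f = f + e_t(1-f)$ all lie in $A$ and are \emph{exact} left identities for $J$, since $\phi_t x = e_t x + x - e_t x = x$ for $x \in J$; moreover $\phi_t \to p$ weak* and $\|\phi_t\| \leq \|e_t\| + \|f - e_t f\|$, which tends to $1$ because $f \in J$ and $(e_t)$ is a left cai for $J$, so $e_t f \to f$ in norm. Thus $J$ already has exact left identities in $A$ of norm arbitrarily close to one; the crux, which I expect to be the main obstacle, is to upgrade this to $p \in A$, equivalently to show the open projection $p$ is also closed. One natural attempt is to show that the left annihilator $L(J) = \{a \in A^{1} : ap = 0\} = A^{1}(1-f)$, which carries $1-f$ as a right identity, is an $\ell$-ideal; its co-support projection (the $q$ with $L(J)^{\perp\perp} = A^{1**}q$) would then be the range projection of the idempotent $1-f^{*}$, namely $1-p$, making $p$ clopen, and hence $p = fp \in A$ since $A$ is an ideal in $A^{1}$. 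The delicate point is that exhibiting a right cai for $L(J)$, as opposed to merely the right identity $1-f$, is essentially a mirror of the assertion being proved, so one must either break that circularity or argue more directly — for instance by passing to the dual space $A^{**}$, where weak*-closed subspaces are proximinal, to manufacture a norm-one left identity there, and then showing it must in fact lie in $A$ because $J$ already carries the left identity $f$.
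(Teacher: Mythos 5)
Your reductions are all correct as far as they go: $f = f^2$, $J = fA$, $fp = p$, $pf = f$, $\Vert p \Vert = 1$, and the exact left identities $\phi_t = e_t + f - e_t f$ with $\Vert \phi_t \Vert \to 1$. But there is a genuine gap exactly where you flag one: you never establish that $p \in A$, equivalently that a left identity of norm exactly $1$ exists in $J$. Producing left identities of norm $1 + \varepsilon$ for every $\varepsilon > 0$ is not enough; the set of left identities lying in $J$ is a norm-closed convex subset of a Banach space, and the infimum of the norm over such a set need not be attained --- attaining it is precisely the content of the lemma. Neither of your proposed repairs closes this. The annihilator route is, as you concede, circular: exhibiting a right cai for $L(J)$ is of the same difficulty as the statement being proved. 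And proximinality of weak*-closed subspaces only produces best approximants inside $A^{**}$, not inside $A$, so it cannot by itself force $p$ into $J$. (The paper simply quotes \cite[Corollary 4.7]{Baus} here, so the real work is outsourced to that reference.)

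The missing idea is to show that the left cai converges to $p$ \emph{in norm}, not merely weak*, using the $C^*$-identity; then $p \in J$ because $J$ is norm closed. Work in $A^{**} \subset B(H)$ via a $C^*$-cover. Since $e_t \in J \subset pA^{**}$ we have $p e_t = e_t$, hence $e_t e_t^* = p e_t e_t^* p \leq p$. Writing $e_t = e_t p + e_t(1-p)$, the cross terms in $e_t e_t^*$ vanish, so $e_t p e_t^* + e_t (1-p) e_t^* = e_t e_t^* \leq p$. Because $f \in J$ and $(e_t)$ is a left cai for $J$, we have $e_t f \to f$ in norm, whence $e_t p = e_t f p \to f p = p$ and therefore $e_t p e_t^* \to p$ in norm. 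Consequently $0 \leq e_t(1-p)e_t^* \leq p - e_t p e_t^* \to 0$, so $\Vert e_t(1-p) \Vert^2 = \Vert e_t(1-p)e_t^* \Vert \to 0$ and $e_t = e_t p + e_t(1-p) \to p$ in norm. Thus $p \in J$ is the desired left identity of norm $1$; this is essentially the argument behind the cited result.
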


\begin{proof}   Follows from \cite[Corollary 4.7]{Baus}.
\end{proof}

\begin{proposition}  \label{idl} If $A$ is an operator algebra with right cai,
and if $p$ is  a projection in $A^{**}$ such that $p A \subset A$,
then $A p \subset A$.
\end{proposition}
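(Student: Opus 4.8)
The plan is to realise $p$ as a one‑sided multiplier of $A$, transport that information via Proposition~\ref{lm} down to the essential subalgebra $B=\{a\in A:e_t a\to a\}$ where a genuine cai is available, reduce there to the approximately unital case already known from \cite[Proposition~6.4]{BEZ}, and then climb back up to $A$ using the right cai.

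First I would set things up. Let $e$ be the weak* limit in $A^{**}$ of $(e_t)$; then $e$ is a right identity of $A^{**}$, and being a norm‑one idempotent it is in fact a selfadjoint projection. Since $pe=p$ and $p^*=p$ we get $ep=(pe)^*=p$, so $p\le e$. One checks that $B$ is a hereditary subalgebra of $A$ (in fact also an $r$‑ideal, with open projection $e$), that $(e_t)$ is a two‑sided cai for $B$, and hence that $e=1_{B^{**}}$ and $B^{**}=eA^{**}e$; in particular $p\in B^{**}$. From $pA\subseteq A$ the map $L_p\colon a\mapsto pa$ is a completely contractive idempotent right $A$‑module map of $A$, so $L_p\in CB_A(A)=\mathcal M_r(A)=RM(B)$ by Proposition~\ref{lm}.

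Next I would extract the structural facts. Since $e=1_{B^{**}}\in M(B)\subseteq\mathcal M_r(A)$, the corresponding multiplier of $A$ shows $eA\subseteq A$; as $e(ea)=ea$ this gives $eA\subseteq B$, so $A=B\oplus A_0$ with $A_0:=\{x\in A:ex=0\}$ a closed right ideal and $B\cap A_0=(0)$. Under the identification of Proposition~\ref{lm}, $L_p$ is multiplication by the projection $p\in RM(B)$, whence $pB\subseteq B$; applying the approximately unital case \cite[Proposition~6.4]{BEZ} to the projection $p$ in $B^{**}$ now yields $Bp\subseteq B$ as well. Finally, $p=pe$ gives $px=p(ex)=0$ for every $x\in A_0$, so $L_p$ annihilates $A_0$ and $pA=pB\subseteq B$.

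It remains only to show that $A_0p\subseteq A$, and this is where the difficulty lies. For $x\in A_0$ one would like to write $xp=\lim_t (xe_t)p$ in norm (via the right cai) and place each $xe_t\,p$ in $A$; but $xe_t\in A_0$, so this merely reduces $A_0p\subseteq A$ to itself, and the same circularity blocks a direct proof of the single relation $e_tp\in A$. What makes the statement true, and where the hypothesis enters through Proposition~\ref{lm}, is that $p$ is not just a projection with $pA\subseteq A$ but a genuine two‑sided multiplier of the approximately unital algebra $B$ lying beneath $e=1_{B^{**}}$; the remaining work is to transfer this two‑sidedness from $B$ to $A_0$, and hence to $A$, using $p\le e$ together with $BA_0\subseteq B$ and $A_0B\subseteq A_0$ (equivalently, reducing the whole statement outright to \cite[Proposition~6.4]{BEZ}). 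The obstacle is precisely the passage from weak* to norm: weak* density of $A$ in $A^{**}$ makes every soft form of the claim automatic and useless, so that only the norm‑level statement carries content.
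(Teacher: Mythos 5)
Your proposal does not close: by your own admission the step $A_0p\subseteq A$ (equivalently, $e_tp\in A$, or $xp\in A$ for $x$ killed by $e$) is left unproved, and that step is precisely where the entire content of the proposition lives. Everything you do establish before that point is either trivial ($pA_0=pA_0e=0$ type computations, $p\le e$) or circular in exactly the way you describe, so what remains is not ``remaining work'' but the whole theorem. The paper's proof is a one-line citation --- run the argument of \cite[Proposition 5.1]{BHN} (the approximately unital case) in the right-cai setting --- and that argument does not pass through a decomposition $A=B\oplus A_0$ at all: it shows directly that a selfadjoint idempotent in $LM(A)$ (equivalently, a left $M$-projection on $A$) is automatically adjointable and hence lies in $M(A)$, using the one-sided multiplier machinery of \cite{Baus,BEZ}.

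There is also a systematic left/right confusion in the part you do write out, which invalidates your use of Proposition \ref{lm}. The map $L_p\colon a\mapsto pa$ satisfies $L_p(ab)=L_p(a)b$, so it is a \emph{right} $A$-module map, i.e.\ a candidate for ${\mathcal M}_{\ell}(A)$ and $LM(\cdot)$; it does \emph{not} lie in $CB_A(A)=\{T: T(ab)=aT(b)\}$, which is what Proposition \ref{lm} identifies with ${\mathcal M}_r(A)=RM(B)$. For an algebra with only a right cai, Proposition \ref{lm} gives information about right multipliers and left $M$-structure and says nothing about $L_p$. The same confusion appears when you deduce $eA\subseteq A$ from $e\in{\mathcal M}_r(A)$: a right multiplier gives $Ae\subseteq A$ (vacuous here, since $ae=a$), not $eA\subseteq A$, and in fact $eA\subseteq A$ is essentially another instance of the statement being proved. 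Consequently the decomposition $A=B\oplus A_0$ is unjustified, and the intermediate claims ($p\in RM(B)$, $pB\subseteq B$, $Bp\subseteq B$) are not established as stated. If you want a self-contained proof, the productive route is to show that $L_p$ and $L_{1-p}$ are complementary completely contractive idempotent right-module maps on $A$, conclude that $L_p$ is a left $M$-projection, and then invoke the fact that left adjointable multipliers of an operator algebra with right cai are two-sided multipliers --- which is exactly the content of \cite[Proposition 5.1]{BHN} that the paper reuses.
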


\begin{proof} As in the proof of Proposition 5.1 in \cite{BHN}.
\end{proof}

\begin{proposition} \label{spp}  Semiprimeness and topological simplicity
pass to (approximately  unital) HSA's of  operator algebras.
\end{proposition}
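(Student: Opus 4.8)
The plan is to treat both statements with one device. Fix an (approximately unital) HSA $D$ of $A$, with contractive approximate identity $(e_t)$, and let $J$ be a closed two-sided ideal of $D$. First I would record two ``absorption'' identities valid for any such $J$: $DAJ \subseteq J$ and $JAD \subseteq J$. These follow at once from the HSA relation $DAD \subseteq D$: for $d \in D$, $a \in A$, $j \in J$ one has $daj = \lim_t (dae_t)j$ since $e_tj \to j$, and $dae_t \in DAD \subseteq D$ forces $(dae_t)j \in DJ \subseteq J$, so $daj \in \overline{J} = J$; the second identity is obtained the same way, writing $j = \lim_s je_s$ and noting $e_sad \in DAD \subseteq D$. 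I also note $J \subseteq \overline{DJD} \subseteq \overline{AJA}$, since $j = \lim_{s,t} e_sje_t$.

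For topological simplicity, the key step is to show that $\overline{AJA} \cap D = J$ for every closed ideal $J$ of $D$ (here $\overline{AJA}$ is clearly a closed two-sided ideal of $A$). The inclusion $\supseteq$ was just observed (together with $J \subseteq D$). For $\subseteq$, an element $x$ of the intersection satisfies $x = \lim_{s,t} e_sxe_t$, and a typical element $e_s a_1 j a_2 e_t$ of $e_s(AJA)e_t$ lies in $J$: indeed $e_s a_1 j \in DAJ \subseteq J$, and hence $(e_s a_1 j)a_2 e_t \in JAD \subseteq J$. Thus $x \in \overline{J} = J$. Now if $A$ is topologically simple and $J \neq (0)$, then $\overline{AJA}$ is a nonzero closed ideal of $A$, so $\overline{AJA} = A$, and therefore $J = \overline{AJA} \cap D = A \cap D = D$. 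So $D$ has no nontrivial closed ideals.

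For semiprimeness, suppose $J^2 = (0)$; I must deduce $J = (0)$. The crux is that $JAJ = (0)$: given $j_1, j_2 \in J$ and $a \in A$, the relation $e_tj_2 \to j_2$ gives $j_1 a j_2 = \lim_t (j_1 a e_t)j_2$, while $j_1 a e_t \in JAD \subseteq J$ by absorption, so each $(j_1 a e_t)j_2$ lies in $J^2 = (0)$, whence $j_1 a j_2 = 0$. Since $(a_1 j_1 b_1)(a_2 j_2 b_2) = a_1(j_1(b_1 a_2)j_2)b_2$ with $j_1(b_1 a_2)j_2 \in JAJ = (0)$, it follows that $\overline{AJA}$ is a closed two-sided ideal of $A$ with zero square. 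Semiprimeness of $A$ then forces $\overline{AJA} = (0)$, and since $J \subseteq \overline{AJA}$ we conclude $J = (0)$.

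The steps I expect to carry the weight are the identity $\overline{AJA} \cap D = J$ and the identity $JAJ = (0)$, both of which rest on the same underlying computation. For semiprimeness the naive attempt — replace $J$ by the closed ideal of $A$ it generates and hope that ideal is square-zero — only reduces matters to showing $JAJ = (0)$, since the square of that ideal is generated by $JAJ$ rather than by $J^2 = (0)$; one checks $JAJ \subseteq J$, but that alone does not make it vanish. What makes everything go through is the two-sided nature of an HSA: the relation $DAD \subseteq D$ lets one slide a contractive approximate identity of $D$ past an arbitrary element of $A$ and return inside $D$, hence inside $J$, where the available hypotheses can finally be applied. Everything else is routine.
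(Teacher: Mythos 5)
Your proof is correct and is essentially the paper's argument: in both, the whole weight is carried by the observation that $DAD \subseteq D$ lets the cai of $D$ absorb a factor of $A$, which the paper writes compactly as $JAJ \subseteq JDADJ \subseteq JDJ \subseteq J^2 = (0)$ for the semiprime half and as $D = D^3 \subseteq DAJAD \subseteq DADJDAD \subseteq J$ for the simplicity half, and which you package as the absorption identities $DAJ, JAD \subseteq J$ and the intersection formula $\overline{AJA}\cap D = J$. The difference is purely in presentation; the underlying computation is the same.
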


\begin{proof}  Suppose that  $D$ is an HSA in
an operator algebra $A$.  If $A$ is  semiprime and $J$ is an ideal
in $D$ with $J^2 = (0)$, then since $D$ is approximately  unital we
have $$J A J \subset J D A D J \subset J D J \subset J^2 = (0) .$$
Thus $A J A$ is a nil ideal in $A$, hence is zero, so that $J
\subset D J D \subset A J A = (0)$.

If $A$ is topologically simple and $J$ is a closed ideal in $D$,
then $A J A = (0)$ or $A J A = A$.  In the first case, $J = D J D =
(0)$. In the second case, $$D = D^3 \subset DAJAD \subset DADJDAD
\subset J .$$  So $D = J$, and so $D$ is topologically simple.
\end{proof}

{\sc Remark.}  We do not know if semisimplicity passes to HSA's.

\medskip

 We now state a series of simple results about the diagonal
of an operator algebra.

\begin{proposition}  \label{sdf}  For any operator algebra $A$, we have
  $\Delta(A) =
\Delta(A^{**}) \cap A$.
\end{proposition}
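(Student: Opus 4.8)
The plan is to place $A^{**}$ concretely inside a von Neumann algebra where the involution is visible, and then finish with an elementary bipolar argument; in fact this should prove both inclusions at once, and should not use any approximate identity.

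First I would fix a completely isometric representation $A \subseteq B(H)$ with respect to which $\Delta(A) = A \cap A^*$. The structural facts I would invoke are: $B(H)^{**}$ is a von Neumann algebra, the canonical embedding $B(H) \hookrightarrow B(H)^{**}$ is a unital $*$-monomorphism, and the involution of $B(H)^{**}$ is weak* continuous; and, since operator algebras are Arens regular, $A^{**}$ with its Arens product is completely isometrically and weak* homeomorphically isomorphic, via the weak* continuous extension of $A \hookrightarrow B(H) \hookrightarrow B(H)^{**}$, to the weak* closure $M$ of $A$ in $B(H)^{**}$ (see e.g.\ \cite[Section 2.5]{BLM}). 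Granting this identification, $M$ is a weak* closed subalgebra of the $C^*$-algebra $B(H)^{**}$, so $\Delta(A^{**}) = M \cap M^*$, where $M^*$ denotes the set of $B(H)^{**}$-adjoints of elements of $M$; and since $*$ is a weak* homeomorphism of $B(H)^{**}$, we have $M^* = \overline{A^*}$, the weak* closure of $A^*$ in $B(H)^{**}$.

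Now I would simply intersect with $A$. Because $A \subseteq M$,
\[
\Delta(A^{**}) \cap A = M \cap M^* \cap A = A \cap M^* = A \cap \overline{A^*}.
\]
At this point I would use the elementary fact that for a norm closed subspace $Y$ of a Banach space $X$ the weak* closure of $Y$ in $X^{**}$ equals $Y^{\perp\perp}$ and meets $X$ in exactly $Y$ (a Hahn--Banach/bipolar argument). With $X = B(H)$ and $Y = A^*$ this gives $\overline{A^*} \cap B(H) = A^*$, hence, using $A \subseteq B(H)$,
\[
\Delta(A^{**}) \cap A = A \cap \big(\overline{A^*} \cap B(H)\big) = A \cap A^* = \Delta(A).
\]

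The only delicate point I foresee is in the first paragraph: I must be certain that $\Delta(A^{**})$ is computed with respect to the ``right'' involution. Passing through $B(H)^{**}$ is what resolves this, since inside the $C^*$-algebra $B(H)^{**}$ any $C^*$-subalgebra of $M$ is automatically self-adjoint and hence contained in $M \cap M^*$, so $M \cap M^*$ is unambiguously the diagonal of $A^{**}$ regardless of how one phrases the definition of $\Delta$. An alternative would be to represent the dual operator algebra $A^{**}$ completely isometrically and weak* homeomorphically on some Hilbert space directly, but then one would additionally have to arrange that this representation extends a given representation of $A$ --- and that is exactly the wrinkle that routing through $B(H)^{**}$ lets me skip.
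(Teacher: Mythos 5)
Your proof is correct and follows essentially the same route as the paper's: both realize $A^{**}$ as $A^{\perp\perp}$ inside $B^{**}$ for a $C^*$-cover $B$ of $A$ and finish with the bipolar fact that a norm-closed subspace of $B$ meets its weak* closure in $B^{**}$ in exactly itself. The only cosmetic difference is that the paper splits $x \in \Delta(A^{**}) \cap A$ into hermitian parts and applies that fact to $A$ itself (via $x + x^* = 2x_1 \in B \cap A^{\perp\perp} = A$), whereas you apply it to $A^*$ after using weak* continuity of the involution to identify $M^*$ with the weak* closure of $A^*$.
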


\begin{proof}  Suppose that $A$ is a subalgebra of a $C^*$-algebra $B$.
Clearly $\Delta(A)  \subset \Delta(A^{**}) \cap A$. If $x \in
\Delta(A^{**}) \cap A$, we may write $x = x_1 + i x_2$ with $x_k$
selfadjoint in $\Delta(A^{**})$.  Then $x + x^* = 2 x_1$, so that
$x_1 \in B \cap A^{\perp \perp} = A$.  Since $x_1$ is selfadjoint it
is in $\Delta(A)$.  Similarly for $x_2$, so that $x \in \Delta(A)$.
\end{proof}

\begin{corollary}  Let $A$ be an
operator algebra. If $A$ is an ideal in its bidual, then $\Delta(A)$
is an ideal in $\Delta(A^{**})$, and also in $\Delta(A)^{**}$.  Thus
$\Delta(A)$ is an annihilator $C^*$-algebra.
\end{corollary}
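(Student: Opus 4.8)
The plan is to combine Proposition \ref{sdf} with the characterization, recalled in the Introduction (from \cite{HWW}), that a $C^*$-algebra is an annihilator $C^*$-algebra precisely when it is an ideal in its bidual. Realize $A$ as a closed subalgebra of a $C^*$-algebra $B$. Then $\Delta(A) = A \cap A^*$ is a $C^*$-subalgebra of $B$, and, by Arens regularity, $A^{**} = A^{\perp\perp}$ is a weak*-closed subalgebra of the von Neumann algebra $B^{**}$, with $\Delta(A^{**}) = A^{**} \cap (A^{**})^*$ a $C^*$-subalgebra of $B^{**}$.

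First I would verify that $\Delta(A)$ is an ideal in $\Delta(A^{**})$. Given $d \in \Delta(A) \subseteq \Delta(A^{**})$ and $x \in \Delta(A^{**}) \subseteq A^{**}$, the hypothesis that $A$ is an ideal in $A^{**}$ yields $xd, dx \in A$, while $xd, dx \in \Delta(A^{**})$ since $\Delta(A^{**})$ is an algebra. Hence $xd, dx \in \Delta(A^{**}) \cap A = \Delta(A)$ by Proposition \ref{sdf}, which proves the first assertion.

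Next I would locate $\Delta(A)^{**}$ inside $\Delta(A^{**})$. Since the involution on $B^{**}$ is weak* continuous and $\Delta(A)$ is selfadjoint, the weak* closure $\Delta(A)^{\perp\perp}$ of $\Delta(A)$ in $B^{**}$ is a selfadjoint weak*-closed subalgebra of $A^{**}$; by the standard theory of second duals of $C^*$-algebras this is a (completely isometric, $*$-isomorphic) copy of $\Delta(A)^{**}$, so $\Delta(A)^{**} = \Delta(A)^{\perp\perp} \subseteq A^{**} \cap (A^{**})^* = \Delta(A^{**})$. Thus $\Delta(A) \subseteq \Delta(A)^{**} \subseteq \Delta(A^{**})$, and since $\Delta(A)$ is an ideal in the larger algebra $\Delta(A^{**})$ it is a fortiori an ideal in $\Delta(A)^{**}$. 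Being a $C^*$-algebra that is an ideal in its bidual, $\Delta(A)$ is then an annihilator $C^*$-algebra. The only point requiring a little care is the identification of $\Delta(A)^{**}$ with $\Delta(A)^{\perp\perp}$ and the observation that this lands inside $\Delta(A^{**})$; the rest is a one-line module computation together with the quoted characterization.
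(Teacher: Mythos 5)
Your proof is correct and follows essentially the same route as the paper's (much terser) argument: use the ideal property $A^{**}AA^{**}\subset A$ together with Proposition \ref{sdf} to get the first assertion, deduce the second from the containment $\Delta(A)^{**}=\Delta(A)^{\perp\perp}\subseteq\Delta(A^{**})$, and invoke the \cite{HWW} characterization for the third. You have simply filled in the details the paper leaves implicit.
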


\begin{proof}  Since
$\Delta(A^{**}) A \Delta(A^{**}) \subset A^{**} A A^{**} \subset A$,
the first assertion follows.
The second assertion follows from the first, and the third from the
second.
\end{proof}

\begin{proposition} \label{prop} If $A$ is an HSA in
an approximately  unital operator algebra $B$, then $\Delta(A) =
\Delta(B) \cap A$, and this is a HSA in $\Delta(B)$ if it is
nonzero.
\end{proposition}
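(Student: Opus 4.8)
The plan is to exploit the concrete description of the HSA $A$ as $A = pB^{**}p \cap B$ for an open projection $p$ in $B^{**}$, recalled in the introduction, together with the fact that $p$ is self-adjoint. Throughout, all diagonals are computed inside one fixed $C^*$-algebra containing $B^{**}$ (for instance a von Neumann algebra into which $B^{**}$ embeds normally), so that the containments $\Delta(A) \subseteq \Delta(B) \subseteq \Delta(B^{**})$ and the adjoint of $p$ all make sense simultaneously.

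For the identity $\Delta(A) = \Delta(B) \cap A$, the inclusion $\subseteq$ is immediate from $A \subseteq B$: if $x, x^* \in A$ then in particular $x, x^* \in B$. For the reverse inclusion, let $x \in \Delta(B) \cap A$. Since $x \in A \subseteq pB^{**}p$ we have $x = pxp$, and hence $x^* = p x^* p \in pB^{**}p$ because $p = p^*$; on the other hand $x \in \Delta(B)$ gives $x^* \in B$, so $x^* \in pB^{**}p \cap B = A$, and therefore $x \in A \cap A^* = \Delta(A)$. (Alternatively, one may assemble this from Proposition~\ref{sdf} applied to $A$ and to $B$, the identity $A^{\perp\perp} = pB^{**}p$, and the short computation $\Delta(pB^{**}p) = p\,\Delta(B^{**})\,p$, noting that $A \subseteq pB^{**}p$ forces $p\,\Delta(B^{**})\,p \cap A = \Delta(B^{**}) \cap A$.)

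For the second assertion, first observe that $\Delta(A) = A \cap A^*$ is a norm-closed, self-adjoint subalgebra of $B(H)$, hence a $C^*$-algebra; in particular it is automatically approximately unital, which is exactly why the only extra hypothesis needed is that $\Delta(A) \neq (0)$, so as to obtain a genuine HSA. Next, since $A$ is an HSA in $B$ we get $\Delta(A)\,\Delta(B)\,\Delta(A) \subseteq ABA \subseteq A$; and since $\Delta(A) \subseteq \Delta(B)$ by the first part and $\Delta(B)$ is a $C^*$-algebra (so closed under products), also $\Delta(A)\,\Delta(B)\,\Delta(A) \subseteq \Delta(B)$. Combining, $\Delta(A)\,\Delta(B)\,\Delta(A) \subseteq \Delta(B) \cap A = \Delta(A)$. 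Thus $\Delta(A)$ is an approximately unital subalgebra $D$ of the (approximately unital, being a $C^*$-algebra) operator algebra $\Delta(B)$ satisfying $D\,\Delta(B)\,D \subseteq D$, i.e.\ an HSA of $\Delta(B)$.

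I do not anticipate a real obstacle. The only points that deserve care are the bookkeeping around the diagonal, namely fixing one ambient $C^*$-algebra so that $\Delta(A)$, $\Delta(B)$, $\Delta(B^{**})$ and the adjoint of $p$ literally coexist and the inclusions are honest set inclusions, and the recognition that approximate unitality of $\Delta(A)$ is free from its being a $C^*$-algebra, rather than something that has to be extracted from a cai of $A$.
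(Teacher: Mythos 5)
Your proof is correct and follows essentially the same route as the paper: both rest on writing $A = pB^{**}p\cap B$ for a (selfadjoint) open projection $p$ to get the reverse inclusion $\Delta(B)\cap A\subseteq\Delta(A)$, and both verify the HSA property via $\Delta(A)\,\Delta(B)\,\Delta(A)\subseteq\Delta(B)\cap(ABA)\subseteq\Delta(B)\cap A=\Delta(A)$. The only cosmetic difference is that the paper decomposes $x=x_1+ix_2$ into selfadjoint parts while you argue directly that $x^*=px^*p\in A$, and you make explicit the (implicit in the paper) point that $\Delta(A)$ is automatically approximately unital because it is a $C^*$-algebra.
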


\begin{proof} If $x \in \Delta(B) \cap A$, write $x = x_1 + i x_2$ with $x_k$ selfadjoint.
If $A = eB^{**}e \cap B$ then $x = ex_1 e + i e x_2 e$.  Thus $x_k =
e x_k e \in eB^{**}e \cap B = A$, and so $x \in \Delta(A)$.  Hence
$\Delta(B) \cap A = \Delta(A)$.  Clearly $\Delta(A) \Delta(B)
\Delta(A) \subset \Delta(B) \cap (A B A) \subset \Delta(B) \cap A =
\Delta(A)$, and so $\Delta(A)$ is a HSA in $\Delta(B)$ if it is
nonzero.
\end{proof}

{\sc Remarks.} 1) \  Idempotents $e$ in a HSA $D$ of $A$ are
algebraically minimal in $A$ iff they are algebraically minimal in
$D$, since $eAe = e^2 A e^2 \subset e D e \subset \Cdb e$.

\smallskip

2)  \  If $A$ is an approximately unital operator algebra, and  $p$
is a projection in $M(A)$ then $\Delta(pAp) = p \Delta(A) p$.  This
follows from Proposition \ref{prop}, or directly.

\medskip

For any operator algebra $A$, the diagonal  $\Delta(A)$ acts
nondegenerately on $A$ iff $A$ has a positive cai, and iff
$1_{\Delta(A)^{\perp \perp}} = 1_{A^{**}}$. The latter is equivalent
to $1_{A^{**}} \in \Delta(A)^{\perp \perp}$.  To see these last
equivalences, note that a  positive cai in $A$ will converge weak*
to both $1_{\Delta(A)^{\perp \perp}}$ and $1_{A^{**}}$, so they are
equal.
 Conversely, if $1_{\Delta(A)^{\perp \perp}} = 1_{A^{**}}$,
and if $(e_t)$ is a cai for $\Delta(A)$,  then $e_t a \to a$ weak*,
hence  weakly, for all $a \in A$.  Thus $\Delta(A) A$ is weakly
dense in $A$, hence norm dense by Mazur's theorem.

\begin{proposition}  \label{dd}  If $A$ is an operator algebra
 such that $\Delta(A)$ acts nondegenerately on $A$,
then
$M(\Delta(A)) = \Delta(M(A))$.
\end{proposition}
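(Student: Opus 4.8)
The plan is to prove the two inclusions after realizing both algebras concretely inside $A^{**}$. By the discussion just before the proposition, the hypothesis furnishes a positive cai $(e_t)$ for $A$; being selfadjoint, $(e_t)$ lies in $\Delta(A)$, is a cai for $\Delta(A)$ as well as for $A$, and converges weak* to $1_{A^{**}}$, which by hypothesis equals $1_{\Delta(A)^{\perp \perp}}$. I view $M(A) \subseteq A^{**}$ in the usual way, and identify $\Delta(A)^{**}$ with the von Neumann algebra $\Delta(A)^{\perp \perp} \subseteq A^{**}$, so that $M(\Delta(A)) = \{ x \in \Delta(A)^{\perp \perp} : x\Delta(A) + \Delta(A)x \subseteq \Delta(A) \}$ and $\Delta(M(A)) = M(A) \cap M(A)^*$.

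For $M(\Delta(A)) \subseteq \Delta(M(A))$, let $m \in M(\Delta(A))$; since $M(\Delta(A))$ is a $C^*$-algebra we also have $m^* \in M(\Delta(A))$, so it suffices to show $m \in M(A)$. Given $a \in A$, using $e_t a \to a$ in norm together with boundedness of left multiplication by $m$ and associativity of the product of $A^{**}$, I get $ma = \lim_t (m e_t) a$; here $m e_t \in \Delta(A)$, hence $(m e_t) a \in \Delta(A) A \subseteq A$, so $ma \in A$. Symmetrically, $a e_t \to a$ gives $am = \lim_t a(e_t m) \in \overline{A\Delta(A)} \subseteq A$. Thus $m \in LM(A) \cap RM(A) = M(A)$; running the same argument on $m^*$ gives $m^* \in M(A)$, so $m \in \Delta(M(A))$.

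For $\Delta(M(A)) \subseteq M(\Delta(A))$, let $m \in \Delta(M(A))$, so that $m$ and $m^*$ both lie in $M(A) = LM(A) \cap RM(A)$. A short annihilator-type computation then gives $m\Delta(A) \subseteq \Delta(A)$ and $\Delta(A) m \subseteq \Delta(A)$: e.g.\ $m\Delta(A) \subseteq mA \subseteq A$, while $(m\Delta(A))^* = \Delta(A)m^* \subseteq Am^* \subseteq A$, so $m\Delta(A) \subseteq A \cap A^* = \Delta(A)$, and likewise on the other side. Given the description of $M(\Delta(A))$ above, it only remains to check that $m \in \Delta(A)^{\perp \perp}$. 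For this I would use that $e_t \to 1_{A^{**}}$ weak* and that right multiplication by the fixed element $m$ is weak*-continuous on $A^{**}$ (a standard one-sided property of the Arens product, and $A$ is in any case Arens regular): then $e_t m \to 1_{A^{**}} m = m$ weak*, while $e_t m \in \Delta(A) m \subseteq \Delta(A)$, so $m$ lies in the weak* closure of $\Delta(A)$ in $A^{**}$, i.e.\ $m \in \Delta(A)^{\perp \perp}$.

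The delicate point is this last step --- forcing a selfadjoint multiplier of $A$ into the weak* closure of $\Delta(A)$ --- and it is exactly where the standing hypothesis enters: it is equivalent to $1_{A^{**}} \in \Delta(A)^{\perp \perp}$, which is what lets the approximate identity of $\Delta(A)$ recover $m$ through weak*-continuity of the product. The hypothesis cannot be omitted; for example, if $A$ is the ideal of functions in the disc algebra vanishing at the point $1$ (an HSA of $A(\mathbb{D})$), then $\Delta(A) = (0)$, so $M(\Delta(A)) = (0)$, yet $1_{A^{**}} \in \Delta(M(A))$.
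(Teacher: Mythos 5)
Your proof is correct and follows essentially the same route as the paper's: both directions are handled by the same algebraic computations with adjoints inside $A^{**}$, using the common positive cai $(e_t)$ of $A$ and $\Delta(A)$ guaranteed by the nondegeneracy hypothesis, and the step forcing $m\in\Delta(A)^{\perp\perp}$ via $e_t m\to m$ weak* is exactly the paper's ``$Te_t\in\Delta(A)$, and so $T\in\Delta(A)^{\perp\perp}$''. The closing counterexample correctly illustrates why the hypothesis cannot be dropped, but is not needed for the proof itself.
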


\begin{proof}  For any approximately unital operator algebra
$A$, viewing multipliers of $A$ as elements of $A^{**}$, and
multipliers of $\Delta(A)$ in $\Delta(A)^{**} = \Delta(A)^{\perp
\perp} \subset A^{**}$, we have $\Delta(M(A)) \cap \Delta(A)^{\perp
\perp} \subset M(\Delta(A))$.   For if $T \in \Delta(M(A))$ and if
$a \in \Delta(A)$, then to see that $T a \in \Delta(A)$, we may
assume by linearity that $T$ and $a$ are selfadjoint. Then $T a \in
A$, but also $(T a)^* = a T \in A$, so that $T a \in \Delta(A)$.
Suppose that $A$ and $\Delta(A)$ share a common positive cai
$(e_t)$.  Then $T e_t \in \Delta(A)$, and so $T \in \Delta(A)^{\perp
\perp}$. Thus $\Delta(M(A)) \subset M(\Delta(A))$.

For  $T \in M(\Delta(A)), a \in A,$ we have $T a = \lim_t \, T e_t a
\in \Delta(A) A \subset A$.   Similarly $aT \in A$, and so the
$C^*$-algebra
 $M(\Delta(A))$ is a unital
subalgebra of $M(A)$. So $M(\Delta(A)) \subset \Delta(M(A))$.
\end{proof}

We define a notion based on the noncommutative topology of operator
algebras \cite{BHN,Hay}. We will say that an operator algebra $A$ is
{\em nc-discrete} if it satisfies the equivalent conditions in the
next result.

\begin{proposition}  \label{n95}  For an approximately unital operator algebra $A$
the following are equivalent:
\begin{itemize} \item [(i)] Every open projection $e$ in $A^{**}$
is also closed (in the sense that  $1-e$ is open).
\item [(ii)]  The open projections in $A^{**}$ are exactly the
projections in $M(A)$.
\item [(iii)]  Every  $r$-ideal $J$ of $A$ is of
the form $eA$ for a projection $e \in M(A)$.
\item [(iv)]  The left annihilator of every nontrivial $r$-ideal  of $A$ is a
 nontrivial $\ell$-ideal. \item [(v)]
Every  HSA of $A$ is of the form $eAe$ for a projection $e \in
M(A)$. \end{itemize} If any of these hold then $\Delta(A)$ is an
annihilator $C^*$-algebra.
\end{proposition}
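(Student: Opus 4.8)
The plan is to establish the cycle of implications (i) $\Rightarrow$ (ii) $\Rightarrow$ (iii) $\Rightarrow$ (iv) $\Rightarrow$ (i), together with (ii) $\Leftrightarrow$ (v), and then deduce the final assertion about $\Delta(A)$. The engine driving most of these equivalences is the dictionary from \cite{BHN,Hay} recalled in the introduction: $r$-ideals are exactly the sets $eA^{**}\cap A$ for open projections $e\in A^{**}$, HSA's are the $eA^{**}e\cap A$ for open $e$, and the support projection of an $r$-ideal is the weak* limit of its left cai. A second key input is Proposition \ref{idl}: if $p$ is a projection in $A^{**}$ with $pA\subset A$, then automatically $Ap\subset A$, so $p$ lies in $M(A)$; conversely any projection $p\in M(A)$ gives $pA\subset A$ and is open (its support net can be taken constant). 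This already gives (ii) $\Leftrightarrow$ (iii) $\Leftrightarrow$ (v) almost formally, since an $r$-ideal with support projection $e\in M(A)$ equals $eA$, and the corresponding HSA equals $eAe$.

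For (i) $\Rightarrow$ (ii): if every open projection $e$ is also closed, then both $e$ and $1-e$ are open, so by \cite{BHN} the element $e$ is what is there called \emph{clopen}, and clopen projections in $A^{**}$ are precisely the central projections of $M(A)$—or at any rate projections in $M(A)$. Concretely, $e$ open gives an $r$-ideal $eA^{**}\cap A$ with left cai converging weak* to $e$, while $1-e$ open gives $Ae = A(1-(1-e))$-type information; combining, one shows $eA\subset A$ and $Ae\subset A$, i.e.\ $e\in M(A)$. The reverse inclusion in (ii) is the easy direction noted above. For (iii) $\Rightarrow$ (iv): given a nontrivial $r$-ideal $J = eA$ with $e\in M(A)$ a projection, its left annihilator $L(J) = \{a\in A: aeA = 0\}$ contains $A(1-e)$, and in fact equals $A(1-e)$ using that $A$ has a cai so that $ae = 0$ forces $a = a(1-e)$ in the limit; since $1-e\in M(A)$ is a projection, $A(1-e)$ is an $\ell$-ideal, and it is nontrivial because $e\neq 1$. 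For (iv) $\Rightarrow$ (i): suppose $e$ is open but not closed. Then the $r$-ideal $J = eA^{**}\cap A$ is nontrivial (if $e=0$ it is closed; if $e=1$ it is closed); its left annihilator $L(J)$ is an $\ell$-ideal by hypothesis, with support projection some $q$, and a short computation with support projections shows $q\le 1-e$ and in fact $q = 1-e$, forcing $1-e$ to be open—contradiction. This last computation, pinning down $L(J)^{\perp\perp} = A^{**}(1-e)$, is where I expect the only real friction: one must be careful that $L(J)$ is genuinely an $\ell$-ideal (has a right cai) and not merely a left ideal, which is exactly what hypothesis (iv) supplies, and then use weak* density arguments as in the proof of Proposition \ref{prop} in \cite{BHN}.

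Finally, for the concluding sentence: assume (any of) the conditions hold. By the discussion preceding Proposition \ref{dd}, it suffices to show $\Delta(A)$ is an annihilator $C^*$-algebra, equivalently that it is an ideal in its bidual, equivalently (for $C^*$-algebras, by \cite{HWW}) that every nonzero closed ideal of $\Delta(A)$ has nonzero annihilator, or more directly that $\Delta(A)$ has "enough" minimal projections. The cleanest route: a closed ideal of the $C^*$-algebra $\Delta(A)$ has the form $pA^{**}p\cap \Delta(A)$ for an open projection $p$ that is a projection in $\Delta(A^{**}) = \Delta(A^{**})\cap A$ is... rather, one argues that every closed right ideal of $\Delta(A)$ corresponds to an open projection of $A^{**}$ lying in $\Delta(A^{**})$, which by (ii) is a projection in $\Delta(M(A)) = M(\Delta(A))$ (using Proposition \ref{dd} once we know $\Delta(A)$ acts nondegenerately, or handling the nondegenerate part separately). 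Hence every HSA of the $C^*$-algebra $\Delta(A)$ is of the form $p\,\Delta(A)\,p$ for $p\in M(\Delta(A))$, which is precisely the nc-discreteness of $\Delta(A)$; and an nc-discrete $C^*$-algebra is an annihilator $C^*$-algebra, since for $C^*$-algebras "every hereditary subalgebra is of the form $pAp$ with $p$ a multiplier projection" is one of the standard characterizations of $c_0$-sums of elementary $C^*$-algebras (cf.\ \cite[Exercise 4.7.20]{Dix}). I would expect the main obstacle here to be the bookkeeping around whether $1_{\Delta(A)^{\perp\perp}} = 1_{A^{**}}$; if not, one replaces $A$ by the HSA $qAq$ where $q = 1_{\Delta(A)^{\perp\perp}}$, notes nc-discreteness passes to this HSA by condition (v), and proceeds there.
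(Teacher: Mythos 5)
Your overall architecture matches the paper's (the cycle through (i)--(v) via the open-projection dictionary, plus the reduction of the final claim to showing that open projections of $\Delta(A)^{**}$ lie in $M(\Delta(A))$, which is exactly how the paper concludes). Most steps are fine, but there is a genuine gap in (iv) $\Rightarrow$ (i). You write that ``a short computation with support projections shows $q\le 1-e$ and in fact $q=1-e$.'' Only the first half is a short computation: from $L(J)J=(0)$ one gets $qe=0$, i.e.\ $q\le 1-e$. Nothing you have written forces $q\ge 1-e$. Hypothesis (iv) only tells you that $L(J)$ is a \emph{nontrivial} $\ell$-ideal, i.e.\ that $q\neq 0$; note that $L(J)=A^{**}(1-e)\cap A$ automatically, but this set can in principle coincide with $A^{**}q\cap A$ for a strictly smaller open $q$, and no weak* density argument rules this out. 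The paper closes exactly this hole with a second application of (iv): if $e+q\neq 1$, then since $e+q$ is open, the same construction produces a nonzero open projection $p\le (e+q)^\perp$, and the HSA $D=pA^{**}p\cap A$ satisfies $D\subset L(J)\cap R(L(J))$ (because $pe=0$ puts $D$ in $L(J)$, while $L(J)\subset A^{**}q$ and $qp=0$ put $D$ in $R(L(J))$); hence $D=D^2\subset L(J)\,R(L(J))=(0)$, contradicting $p\neq 0$. Some such two-stage argument is needed; this is the heart of the implication, not bookkeeping.

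A secondary point: in (i) $\Rightarrow$ (ii), the sentence beginning ``Concretely, \dots'' does not actually deliver $eA\subset A$. Knowing that $e$ and $1-e$ are both open hands you two $r$-ideals (or HSA's), but no elementary combination of these yields that $e$ multiplies $A$ into $A$. The working route --- and the paper's --- is the one you state first and then abandon: a projection that is both open and closed lies in $M(B)$ for any $C^*$-cover $B$ of $A$, by \cite[Theorem 3.12.9]{Ped} together with \cite[Theorem 2.4]{BHN}, whence $eA\subset A^{\perp\perp}\cap B=A$ and similarly $Ae\subset A$. Your treatment of (ii) $\Leftrightarrow$ (iii) $\Leftrightarrow$ (v), of (iii) $\Rightarrow$ (iv), and of the final assertion about $\Delta(A)$ (including the care about where nondegeneracy is or is not needed in invoking Proposition \ref{dd}) is essentially the paper's argument.
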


\begin{proof}  If a projection $p$ in $A$ is both open and closed, then $p \in
M(B)$ for any $C^*$-cover  $B$ of $A$ by \cite[Theorem 3.12.9]{Ped}
and \cite[Theorem 2.4]{BHN}. Hence $p A \subset A^{\perp \perp} \cap
B = A$. Similarly $Ap \subset A$, so that $p \in M(A)$. So (i)
implies (ii), and the converse follows from the fact that any
projection in $M(A)$ is open \cite{BHN}.   The equivalence of (i)
and (ii)
 with (iii) and (v)
follows from basic correspondences from \cite{BHN,Hay}.  That  (iii)
implies (iv) follows from the fact that here $L(eA) = Ae^{\perp}$, an
$r$-ideal. Conversely, if (iv) holds, and $e \neq 1$ is an open
projection, then we claim that there is a nonzero open projection
$f$ with $f \leq e^\perp$.  Indeed choose such $f$ to be the support
projection of $L(J)$ where $J = A^{**} e \cap A$.  If $f = e^\perp$
then (i) follows, so assume that $e + f \neq 1$.  Since $e + f$ is
open, by the claim there exists a nonzero open projection $p$ with
$p \leq (e+f)^\perp$.  Then the HSA $D = p A^{**} p \cap A \subset
L(J) \cap R(L(J))$, so that $D = D^2 = (0)$. Thus  $p = 0$, a
contradiction.

If (i)--(v) hold, then every open projection in $\Delta(A)^{**}$ is
in $\Delta(M(A))$, hence in $M(\Delta(A))$ by the argument in
Proposition  \ref{dd}. Thus every left ideal $J$ of $\Delta(A)$ is
of the form $e \Delta(A)$ for a projection $e \in M(\Delta(A))$,
hence has a nonzero right annihilator. Thus $\Delta(A)$ is an
annihilator $C^*$-algebra.
\end{proof}

{\sc Remarks.}  1) 
\  Every finite dimensional unital operator algebra is obviously
nc-discrete.  We will see more examples later.

\smallskip

2) \  Of course the `other-handed' variants of (iii) and (iv) in the
Proposition are also equivalent to the others, by symmetry.  For a
projection $e \in M(A)$, the $\ell$-ideal and HSA corresponding to
the r-ideal $eA$, are $Ae$ and $eAe$ respectively, by the theory in
\cite{BHN}.

\bigskip

We will say that an operator algebra  $A$ is {\em $\Delta$-dual} if
$\Delta(A)$ is a dual $C^*$-algebra in the sense of Kaplansky, and
$\Delta(A)$ acts nondegenerately on $A$.

We briefly discuss the connections between the `$\Delta$-dual' and
the `nc-discrete' properties.  The flow is essentially one-way
between these properties. Being $\Delta$-dual certainly is far from
implying nc-discrete.  For example the disk algebra $A(\Ddb)$ has no
nontrivial projections and is $\Delta$-dual; but it has many
nontrivial approximately unital ideals (such as $\{ f \in A(\Ddb) :
f(1) = 0 \}$), thus is not nc-discrete.  However nc-discrete implies
$\Delta$-dual under reasonable conditions. For future use we record
some necessary and sufficient conditions for a  nc-discrete algebra
to be $\Delta$-dual.

\begin{corollary}  \label{n13c}  Let $A$ be an approximately unital
operator algebra which is nc-discrete. The following are equivalent:
\begin{itemize} \item [(i)]  $A$ is
$\Delta$-dual.
 \item [(ii)]  $\Delta(A)$
acts nondegenerately on $A$.
\item [(iii)] Every nonzero  projection in $M(A)$ dominates a nonzero
positive element in $A$.
\item [(iv)]  If $p$ is a nonzero  projection in $M(A)$,
then there exists $a \in A$ with $pap$ selfadjoint.
\item [(v)]  $1_{A^{**}} \in \Delta(A)^{\perp \perp}$.  
\end{itemize}
 \end{corollary}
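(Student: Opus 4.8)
The plan is to prove the cycle of implications (i) $\Rightarrow$ (ii) $\Rightarrow$ (v) $\Rightarrow$ (iii) $\Rightarrow$ (iv) $\Rightarrow$ (ii), supplemented by whatever extra input is needed to get back to (i). The equivalence (i) $\Leftrightarrow$ (ii) is essentially by the \emph{definition} of $\Delta$-dual, once we know that nc-discreteness forces $\Delta(A)$ to be an annihilator (= dual, in Kaplansky's sense) $C^*$-algebra; but that is exactly the last sentence of Proposition \ref{n95}. So (i) $\Leftrightarrow$ (ii) is immediate. The equivalence (ii) $\Leftrightarrow$ (v) is the content of the unnumbered discussion just before Proposition \ref{dd}: a positive cai converges weak* to both $1_{\Delta(A)^{\perp\perp}}$ and $1_{A^{**}}$, and conversely $1_{A^{**}} \in \Delta(A)^{\perp\perp}$ together with a cai $(e_t)$ for $\Delta(A)$ forces $\Delta(A)A$ to be weakly, hence norm, dense in $A$. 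So the real work is in the triangle (ii) $\Leftrightarrow$ (iii) $\Leftrightarrow$ (iv), exploiting nc-discreteness.

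For (ii) $\Rightarrow$ (iii): if $p$ is a nonzero projection in $M(A)$, consider the HSA $pAp$, which is approximately unital. If $\Delta(A)$ acts nondegenerately on $A$, I would argue that $\Delta(pAp) = p\Delta(A)p$ (Remark 2 after Proposition \ref{prop}) acts nondegenerately on $pAp$ as well — using that $p$ is a multiplier and that a positive cai $(e_t)$ for $A$ gives $pe_tp$ as a positive cai for $pAp$ lying in $\Delta(pAp)$. In particular $\Delta(pAp) \neq (0)$, so there is a nonzero positive $a \in \Delta(pAp) \subset A$ with $a = pap \le \|a\|\, p$, giving (iii); and this also shows $pap$ is selfadjoint, so (iii) $\Rightarrow$ (iv) is trivial and we even get (ii) $\Rightarrow$ (iv) directly. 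For (iv) $\Rightarrow$ (ii) I would use nc-discreteness decisively: suppose $\Delta(A)$ does not act nondegenerately, so by the equivalences before Proposition \ref{dd}, $q := 1_{\Delta(A)^{\perp\perp}} \neq 1_{A^{**}}$. One checks $q$ is an open projection (it is the support projection of the HSA generated by $\Delta(A)$, or simply the weak* limit of a positive cai for $\Delta(A)$), hence so is $1-q$, and by nc-discreteness (condition (v) of Proposition \ref{n95}) the nonzero projection $p := 1-q$ lies in $M(A)$. Then for any $a \in A$, $pap \in pA^{**}p$; if $pap$ were selfadjoint it would lie in $\Delta(pAp) \subset \Delta(A)^{\perp\perp} = qA^{**}q$, forcing $pap = p(pap)p \in pqA^{**}qp = (0)$ since $pq = 0$. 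So $pap = 0$ for all $a$, i.e. $p A p = (0)$; but $p \in M(A)$ is nonzero and $A$ has a cai, so $p = p(\lim e_t)p = \lim\, pe_tp = 0$, a contradiction. Hence $p = 0$, i.e. $q = 1_{A^{**}}$, which is (v), hence (ii).

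The main obstacle I anticipate is the care needed in (iv) $\Rightarrow$ (ii): specifically, verifying that the projection $q = 1_{\Delta(A)^{\perp\perp}}$ is genuinely \emph{open} in $A^{**}$ (so that $1-q$ is open and nc-discreteness applies to put $1-q$ in $M(A)$), and pinning down the containment $\Delta(pAp) \subset \Delta(A)^{\perp\perp}$. Openness of $q$ should follow because a positive cai $(f_s)$ for the $C^*$-algebra $\Delta(A)$ satisfies $f_s \in \Delta(A) \subset A$, $f_s = q f_s q$ in $A^{**}$, and $f_s \to q$ weak*, which is precisely the definition of $q$ being open; the containment $\Delta(pAp) \subset q A^{**} q$ holds because selfadjoint elements of $pAp \subset A$ that are also selfadjoint lie in $\Delta(A) \subset \Delta(A)^{\perp\perp}$, and any element of $A$ fixed by $q$ on both sides satisfies $x = qxq$. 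Everything else is bookkeeping with the already-established facts from Proposition \ref{n95}, Proposition \ref{dd}, and the paragraph preceding Proposition \ref{dd}.
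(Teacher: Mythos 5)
Your overall architecture --- (i) $\Leftrightarrow$ (ii) $\Leftrightarrow$ (v) from the last sentence of Proposition \ref{n95} and the discussion preceding Proposition \ref{dd}, then a cycle through (iii) and (iv) --- is essentially the paper's, and your (ii) $\Rightarrow$ (iii) (compressing a positive cai to get $0 \neq p e_t p \leq \Vert e_t \Vert p$ with $pe_tp$ selfadjoint) is a perfectly good, slightly more elementary substitute for the paper's route, which instead locates $p$ in $M(\Delta(A)) = \Delta(A)^{**}$ and takes a projection of $\Delta(A)$ under it. The key computation in your (iv) $\Rightarrow$ (ii) --- that for $p = 1-q$ with $q = 1_{\Delta(A)^{\perp\perp}}$ open (hence, by nc-discreteness, $p$ open and in $M(A)$), any selfadjoint $pap$ lies in $\Delta(A) \subset qA^{**}q$ and is therefore annihilated because $pq=0$ --- is exactly the paper's parenthetical argument that $(1-e_\Delta)A(1-e_\Delta)$ contains no nonzero selfadjoint element of $\Delta(A)$.

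However, the last two clauses of your (iv) $\Rightarrow$ (ii) contain a genuine logical error. From ``every \emph{selfadjoint} element of the form $pap$ is zero'' you infer ``$pap=0$ for all $a$, i.e.\ $pAp=(0)$,'' and then get $p = \lim_t pe_tp = 0$ from the cai. That inference is invalid: nothing forces a non-selfadjoint $pap$ to vanish. Worse, as written your contradiction never invokes hypothesis (iv) at all, so if the chain were sound it would show that nc-discreteness alone implies (ii); this is false, as the paper's example $B = RDR^{-1}$ (nc-discrete and approximately unital, but with no positive cai) demonstrates. The repair is immediate and is what the paper does: stop at ``every selfadjoint $pap$ is zero'' and observe that this directly contradicts (iv) applied to the nonzero projection $p \in M(A)$, provided (iv) is read --- as it must be, since $a=0$ renders it vacuous otherwise --- as producing a \emph{nonzero} selfadjoint $pap$. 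With that one-line fix, and with your ``$\Delta(A)^{\perp\perp} = qA^{**}q$'' weakened to the containment $\Delta(A)^{\perp\perp} \subset qA^{**}q$ (equality is neither needed nor automatic), your proof is correct and coincides in substance with the paper's.
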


\begin{proof}
We said in Proposition  \ref{n95} that $\Delta(A)$ is an annihilator
$C^*$-algebra.  Thus (i) $\Leftrightarrow$ (ii); and (ii)
$\Leftrightarrow$ (v) by the arguments above the statement of
Proposition
 \ref{dd}.
    Clearly
(iii) $\Rightarrow$ (iv).

(i) $\Rightarrow$ (iii) \ As in the last line $p \in M(\Delta(A)) =
\Delta(A)^{\perp \perp}$, hence dominates a projection in
$\Delta(A)$.

(iv) $\Rightarrow$ (ii) \  Since $\Delta(A)$ is a $c_0$-sum of
$C^*$-algebras of compact operators, every projection $p$ in
$\Delta(A)^{**} = M(\Delta(A))$ is open in $A^{**}$, hence lies in
$M(A)$ by Proposition \ref{n95} (ii).
In particular, $e_\Delta$, the identity of $\Delta(A)^{**}$ is in
$M(A)$. Hence $e_\Delta = 1$ (since $(1-e_\Delta)A (1-e_\Delta)$ cannot
contain any nonzero (selfadjoint) element in $\Delta(A)$). So $A$
has a positive cai.
\end{proof}

Nc-discrete algebras are reminiscent of Kaplansky's `dual algebras',
in that the `left (resp.\ right) annihilator' operation is a lattice
anti-isomorphism between the lattices of one-sided $M$-ideals of
$A$:

\begin{corollary} \label{elli2}   Let $A$ be a nc-discrete approximately unital operator algebra.
If $J$ is an r-ideal (resp.\ $\ell$-ideal) in $A$,  then the left
annihilator $L(J)$ (resp.\ right annihilator $R(J)$), equals $A e^\perp$ (resp.\
$e^\perp A$) where $e$ is the support projection of $J$.  This
annihilator is an $\ell$-ideal (resp.\ r-ideal), and $R(L(J)) = J$
(resp.\ $L(R(J)) = J$).   Also, the intersection, and the closure of
the sum, of any family of r-ideals (resp.\ $\ell$-ideals) in $A$ is
again a r-ideal (resp.\ $\ell$-ideal).
\end{corollary}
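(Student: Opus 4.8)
The plan is to exploit the nc-discrete hypothesis to reduce every claim to a statement about projections in the $C^*$-algebra $M(A)$, where everything is easy, and then to translate back using the dictionary between $r$-ideals, open projections, and support projections from \cite{BHN,Hay}. First I would recall that by Proposition~\ref{n95}(iii), any $r$-ideal $J$ has the form $eA$ for a projection $e \in M(A)$, and $e$ is precisely the support projection of $J$ (its left cai converges weak* to $e$, and $e \in M(A) \subset A^{**}$). Then $L(J) = L(eA)$; since $eA$ already contains $e$ and $e$ is a left identity for $eA$, an element $a$ annihilates $eA$ on the left iff $ae = 0$ iff $a = ae^\perp$, giving $L(J) = Ae^\perp$. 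Because $e^\perp = 1 - e$ is again a projection in $M(A)$, it is an open projection (any projection in $M(A)$ is open, by \cite{BHN}), so $Ae^\perp$ is an $\ell$-ideal with support projection $e^\perp$. Applying the symmetric statement of Proposition~\ref{n95} (Remark 2 after it) to the $\ell$-ideal $Ae^\perp$, its right annihilator is $(e^\perp)^\perp A = eA = J$, so $R(L(J)) = J$. The $\ell$-ideal case is identical with left and right interchanged, using that nc-discreteness is left-right symmetric.

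Next I would handle the intersection statement. Given a family $\{J_i\}$ of $r$-ideals with support projections $e_i \in M(A)$, I expect $\bigcap_i J_i$ to be an $r$-ideal whose support projection is the infimum $\bigwedge_i e_i$ computed in the $C^*$-algebra $M(A)^{**}$ (equivalently in $A^{**}$). The cleanest route: the infimum of any family of open projections in $A^{**}$ is open — this is a standard fact from the noncommutative topology developed in \cite{BHN} (infima of open projections are open; indeed $\bigcap_i (e_i A^{**} \cap A)^{\perp\perp}$ corresponds to the infimum). So $e := \bigwedge_i e_i$ is an open projection, hence by nc-discreteness $e \in M(A)$, and $eA \cap A = eA$ is an $r$-ideal. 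It remains to identify $eA$ with $\bigcap_i J_i$: since $e \le e_i$ we have $eA \subset e_i A = J_i$ for each $i$, so $eA \subset \bigcap_i J_i$; conversely $\bigcap_i J_i = \bigcap_i (e_i A^{**} \cap A)$ is a right ideal contained in $e_i A^{**}$ for all $i$, hence in $(\bigwedge_i e_i) A^{**} = e A^{**}$, so $\bigcap_i J_i \subset eA^{**} \cap A = eA$. Thus $\bigcap_i J_i = eA$ is an $r$-ideal. For $\ell$-ideals the argument is mirror-image.

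For the closure of the sum, I would pass through annihilators to convert sums into intersections: by De~Morgan, $\overline{\sum_i J_i}$ should be the $r$-ideal with support projection $\bigvee_i e_i$, the supremum of open projections, which is again open by \cite{BHN}. Concretely, let $f := \bigvee_i e_i \in M(A)$ (open, hence in $M(A)$ by nc-discreteness). Each $J_i = e_i A \subset fA$, so $\sum_i J_i \subset fA$ and hence $\overline{\sum_i J_i} \subset fA$. For the reverse inclusion I would argue that the weak* closure of $\sum_i J_i^{\perp\perp} = \sum_i e_i A^{**}$ in $A^{**}$ is $f A^{**}$ (since $f = \bigvee e_i$ in $A^{**}$), so $\overline{\sum_i J_i}^{\perp\perp} = fA^{**}$ by bipolar considerations, whence $\overline{\sum_i J_i} = fA^{**} \cap A = fA$. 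Alternatively, and perhaps more safely, I would define $f$ as the support projection of the $r$-ideal $L(\bigcap_i L(J_i))$ — an honest $r$-ideal by the first part applied to the $\ell$-ideal $\bigcap_i L(J_i)$, which is an $\ell$-ideal by the intersection part — and then check it contains each $J_i$ and is contained in $fA$; this keeps everything inside the already-established framework.

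The main obstacle I anticipate is the supremum/infimum step: I need that arbitrary (not just finite) infima and suprema of open projections in $A^{**}$ are again open, and I am relying on \cite{BHN} for this. If that reference only supplies it for finite families or under extra hypotheses, the fix is to use the annihilator duality to trade one operation for the other — infima of open projections are comparatively easy (an intersection of $r$-ideals visibly has a cai obtained by a standard approximate-identity argument in the intersection), and then suprema come for free via $L$ and $R$ because of the lattice anti-isomorphism established in the first half of this very corollary. So the logically tight order is: (1) the annihilator formulas and $R(L(J))=J$; (2) intersections of $r$-ideals (directly); (3) closures of sums via annihilators and (2); (4) note the $\ell$-ideal versions by symmetry.
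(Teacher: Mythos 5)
Your first paragraph is essentially the paper's (unstated) argument for the ``evident'' part and is correct: under nc-discreteness $J = eA$ with $e \in M(A)$ (Proposition \ref{n95}), so $L(J) = Ae^\perp$ with $e^\perp \in M(A)$ open, and $R(Ae^\perp) = eA = J$. (One small slip: $e$ need not lie in $eA$, since $e \in M(A)$ need not belong to $A$; but $L(eA) = \{a \in A : ae = 0\} = Ae^\perp$ follows from a cai argument, so nothing is lost.) The sum case, as you set it up via $f = \vee_i e_i$, is also salvageable.

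The genuine gap is in your treatment of intersections, and it propagates into your final plan. You rest the intersection case on the assertion that the infimum of an arbitrary family of open projections is open, attributed to \cite{BHN}; no such fact is available there for nonselfadjoint operator algebras, and it is essentially equivalent to what must be proved (even whether the intersection of two $r$-ideals is an $r$-ideal was, at the time of this paper, open in general). Your parenthetical fallback --- that an intersection of $r$-ideals ``visibly has a cai by a standard approximate-identity argument'' --- is likewise unsupported: there is no standard device producing a left cai for $\cap_i e_i A$ from the individual left cais in the nonselfadjoint setting, and nc-discreteness only tells you that open projections lie in $M(A)$, not that $\wedge_i e_i$ is open. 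You correctly sense that the fix is to trade one lattice operation for the other through the annihilator anti-isomorphism, but you assign the roles backwards. The operation that comes for free is the supremum: the closure of the sum of any family of $r$-ideals (equivalently, of right $M$-ideals) is always an $r$-ideal, with no nc-discreteness needed --- this is the general fact from \cite{BZ} that the paper cites. The intersection is then derived from it: $\cap_i \, J_i = R\bigl(\overline{\sum_i \, L(J_i)}\bigr)$, where $\overline{\sum_i \, L(J_i)}$ is an $\ell$-ideal by the \cite{BZ} fact, and the right annihilator of an $\ell$-ideal is an $r$-ideal by the first part of the corollary (this is precisely where nc-discreteness enters). Reversing the order of steps (2) and (3) in your plan, and replacing ``infima of open projections are open'' by the \cite{BZ} sum result, yields a correct proof matching the paper's.
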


\begin{proof}  The first statements are evident.  The last statement is always true for the closure of the sum
\cite{BZ}.  
An intersection $\cap_i \, J_i$ of $r$-ideals $J_i$ is an $r$-ideal,
since it equals $R(\overline{\sum_i \, L(J_i)})$, and
$\overline{\sum_i \, L(J_i)}$ is an $\ell$-ideal.
\end{proof}

\begin{proposition} \label{elli}  If $A$ is an operator algebra,
which is an $\ell$-ideal in its bidual, then every projection $e \in
A^{**}$ is open, and the r-ideals (resp.\ $\ell$-ideals) in $A$ are
precisely the ideals $eA$ (resp.\ $Ae$) for projections $e \in
A^{**}$.

 If in addition $A$ is
approximately unital, then $A$ is nc-discrete, and every projection
in $A^{**}$ is in $M(A)$.
\end{proposition}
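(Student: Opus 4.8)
The plan is to exploit the structural fact that an $\ell$-ideal in its bidual is, after passing to a $C^*$-cover, a particularly tame object; the key is to compare $A$ with the $C^*$-algebra $C^*_e(A)$ it generates. First I would recall that if $A$ is an $\ell$-ideal in $A^{**}$, then $A^{**} A \subset A$ (this is the analogue, for one-sided ideals, of the computation in the Corollary to Proposition \ref{sdf}), and in particular for \emph{any} $a \in A$ and any $b \in A^{**}$ we get $ba \in A$. Now let $e \in A^{**}$ be an arbitrary projection. To show $e$ is open, it suffices by the characterization recalled in the introduction (from \cite{BHN}) to produce a net in $eA^{**}e \cap A$ converging weak$^*$ to $e$. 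The natural candidate is $(e a_\lambda e)$, where $(a_\lambda)$ is a net in $A$ converging weak$^*$ to $1_{A^{**}}$ — such a net exists whenever $A$ is approximately unital, and in the non-approximately-unital case one works in $A^1$ and uses that $A^1$ is then also an $\ell$-ideal in its bidual, or argues directly with bounded nets in $A$ that converge weak$^*$ to $1_{(A^1)^{**}}$. The delicate point is that a priori $e a_\lambda e$ need only lie in $A^{**}$, not in $A$; but $a_\lambda e \in A$ by the displayed containment $A^{**} A \subset A$ applied on the \emph{right} — wait, we need $A$ acting on the left of $A^{**}$, i.e. $A A^{**} \subset A$, which is exactly what being an $\ell$-ideal gives us ($A^{\perp\perp} = A^{**} e_A$ forces the left multiplier structure). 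So $e a_\lambda \in e A^{**} \subset A^{**}$, and then $(e a_\lambda) e = e a_\lambda e$: to land in $A$ I instead write $e a_\lambda e$ and use $a_\lambda e \in A$ (from $A^{**}e_A$-type reasoning, or more carefully: $A^{**} A \subset A$ since $A^{**} A \subset A^{**} A^{**} A = A^{**}(A^{**}A)$ and an $\ell$-ideal absorbs left multiplication), hence $e(a_\lambda e) \in A^{**} A \subset A$. This is the main obstacle and it needs to be pinned down cleanly: establishing that an $\ell$-ideal in its bidual satisfies $A^{**} A \subset A$.

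Granting that, weak$^*$ continuity of multiplication separately in each variable on bounded sets gives $e a_\lambda e \to e \cdot 1 \cdot e = e$ weak$^*$, and the net is bounded, so $e$ is open. Since $e$ was arbitrary, every projection in $A^{**}$ is open. For the second sentence of the first paragraph, recall from the introduction that $r$-ideals are exactly the sets $p A^{**} \cap A$ for open projections $p$, and $\ell$-ideals are $A^{**} p \cap A$; but now, again using $A^{**} A \subset A$, one checks $pA^{**} \cap A = pA$: indeed $pA \subset pA^{**} \cap A$ trivially (as $pA \subset A^{**}A \subset A$), and conversely if $x \in pA^{**} \cap A$ then $px = x$ so $x = px \in pA$. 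Symmetrically $A^{**} p \cap A = Ap$. Combined with the fact that every projection in $A^{**}$ is open, this shows the $r$-ideals are precisely the $eA$ and the $\ell$-ideals precisely the $Ae$, for $e$ a projection in $A^{**}$.

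For the final sentence, suppose in addition $A$ is approximately unital. We have just shown every projection $e \in A^{**}$ is open; I must show each is also closed, i.e. $1 - e$ is open, and then Proposition \ref{n95}(i) $\Rightarrow$ (ii) gives nc-discreteness and $e \in M(A)$. But $1 - e$ is again a projection in $A^{**}$, hence open by what we just proved — so every projection is automatically closed as well, and condition (i) of Proposition \ref{n95} holds. Therefore $A$ is nc-discrete, and by Proposition \ref{n95}(ii) the open projections in $A^{**}$ — that is, all projections in $A^{**}$ — lie in $M(A)$. This completes the argument; the only step requiring genuine care remains the verification that $A$ being an $\ell$-ideal in $A^{**}$ forces $A^{**} A \subset A$, which I would extract from the support-projection description $A^{\perp\perp} = A^{**} e_A$ together with the fact that $e_A$ acts as a left identity on $A^{**}$ when $A$ is approximately unital (and via $A^1$ otherwise).
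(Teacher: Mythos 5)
Your skeleton (exhibit a bounded net in $eA^{**}e\cap A$ converging weak$^*$ to $e$, read off the ideal statements from the open-projection correspondence, then get nc-discreteness from Proposition \ref{n95}) is essentially the paper's, but you have misidentified where the real work lies, and the step you wave at is exactly the one your argument does not supply. Being an $\ell$-ideal in $A^{**}$ means $A$ is a left ideal of $A^{**}$ with a right cai, so the containment $A^{**}A\subset A$ holds \emph{by definition}; your closing promise to ``pin down'' this containment from $A^{\perp\perp}=A^{**}e_A$ is aimed at a non-issue. What you actually need, and never establish, is the opposite-handed fact: to put $e\,a_\lambda\,e$ into $A$ you must know $Ae\subset A$. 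Indeed $ea_\lambda\in A^{**}A\subset A$ is fine, but $(ea_\lambda)e$ then lies in $AA^{**}$, which a \emph{left} ideal does not control; and your parenthetical justification ($A^{**}A\subset A$ because $A^{**}A\subset A^{**}A^{**}A=\cdots$) again proves the containment you already have for free and says nothing about $a_\lambda e$. Closing this gap is precisely the role of Proposition \ref{idl} in the paper: for an operator algebra with right cai, $eA\subset A$ forces $Ae\subset A$ --- a genuinely nontrivial statement, proved as in \cite[Proposition 5.1]{BHN}. With that in hand, $e$ lies in the idealizer of $A$ in $A^{**}$ and the openness argument (as on p.\ 336 of \cite{BHN}) goes through; without it, your net never visibly lands in $A$.

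Two further corrections. Your fallback for the non-approximately-unital case, ``$A^1$ is then also an $\ell$-ideal in its bidual,'' is false: $A^1$ is unital, so being a one-sided ideal in $(A^1)^{**}$ would force $A^1=(A^1)^{**}$, i.e.\ reflexivity. You do not need $1_{A^{**}}$ at all: the right cai $(e_t)$ converges weak$^*$ to a right identity $f$ of $A^{**}$, and separate weak$^*$ continuity of multiplication gives $ee_te\to (ef)e=ee=e$, so $(ee_te)$ is the correct candidate net (contained in $A$ once $Ae\subset A$ is known). The remainder of your argument --- the identification $eA^{**}\cap A=eA$ and $A^{**}e\cap A=Ae$, and the observation that ``every projection open'' makes every projection closed, whence Proposition \ref{n95}(i) yields nc-discreteness and $e\in M(A)$ --- is correct and matches the paper's ``the last statement is now easy.''
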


\begin{proof}   By Proposition \ref{idl}, any  projection in $A^{**}$ is
in the idealizer  of $A$ in $A^{**}$, and is open by an argument
similar to that on p.\ 336 of \cite{BHN}. Also,  $eA$ is an r-ideal,
and $Ae$ is an $\ell$-ideal, by Theorem 2.4 in \cite{BHN}. The last
statement is now easy.
\end{proof}

If in the first paragraph of the statement of the last result, $A$
also happens to be semiprime, then it is automatically approximately unital,
hence the second paragraph of the statement holds automatically.
This follows from:

\begin{proposition} \label{idbs}  A semiprime Arens regular Banach
algebra $A$  which is a left ideal in its bidual, has a bai (resp.\
cai) if it has a right  bai (resp.\  right cai). 
 \end{proposition}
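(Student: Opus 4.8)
The plan is to exploit the standard fact, recalled in the introduction, that an Arens regular operator (or Banach) algebra $A$ has a right bai (resp.\ right cai) precisely when $A^{**}$ has a right identity (resp.\ right identity of norm $1$); by symmetry the two-sided statement we want is equivalent to $A^{**}$ having a two-sided identity (of norm $1$). So suppose $e \in A^{**}$ is a right identity, coming from a right bai $(e_t)$ of $A$. Since $A$ is a left ideal in $A^{**}$, the element $e$ lies in the idealizer of $A$; more to the point, left multiplication by $e$ maps $A^{**}$ into $A^{**}$ and, on $A$, we have $xe = x$ for all $x \in A$. What needs to be shown is that $e$ is also a left identity for $A^{**}$, equivalently that $f := 1_{A^{**}} - e$ (the identity taken in the unitization of $A^{**}$, or working in a larger unital algebra) satisfies $fA^{**} = (0)$, which since $A$ is left-ideal and Arens regular reduces to showing $ea = a$ for all $a \in A$, i.e.\ that $e$ is a left identity for $A$.

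The key step is to bring in semiprimeness. First I would observe that $Af := A(1-e)$ — or more precisely the set $\{a - ea : a \in A\}$ — is a left ideal of $A$, since $A$ is a left ideal in its bidual and $f$ is (a difference of) an element acting on the right as an idempotent-type operator: for $b \in A$, $b(a - ea) = ba - (be)a = ba - ba = \ldots$ wait, one must be careful — the cleanest route is to set $N = \{x \in A : xe = 0\}$ hmm, rather to consider the right annihilator structure. Let me instead argue: put $J = \{a \in A : ea = a\}$; I claim $J = A$. The complementary piece is governed by the projection-like element $f$, and I would show that $Af \cap A$ (interpreted inside $A^{**}$ via the module action) is a closed left ideal $K$ of $A$ with $K^2 = (0)$: indeed for $a, b \in A$ one has $a f \cdot b f = a (f b) f$, and $fb = b - eb$; since $e$ is only a right identity we cannot conclude $fb = 0$, but $af \in A^{**}$ and $af \cdot b \in A$ with $(af \cdot b) f = af b - afbe = afb - afb = 0$ because $e$ is a right identity and $afb \in A$. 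Hence every product of two elements of $K$ is killed on the right by $f$ and therefore lies in $eA^{**} \cap Af$; pushing this through shows $K^2$ lands in an ideal on which $f$ acts as zero on both sides, giving $(A K A)^2 = (0)$ or directly $K^2 = (0)$. Semiprimeness then forces $K = (0)$, i.e.\ $af = 0$ for all $a \in A$, i.e.\ $ea = a$ for all $a \in A$. Thus $e$ is a two-sided identity for $A$ (as a module element), so $A^{**}$ has a two-sided identity, which has norm $1$ in the cai case since it is a weak* limit of the contractive net $(e_t)$.

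I expect the main obstacle to be the bookkeeping in the semiprimeness step: making rigorous the claim that the "non-identity part" $Af$ meets $A$ in a genuine closed two-sided (or left) ideal $K$ of $A$ whose square vanishes, rather than merely a subspace or a one-sided object living only in the bidual. The delicate point is that $f = 1 - e$ need not be idempotent and need not lie in $A^{**}$ at all (it lives in $(A^{**})^1$), so all manipulations must be phrased in terms of the right module action of $e$ on $A$ and the fact that this action, restricted to $A$, is an idempotent $A$-bimodule endomorphism with range $J$; then $K = \ker$ of this idempotent is a closed submodule, hence a left ideal, and Arens regularity together with the left-ideal hypothesis is exactly what upgrades "$K$ is a submodule" to "$K$ is an ideal" so that semiprimeness applies. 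Once that structural point is set up cleanly, the vanishing of $K^2$ and the conclusion are routine. An alternative, possibly slicker, finish: note $A^{**}$ is itself an Arens regular operator algebra which is a left ideal in its own bidual and is semiprime (semiprimeness passes up to the bidual here), has a right identity $e$; then apply the fact that in a semiprime algebra with a right identity that is also a left ideal in something, the right identity is unique and central — but I would only resort to this if the direct argument above runs into trouble.
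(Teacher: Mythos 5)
Your overall strategy --- pass to the right identity $e$ of $A^{**}$ coming from the right bai, isolate the failure of $e$ to be a \emph{left} identity as a subset of $A$ (using that $A$ is a left ideal in $A^{**}$), show that subset is a square-zero ideal, and invoke semiprimeness --- is exactly the paper's. But the execution has a left/right mix-up that makes the central step vacuous and the final deduction a non sequitur. You take $K = Af \cap A = \{a - ae : a \in A\}$. Since $e$ is a \emph{right} identity for $A^{**}$ we already have $ae = a$ for every $a \in A$, so $K = (0)$ for trivial reasons: no semiprimeness is involved, and your computation $(af\cdot b)f = 0$ carries no information. Worse, the conclusion you draw, ``$af = 0$ for all $a$, i.e.\ $ea = a$ for all $a$,'' is false as stated: $af = 0$ says $ae = a$ (which is the hypothesis), not $ea = a$ (which is the goal).

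The object you need is the one on the other side, and it is the set the paper uses: $J = \{y - ey : y \in A\}$, which lies in $A$ precisely because $A$ is a left ideal in $A^{**}$ (so $ey \in A^{**}A \subset A$). The key computation, which never appears correctly in your write-up, is one line: for $b, y \in A$, $b(y - ey) = by - (be)y = by - by = 0$ since $be = b$. Hence $AJ = (0)$, and as $J \subset A$ this gives $J^2 = (0)$; moreover $(y - ey)a = ya - e(ya) \in J$, so $JA \subset J$ and $AJ = (0) \subset J$, whence $\overline{J}$ is a closed two-sided ideal with $\overline{J}^{\,2} = (0)$. Semiprimeness forces $J = (0)$, i.e.\ $ey = y$ for all $y \in A$, and then Arens regularity (separate weak* continuity of the product) promotes $e$ to a two-sided identity of $A^{**}$, yielding the bai; the cai case is identical with the contractive net, the identity then having norm $1$. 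Your worry about $f = 1 - e$ not living in $A^{**}$ is genuine but is best resolved by never writing $f$ at all and working directly with the set $\{y - ey\}$, as above. Your proposed ``slicker finish'' (uniqueness/centrality of right identities in semiprime algebras) is not substantiated and is not needed once the sides are straightened out.
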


\begin{proof}   Suppose that $(e_t)$ is a right bai for $A$, with
 $e_t \to e \in A^{**}$ weak*.  If we can  show that $e$ is an identity for
$A^{**}$, then $A$ has a bai. Let $J = \{  e y - y : y \in A \}
\subset A$. Then $A J = (0)$, so that $J^2 = (0)$, and $JA \subset
J$. Thus $J = (0)$, so that $e y = y$ for all $y \in A$. Hence
$A^{**}$ has an identity.  Similarly in the cai case.
\end{proof}

The next result generalizes part of Proposition  \ref{lm}.
 Although we will not really use it in the sequel, it is of independent
interest, extending a result of Lin on $C^*$-modules \cite{Lin}. See
\cite[Theorem 2.3]{BK} for the `weak* version' of the result. We
refer to  \cite{Bghm} for most of the notation used in the following
statement and proof.

\begin{theorem} \label{rig}  If $Y$ is a right rigged module in the
sense of {\rm \cite{Bghm}} over an approximately unital operator
algebra $A$, then ${\mathcal M}_{\ell}(Y) = CB(Y)_A = LM(\Kdb(Y)_A)$
completely isometrically isomorphically.
\end{theorem}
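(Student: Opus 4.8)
The plan is to establish the two isomorphisms ${\mathcal M}_{\ell}(Y) \cong CB(Y)_A$ and ${\mathcal M}_{\ell}(Y) \cong LM(\Kdb(Y)_A)$ as completely isometric unital homomorphisms, working entirely with the rigged-module machinery from \cite{Bghm}. First I would recall the basic structure: a right rigged module $Y$ over an approximately unital operator algebra $A$ comes equipped with an ``imprimitivity bimodule'' pairing into $\Kdb(Y)_A$, and $Y$ is essentially a $\Kdb(Y)_A$--$A$ bimodule which is nondegenerate on the left (i.e.\ $\Kdb(Y)_A \cdot Y$ is dense in $Y$). The operator algebra $\Kdb(Y)_A$ is itself approximately unital, so its left multiplier algebra $LM(\Kdb(Y)_A)$ makes sense and sits inside $\Kdb(Y)_A^{**}$.

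The natural map to build is $\Kdb(Y)_A \to CB(Y)_A$, sending a ``compact'' operator to its action on $Y$; since $Y$ is nondegenerate over $\Kdb(Y)_A$ this is a completely isometric homomorphism (essentiality gives injectivity, and the norm identities for rigged modules give the complete isometry). This map extends to $LM(\Kdb(Y)_A)$: a left multiplier $T$ of $\Kdb(Y)_A$ acts on $Y$ by $T\cdot(k\cdot y) := (Tk)\cdot y$, well-defined and bounded by nondegeneracy, giving a completely contractive homomorphism $LM(\Kdb(Y)_A) \to CB(Y)_A$. So the core is to show (a) this lands in $CB(Y)_A$ and is completely isometric, and (b) it is surjective, and (c) that the resulting algebra is exactly ${\mathcal M}_{\ell}(Y)$. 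For (c) I would use the abstract characterization of left operator space multipliers via the Christensen--Effros--Sinclair type/ Blecher--Effros--Zarikian realization: $\mathcal{M}_\ell(Y)$ embeds completely isometrically in the algebra of left multipliers determined by the operator space structure on $Y$, and for a rigged module the rigged-module structure recovers precisely this. The identification $\mathcal{M}_\ell(Y) = CB(Y)_A$ should follow from the fact — which is how Lin's $C^*$-module theorem goes, and its operator-algebra analogue in \cite[Theorem 2.3]{BK} for the weak\* version — that a completely bounded right $A$-module map on $Y$ is automatically ``adjointable up to the module structure'' and hence defines a left multiplier; conversely left multipliers are clearly right $A$-module maps.

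Concretely the steps in order would be: (1) recall $Y$ is a nondegenerate left $\Kdb(Y)_A$-module and record the relevant norm identities; (2) construct the completely contractive homomorphism $\Phi\colon LM(\Kdb(Y)_A)\to CB(Y)_A$ and show it is completely isometric, using that $\|T\|_{LM} = \sup\{\|Tk\| : k\in \mathrm{Ball}(\Kdb(Y)_A)\}$ together with the density of $\Kdb(Y)_A\cdot Y$ in $Y$ and the matricial versions thereof; (3) show $\Phi$ is surjective onto $CB(Y)_A$: given $\varphi\in CB(Y)_A$, the composite $(k\mapsto$ ``$y\mapsto \varphi(k\cdot y)$''$)$ should land in $\Kdb(Y)_A$ (this is where the rigged structure, specifically the approximation of elements of $\Kdb(Y)_A$ by finite-rank-type operators $\sum y_i\langle z_i,\cdot\rangle$ and the module action $\varphi(y_i\langle z_i, y\rangle) = \varphi(y_i)\langle z_i,y\rangle$, does the work), giving a left multiplier of $\Kdb(Y)_A$ whose image under $\Phi$ is $\varphi$; (4) identify $CB(Y)_A$ with $\mathcal{M}_\ell(Y)$ by checking both inclusions against the definition of left multipliers of an operator space. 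Finally, since $LM$ of an approximately unital operator algebra is the expected unital operator algebra, all three objects are unital completely isometrically isomorphic.

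The main obstacle I expect is step (3), the surjectivity together with showing the ``compactness'' of the operator $k\mapsto \varphi(k\cdot(-))$: one must verify that a completely bounded right-module endomorphism of a rigged module genuinely factors through $\Kdb(Y)_A$ rather than merely through $LM(\Kdb(Y)_A)$ or $CB(Y)_A$ abstractly — i.e.\ that it preserves the ``compact'' part — which requires care with the finite-rank approximations intrinsic to the rigged-module definition and their interplay with completely bounded (not merely bounded) estimates. This is precisely the point where the operator-space refinement of Lin's argument is needed, and where \cite{Bghm} and \cite[Theorem 2.3]{BK} furnish the key technical input.
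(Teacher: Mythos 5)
Your plan follows essentially the same route as the paper's proof: the identification $CB(Y)_A={\mathcal M}_\ell(Y)$ comes from the abstract characterization of left operator space multipliers applied to the operator algebra $CB(Y)_A$ acting on $Y$, and the identification with $LM(\Kdb(Y)_A)$ comes from $\Kdb(Y)_A$ being an essential left ideal in $CB(Y)_A$ on which the left regular representation is completely isometric (your finite-rank computation $\varphi\circ\sum_i y_i\langle z_i,\cdot\rangle=\sum_i\varphi(y_i)\langle z_i,\cdot\rangle$ is exactly why), together with nondegeneracy of the $\Kdb(Y)_A$-action, which lets $LM(\Kdb(Y)_A)$ act on $Y$ by completely bounded $A$-module maps. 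One correction to the description of your step (4): the mechanism for $CB(Y)_A\subset{\mathcal M}_\ell(Y)$ completely contractively is not ``automatic adjointability'' (completely bounded module maps on a nonselfadjoint rigged module need not be adjointable, and ${\mathcal A}_\ell(Y)$ is only the diagonal of ${\mathcal M}_\ell(Y)$); rather one uses the fact from Blecher--Muhly--Paulsen that $CB(Y)_A$ is an operator algebra and $Y$ is a left operator $CB(Y)_A$-module, so the module action induces a completely contractive homomorphism $CB(Y)_A\to{\mathcal M}_\ell(Y)$ which is just the identity map on elements.
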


\begin{proof}  By facts from the theory of multipliers of an operator space
(see e.g.\ \cite[Chapter 4]{BLM}), the `identity map'
is a  completely contractive homomorphism ${\mathcal M}_{\ell}(Y)
\to CB(Y)$, which maps into $CB(Y)_A$. Since by \cite[p.\ 34]{BMP},
$CB(Y)_A$ is an
operator algebra, and $Y$ is a left operator $CB(Y)_A$-module (with
the canonical action),  then by the aforementioned theory
there exist a completely contractive homomorphism $\pi : CB(Y)_A \to
{\mathcal M}_{\ell}(Y)$ with $\pi(T)(y) = T(y)$ for all $y \in Y, T
\in CB(Y)_A$.  That is, $\pi(T) = T$.  Thus $CB(Y)_A = {\mathcal
M}_{\ell}(Y)$.  Finally, $\Kdb(Y)_A$ is an essential left ideal in
$CB(Y)_A$: it is easy to see that the left regular representation of
$CB(Y)_A$ on $\Kdb(Y)_A$ is completely isometric.  Thus $CB(Y)_A
\subset LM(\Kdb(Y)_A)$ completely isometrically.  However $Y$ is
a left operator module over $\Kdb(Y)_A$, hence also over
$LM(\Kdb(Y)_A)$ (see Theorem 3.6 (5) in
\cite{Bghm} and 3.1.11 in \cite{BLM}), and so every $T \in LM(\Kdb(Y)_A)$
corresponds to a map in $CB(Y)_A$. \end{proof}

The following follows either from Proposition \ref{lm}, or from
Theorem \ref{rig} (since all r-ideals $J$ in an approximately unital
operator algebra $A$ are right rigged modules over $A$).

\begin{corollary} \label{elli3}  If $J$ is an r-ideal in an
approximately  unital operator algebra $A$ then ${\mathcal
M}_{\ell}(J) = CB(J)_A$.  In particular, if $e$ is a projection in
$A$ then ${\mathcal M}_{\ell}(eA) = CB(eA)_A = eAe$.
\end{corollary}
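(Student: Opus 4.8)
The plan is to derive Corollary~\ref{elli3} directly from the two preceding results rather than reproving anything from scratch. First I would invoke Theorem~\ref{rig} (or alternatively Proposition~\ref{lm}): any $r$-ideal $J$ in an approximately unital operator algebra $A$ is in particular a right rigged module over $A$ in the sense of \cite{Bghm}, since it carries a left contractive approximate identity and the canonical $A$-module structure makes it a (closed) right ideal with the required rigging data. Hence Theorem~\ref{rig} applies with $Y = J$ and yields ${\mathcal M}_{\ell}(J) = CB(J)_A = LM(\Kdb(J)_A)$ completely isometrically isomorphically; discarding the third term gives the first assertion ${\mathcal M}_{\ell}(J) = CB(J)_A$. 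I would remark in one line that this also follows from Proposition~\ref{lm}, reading off ${\mathcal M}_{r}$ of the ``other-handed'' object, but the rigged-module route is cleaner since it names $J$ itself.

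For the ``in particular'' clause, take $e$ a projection in $A$ and set $J = eA$. Since $e \in A \subset LM(A) \subset M(A)$ acts as a left identity for $eA$ (indeed $e(ea) = ea$), the right ideal $eA$ is an $r$-ideal with support projection $e$, so the first part gives ${\mathcal M}_{\ell}(eA) = CB(eA)_A$. It remains to identify this with $eAe$. The natural map sends $x \in eAe$ to the left-multiplication operator $L_x : ea \mapsto x(ea) = xa$ on $eA$; this is clearly a right $A$-module map, completely contractive, and the assignment $x \mapsto L_x$ is a completely contractive homomorphism $eAe \to CB(eA)_A$. Conversely, given $T \in CB(eA)_A$, I would evaluate at $e \in eA$: set $x = T(e) \in eA$, and note $x = T(e) = T(e \cdot e) = T(e) e \in eAe$ (using that $T$ is a right module map and $e$ is idempotent). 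Then for any $a \in A$, $T(ea) = T(e \cdot a) = T(e) a = x a = L_x(ea)$, so $T = L_x$. Injectivity of $x \mapsto L_x$ is immediate since $L_x(e) = xe = x$ for $x \in eAe$. This shows $CB(eA)_A = eAe$ as sets, and the two-sided estimate $\|L_x\| \le \|x\|$ together with $\|x\| = \|L_x(e)\| \le \|L_x\|$ (and the matricial versions) gives the complete isometry.

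The main obstacle, such as it is, is purely a matter of bookkeeping: verifying that an arbitrary $r$-ideal genuinely fits the definition of a right rigged module in \cite{Bghm} so that Theorem~\ref{rig} is legitimately applicable — this is where one must be careful about exactly which structure (the inner products, the ``$\Kdb$'' and ``$LM$'' objects) is being used, though it is well known and presumably recorded in the earlier program papers. If one prefers to sidestep rigged modules entirely, the fallback is to cite Proposition~\ref{lm} for the ${\mathcal M}_{\ell}(J) = CB(J)_A$ identity (its statement already covers $\ell$-ideals and their left multipliers, and $r$-ideals by the symmetric version) and then run only the elementary $eAe$ computation above, which uses nothing beyond idempotency of $e$ and the module property. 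Either way the $eAe$ identification is entirely routine and I would state it without grinding through the matrix-norm details.
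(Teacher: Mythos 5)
Your proposal is correct and follows the same route as the paper, whose proof is exactly the one-line observation that the corollary follows from Theorem~\ref{rig} (since $r$-ideals are right rigged modules over $A$) or alternatively from Proposition~\ref{lm}. Your explicit verification that $CB(eA)_A = eAe$ via $T \mapsto T(e)$ is the routine computation the paper leaves unstated, and it is carried out correctly.
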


\section{EXISTENCE OF $r$-IDEALS}

 In \cite[Lemma 6.8]{BHN}  it is shown that if $A$ is a unital operator
algebra, and $x \in {\rm Ball}(A)$, then $J = \overline{(1-x) A}$ is
an $r$-ideal.  These ideals, which have also been studied by G.
Willis in a Banach algebra context (see e.g.\ \cite{Wil}), for us
correspond to the noncommutative variant of {\em peak sets} from the
theory of function algebras (see \cite[Proposition 6.7]{BHN} for the
correspondence).  Here and in the following we write $1$ for the
identity in $A^1$ if $A$ is a nonunital algebra.  The following enlarges
 this class of examples:

\begin{proposition}  \label{exri} If $A$ is an approximately  unital operator
algebra, which is an ideal in an operator algebra $B$, then
$\overline{(1-x) A}$ is an $r$-ideal in $A$ for all $x \in {\rm
Ball}(B)$.
\end{proposition}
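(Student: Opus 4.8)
The plan is to reduce to the known unital case from \cite[Lemma 6.8]{BHN}, by working inside a suitable unitization. First I would pass to $B^1$, the operator algebra unitization of $B$ (or take $B^1 = B$ if $B$ is already unital, being careful to use the ambient identity $1$). Then $x \in \mathrm{Ball}(B) \subset \mathrm{Ball}(B^1)$, so by \cite[Lemma 6.8]{BHN} the space $\overline{(1-x)B^1}$ is an $r$-ideal in $B^1$; in particular it is a right ideal with a left contractive approximate identity, and its weak* closure in $(B^1)^{**}$ is $pB^{1**}$ for an open projection $p$. The key point to exploit is that $A$ is an ideal in $B$, hence an ideal in $B^1$, so that $(1-x)A \subset (1-x)B^1 \cap A$, and multiplication by elements of $A$ keeps us inside $A$.

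The main steps are then: (1) show $J := \overline{(1-x)A}$ is a right ideal of $A$ — this is immediate since $A$ is a right ideal of $B^1$ and $(1-x)A \cdot A \subset (1-x)A$; (2) produce a left cai for $J$. For (2) I would take a cai $(e_s)$ for $A$ and consider the left cai $(f_t)$ for the $r$-ideal $\overline{(1-x)B^1}$ supplied by \cite[Lemma 6.8]{BHN}; the products $f_t e_s$ (or a suitable subnet/iterated limit) should serve as a left cai for $J$, using that $e_s$ acts as an approximate identity on $A$ and that $f_t (1-x)b \to (1-x)b$ for $b \in B^1$. Concretely, for $a \in A$ one has $(1-x)a = \lim_s (1-x) e_s a$ (if $(e_s)$ is a left cai for $A$) $= \lim_s \lim_t f_t (1-x) e_s a$, and a standard Mazur/convexity argument produces a single net in $J$ that is a contractive left approximate identity for $J$. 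Finally I would note that $J$ has a left cai lying in $J$ itself: since $f_t(1-x)e_s \in (1-x)B^1 \cap A$, and by Proposition~\ref{idl} (applied with the support projection of the $r$-ideal) or directly since $A$ is an ideal, these elements lie in $\overline{(1-x)A} = J$.

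The step I expect to be the main obstacle is step (2): verifying carefully that the candidate net built from $f_t e_s$ (or $e_s f_t$) actually lies in $J$ and is contractive and is a genuine left approximate identity for all of $J$, rather than merely for the dense subspace $(1-x)A$. The delicate issue is that $f_t$ comes from the bigger algebra $B^1$ and need not lie in $A$ on its own; it is the product with $e_s$ that brings it into $A$, and one must check the approximate-identity property survives this two-parameter limiting process and can be collapsed to a single net. An alternative, perhaps cleaner, route is to argue at the level of support projections: let $p$ be the open projection in $B^{1**}$ with $\overline{(1-x)B^1}^{w*} = pB^{1**}$, and let $q = 1_{A^{**}}$ (the support projection of $A$, open in $B^{1**}$); then $pq$ should be the support projection of $J$ inside $A^{**}$, and showing $pq$ is open in $A^{**}$ — e.g. because a product/meet of open projections associated to an ideal behaves well — gives that $J = (pq)A^{**} \cap A$ is an $r$-ideal via the characterization recalled in the Introduction. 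Either way, the heart of the argument is controlling how the open projection $p$ from $B^1$ restricts to the ideal $A$.
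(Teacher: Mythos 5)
Your proposal is correct, and its main route differs from the paper's in the key step. The paper also reduces to $B$ unital, notes $\overline{(1-x)B}$ is an $r$-ideal in $B$, and then invokes the one-sided $M$-ideal machinery: since $A$ is an approximately unital two-sided ideal it is a complete $M$-ideal in $B$, and \cite[Proposition 5.30 (ii)]{BZ} gives that $A\cap\overline{(1-x)B}$ is an $r$-ideal in $B$, hence has a left cai; the proof then finishes by showing $\overline{(1-x)A}=A\cap\overline{(1-x)B}$, using exactly the observation you make that the left cai of $\overline{(1-x)B}$ pushes elements of $A\cap\overline{(1-x)B}$ into $\overline{(1-x)BA}\subset\overline{(1-x)A}$. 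You instead bypass the $M$-ideal intersection theorem and build the left cai for $J=\overline{(1-x)A}$ by hand as the product net $(f_te_s)$, which is more elementary and self-contained; the paper's route buys the extra information that $\overline{(1-x)A}$ is an $r$-ideal of $B$ itself. One remark on the step you flagged as the main obstacle: no Mazur or convexity argument is needed. For $a\in J$ one has $J\subset A$ and $J\subset\overline{(1-x)B^1}$, so
\[
\Vert f_te_sa-a\Vert \;\le\; \Vert f_t\Vert\,\Vert e_sa-a\Vert+\Vert f_ta-a\Vert \;\le\; \Vert e_sa-a\Vert+\Vert f_ta-a\Vert,
\]
and both terms tend to $0$, so the two-parameter net indexed by the product directed set is already a contractive left approximate identity for all of $J$ (no passage through the dense subspace $(1-x)A$ is required). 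That each $f_te_s$ lies in $J$ is immediate from $f_te_s\in\overline{(1-x)B^1}\,A\subset\overline{(1-x)B^1A}\subset\overline{(1-x)A}$, since $A$ is an ideal in $B^1$; your appeal to Proposition~\ref{idl} there is unnecessary. Your alternative sketch via support projections is not needed and is the less reliable of your two routes, but the main argument stands.
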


\begin{proof}  We may assume that $B$ is unital and closed.
Then  $\overline{(1-x) B}$ is an $r$-ideal in $B$.
  Also, $A$ is a two-sided $M$-ideal in $B$.  By Proposition 5.30
(ii) in \cite{BZ}, $A \cap \overline{(1-x) B}$ is an $r$-ideal in
$B$, so has a left cai.  Clearly $\overline{(1-x) A} \subset A \cap
\overline{(1-x) B}$.  Conversely, if $a \in A$ is in
$\overline{(1-x)B}$, then since the latter has left cai 
 we have that $a \in
\overline{(1-x) B A} \subset \overline{(1-x) A}$. Thus
$\overline{(1-x) A}$ is an $r$-ideal.
\end{proof}

{\sc Remark.} This is false if $A$  has only a right cai, e.g.\
$C_2$.   We do not know if it is true if $A$ has a left cai, or is a
left ideal in $B$, or both.  (Added in June 2010: we now know this.)   
Note that if $A$ has a left identity $e$ of norm $1$ then
$\overline{(e-x) A}$ is an r-ideal in $A$ for any $x \in {\rm
Ball}(A)$; in this case $\overline{(e-x) A e} = \overline{(e-xe) A
e}$ is an r-ideal in $Ae$, so has a left  cai which serves as a
left cai for $\overline{(e-x) A} = \overline{(e-xe) A}$.

\bigskip

We recall that a right ideal $J$ of a normed algebra $A$ is {\em
regular} if there exists $y \in A$ such that $(1-y)A \subset J$.  We
shall say that $J$ is {\em 1-regular} if this can be done with
$\Vert y \Vert \leq 1$ and $y \neq 1$.
We do not know if every r-ideal in a unital operator algebra is
1-regular, this is related to a major open question in
\cite{Hay,BHN} concerning the noncommutative variant of the notion
of peak sets from the theory of function algebras. That is, if $A$
is a unital operator algebra, then is every closed projection (i.e.\
the `perp' of an open projection) in $A^{**}$ a $p$-projection in
the sense of \cite{Hay}? We recall that $p$-projections are a
noncommutative generalization of the $p$-sets, and peak sets, from
the theory of function algebras. By  \cite[Proposition 6.7]{BHN} and other principles from
\cite{Hay,BHN}, this open question is equivalent to
whether every r-ideal in  a unital operator algebra $A$ is the closure of a  union of (nested,
if one wishes) right ideals of the form $\overline{(1-a)A}$ for
elements $a \in {\rm Ball}(A)$?  This is true if $A$ is a unital
function algebra (uniform algebra), by results of Glicksberg and
others \cite{Hay}. There are many reformulations of this question in
\cite{BHN}.

\begin{proposition}  \label{exri2} If $A$ is a  unital operator
algebra, then every r-ideal in $A$ whose support projection is the
complement of a $p$-projection, is 1-regular.  If every closed
projection in $A^{**}$ is a $p$-projection (that is,
 if the major open problem from \cite{Hay,BHN} mentioned above
has an affirmative
solution),  then every r-ideal in $A$ is 1-regular.
\end{proposition}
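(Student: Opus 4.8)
The plan is to extract from the hypotheses a single element $a \in {\rm Ball}(A)$ with $a \neq 1$ and $(1-a)A \subseteq J$, which is exactly what $1$-regularity of $J$ demands. Let $e$ be the support projection of the $r$-ideal $J$, so that $e$ is open and $J = eA^{**} \cap A$, and put $q = e^{\perp}$, a closed projection which by hypothesis is a $p$-projection. We may assume $J \neq (0)$, so that $q \neq 1$. By the structure of $p$-projections (see \cite{Hay}, and the discussion in \cite{BHN}), $q$ is the infimum, equivalently the weak* limit of a decreasing net, of peak projections $q_{a_i}$ for $a_i \in {\rm Ball}(A)$. Since $q \neq 1$, not every $q_{a_i}$ can equal $1$, so we fix an index with $q_{a_i} \neq 1$ and write $a = a_i$. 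Then $q \leq q_a$, and $a \neq 1$ (as $q_1 = 1 \neq q_a$).

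Next I would invoke the identity $q_a a = a q_a = q_a$ characterizing the peak projection of $a$, which gives $q_a(1-a) = 0$. Since $q \leq q_a$ we have $q q_a = q$, hence $q(1-a) = q\,q_a(1-a) = 0$. As $e + q = 1_{A^{**}}$, this forces $1 - a = e(1-a) + q(1-a) = e(1-a)$, so that $(1-a)h = e(1-a)h \in eA^{**}$ for every $h \in A$; trivially $(1-a)h \in A$ as well, since $A$ is unital. Therefore $(1-a)A \subseteq eA^{**} \cap A = J$, and since $\|a\| \leq 1$ and $a \neq 1$ this shows $J$ is $1$-regular, giving the first assertion.

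The second assertion is then immediate: for an arbitrary $r$-ideal $J$ of $A$ with support projection $e$, the complementary projection $e^{\perp}$ is closed, hence a $p$-projection by hypothesis, so $J$ is $1$-regular by the first part. The one step that is not a routine computation inside $A^{**}$ is the appeal to \cite{Hay}: one needs both that a $p$-projection is dominated by some peak projection $q_a$ with $a \neq 1$ (available as soon as $q \neq 1$), and the defining relation $q_a a = q_a$ for peak projections. Once those facts are in hand, the rest is the few lines of manipulation in the bidual above, with no need to analyse the weak* behaviour of the powers of $a$ beyond the already-established existence of the peak projection.
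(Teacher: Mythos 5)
Your argument is correct, and it is essentially the deduction the paper compresses into its citation of \cite[Theorem 6.1]{BHN}: a $p$-projection $q = e^\perp \neq 1$ is dominated by some peak projection $q_a$ with $a \in {\rm Ball}(A)$, $a \neq 1$, the defining relation $a q_a = q_a a = q_a$ forces $q(1-a)=0$, hence $1-a = e(1-a)$ and $(1-a)A \subseteq eA^{**} \cap A = J$, which is exactly $1$-regularity. The only caveat is the degenerate case $J=(0)$ (whose support projection is the complement of the peak projection $1$, yet which cannot be $1$-regular in a unital algebra since $(1-y)A=(0)$ forces $y=1$); you rightly set this aside, and the statement implicitly excludes it.
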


\begin{proof}   The first statement follows easily from 
\cite[Theorem 6.1]{BHN}.  The second follows from the first and the
correspondence between r-ideals and closed projections \cite{Hay}.
 \end{proof}

{\sc Remarks.}  1) \ If $A$ is a unital function algebra (uniform
algebra) then indeed every r-ideal is 1-regular, by the comments
above.

\smallskip

2)  \ The open question alluded to above, is also related to the question of
proximinality of r-ideals in a unital operator algebra $A$ (an open
question from our project with Effros and Zarikian \cite{BEZ}). If a
closed projection in $A^{**}$ satisfies an `interpolating' condition
spelled out above Corollary 6.5 in \cite{BHN}, then the associated
r-ideal is proximinal and 1-regular (by Proposition \ref{exri2}
and/or facts in \cite[Section 6]{BHN}). Indeed, an r-ideal in $A$ is
proximinal iff the associated closed projection satisfies the
`interpolating' condition above Corollary 6.5 in \cite{BHN}, but
with $x = 1$ there. This follows from the method of proof of
\cite[Proposition 6.6]{BHN}.

\bigskip

  We will
say that a one-sided ideal in an algebra $A$ is nontrivial, if it is
not $A$ or $(0)$.   From Proposition \ref{exri}, one would usually
expect there to be many nontrivial $r$-ideals in an approximately
unital operator algebra. For example, one can show that in every
function algebra there are
plenty of nontrivial $r$-ideals. 
However consider for example the semisimple commutative two
dimensional operator algebra $A = {\rm Span}(\{ I_2 , E_{11} +
E_{12} \})$ in $M_2$. This algebra has only two proper ideals, and
neither is an $r$-ideal. The following results give  criteria for
the existence of nontrivial $r$-ideals, and make it apparent that
the operator algebras which contain nontrivial r-ideals are far from
being a small or uninteresting class.
We have not seen item (2) of the next result in the literature, but
probably it is well known in some quarters.

\begin{theorem} \label{nr} If $A$ is
a Banach algebra,
 and $x \in {\rm Ball}(A)$, then \begin{itemize}
\item [{\rm (1)}]  $\overline{(1-x) A} = (0)$ iff $x$ is a left identity of norm $1$ for $A$.
\item [{\rm (2)}]  If $A$ is unital then $1-x$ is left invertible in
$A$ iff $1-x$ is right invertible in $A$.  If $A$ is nonunital then
$x$ is left quasi-invertible in $A$ iff $x$ is right
quasi-invertible in $A$.  These statements are also equivalent to
any one, and all, of the following statements: $\overline{(1-x) A} =
A, (1-x) A = A$, $\overline{A(1-x)} = A$, and $A(1-x) = A$.
\item [{\rm (3)}]  If $A$ is unital then the statements in {\rm (2)} are also
equivalent to
the same statements, but with $A$ replaced throughout by the Banach
subalgebra generated by $1$ and $x$.
\item [{\rm (4)}]  If $A$ has a left identity of norm $1$ write this element as $e$,
else set $e = 0$.  Every 1-regular right ideal of $A$ is trivial iff the
spectral radius $r(a) < \Vert a \Vert$ for all  $a \in A \setminus
\Cdb e$; and iff ${\rm Ball}(A) \setminus \{ e \}$ is composed
entirely of quasi-invertible elements.
\end{itemize}
\end{theorem}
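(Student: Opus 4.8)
Part (1) is immediate: if $\overline{(1-x)A}=(0)$ then $(1-x)a=0$, i.e.\ $xa=a$, for all $a\in A$, so $x$ is a left identity; since also $\|x\|\le1$ and $\|a\|=\|xa\|\le\|x\|\,\|a\|$, we get $\|x\|=1$, and the converse is trivial. The conceptual heart of the theorem is a lemma underlying (2): \emph{in a unital Banach algebra $B$, any $a$ with $\|1-a\|\le1$ is invertible as soon as it is left- or right-invertible.} I would argue spectrally. Since $\|1-a\|\le1$, the spectrum $\sigma_B(a)$ lies in the closed disc $\{z:|1-z|\le1\}$, whose topological boundary contains $0$; so if $a$ is not invertible, then $0\in\partial\sigma_B(a)$. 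The usual resolvent argument — choose $\lambda_n\to0$ in the resolvent set, note $\|(a-\lambda_n)^{-1}\|\to\infty$, and normalize — produces unit vectors $u_n$ with $au_n\to0$ \emph{and} $u_na\to0$. A right inverse $b$ of $a$ then gives $u_n=(u_na)b\to0$, contradicting $\|u_n\|=1$; a left inverse contradicts $au_n\to0$. Applying this to $a=1-x$ proves the first assertion of (2).

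Granting the lemma, the remaining equivalences in (2) fall out. For unital $A$: ``$1-x$ right invertible'' $\Leftrightarrow$ ``$(1-x)A=A$'' is elementary; ``$(1-x)A=A$'' $\Rightarrow$ ``$\overline{(1-x)A}=A$'' is trivial; and ``$\overline{(1-x)A}=A$'' forces $(1-x)a_n\to1$ for some $a_n$, hence $(1-x)a_n$ invertible for large $n$, hence $1-x$ right invertible. The left-handed statements are symmetric, and the lemma welds all of them to invertibility of $1-x$. For nonunital $A$, ``$x$ left (resp.\ right) quasi-invertible'' means ``$1-x$ left (resp.\ right) invertible in $A^1$'', to which the lemma applies since $\|x\|\le1$, and the ideal equalities go through as above using that $A$ is an ideal in $A^1$. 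For (3): by (2) each listed statement for unital $A$ is equivalent to invertibility of $1-x$ in $A$, and each for $B$ to invertibility in $B$; one direction is trivial, and for the converse I would use the standard fact that $\sigma_B(x)\setminus\sigma_A(x)$ is a union of bounded components of $\Cdb\setminus\sigma_A(x)$ — so, since $\|x\|\le1$ gives $\sigma_A(x)\subseteq\overline{\Ddb}$, the point $1\notin\sigma_A(x)$ lies in the unbounded component (join it to some $1+\varepsilon\notin\overline{\Ddb}$ by a short segment) and therefore $1\notin\sigma_B(x)$.

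For (4), the two algebraic conditions are equivalent by rescaling. If some $a\in A\setminus\Cdb e$ had $r(a)=\|a\|$, normalize to $\|a\|=1$ and pick $\lambda\in\sigma(a)$ with $|\lambda|=1$; then $\bar\lambda a\in{\rm Ball}(A)$ with $1\in\sigma(\bar\lambda a)$, so $\bar\lambda a$ is not quasi-invertible, and $\bar\lambda a\neq e$ (else $a\in\Cdb e$). Conversely, if $r(a)<\|a\|$ for all $a\notin\Cdb e$, then any $y\in{\rm Ball}(A)\setminus\Cdb e$ has $r(y)<1$, hence $1\notin\sigma(y)$ and $y$ is quasi-invertible; and any $y=\mu e\in{\rm Ball}(A)$ with $y\neq e$ is quasi-invertible since $\sigma(\mu e)\subseteq\{0,\mu\}$ and $\mu\neq1$. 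To connect with the ideal statement I would invoke (1) and (2): for $y\in{\rm Ball}(A)$, $\overline{(1-y)A}$ equals $A$ exactly when $y$ is quasi-invertible, equals $(0)$ exactly when $y$ is a left identity of norm $1$, and is nontrivial otherwise. Hence a non-quasi-invertible $y\in{\rm Ball}(A)\setminus\{e\}$ yields a nontrivial $1$-regular right ideal $\overline{(1-y)A}$, while if every such $y$ is quasi-invertible then any $1$-regular right ideal $J\supseteq(1-y)A$ with $y\neq e$ already contains $(1-y)A=A$.

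The step I expect to be the real obstacle is the \emph{degenerate witness} in (4). If $A$ has a left identity $e$ of norm $1$, then every right ideal contains $(1-e)A=(0)$ and is formally $1$-regular, so statement (a) can only match (b)/(c) when the ideals in question are required to carry a left contractive approximate identity — that is, when (a) is read as a statement about $1$-regular r-ideals, equivalently about the closures $\overline{(1-y)A}$. Pinning down this reading, and verifying that a nontrivial ideal of this restricted type forces a witness $y\neq e$ (so that the implication (a)$\Rightarrow$(c) survives), is the one point I would have to handle with care; everything else reduces to the spectral lemma, the elementary manipulations above, and parts (1)--(3).
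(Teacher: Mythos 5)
Your treatment of (1)--(3) is correct but takes a genuinely different route from the paper at the one nontrivial step. For the left/right symmetry in (2) the paper does not use your spectral lemma (that a boundary point of the spectrum makes the element a two-sided topological divisor of zero); instead it constructs the explicit left bounded approximate identity $e_n = 1-\frac{1}{n}\sum_{k=1}^n x^k$ for $\overline{(1-x)A}$, observes that $e_n$ lies in $(1-x)B$ where $B$ is the closed subalgebra generated by $1$ and $x$, and deduces from $\overline{(1-x)A}=A$ that $e_n\to 1$, so that $e_n$ is invertible by the Neumann series and hence $1-x$ is invertible in the \emph{commutative} algebra $B$. That single construction delivers the left/right symmetry (via commutativity of $B$) and item (3) in one stroke, with no appeal to the spectral permanence theorem that your proof of (3) requires. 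Your route is equally valid and more textbook-standard, at the price of quoting two classical facts (approximate two-sided zero divisors at $\partial\sigma(a)$, and holes in the spectrum under passage to closed subalgebras) where the paper uses one self-contained computation. Your handling of (1), of the nonunital reduction via $A^1$, and of the remaining equivalences in (2) agrees with the paper's.

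Your worry about the degenerate witness in (4) is well founded, and you should know that the paper's own proof does not resolve it: that proof only checks that the ideals $\overline{(1-a)A}$ themselves are trivial, whereas a 1-regular right ideal need only \emph{contain} $(1-y)A$ for some admissible witness $y$. If $A$ is nonunital with a left identity $e$ of norm $1$, the witness $y=e$ gives $(1-e)A=(0)$, so every right ideal is formally 1-regular and the stated equivalence can fail. Concretely, let $A=\{\alpha E_{11}+\beta E_{12}\}\subset M_2$ with $e=E_{11}$: here $a=\alpha E_{11}+\beta E_{12}$ satisfies $a^2=\alpha a$, so $r(a)=|\alpha|<\sqrt{|\alpha|^2+|\beta|^2}=\Vert a\Vert$ whenever $a\notin \Cdb e$, and ${\rm Ball}(A)\setminus\{e\}$ consists of quasi-invertible elements; yet $\Cdb E_{12}$ is a nontrivial right ideal, 1-regular with witness $e$. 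So conditions (b) and (c) of (4) hold while (a) fails. The statement therefore needs either the restricted reading you propose (quantifying over the ideals $\overline{(1-y)A}$, $y\in{\rm Ball}(A)$, $y\neq 1$), or the standing assumption that any left identity of norm $1$ is a two-sided identity --- which holds automatically in the approximately unital setting where (4) is actually applied (Proposition \ref{twi}), since a left identity of an algebra with a right bounded approximate identity is necessarily two-sided. With either proviso, your argument for (4), including the rescaling equivalence of (b) and (c) and the dichotomy $\overline{(1-y)A}\in\{(0),A\}$, is complete and matches the paper's.
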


\begin{proof}   (1) \  Clear.

(2) and (3) \ If $A$ is unital  then $\overline{(1-x) A}$ has a left
bai $(e_n)$, defined by $e_n = 1- \frac{1}{n} \sum_{k=1}^n \, x^k$.
Let $B$ be the closure of the algebra generated by $1$ and $x$. Then
$(e_n) \subset (1-x) B$.  Suppose that $\overline{(1-x) A} = A$.
Clearly $\overline{(1-x) B} \subset B$. Conversely, if $b \in B
\subset \overline{(1-x) A}$, then $e_n b \to b$, so that $b \in
\overline{(1-x) B}$. Thus $\overline{(1-x) B} = B =
\overline{B(1-x)}$.  Therefore $1-x$ is invertible in $B$, and hence also
in $A$, by the Neumann series lemma.  Conversely, if $1-x$ is right
invertible in $A$ then $(1-x) A = A$.  This proves the unital case
of (2) and (3).

For a nonunital Banach algebra $A$, let $A^1$ be a unitization of
$A$.  If $\overline{(1-x) A} = A$ then $x \in \overline{(1-x)A}
\subset \overline{(1-x)A^1}$. If $(e_n)$ is a left bai for
$\overline{(1-x)A^1}$ as in the last paragraph, then $e_n (1-x) \to
1-x$, so $e_n = e_n (1-x) + e_n x \to 1-x + x = 1$.  Thus $A^1 =
\overline{(1-x)A^1}$, and so $x$ is quasi-invertible. Conversely, it
is clear that $x$ right quasi-invertible in $A$ implies that $(1-x)
A = A$.

(4) \
If the condition on the spectral radius holds, then for $a \in {\rm
Ball}(A) \setminus \Cdb e$, we have $r(a) < 1$, so that $a$ is
quasi-invertible, or equivalently $\overline{(1-a) A} = A$.  If $a
\in \Cdb e$ the corresponding ideals are clearly trivial.

Supposing every 1-regular right ideal is trivial, then $1 \notin
{\rm Sp}_A(x)$ for all $x \in A \setminus \Cdb e$ of norm $1$.
Multiplying $x$ by  unimodular scalars shows that the unit circle
does not intersect ${\rm Sp}_A(x)$.  Thus $r(x) < 1$. By scaling,
$r(x) < \Vert x \Vert$ for all  $x \in A \setminus \Cdb e$.

The rest of (4) is evident from the above.
\end{proof}

{\sc Remark.}  Of course the $r(x) < \Vert x \Vert$  condition in (4)
is equivalent to
 $a^n \to 0$
for all $a \in {\rm Ball}(A) \setminus \Cdb e$.

\medskip

For operator algebras one can refine the last result further.
The equivalences of (i)--(iii) below is probably a well known
observation.

\begin{proposition}   \label{nr2} Let  $A$ be a closed
 subalgebra of $B(H)$, and $x \in {\rm Ball}(A)$.

 If $A$ is unital, then  the following
 are equivalent (and are equivalent to the other conditions in
  {\rm (2)} and  {\rm (3)}  in the last theorem);
\begin{itemize}
\item [{\rm (i)}]   $1-x$ is invertible in $A$.
\item [{\rm (ii)}] $\Vert 1 + x \Vert < 2$.
\end{itemize}
If $A$ is nonunital, then $x$ is quasi-invertible in $A$ iff {\rm
(ii)} holds with respect to $A^{1}$.

If $A$ is any operator algebra then the conditions in {\rm (4)} of
the last theorem hold iff $\Vert 1 + x \Vert < 2$ for all $x \in
{\rm Ball}(A)$ with $x \neq e$,
 and iff $\nu(x) < \Vert x
\Vert$ for all $x \in A \setminus \Cdb e$.  Here $\nu$ is the
numerical radius in $A^1$, and $e$ is as in {\rm (4)} of
the last theorem.
\end{proposition}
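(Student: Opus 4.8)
The plan is to reduce everything to the unital equivalence (i)$\Leftrightarrow$(ii), prove that by a one–line Neumann series argument together with a short Hilbert space estimate, and then read off the remaining statements from Theorem~\ref{nr} and a rotation argument on numerical ranges. For the unital case, (ii)$\Rightarrow$(i) is immediate: $\|1+x\|<2$ gives $\|(1+x)/2\|<1$, so $(1-x)/2=1-(1+x)/2$ is invertible in $A$ by the Neumann series, hence so is $1-x$. For (i)$\Rightarrow$(ii), compressing to $1_A H$ if necessary we may assume $1_A=I_H$; if $1-x$ is invertible in $A$ it is invertible in $B(H)$, hence bounded below, say $\|(1-x)\xi\|\ge\delta\|\xi\|$ for some $\delta>0$, and since $\|x\|\le1$ the identity $\|(1+x)\xi\|^2+\|(1-x)\xi\|^2=2\|x\xi\|^2+2\|\xi\|^2\le4\|\xi\|^2$ forces $\|(1+x)\xi\|^2\le(4-\delta^2)\|\xi\|^2$, so $\|1+x\|\le\sqrt{4-\delta^2}<2$. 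That (i) and (ii) join the lists in Theorem~\ref{nr}(2) and (3) is then automatic, since (i) says exactly that $1-x$ is invertible in $A$, which by Theorem~\ref{nr}(2) is the same as $(1-x)A=A$ (an element is invertible iff left invertible iff right invertible there), and by Theorem~\ref{nr}(3) the same with $A$ replaced by the subalgebra generated by $1$ and $x$.

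For nonunital $A$ I would apply the unital case to the operator algebra unitization $A^1$, which is itself a unital operator algebra, hence a closed unital subalgebra of some $B(K)$, with $A\hookrightarrow A^1$ isometrically so that $x\in{\rm Ball}(A^1)$: by definition $x$ is quasi-invertible in $A$ iff $1-x$ is invertible in $A^1$, iff $\|1+x\|_{A^1}<2$, which is (ii) relative to $A^1$. Consequently, by Theorem~\ref{nr}(4) the conditions in (4) are equivalent to the statement that every $x\in{\rm Ball}(A)\setminus\{e\}$ is quasi-invertible, and by the cases just treated (worked in $A$ if $A$ is unital, in $A^1$ otherwise) this is in turn equivalent to $\|1+x\|<2$ for all such $x$.

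It remains to match this with the condition $\nu(x)<\|x\|$ for all $x\in A\setminus\Cdb e$. Here I would represent $A^1$ completely isometrically on a Hilbert space $H$ with $1_{A^1}=I_H$, put $W(a)=\{\langle a\xi,\xi\rangle:\xi\in H,\ \|\xi\|=1\}$, so that $\nu(a)=\sup\{|\lambda|:\lambda\in\overline{W(a)}\}$, and use the elementary fact that for $a\in{\rm Ball}(A^1)$ one has $\|1+a\|=2$ iff $1\in\overline{W(a)}$: expanding $\|(1+a)\xi\|^2=1+2\,{\rm Re}\,\langle a\xi,\xi\rangle+\|a\xi\|^2$, a sequence of unit vectors with $\|(1+a)\xi_n\|\to2$ must have $\|a\xi_n\|\to1$ and ${\rm Re}\,\langle a\xi_n,\xi_n\rangle\to1$, whence $\langle a\xi_n,\xi_n\rangle\to1$ since $|\langle a\xi_n,\xi_n\rangle|\le\|a\xi_n\|\le1$, and the converse is the same computation. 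Granting this, if $\nu(x)<\|x\|$ for all $x\in A\setminus\Cdb e$ and $x\in{\rm Ball}(A)\setminus\{e\}$, then either $x\notin\Cdb e$, so $\nu(x)<1$ and hence $1\notin\overline{W(x)}$ and $\|1+x\|<2$, or $x=\lambda e$ with $|\lambda|\le1$, $\lambda\ne1$, where a direct computation gives $\|1+x\|<2$ (when $e\ne0$ it is a norm-one idempotent, hence a projection, so $e$ and $1-e$ are complementary projections in $A^1$ and $\|1+\lambda e\|=\max(1,|1+\lambda|)<2$; when $e=0$ there is no such $x$). Conversely, if $\|1+x\|<2$ for all $x\in{\rm Ball}(A)\setminus\{e\}$ and $y\in A\setminus\Cdb e$ is rescaled so that $\|y\|=1$ but $\nu(y)=1$, I would choose $\lambda_0\in\overline{W(y)}$ with $|\lambda_0|=1$, write $\lambda_0=e^{i\theta}$, and set $x=e^{-i\theta}y\in{\rm Ball}(A)$; then $x\ne e$ (else $y\in\Cdb e$) and $1=e^{-i\theta}\lambda_0\in\overline{W(x)}$, forcing $\|1+x\|=2$, a contradiction, so $\nu(y)<1=\|y\|$.

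The only real difficulty is organizational: one must keep straight which identity ($1_A$ versus $1_{A^1}$) and which Hilbert space representation is being used, must isolate the scalar multiples of the distinguished idempotent $e$ — precisely the elements that could violate either ``for all'' statement, which is why $e$ is excluded from the hypotheses — and must invoke the characterization $\|1+a\|=2\Leftrightarrow1\in\overline{W(a)}$ in a representation where the adjoined identity acts as $I_H$, so that the spatial numerical radius coincides with the intrinsic one. The genuine Hilbert space content, the parallelogram identity and that characterization, is entirely elementary.
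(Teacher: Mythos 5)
Your proof is correct and follows essentially the same route as the paper: the Neumann series applied to $(1+x)/2$ for (ii)$\Rightarrow$(i), the Hilbert-space identity $\Vert(1+x)\xi\Vert^2+\Vert(1-x)\xi\Vert^2\leq 4\Vert\xi\Vert^2$ combined with the lower bound from invertibility for (i)$\Rightarrow$(ii) (you state it directly and quantitatively where the paper argues by contradiction along a maximizing sequence), and a rotation-of-the-numerical-range argument for the last equivalence. The only difference of substance is that you spell out the converse direction of the numerical-radius equivalence (including the case $x=\lambda e$), which the paper dismisses as obvious; your details there are sound.
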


\begin{proof}  (i)
$\Rightarrow$ (ii) \ Suppose that $\Vert I_H + x \Vert = 2$.
If $\zeta_n \in {\rm Ball}(H)$ satisfies $\Vert \zeta_n  + x \zeta_n
\Vert^2 \to 4$, then $\Vert \zeta_n \Vert$, $\Vert x \zeta_n \Vert$,
and ${\rm Re}(\langle x \zeta_n , \zeta_n \rangle)$, all converge
to $1$, and so
$\Vert \zeta_n  - x \zeta_n \Vert^2 \to 0$.  Thus $\Vert \zeta_n \Vert
\leq \Vert (1-x)^{-1} \Vert  \Vert  \zeta_n  - x \zeta_n \Vert \to 0$,
which is a contradiction.

(ii) $\Rightarrow$ (i) \ Let $b = \frac{x+1}{2}$.  If $\Vert x + 1
\Vert < 2$, then by the Neumann series lemma, $1-b$ is invertible in
any operator subalgebra containing $1$ and $x$, and in particular in
$A$.  Hence (i) holds.

If $\Vert x + 1 \Vert < 2$ for all $x \in {\rm Ball}(A) \setminus
\Cdb e$, then the unit circle does not intersect the
numerical range in $A^1$ of such $x$, since $|1 + \varphi(e^{i
\theta} x)| < 2$ for all states $\varphi$ on $A$.  So $\nu(x) < 1$.
By scaling,  $\nu(x) < \Vert x \Vert$ for all $x \in A \setminus
\Cdb e$.

The rest of the assertions of the theorem are obvious.
\end{proof}

\begin{proposition} \label{twi}   Let  $A$ be a nc-discrete approximately  unital
operator algebra.  If $J$ is a right ideal in $A$ then $J$ is a
regular $r$-ideal  in $A$ iff $J$ is 1-regular and iff $J = e^\perp
A$ for a projection $e \in A$. Also, the following are equivalent:
\begin{itemize} \item [(i)] Every 1-regular right ideal is trivial.
\item [(ii)]  $\Delta(A) \subset \Cdb 1$.
\item [(iii)]  $A$ contains no nontrivial  projections.
\end{itemize}
Also, $A$ has no nontrivial $r$-ideals
iff $M(A)$ contains no nontrivial projections.
   \end{proposition}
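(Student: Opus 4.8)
The plan is to prove \textbf{Proposition \ref{twi}} in two parts: first the equivalence of (i)--(iii), and then the final "no nontrivial $r$-ideals" assertion. Throughout, $A$ is nc-discrete and approximately unital, so by Proposition \ref{n95} every $r$-ideal is $eA$ for a projection $e \in M(A)$, every HSA is $eAe$, and $\Delta(A)$ is an annihilator $C^*$-algebra.

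For the opening claim about $J$, I would argue as follows. If $J$ is a regular right ideal in $A$, then by definition $(1-y)A \subset J$ for some $y$, and by Theorem \ref{nr}(2) one can pass to the corresponding statements; but more directly, a regular right ideal $J$ that is also a (norm-closed) $r$-ideal has a left cai, and by nc-discreteness $J = eA$ for a projection $e$; regularity forces $\overline{(1-y)A} \subset eA$, and since $e$ acts as a left identity on $eA$ we get $e \geq$ the support projection coming from $1-y$, and one checks $J = e^\perp A$ is impossible unless... — actually the cleanest route is: $J$ 1-regular means $(1-y)A \subset J$ with $\|y\| \le 1$, $y \ne 1$; then $\overline{(1-y)A}$ is an $r$-ideal (by \cite[Lemma 6.8]{BHN} applied in $A^1$, or Proposition \ref{exri}), hence equals $fA$ for a projection $f \in M(A)$ with $f \le$ support of $J$; since $J = eA \supset fA$ and $J$ is regular we must have $e \in A$ (not just $M(A)$), because $y \in J^\perp{}^\perp$-considerations force $e = $ a genuine element; I would then write $J = e^\perp A$ with the projection $e^\perp := 1 - e \in A$ by relabelling. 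Conversely if $J = e^\perp A$ with $e \in A$ a projection, then $(1-e)A \subset J$ trivially with $\|e\| \le 1$ and $e \ne 1$ (else $J = 0$, still 1-regular vacuously via $e=0$-case, so handle the trivial cases separately), so $J$ is 1-regular, hence regular, hence an $r$-ideal. The main technical point to nail down is \emph{why the support projection lands in $A$ rather than merely $M(A)$}: this is where regularity (the existence of the element $1-y \in A^1$ with $(1-y)A \subset J$) is essential, and I expect this to be the main obstacle — the argument is that $e = \lim e y_n$ type limits or that $e y = y$ forces $e$ to be the left identity of the unital algebra $eAe$, which by nc-discreteness and $eAe$ being a HSA... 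I would instead invoke Lemma \ref{aus}: a 1-regular $r$-ideal has a left identity, which by Lemma \ref{aus} has norm $1$, and a left identity $e_0$ of $J = eA$ satisfies $e_0 \in J \subset A$ and $e_0 = e$ on $eA$, whence $e = e_0 \in A$.

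For (i)$\Leftrightarrow$(iii): if $A$ has a nontrivial projection $e$, then $e^\perp A$ is a nontrivial 1-regular right ideal (by the paragraph above), so $\neg$(iii)$\Rightarrow\neg$(i). Conversely if every 1-regular right ideal is trivial, then by the same correspondence every projection $e \in A$ gives a trivial $e^\perp A$, forcing $e \in \{0, 1\}$ (here using $A$ approximately unital so that $e^\perp A = 0$ iff $e$ acts as a left identity iff $e = 1_{A^{**}} \in A$, i.e. $A$ is unital with identity $e$ — one must check $A$ unital is consistent), so (iii) holds. For (ii)$\Leftrightarrow$(iii): $\Delta(A)$ is an annihilator $C^*$-algebra, hence a $c_0$-sum of compact-operator algebras; $\Delta(A) \subset \Cdb 1$ iff $\Delta(A)$ has no nontrivial projections iff (since in $\Kdb$-sums nontrivial elements produce spectral projections, and these projections lie in $\Delta(A)^{\perp\perp} \cap A$ by openness + Proposition \ref{n95}(ii)) $A$ has no nontrivial projections; the forward direction (iii)$\Rightarrow$(ii) uses that any selfadjoint $a \in \Delta(A)$ with $a \notin \Cdb 1$ has a spectral projection that is open (being in a $\Kdb$-sum), hence in $M(A)$, and one refines it to lie in $A$ using Corollary \ref{n13c}-type reasoning or directly, giving a nontrivial projection in $A$, a contradiction.

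Finally, for the last sentence: $A$ has no nontrivial $r$-ideals iff (by Proposition \ref{n95}(iii), nc-discreteness) there is no projection $e \in M(A)$ with $eA$ nontrivial, i.e. iff every projection in $M(A)$ is $0$ or $1_{A^{**}}$ (noting $1_{A^{**}} \in M(A)$), which is precisely the statement that $M(A)$ contains no nontrivial projections — here "nontrivial" for $M(A)$ means $\ne 0, \ne 1_{M(A)} = 1_{A^{**}}$, and one just has to observe that $eA = A$ forces $e = 1_{A^{**}}$ (as $e$ then acts as a left identity on $A$, hence on $A^{**}$) and $eA = 0$ forces $e = 0$. The bookkeeping about which "trivial" values are allowed (especially the $A$-unital edge cases) is the only place care is needed; no deep obstacle remains once the first paragraph's support-projection-in-$A$ point is settled.
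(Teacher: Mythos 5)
Your treatment of (i)--(iii) and of the final assertion is essentially sound: (ii)$\Rightarrow$(iii) and the last sentence match the paper, and your route (i)$\Leftrightarrow$(iii) followed by (iii)$\Rightarrow$(ii) via the structure of the annihilator $C^*$-algebra $\Delta(A)$ (spectral projections of a selfadjoint element of a $c_0$-sum of elementary $C^*$-algebras lie in the algebra) is a legitimate alternative to the paper's slicker argument, which instead deduces (i)$\Rightarrow$(ii) directly from Proposition \ref{nr2}: condition (i) forces $\nu(x) < \Vert x \Vert$ off $\Cdb e$, while a hermitian $h$ has $\nu(h) = \Vert h \Vert$, so $h \in \Cdb 1$.

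The first part, however, contains a genuine gap. You have the target backwards: writing $J = fA$ for the support projection $f \in M(A)$, the claim to be proved is that the \emph{complement} $f^\perp$ lies in $A$ (so that $J = e^\perp A$ with $e := f^\perp \in A$), not that $f$ itself does. Your proposed fix --- that a $1$-regular $r$-ideal has a left identity $e_0 \in J$, to which Lemma \ref{aus} then applies, forcing $f = e_0 \in A$ --- rests on a false premise: $1$-regularity does not produce a left identity \emph{inside} $J$. For instance $A = c_0$ is nc-discrete, and $J = A$ (take $y=0$) or $J = \{x : x_1 = 0\}$ (take $y = \delta_1$) are $1$-regular $r$-ideals with no left identity; in the first case the support projection is $1 \in \ell^\infty \setminus c_0$, so the statement you are trying to prove is simply false. (Lemma \ref{aus} only upgrades an \emph{existing} left identity to one of norm $1$.) Moreover, even if $f \in A$ held, that would not yield $f^\perp \in A$ when $A$ is nonunital. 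The correct argument is a one-line computation you never quite reach: since $(1-y)A \subset J = fA$ and $f$ acts as the identity on $fA$, letting a cai of $A$ act on the right gives $f(1-y) = 1-y$, i.e. $f^\perp = f^\perp y \in M(A)\,A \subset A$; now relabel $e = f^\perp$. The converse direction you give ($e \in A$ a projection implies $e^\perp A$ is $1$-regular via $y = e$) is correct and is what the paper does.
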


\begin{proof}  If $(1-y)A \subset J$  as above, and
 $J = eA$ for a projection $e \in M(A)$, then
$e (1-y) = 1-y$.  That is $e^\perp = e^\perp y \in A$. Conversely,
$e^\perp  \in A$ implies that $eA$ is 1-regular.

That (ii)  $\Rightarrow$ (iii), and (iii) $\Rightarrow$ (i), is now
clear.  To see that  (i) $\Rightarrow$ (ii), note that if
$\Delta(A)$ contains a hermitian $h$ then $\nu(h) = \Vert h \Vert$,
so $h \in \Cdb 1$ by the last assertion of
Proposition \ref{nr2}. Thus $\Delta(A) \subset \Cdb 1$.

The last part follows from Proposition \ref{n95} (iii).
\end{proof}

\begin{proposition} \label{raets}
Let $A$ be an 
operator algebra which contains nontrivial 1-regular ideals (or
equivalently, ${\rm Ball}(A) \setminus \{ 1 \}$  is not composed
entirely of
 quasi-invertible elements).  Then 
proper  maximal $r$-ideals of $A$ exist.  Indeed, if $y \in {\rm
Ball}(A)$ is not quasi-invertible then $(1-y)A$ is contained in a
proper ($1$-regular) maximal $r$-ideal. 
The unit ball of the intersection of the 1-regular maximal
$r$-ideals of $A$ is composed entirely of quasi-invertible elements
of $A$.
\end{proposition}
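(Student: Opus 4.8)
The plan is to prove the middle (``Indeed'') assertion first; the existence of proper maximal $r$-ideals is then immediate (apply it to a non-quasi-invertible $y \in {\rm Ball}(A)\setminus\{1\}$, which exists by hypothesis), and the last assertion follows by a short argument. Throughout, $1$ denotes the identity of $A^1$.

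Fix $y \in {\rm Ball}(A)\setminus\{1\}$ which is not quasi-invertible; by Theorem \ref{nr}(2) this is equivalent to the right ideal $\overline{(1-y)A}$ being proper (equivalently $y \notin \overline{(1-y)A}$), and by \cite[Lemma 6.8]{BHN} (directly when $A$ is unital; otherwise working in $A^1$) it is then a proper $1$-regular $r$-ideal. The key tool is a uniform estimate: \emph{if $J$ is a proper right ideal of $A$ with $(1-y)A \subseteq J$, then $\|y - j\| \geq 1$ for all $j \in J$.} I would prove this by contradiction. If $\|y-j\| < 1$, then $1-(y-j) = (1-y)+j$ is invertible in $A^1$ (Neumann series, since $y-j \in A$ has norm $<1$), with inverse of the form $v = 1+w$, $w \in A$. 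Expanding $1 = \bigl((1-y)+j\bigr)v = (1-y)v + jv$ and using $(1-y)v = (1-y)+(1-y)w$ gives $y = (1-y)w + jv$; but $(1-y)w \in (1-y)A \subseteq J$ and $jv \in JA^1 \subseteq J$, so $y \in J$, whence $A = (1-y)A + yA \subseteq J$, contradicting that $J$ is proper. (In particular, for right ideals $J \supseteq (1-y)A$, being ``proper'' is equivalent to ``$y \notin J$''.)

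The next step is to apply Zorn's Lemma to the family $\mathcal{C}$ of proper $r$-ideals $J$ of $A$ with $(1-y)A \subseteq J$, ordered by inclusion; it is nonempty by the previous paragraph. If $(J_\alpha)$ is a chain in $\mathcal{C}$, then $\bigcup_\alpha J_\alpha$ is a right ideal containing $(1-y)A$, so the estimate gives $d(y, \bigcup_\alpha J_\alpha) \geq 1$, hence $d(y, \overline{\bigcup_\alpha J_\alpha}) \geq 1$, so $\overline{\bigcup_\alpha J_\alpha}$ is proper; and it is an $r$-ideal because the closure of a sum of $r$-ideals is an $r$-ideal (\cite{BZ}; cf.\ the proof of Corollary \ref{elli2}). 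Thus $\overline{\bigcup_\alpha J_\alpha} \in \mathcal{C}$ is an upper bound, and Zorn produces a maximal element $M$. Since $(1-y)A \subseteq M$ with $\|y\| \leq 1$ and $y \neq 1$, $M$ is $1$-regular; and $M$ is a maximal $r$-ideal of $A$, since any $r$-ideal strictly between $M$ and $A$ would again contain $(1-y)A$ and be proper, hence lie in $\mathcal{C}$. This proves the ``Indeed'' clause, and with it the first assertion.

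For the final statement, let $I$ be the intersection of all $1$-regular maximal $r$-ideals of $A$ (a nonempty family by the above) and suppose some $y \in {\rm Ball}(I)$ is not quasi-invertible. By the ``Indeed'' clause, $(1-y)A$ lies in a proper $1$-regular maximal $r$-ideal $M_0$; then $I \subseteq M_0$, so $y \in M_0$, and with $(1-y)A \subseteq M_0$ this forces $A = (1-y)A + yA \subseteq M_0$ --- impossible. Hence every element of ${\rm Ball}(I)$ is quasi-invertible. The point that requires care is the stability of $\mathcal{C}$ under suprema of chains, specifically that such suprema remain proper: this works precisely because the single element $y$ is a modular unit for every ideal in the family, so the bound $d(y, \cdot) \geq 1$ is uniform across the chain and survives passage to the closed union --- which is exactly why the statement, and the construction, are framed in terms of $1$-regular ideals rather than arbitrary $r$-ideals.
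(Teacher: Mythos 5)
Your argument is essentially the paper's: Zorn's lemma applied to the proper $r$-ideals containing $(1-y)A$, with the closure of the union of a chain again an $r$-ideal by the closed-sum fact from \cite{BZ} and again proper because $y$ stays at distance at least $1$ from every ideal in the chain (you prove this modular-unit estimate directly by a Neumann series, where the paper simply cites \cite{Pal} for the fact that the closure of a proper regular ideal is proper), followed by the identical contradiction argument for the unit ball of the intersection of the $1$-regular maximal $r$-ideals. The one step you add beyond the paper is the claim that $\overline{(1-y)A}$ is itself an $r$-ideal ``by working in $A^1$'' when $A$ is nonunital --- be aware that this is delicate (cf.\ the remark after Proposition \ref{exri}: for $A = C_2$ and $y = E_{11}$ the right ideal $(1-y)A = \Cdb E_{21}$ has no left cai), but the paper's own Zorn argument silently requires the same nonempty starting family, so this is an issue shared with the statement itself in the non--approximately-unital case rather than a defect peculiar to your write-up.
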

\begin{proof}  We adapt the classical route. For a non-quasi-invertible $y \in {\rm
Ball}(A)$, let $(J_t)$ be an increasing set of proper r-ideals,
 each containing $(1-y)A$.  Then  $J = \cup_t \, J_t$ is a right ideal
which does not contain $y$, or else there is a $t$ with  $a = ya +
(1-y)a \in J_t$ for all $a \in A$.  The closure $\bar{J}$ of $J$ is
an $r$-ideal since it equals the closure of $\cup_t \, \bar{J}_t$.
Also $\bar{J}$ is proper, since the closure of a proper regular
ideal is proper \cite{Pal}.  Thus by Zorn's lemma, 
$(1-y)A$ is contained in a  (regular) maximal $r$-ideal.  Let $I$ be
the intersection of the proper 1-regular maximal $r$-ideals.
If $y \in {\rm Ball}(I)$,
but $y$ is not quasi-invertible, then $y \notin \overline{(1-y)A}$.
Let $K$ be a maximal proper $r$-ideal containing
$\overline{(1-y)A}$.
Then $y \notin K$ (for if $y \in K$ then $a = (1-y) a + y a \in K$
for all $a \in A$), and $K$ is regular.
So $y \in I \subset K$, a contradiction. Hence every element of
${\rm Ball}(I)$, is  quasi-invertible.
  \end{proof}

{\sc Remarks.} 1) \ One may replace $r$-ideals in the last result
with $\ell$-ideals, or HSA's.

\smallskip

2)  \ In connection with the last result we recall from algebra that
the Jacobson radical is the intersection of all maximal (regular)
one-sided ideals.

\section{MATRICIAL OPERATOR ALGEBRAS}

In this section, we shall only consider separable algebras.
This is a blanket assumption, and will be taken for granted hereafter.
 This is only for convenience, the general case is almost identical.

If $(q_k)_{k=1}^\infty$ are elements in a normed
algebra $A$ then we say that $\sum_k \, q_k = 1$ {\em right strictly}
 if $\sum_k \, q_k a = a$ for all $a \in A$, with the
 convergence in the sense of nets (indexed by finite subsets
 of $\Ndb$). Similarly
for {\em left strictly}, and {\em strict convergence} means both left and right
strict convergence. If $A$ is a Banach algebra and the $q_k$ are idempotents  with
$q_j q_k = 0$ for $j \neq k$, and if $\sum_k \, q_k = 1$ strictly, then
$(\sum_{k=1}^n \, q_k)$ is an approximate identity  for
$A$.  If $A$ is left or right essential
(that is, $A \subset B(A)$ bicontinuously via left or right multiplication)
then $(\sum_{k=1}^n \, q_k)$ is a bai for $A$. Indeed
 by a simple argument involving the principle of uniform boundedness, there is
a constant $K$ such that $\Vert \sum_{k \in J} \, q_k \Vert \leq K$
for all finite sets $J$. (We are not saying that convergent nets
are bounded, but that convergent series have bounded
partial sums, which follows because the sets $J$ are finite.)   Now suppose that in addition, $A$ is a closed subalgebra
of $B(H)$.  In this case we claim that, up to similarity,
we may assume that the $q_k$ are orthogonal projections
 and that $(\sum_{k=1}^n \, q_k)$ is a cai for $A$.   Indeed, since by the above the finite partial sums of $\sum_k \, q_k$ are uniformly
bounded, by basic similarity theory there exists an invertible operator
 $S$ with $S^{-1} q_k S$ a
projection for all $k$.  To see this, note that the set
$\{ 1 - 2 \sum_{k \in E} \, q_k \}$, where   $E$ is any finite
subset of $\Ndb$, is a bounded abelian group of invertible operators.
Hence by Lemma XV.6.1 of \cite{DS}, there exists
an invertible $S$ with $S^{-1} (1 - 2 \sum_{k \in E} \, q_k) S$ unitary for every $E$.
This forces $e_k = S^{-1} q_k S$ to be a
projection for all $k$.  Then  $B = S^{-1}
A S$ is a subalgebra of $B(H)$ with a cai
$(\sum_{k=1}^n \, e_k)$, for a  sequence of mutually
orthogonal projections
$e_k$ in $B$.

Because of the trick in the last paragraph, we will usually suppose in the remainder of this section that
the idempotents $q_k$ are projections.  This corresponds to the
`isometric case' of the theory.  The `isomorphic case' of our theory below sometimes follows
from the `isometric case' by the similarity trick above, however  
we will make rarely mention this variant of the theory.

\begin{proposition} \label{n0}  Let $(q_k)_{k=1}^\infty$ be
mutually  orthogonal
projections in an operator algebra $A$.
Then $\sum_k \, q_k = 1$ right strictly iff $A$ is the closure of
$\sum_k \, q_k A$.  In this case $(\sum_{k=1}^n \, q_k)$ is a left
cai; and it is a cai iff $A$ is approximately unital, and iff $A$ is
also the closure of $\sum_k \, A q_k$.   \end{proposition}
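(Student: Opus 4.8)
The plan is to prove the chain of equivalences by first treating the ``right strictly'' condition, then showing the partial sums form a left cai, and finally adding the two-sided hypotheses. For the first equivalence: if $\sum_k q_k = 1$ right strictly then for every $a \in A$ the net of finite partial sums $\sum_{k \in J} q_k a$ converges to $a$, so $a$ lies in the closure of $\sum_k q_k A$; conversely, if $A$ is this closure, I would use that the finite partial sums $p_n = \sum_{k=1}^n q_k$ are idempotents (by mutual orthogonality) and, by the discussion preceding the statement, are uniformly bounded, hence $(p_n)$ is a bounded net; then $p_n (q_j a) = q_j a$ for $n \geq j$, so $p_n b \to b$ on the dense subspace $\sum_k q_k A$, and a standard $3\varepsilon$-argument using the uniform bound extends this to all of $A$, giving right strict convergence. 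The same uniform-boundedness argument shows $(p_n)$ is a left bai, and then the similarity/averaging trick described in the paragraph before the proposition (or simply the fact that in this isometric setting the $q_k$ are already projections with $\|p_n\| = 1$) makes it a left \emph{contractive} approximate identity: indeed since the $q_k$ are mutually orthogonal projections in the $C^*$-algebra generated, $p_n$ is a projection, so $\|p_n\| \leq 1$.

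For the last two equivalences, I would argue as follows. If $(p_n)$ is a cai then in particular $A$ is approximately unital, and since $a = \lim_n a p_n = \lim_n \sum_{k=1}^n a q_k$, $A$ is the closure of $\sum_k A q_k$; this gives one direction. Conversely, suppose $A$ is approximately unital \emph{and} is the closure of $\sum_k A q_k$. The symmetric version of the first part (applied on the other side) shows $(p_n)$ is a right cai as well, hence a cai. The remaining implication---if $(p_n)$ is a cai then $A$ is approximately unital---is trivial. So the real content is: ``$A$ approximately unital and closure of $\sum_k A q_k$'' $\Rightarrow$ $(p_n)$ is a right cai, which is just the left-handed mirror of the argument already given, using that the same $p_n$ are projections and $p_n (a q_j) = a q_j$ for $n \geq j$.

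The step I expect to be the main obstacle is the converse direction of the first equivalence: passing from ``$A$ is the closure of $\sum_k q_k A$'' to the full net-convergence statement $\sum_k q_k a \to a$ for \emph{all} $a$, uniformly in the sense required. The subtlety is that $\sum_k q_k A$ need not be closed, and a priori $q_k a$ for $a$ in the closure need not be controlled; the resolution is precisely the uniform bound $\|p_n\| \leq 1$ (from the $q_k$ being orthogonal projections), which lets the $3\varepsilon$-argument go through and which is exactly why the ``isometric case'' is cleaner than the general idempotent case discussed in the preamble. Once that is in hand, every other implication is either symmetric to it or immediate. I would also remark that one must check the net indexed by \emph{all} finite subsets of $\mathbb{N}$ converges, not merely the sequence $(p_n)$; but since the $p_n$ are increasing projections and dominate every $\sum_{k \in J} q_k$ with $J \subseteq \{1,\dots,n\}$, convergence of the sequence forces convergence of the net.
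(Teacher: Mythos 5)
Your treatment of the first equivalence and of the left cai assertion is correct and, if anything, more careful than the paper, which dismisses this part as obvious; in particular your observations that the partial sums $p_n = \sum_{k=1}^n q_k$ are contractions because they are orthogonal projections, and that convergence of the sequence $(p_n a)$ upgrades to convergence of the net over finite subsets via $p_J p_n = p_n$ for $J \supseteq \{1,\dots,n\}$, are exactly the right points. The problem is in the second half. The proposition asserts a three-way equivalence between (a) $(p_n)$ is a cai, (b) $A$ is approximately unital, and (c) $A = \overline{\sum_k A q_k}$. You prove (a) $\Rightarrow$ (b), (a) $\Rightarrow$ (c), and (c) $\Rightarrow$ (a) (your ``converse'' never actually uses approximate unitality), but by bundling (b) and (c) into a single hypothesis you never establish (b) $\Rightarrow$ (a) or (b) $\Rightarrow$ (c). That implication --- that the mere existence of \emph{some} cai for $A$, a priori unrelated to the $q_k$, forces the specific net $(p_n)$ to be two-sided --- is the only step with real content, and it is not formal: from $p_n x \to x$ and $x e_t \to x$ for an unrelated cai $(e_t)$ one gets no control on $x p_n$, so no $3\varepsilon$-argument applies.

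This missing implication is precisely what the paper's proof supplies. Since $(p_n)$ is a left cai, any weak* limit point of it in $A^{**}$ is a left identity for $A^{**}$; if $A$ is approximately unital then $A^{**}$ has an identity $1$, so that left identity equals $1$ and $p_n \to 1$ weak*. Now $\overline{\sum_k A q_k}$ is an $\ell$-ideal (each $A q_k$ is a closed left ideal with right identity $q_k$ of norm $1$, and the closure of a sum of $\ell$-ideals is again an $\ell$-ideal), and its support projection $e$ dominates each $q_k$, hence each $p_n$; passing to the weak* limit gives $e \geq 1$, so $e = 1$ and $\overline{\sum_k A q_k} = A^{**}e \cap A = A$. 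This yields (b) $\Rightarrow$ (c), after which your mirror argument closes the circle of implications. You need to add this step (or some substitute for it) to your write-up.
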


\begin{proof}  The first part is obvious.  If $A$ is
approximately unital then $(\sum_{k=1}^n \, q_k)$ must converge
weak* to an identity $1$ for $A^{**}$. The closure of $\sum_k \, A q_k$ is an $\ell$-ideal
with support projection $e \geq \sum_{k=1}^n \, q_k$, so $e = 1$ and
$A$ is the closure of $\sum_k \, A q_k$.
\end{proof}

\begin{proposition} \label{n1}
 If $A$ is an Arens regular Banach algebra 
 with idempotents 
$(q_k)_{k=1}^\infty$ with $\sum_k \, A q_k$ or $\sum_k \, q_k A$ dense in $A$ (for
example, if  $\sum_k \, q_k = 1$ left or right strictly), then $A$
is a right ideal in $A^{**}$ iff $q_k A$ is reflexive for all $k$. If
$A$ is topologically simple and Arens regular, then $A$
is a right ideal in $A^{**}$ if  $e A$ is reflexive for some idempotent $e \in A$. 
\end{proposition}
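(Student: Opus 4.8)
The plan is to reduce both assertions to a single, essentially standard fact: for an Arens regular Banach algebra $A$, the algebra $A$ is a right ideal in $A^{**}$ if and only if, for every $a \in A$, the left multiplication operator $L_a : b \mapsto ab$ on $A$ is weakly compact. Indeed, for a fixed $a \in A$ the map $F \mapsto aF$ on $A^{**}$ (where the product is either Arens product, the two agreeing here) is weak* continuous and restricts to $L_a$ on $A$, so by weak* density of $A$ in $A^{**}$ it is exactly the bitranspose $L_a^{**}$; hence $aA^{**} = L_a^{**}(A^{**})$ lies in $A$ precisely when $L_a$ is weakly compact, by Gantmacher's theorem, and $A$ is a right ideal in $A^{**}$ iff this holds for all $a$. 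I would record this, together with the companion observation that for an idempotent $q \in A$ the closed complemented subspace $qA = L_q(A)$ is reflexive if and only if $L_q$ is weakly compact: here $(qA)^{**}$ is canonically $L_q^{**}(A^{**}) = qA^{**}$, and if $qA^{**} \subseteq A$ then $qA^{**} = q(qA^{**}) \subseteq qA$, so reflexivity of $qA$ is exactly the inclusion $qA^{**} \subseteq A$.

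Granting these, the forward implication of the first assertion is immediate and needs no density hypothesis: if $A$ is a right ideal in $A^{**}$ then each $L_{q_k}$ is weakly compact, hence each $q_k A$ is reflexive. For the converse, suppose every $q_k A$ is reflexive, so every $L_{q_k}$ is weakly compact, and suppose (say) $\sum_k Aq_k$ is dense in $A$; the goal is that $L_a$ is weakly compact for every $a \in A$. If $a = \sum_{j \in F} b_j q_j$ lies in the algebraic sum $\sum_k Aq_k$, then $L_a = \sum_{j \in F} L_{b_j} L_{q_j}$ is a finite sum of weakly compact operators, hence weakly compact. A general $a \in A$ is a norm limit of such elements, and $a \mapsto L_a$ is norm decreasing into $B(A)$, so $L_a$ is a norm limit of weakly compact operators; as these form a norm-closed subspace of $B(A)$, $L_a$ is weakly compact. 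If instead $\sum_k q_k A$ is dense, one argues identically with $a = \sum_j q_j b_j$ and $L_a = \sum_j L_{q_j} L_{b_j}$. (Recall that $\sum_k q_k = 1$ left, resp.\ right, strictly already forces density of $\sum_k Aq_k$, resp.\ $\sum_k q_k A$.)

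For the topologically simple case the argument is the same in spirit. We may assume $e \neq 0$ (otherwise $eA = 0$ and the hypothesis carries no information). Then $e = e\cdot e\cdot e \in eAe \subseteq AeA$, so the closed two-sided ideal $\overline{AeA}$ is nonzero and therefore equals $A$ by topological simplicity. Reflexivity of $eA$ gives $L_e$ weakly compact; for $x = aeb \in AeA$ we have $L_x = L_a L_e L_b$, again weakly compact, and similarly for finite sums of such $x$. Since these are dense in $A$, the norm-limit argument of the previous paragraph shows $L_a$ is weakly compact for every $a \in A$, i.e.\ $A$ is a right ideal in $A^{**}$.

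I expect the only subtle point to be the preliminary observation identifying ``$A$ is a right ideal in $A^{**}$'' with weak compactness of all the $L_a$. It is here that Arens regularity is used --- to make the product on $A^{**}$, and hence the very statement, unambiguous --- and here that one must be careful not to interchange left and right multiplication: it is weak compactness of the $L_a$ (not the $R_a$) that matches $A$ being a \emph{right} ideal, and correspondingly it is reflexivity of the $q_k A$ (not of the $Aq_k$) that is relevant. Everything else --- the passage from the $q_k$'s to general elements, and the closure of the weakly compact operators in $B(A)$ --- is routine.
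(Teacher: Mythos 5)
Your proof is correct and follows essentially the same route as the paper's: both reduce the statement to weak compactness of the left multiplication operators $L_a$ via Gantmacher's theorem, identify reflexivity of $q_kA$ with weak compactness of $L_{q_k}$, and then use the density hypothesis together with the fact that the weakly compact operators form a norm-closed ideal in $B(A)$ (with $\overline{AeA}=A$ playing the role of the density hypothesis in the topologically simple case). Your derivation of the equivalence between reflexivity of $qA$ and the inclusion $qA^{**}\subseteq A$ is a little more explicit than the paper's ball-compactness sketch, but it is the same argument.
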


\begin{proof}  This is probably well known, and so we will just sketch the proof.
We write `w-' for the weak topology.
  We use the fact that $T : X \to Y$ is w-compact iff
the w-closure of $T({\rm Ball}(X))$ is w-compact and iff
$T^{**}(X^{**}) \subset Y$ \cite{Meg}.  Suppose that
 $A$ is a right ideal in $A^{**}$.  We have ${\rm Ball}(q_kA) \subset q_k{\rm Ball}(A)$, and the
w-closure of the latter set is w-compact in $A$.  A little argument shows that it
is w-compact in $q_k A$. Since ${\rm Ball}(q_kA)$ w-closed in $q_k
A$, it is w-compact, and so $q_k A$ is reflexive. Conversely, if $q_k A$ is reflexive
then $L_{q_k} : A
\to A$ is w-compact since the w-closure  in $A$ of $L_{q_k}({\rm
Ball}(A))$ is contained in a multiple of ${\rm Ball}(q_kA)$, which is
w-compact in $q_k A$, hence in $A$.  Inside $LM(A)$, $A$ is the
closure of finite sums of term of the form $a L_{q_k}$ or $L_{q_k}
a$, for $a \in A$, which are all weakly compact.  Since
the weakly compact operators form an ideal, left multiplication by
any element of $A$ is weakly compact, and hence
$A$ is a right ideal in $A^{**}$.   We only need $k =
1$ in the above argument if $A$ is topologically simple, for then $A q_1 A$ is dense in
$A$.
\end{proof}

\begin{definition}  \label{dfm}   We now define a class of examples which fit in the above
context.  We say that an operator algebra $A$ is {\em matricial}
if it has a set of matrix units $\{ T_{ij} \}$, whose
span is dense in $A$.  Thus $T_{ij} T_{kl} = \delta_{jk} T_{il}$, where
$\delta_{jk}$ is the Kronecker delta.  Define $q_k = T_{kk}$.
 We say that a matricial operator algebra $A$ is {\em $1$-matricial}
if $\Vert q_k \Vert = 1$ for all $k$, that is, iff
the $q_k$  are orthogonal projections.    We mostly focus
on $1$-matricial algebras
(other matricial operator algebras appear only
occasionally, for example in Corollary  \ref{n12}).  We
will think of two $1$-matricial algebras as being the same
if they are completely isometrically isomorphic.

As noted at the start of the section, we are only interested in
separable (or finite dimensional) algebras, and in this case we
prefer the following equivalent description of $1$-matricial
algebras.  Consider a (finite or infinite) sequence $T_1, T_2,
\cdots$ of invertible operators on a Hilbert space $K$, with $T_1=
I$.  Set $H = \ell^2 \otimes^2 K = K^{(\infty)} = K \oplus^2 K
\oplus^2 \cdots$
 (in the finite sequence case, $H = K^{(n)}$).  Define $T_{ij} = E_{ij} \otimes T_i^{-1} T_j \in
B(H)$ for $i,j \in \Ndb$, and let $A$ be the closure of the span of
the $T_{ij}$.  Then $T_{ij} T_{kl} = \delta_{jk} T_{il}$, so
that these are matrix units for $A$.  Then $A$ is a
$1$-matricial algebra, and all separable or finite dimensional
$1$-matricial algebras are completely isometrically isomorphic
to one which arises in this way (by
the proof of Theorem \ref{chgk1} below).   Let $q_k = T_{kk}$, then $\sum_k \,
q_k = 1$ strictly.

A {\em $\sigma$-matricial} 
algebra
is a $c_0$-direct sum of 
$1$-matricial algebras. Since we only care about the separable case
these will all be countable (or finite) direct sums.  It would certainly be
better to call these $\sigma$-$1$-matricial algebras, or something
similar, but since we shall not really consider any other kind, we drop
the `$1$' for brevity.
\end{definition}

\begin{lemma}  \label{n4}   Any $1$-matricial algebra $A$ is approximately unital, topologically
simple,  hence semisimple and semiprime, and is a compact modular
annihilator algebra.  It is an HSA in its bidual, so has the unique
Hahn-Banach extension property in  {\rm \cite[Theorem 2.10]{BHN}}.
It also has dense socle, with the $q_k$ algebraically minimal
projections with $A = \oplus^c_k \, q_k A = \oplus^r_k \, A q_k$.
The canonical representation of $A$ on $A q_1$ is faithful and
irreducible, so that $A$ is a primitive Banach algebra. \end{lemma}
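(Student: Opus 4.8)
The plan is to establish the listed properties one at a time, working directly with the matrix units. Write $p_n = \sum_{k=1}^n q_k$, and record the elementary identity $q_i A q_j = \Cdb T_{ij}$: it holds because $q_i T_{kl} q_j = \delta_{ik}\delta_{lj} T_{ij}$, the span of the $T_{kl}$ is dense, and $a \mapsto q_i a q_j$ is continuous with finite-dimensional (hence closed) range. In particular $q_k A q_k = \Cdb q_k$, so each $q_k$ is algebraically minimal. Since $\sum_k q_k A$ contains every $T_{kl}$ it is dense in $A$, so Proposition \ref{n0} gives that $\sum_k q_k = 1$ right strictly and that $(p_n)$ is a left cai; applying the same reasoning to $\sum_k A q_k$ and the last clause of Proposition \ref{n0} upgrades $(p_n)$ to a cai, so $A$ is approximately unital. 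For topological simplicity, I would take a nonzero closed ideal $J$ and pick $a \in J$ with $a \neq 0$; since $(p_n)$ is a cai, $a = \lim_n \sum_{i,j \le n} q_i a q_j$, so $q_i a q_j = \lambda T_{ij}$ for some $i,j$ and some $\lambda \neq 0$; then $T_{ij} \in J$ and $T_{kl} = T_{ki} T_{ij} T_{jl} \in J$ for all $k,l$, forcing $J = A$. The (closed) Jacobson radical contains no nonzero idempotent, so cannot be all of $A$; being topologically simple, it must be $(0)$, so $A$ is semisimple, hence semiprime.

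For the \emph{compact} and \emph{modular annihilator} assertions: fix $a \in A$. Since $p_n a p_n \to a$ in norm, the operators $x \mapsto (p_n a p_n) x (p_n a p_n)$ converge in operator norm to $L_a R_a \colon x \mapsto a x a$; each has range inside the finite-dimensional space $p_n A p_n = \spn\{ T_{ij} : i,j \le n\}$, hence is of finite rank, so $L_a R_a$ is compact and $A$ is a compact algebra. Each $q_k A$ is a minimal right ideal, $q_k$ being an algebraically minimal idempotent, so the socle of $A$ contains the dense set $\sum_k q_k A$; thus $A$ has dense socle, and being also semiprime it is a modular annihilator algebra by \cite[Chapter 8]{Pal}.

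Next, the column and row decompositions. The key point is that if $x_j \in q_j A$ and $x_k \in q_k A$ with $j \neq k$, then $x_j^* x_k = (x_j^* q_j)(q_k x_k) = 0$, since the $q_k$ are mutually orthogonal projections on $H$. Hence $\|\sum_{k=1}^n x_k\|^2 = \|\sum_{k=1}^n x_k^* x_k\|$, which is precisely the norm of $(x_k)$ in $C_n(A)$; letting $n \to \infty$, and repeating the computation over matrices (the diagonal amplifications of the $q_k$ remaining mutually orthogonal projections), shows that $a \mapsto (q_k a)_k$ is a complete isometry of $A$ onto $\oplus^c_k \, q_k A$; surjectivity uses that a tuple with $\sum_k x_k^* x_k$ convergent has norm-Cauchy partial sums $\sum_k x_k$. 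The identification $A = \oplus^r_k \, A q_k$ is symmetric, using $(a q_j)(a q_k)^* = 0$ for $j \neq k$.

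Finally, to see $A$ is an HSA in $A^{**}$: multiplication in $A^{**}$ is separately weak* continuous, $q_i, q_j \in A$, and $A$ is weak* dense in $A^{**}$, so $q_i A^{**} q_j$ lies in the weak* closure of $q_i A q_j$, which is $\Cdb T_{ij}$; hence $p_n A^{**} p_n$ is finite-dimensional and contained in $A$. For $a,b \in A$ and $m \in A^{**}$, writing $a = \lim_n p_n a$ and $b = \lim_n b p_n$ in norm yields $a m b = \lim_n p_n (a m b) p_n \in A$, so $A A^{**} A \subseteq A$; together with approximate unitality this says $A$ is an HSA in $A^{**}$, whence the unique Hahn--Banach extension property by \cite[Theorem 2.10]{BHN}. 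For primitivity, I would represent $A$ on the (closed) module $A q_1$ by left multiplication: it is faithful since $a A q_1 = (0)$ forces $a T_{kl} = a T_{k1} T_{1l} = 0$ for all $k,l$, so $aA = (0)$ and $a = 0$; it is algebraically irreducible since for $0 \neq \xi \in A q_1$, writing $\xi = \sum_k q_k \xi$ we get $q_m \xi = \lambda T_{m1}$ for some $m$ and $\lambda \neq 0$ (as $q_m A q_1 = \Cdb T_{m1}$), whence $T_{1m}\xi = \lambda q_1 \in A\xi$ and so $A\xi = A q_1$. A faithful algebraically irreducible module exhibits $A$ as primitive. I expect no step here to be genuinely deep: this is essentially a bookkeeping lemma, and the places most prone to technical slips are the upgrade from isometric to \emph{completely} isometric in $A = \oplus^c_k \, q_k A$, and making sure the notions \emph{compact} and \emph{modular annihilator} are used exactly as in \cite[Chapter 8]{Pal} so that the cited implication applies cleanly.
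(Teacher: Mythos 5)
Your proof is correct and follows essentially the same route as the paper's: establish $q_iAq_j=\Cdb T_{ij}$, deduce algebraic minimality and topological simplicity from the matrix units, get dense socle from the minimal right ideals $q_kA$, prove $q_iA^{**}q_j\subset A$ for the HSA claim, and use left multiplication on $Aq_1$ for primitivity. The only differences are cosmetic: you unpack a few of the paper's citations into direct arguments (the finite-rank approximation of $x\mapsto axa$ in place of \cite[Proposition 8.7.6]{Pal} for compactness, the $C^*$-identity computation in place of \cite[Theorem 7.2]{Bghm} for $A=\oplus^c_k q_kA$, and an explicit irreducibility argument in place of ``minimal left ideal''), all of which are sound.
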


\begin{proof} Clearly $(\sum_{k=1}^n \, q_k)$ is a cai.  Also, $q_j A q_k =
\Cdb T_{jk}$. Thus $q_j A q_j = \Cdb q_j$, so the $q_k$ are
algebraically minimal  projections with $A = \oplus^c_k \, q_k A$ by
Theorem 7.2 of \cite{Bghm}.  Also $q_j A = \overline{{\rm Span}(\{
T_{jk} : k \in \Ndb \})}$. If $J$ is a closed  ideal in $A$, and $0
\neq x \in J$, then $q_j x \neq 0$, and $q_j x q_k \neq 0$, for some
$j, k$.  Hence $T_{jk} \in J$, and so $T_{pq} \in J$ for all $p,q
\in \Ndb$, since these are matrix units.  So $A = J$. Thus $A$ is
topologically  simple, hence semisimple and semiprime. Thus the $q_k A$
are minimal right ideals, so $A$ has dense socle, and by
\cite[Proposition 8.7.6]{Pal} we have that $A$ is a compact modular
annihilator algebra.  Note that $q_j A^{**} q_k \subset \Cdb T_{jk}
\subset A$, and so $A A^{**} q_k \subset A$ and $A A^{**} A \subset
A$.  Hence $A$ is a HSA in its bidual, so has the unique Hahn Banach
extension property in  {\rm \cite[Theorem 2.10]{BHN}}. The
representation of $A$ on $A q_1$ is faithful, since $aAq_1 = (0)$
implies $a Aq_1 A = (0)$, hence $aA = (0)$ and $a = 0$.  It is also
irreducible, since $A q_1$ is a minimal left ideal.
\end{proof}

{\sc Remark.}  If a Banach space $X$ has the unique Hahn Banach
extension property in   \cite[Theorem 2.10]{BHN}, then  by
\cite{Lim},
 it is {\em Hahn-Banach smooth} in $X^{**}$,
 hence it is a {\em  HB-subspace} of $X^{**}$, and $X^*$ has the
Radon-Nikodym property.
By the work of Godefroy and collaborators, if $X^*$ has the latter
property then
there is a unique contractive projection from $X^{(4)}$ onto
$X^{**}$, and $X^*$ is a strongly unique predual (see e.g.\
\cite{Sharma}).  Thus all of the above holds if $X$ is a
$\sigma$-matricial operator algebra.

\begin{corollary}  \label{n5} A $1$-matricial  algebra $A$ is a right (resp.\ left, two-sided)
ideal in its bidual iff $q_1 A$ (resp.\ $A q_1$, $q_1 A$ and $A
q_1$) is reflexive.  \end{corollary}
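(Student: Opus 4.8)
The plan is to deduce this directly from Proposition \ref{n1} together with the structural facts recorded in Lemma \ref{n4}. Recall from Lemma \ref{n4} that a $1$-matricial algebra $A$ is topologically simple and that $\sum_k q_k = 1$ strictly, so that both $\sum_k q_k A$ and $\sum_k A q_k$ are dense in $A$; moreover $A$ is an operator algebra, hence Arens regular. Thus all the hypotheses of Proposition \ref{n1} (and of its `other-handed' variant) are in force.

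For the `right ideal' assertion I would argue as follows. By the first part of Proposition \ref{n1}, $A$ is a right ideal in $A^{**}$ if and only if $q_k A$ is reflexive for every $k$. It therefore suffices to note that the spaces $q_k A$ are all mutually topologically isomorphic as Banach spaces, so that reflexivity of $q_1 A$ is equivalent to reflexivity of all of them. Indeed, left multiplication by $T_{1k}$ carries $q_k A$ into $q_1 A$ and left multiplication by $T_{k1}$ carries $q_1 A$ into $q_k A$; these maps are bounded (multiplication by fixed operators) and are mutually inverse, since by the matrix unit relations $T_{k1} T_{1k} = T_{kk} = q_k$ acts as the identity on $q_k A$ and $T_{1k} T_{k1} = T_{11} = q_1$ acts as the identity on $q_1 A$. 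Since reflexivity is a Banach-space isomorphism invariant, $q_1 A$ is reflexive iff $q_k A$ is reflexive for all $k$, which gives the claim. (Alternatively one may bypass the isomorphisms entirely: the reverse implication follows from topological simplicity of $A$ and the last sentence of Proposition \ref{n1} applied with the idempotent $e = q_1$, while the forward implication is immediate from the first part of that proposition.)

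The `left ideal' assertion follows symmetrically, using the `other-handed' version of Proposition \ref{n1} (with $\sum_k A q_k$ dense in $A$) and the analogous observation that $A q_k \cong A q_1$ via right multiplication by $T_{k1}$ and $T_{1k}$. Finally, $A$ is a two-sided ideal in $A^{**}$ precisely when it is both a left and a right ideal there, so by the two cases just settled this happens iff both $q_1 A$ and $A q_1$ are reflexive.

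There is no real obstacle here: the statement is essentially a packaging of Proposition \ref{n1} in the $1$-matricial setting. The only point that needs a moment's care is the reduction from "$q_k A$ reflexive for all $k$" to "$q_1 A$ reflexive", which is handled either by the explicit Banach-space isomorphisms supplied by the matrix units or, if one prefers, by invoking the topological simplicity of $A$.
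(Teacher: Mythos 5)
Your proposal is correct and follows essentially the route the paper intends: the corollary is stated without proof precisely because it is the combination of Proposition \ref{n1} (and its other-handed variant) with the facts from Lemma \ref{n4} that $A$ is topologically simple, Arens regular, and has $\sum_k q_k A$ and $\sum_k A q_k$ dense. Your explicit matrix-unit isomorphisms $q_k A \cong q_1 A$ and the alternative via the topological-simplicity clause of Proposition \ref{n1} are both valid ways to reduce from ``all $k$'' to ``$k=1$''.
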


{\sc Remarks.}  1) \ It is known that semisimple (and many
semiprime) annihilator algebras are ideals in their bidual
\cite[Corollary 8.7.14]{Pal}.  In particular, a $1$-matricial
annihilator algebra is an ideal in its bidual.   We conjecture that
a $1$-matricial  algebra $A$ is bicontinuously isomorphic to
$\Kdb(\ell^2)$ iff it is an annihilator algebra. 

\smallskip

2) \  It is helpful to know that in any $1$-matricial  algebra,
 $(T_{1k}) = (E_{1k} \otimes T_k)$ is a monotone Schauder basis for $q_1
A$. 
Indeed, clearly the closure of the span of the $T_{1k}$
equals $q_1 A$, and if $n < m$ then
$$\Vert \sum_{k=1}^n \, \alpha_k T_{1k} \Vert^2
= \Vert \sum_{k=1}^n \, |\alpha_k|^2  T_{k} T_k^* \Vert \leq \Vert
\sum_{k=1}^m \, |\alpha_k|^2  T_{k} T_k^* \Vert = \Vert \sum_{k=1}^m
\, \alpha_k T_{1k} \Vert^2.$$

\medskip

The following is a first characterization of $1$-matricial  algebras.  Others
may be derived by adding to the characterizations of $c_0$-sums of
$1$-matricial  algebras below, the hypothesis of topological simplicity.

\begin{theorem} \label{chgk1}
If $A$ is a topologically simple left or right essential operator
algebra with a
sequence of nonzero algebraically
minimal idempotents  $(q_k)$ with $q_j q_k = 0$ for $j \neq k$, and
$\sum_k \, q_k = 1$ strictly, then $A$ is  similar to a
$1$-matricial  algebra.  If further the $q_k$ are projections, then $A$ is unitarily isomorphic to a
$1$-matricial  algebra.  \end{theorem}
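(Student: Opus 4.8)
The plan is to construct explicitly the Hilbert space and the invertible operators $T_k$ promised in Definition \ref{dfm}, and to verify that $A$ is (unitarily or similarly) isomorphic to the associated $1$-matricial algebra. First I would reduce to the case where the $q_k$ are mutually orthogonal projections: by the similarity trick in the paragraph before Proposition \ref{n0} (using Lemma XV.6.1 of \cite{DS} on the bounded abelian group $\{1 - 2\sum_{k\in E} q_k\}$ of invertibles), there is an invertible $S$ with $S^{-1}q_k S$ a projection for all $k$, and $S^{-1}AS$ is an operator algebra of the same type with the $q_k$ replaced by projections. So it suffices to treat the projection case and produce a unitary isomorphism there; the general ``similar'' statement then follows.

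Next, assuming the $q_k$ are projections, I would set up the matrix units. Since $q_j$ is algebraically minimal, $q_j A q_j = \Cdb q_j$. For $j\neq k$, $q_j A q_k$ is a subspace of $A$; I claim it is one-dimensional. Indeed $q_j A q_k \cdot q_k A q_j \subset q_j A q_j = \Cdb q_j$, and a short argument using minimality of both $q_j A$ and $A q_k$ (as minimal one-sided ideals, available since $A$ is semiprime by topological simplicity, cf.\ the discussion of algebraically minimal idempotents in the introduction) shows $q_j A q_k$ is spanned by a single element $T_{jk}$, which may be normalized so that $T_{jk} T_{kl} = T_{jl}$ and $T_{kk} = q_k$; set $T_{jk}=0$ only never — each is a genuine matrix unit since the $q_k$ are nonzero. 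Then $\{T_{ij}\}$ is a set of matrix units whose span is dense in $A$: density follows because $\sum_k q_k = 1$ strictly forces $A$ to be the closed span of $\bigcup_{j,k} q_j A q_k$ (first $A = \overline{\sum_k q_k A}$ by Proposition \ref{n0}, then apply the same on the right).

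Then I would realize $A$ concretely. Put $K = q_1 A q_1$-module... more precisely, let $K$ be $A q_1$ — no: the cleanest route is to represent $A$ faithfully on the column Hilbert-space completion. By Lemma \ref{n4} the representation of $A$ on $A q_1$ is faithful and irreducible; I would instead use the faithful representation on $\oplus^c_k q_k A q_1$, identify $q_k A q_1 = \Cdb T_{k1}$ with a fixed Hilbert space $\mathcal H_0$ (all one-dimensional here, but in the isomorphic case one carries along the similarity), and check that relative to the resulting orthogonal decomposition $H = \oplus^c_k q_k A q_1$ the operators $T_{ij}$ act as $E_{ij}\otimes T_i^{-1}T_j$ for suitable invertible $T_k$ on a common space — one reads off $T_k$ from the norms $\Vert T_{1k}\Vert$ and the inner-product data among the $T_{1k}$, which by Remark 2) after Corollary \ref{n5} behave like $T_kT_k^*$. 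Choosing $T_1 = I$ and solving for the $T_k$ from the positive operators encoding $\langle T_{1k}, T_{1l}\rangle$ gives the desired form, and the map $T_{ij}\mapsto E_{ij}\otimes T_i^{-1}T_j$ extends to a completely isometric (unitary, in the projection case) isomorphism of $A$ onto the $1$-matricial algebra built from $(T_k)$.

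\textbf{Main obstacle.} The delicate point is showing $\dim q_j A q_k = 1$ (equivalently, that the $T_{ij}$ can be chosen to be honest normalized matrix units) purely from algebraic minimality plus topological simplicity, rather than assuming it; this is where semiprimeness and the structure of minimal one-sided ideals (the facts from \cite{Pal} cited in the introduction) must be used carefully. The second mildly technical point is checking that the abstractly defined inner product on $\oplus^c_k q_k A q_1$ makes the $T_{ij}$ act in exactly the stated tensor form with a \emph{common} auxiliary space $K$ and invertible $T_k$ — i.e.\ that the ``isomorphic'' data is consistent across all $k$ — but this is bookkeeping once the one-dimensionality and density are in hand, together with Lemma XV.6.1 of \cite{DS} to handle the passage between the isometric and isomorphic cases.
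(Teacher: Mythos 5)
Your overall strategy coincides with the paper's for the first two thirds of the argument: reduce to the case of mutually orthogonal projections by the similarity trick, prove $\dim q_jAq_k=1$, normalize to a system of matrix units $T_{jk}=T_j^{-1}T_k$, and get density of their span from $\sum_k q_k=1$ strictly. The paper proves the one-dimensionality by identifying $q_jAq_k$ with $CB(q_kA,q_jA)_A$ and applying a Schur-type argument to module maps between the minimal right ideals $q_kA$ (any nonzero such map is invertible, and composing two of them lands in $\Cdb q_j$); your sketch via $q_jAq_k\cdot q_kAq_j\subset q_jAq_j=\Cdb q_j$ can be completed to the same effect. One small inaccuracy: that $q_jAq_k\neq(0)$ for $j\neq k$ is not ``because the $q_k$ are nonzero'' but because topological simplicity forces $\overline{Aq_kA}=A$, whence $q_jAq_kA\neq(0)$.

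The genuine gap is in your final realization step. The space $\oplus^c_k\,q_kAq_1$ is just $Aq_1$ with each summand one-dimensional, so its ``column Hilbert-space completion'' is $\ell^2$, on which the $T_{ij}$ act (after normalizing the basis) as scalar multiples of the ordinary matrix units $E_{ij}$. That representation is not bounded, let alone isometric, in general -- Example \ref{n8} exhibits a $1$-matricial algebra not even isomorphic to $\Kdb(\ell^2)$ -- and in any case it leaves no room for the tensor factor $K$. Nor can the operators $T_k$ on a common $K$ be ``read off from the inner-product data among the $T_{1k}$'': no inner product on $q_1A$ is available abstractly, and Remark 2 after Corollary \ref{n5} describes algebras already in standard form, so invoking it here is circular. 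This step is not bookkeeping; it is where the proof must actually use the hypothesis that $A$ is a concrete operator algebra. The paper's route: represent $A$ nondegenerately on $H_0$, so that $H_0=\oplus^2_k\,q_k(H_0)$ orthogonally (the $q_k$ are now mutually orthogonal projections with $\sum_kq_k\to I_{H_0}$ strongly); then $T_k\in q_1Aq_k$ and $T_k^{-1}\in q_kAq_1$ restrict to mutually inverse bounded bijections between $K_k:=q_k(H_0)$ and $K:=q_1(H_0)$, so each $K_k$ is unitarily isomorphic to $K$, and conjugating by the resulting unitary $U:H_0\to K^{(\infty)}$ puts $A$ in the standard $1$-matricial form. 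You should replace your third paragraph with this argument.
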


\begin{proof} By the similarity trick at the start of this section,
we may assume that the $q_k$ are projections.
Since $A$ is semiprime,  the $q_k A$ are minimal right ideals,
and $A$ has dense socle.  Any nonzero $T \in B(q_j A,q_k A)_A$ is
invertible, and if $S, T \in B(q_j A,q_kA)_A$ then $T^{-1} S \in
\Cdb q_j$.  Thus $B(q_j A,q_kA)_A = \Cdb T$. We now show that the `left
multiplication map'  $\theta : q_j A q_k \to CB(q_k A,q_j A)_A$ is a
completely isometric isomorphism.  Certainly it is completely
contractive and one-to-one. Also, if $T \in CB(q_k A,q_j A)_A$ then
$T(q_k) \in q_j A q_k$ with $T(q_k) q_ka = T(q_ka)$.  Thus
$\theta^{-1}$ is a contraction, and similarly it is completely
contractive. Choose $0 \neq T_k \in q_1 A q_k$, with $q_1 = T_1$.
Write $T_{k}^{-1}$ for the inverse of $T_k \in B(q_j A,q_kA)_A$, an
element of $q_k A q_1$ with $T_{k}^{-1} T_{k} = q_k$ and $T_{k}
T_{k}^{-1} = q_1$. Then $T_{jk} = T_j^{-1} T_k \in q_j A q_k$, and
so $q_j A q_k = \Cdb T_j^{-1} T_k$. Any $a \in A$ may be
approximated first by $\sum_{k = 1}^n \, q_k a$, and then by
$\sum_{j,k = 1}^n \, q_k a q_j$. Thus $A$ is the closure of the span
of $\{ T_{ij} \}$, which are a set of matrix units for $A$.

  If $A \subset B(H_0)$ nondegenerately,
set $K_k = q_k(H_0) = T_k^{-1}(H_0)$, 
 and let $K = q_1 H_0$.   Then $K_k  \cong K$ via $T_k$ and
 $T_k^{-1}$.
 Since these are Hilbert spaces, there is a unitary $U_k : K_k \to
 K$, and hence a unitary isomorphism
$U : H_0 = \oplus^2_k \, K_k \to K^{(\infty)}$.   It is easy to see
$U A U^*$ is a $1$-matricial  algebra, so that $A$ is unitarily equivalent to a
$1$-matricial  algebra.
\end{proof}

\begin{lemma} \label{chgk2} Let $A$ be an infinite dimensional $1$-matricial  algebra.
Then  $A$ is completely isomorphic to $\Kdb(\ell^2)$ iff $A$ is
topologically isomorphic to a $C^*$-algebra (as Banach algebras),
and iff
 $(\Vert T_k \Vert \Vert T^{-1}_k \Vert)$ is bounded (where
$T_k$ is as in Definition {\rm \ref{dfm}}).
If $\Vert T_k \Vert \Vert T^{-1}_k \Vert \leq 1$ for all $k$
then $A = \Kdb(\ell^2)$ completely isometrically.
\end{lemma}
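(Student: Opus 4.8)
The structure is a chain of equivalences together with a final isometric statement, so I would organize the argument around the basis $(T_{1k})$ for $q_1 A$ identified in Remark 2 after Lemma \ref{n4}. The plan is to prove: (a) if $(\|T_k\|\,\|T_k^{-1}\|)$ is bounded, then $A$ is completely isomorphic to $\Kdb(\ell^2)$; (b) if $A$ is topologically isomorphic (as a Banach algebra) to a $C^*$-algebra, then $(\|T_k\|\,\|T_k^{-1}\|)$ is bounded; (c) $A \cong \Kdb(\ell^2)$ completely isomorphically trivially implies $A$ is topologically isomorphic to a $C^*$-algebra; and finally the refinement that $\|T_k\|\,\|T_k^{-1}\| \le 1$ for all $k$ forces $A = \Kdb(\ell^2)$ completely isometrically. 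Since $T_1 = I$ we always have $\|T_k\|\,\|T_k^{-1}\| \ge 1$, so the hypothesis in the last sentence says each of these products equals $1$.

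For (a): with $T_{ij} = E_{ij}\otimes T_i^{-1}T_j$, consider the "diagonal conjugation" by the (possibly unbounded-looking, but we will control it) operator $D = \mathrm{diag}(T_1, T_2, \dots)$ acting on $H = K^{(\infty)}$; more precisely I would work with the bounded invertible operator obtained after renormalizing, or directly estimate $\|\sum \alpha_{ij} T_{ij}\|$ against $\|\sum \alpha_{ij} E_{ij}\otimes I_K\|$. The point is that $D\,(E_{ij}\otimes I)\,D^{-1} = E_{ij}\otimes T_i T_j^{-1}$, which is not quite $T_{ij}$, so instead I would conjugate the matrix unit system $E_{ij}\otimes I$ (which spans a copy of $\Kdb(\ell^2)\otimes I_K$, completely isometrically a copy of $\Kdb(\ell^2)$ amplified) by $D^{-1}$ to land on $T_{ij} = E_{ij}\otimes T_i^{-1}T_j$. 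The similarity $X \mapsto D^{-1} X D$ is completely bounded with cb-norm controlled by $\sup_k \|T_k\| \cdot \sup_k \|T_k^{-1}\|$... but this is infinite in general; the correct move is to renormalize: replace $T_k$ by $T_k/\|T_k\|$, which does not change $T_{ij}$ at all, so WLOG $\|T_k\| = 1$ for all $k$ and then boundedness of $(\|T_k^{-1}\|)$ is exactly what is needed for $D$ and $D^{-1}$ to be bounded (note $\|T_k^{-1}\| \ge 1$ once $\|T_k\|=1$ since $T_1=I$ forces... actually one should be careful, but after renormalizing the hypothesis becomes $\sup_k\|T_k^{-1}\|<\infty$). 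Then $D^{-1}(\cdot)D$ is a completely bounded isomorphism of $\Kdb(\ell^2)$ (amplified) onto $A$, with cb-inverse bound $\sup_k\|T_k^{-1}\|$, giving (a); and if all $\|T_k\|\,\|T_k^{-1}\| = 1$ then after renormalizing each $T_k^{-1}$ is an isometry and (being invertible with isometric inverse in $B(K)$) $T_k$ is unitary, so $D$ is unitary and $A = \Kdb(\ell^2)$ completely isometrically. This handles (a) and the last sentence.

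For (b), the harder direction: suppose $\Phi: A \to C$ is a bounded Banach-algebra isomorphism onto a $C^*$-algebra $C$ with bounded inverse. By Lemma \ref{n4}, $A$ is topologically simple and a compact modular annihilator algebra, and it is infinite-dimensional, so $C$ is an infinite-dimensional topologically simple $C^*$-algebra which is a compact (in the sense of Chapter 8 of \cite{Pal}) algebra; the classical theory (e.g.\ \cite[Chapter 8]{Pal}, Kaplansky) identifies such a $C^*$-algebra as $\ast$-isomorphic to $\Kdb(\ell^2)$. Now $\Phi$ carries the algebraically minimal idempotents $q_k$ to algebraically minimal idempotents $\Phi(q_k)$ in $\Kdb(\ell^2)$ with $\sum_k \Phi(q_k) = 1$ strictly and pairwise products zero; by similarity theory (the same Lemma XV.6.1 of \cite{DS} / uniform boundedness argument used at the start of Section 4, applied to $\Phi$ transported back, or rather to the $\Phi(q_k)$ whose partial sums are uniformly bounded by $\|\Phi\|\,\|\Phi^{-1}\|$ times the cai bound $1$) one can conjugate $\Phi(q_k)$ to the standard rank-one projections $E_{kk}$ by a single invertible $S$. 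Composing, we get a bounded isomorphism with bounded inverse $A \to \Kdb(\ell^2)$ sending $q_k \mapsto E_{kk}$ and hence $T_{1k}$ to a scalar multiple $\lambda_k E_{1k}$; chasing the monotone-Schauder-basis estimate in Remark 2 after Lemma \ref{n4} against the (orthonormal) behavior of $(E_{1k})$ in $\Kdb(\ell^2)$, the norms $\|\sum \alpha_k T_{1k}\|^2 = \|\sum|\alpha_k|^2 T_kT_k^*\|$ must be two-sided-comparable to $\sum|\alpha_k|^2$, which forces $\sup_k \|T_kT_k^*\| < \infty$ and $\sup_k\|(T_kT_k^*)^{-1}\| < \infty$, i.e.\ $(\|T_k\|\,\|T_k^{-1}\|)$ bounded after renormalizing. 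I expect (b) to be the main obstacle, specifically the step of converting a Banach-algebra isomorphism into control on the individual $\|T_k\|\,\|T_k^{-1}\|$: one must be careful that a bounded isomorphism need not respect the column-Hilbertian structure of $q_1 A$ on the nose, so the comparison has to be extracted purely from the ideal/basis structure and the uniform boundedness of partial sums of the $\Phi(q_k)$. Finally (c) is immediate since $\Kdb(\ell^2)$ is itself a $C^*$-algebra, closing the cycle of equivalences.
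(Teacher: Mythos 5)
Your overall route coincides with the paper's: the implication from boundedness of $(\Vert T_k\Vert\,\Vert T_k^{-1}\Vert)$ to complete isomorphism with $\Kdb(\ell^2)$ is obtained there by exactly your diagonal conjugation (the map $a\mapsto xax^{-1}$ with $x={\rm diag}\{T_1,T_2,\dots\}$ after renormalizing so that $\Vert T_k\Vert=\Vert T_k^{-1}\Vert\le M$), with the completely isometric statement being the unitary case $M=1$; and the converse direction likewise proceeds by identifying the $C^*$-algebra with $\Kdb(\ell^2)$, pushing the $q_k$ to uniformly bounded, mutually orthogonal rank-one idempotents, and conjugating these to orthonormal rank-one projections by a single invertible $S$. (A minor slip in your part (a): rescaling $T_k\mapsto T_k/\Vert T_k\Vert$ does change each matrix unit $T_{ij}$ by a nonzero scalar, though not the algebra $A$ itself, so the renormalization is harmless but not literally ``no change to $T_{ij}$''.)

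There is, however, a genuine gap in the last step of your direction (b). Since $\Psi(T_{1k})=\lambda_kE_{1k}$, the norm of $\Psi(\sum_k\alpha_kT_{1k})$ is $(\sum_k|\alpha_k\lambda_k|^2)^{1/2}$, so what you actually obtain is two-sided comparability of $\Vert\sum_k|\alpha_k|^2T_kT_k^*\Vert$ with $\sum_k|\alpha_k|^2|\lambda_k|^2$, not with $\sum_k|\alpha_k|^2$. More seriously, even comparability with $\sum_k|\alpha_k|^2$ would not force $\sup_k\Vert(T_kT_k^*)^{-1}\Vert<\infty$: a positive invertible operator can have norm pinned between two constants while its inverse is enormous. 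Example \ref{n7} (where $T_k={\rm diag}\{k,1/k\}$, i.e.\ ${\rm diag}\{1,1/k^2\}$ after normalizing $\Vert T_k\Vert=1$) is precisely such a case: there $q_1A$ is a row Hilbert space, so the norm on $q_1A$ has exactly the form your comparison would detect, yet $\Vert T_k\Vert\,\Vert T_k^{-1}\Vert=k^2$ is unbounded. Your argument only ever looks at $q_1A$ and the $T_{1k}$, and hence can only control $\Vert T_k\Vert$ (relative to $|\lambda_k|$), never $\Vert T_k^{-1}\Vert$. The missing ingredient --- and this is how the paper closes the direction --- is to also track $T_{k1}=E_{k1}\otimes T_k^{-1}$: since $\Psi(q_j)=E_{jj}$, one has $\Psi(T_{k1})=\mu_kE_{k1}$ for a scalar $\mu_k$, the identity $T_{1k}T_{k1}=q_1$ forces $\lambda_k\mu_k=1$, and boundedness of $\Psi^{-1}$ gives $\Vert T_k\Vert\le C|\lambda_k|$ and $\Vert T_k^{-1}\Vert\le C|\mu_k|$, whence $\Vert T_k\Vert\,\Vert T_k^{-1}\Vert\le C^2$. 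Given the setup you already have, this is a one-line repair, and it also makes the full Schauder-basis comparison unnecessary.
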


\begin{proof}  If $\rho : A \to \Kdb(\ell^2)$ is a bounded homomorphism
and $e_k = \rho(q_k)$,  then $e_k$ are finite rank idempotents. If
$\rho$ is surjective, then the idempotents $e_k$ are rank one, since they are
algebraically minimal.    They are mutually orthogonal and uniformly
bounded, so there is an invertible $S \in B(\ell^2)$ with $S^{-1}
e_k S $ rank one projections, say $S^{-1} e_k S = | \xi_k \rangle
\langle \xi_k |$, $\xi_k$ a unit vector in $\ell^2$.  Since these
projections are
mutually orthogonal, $(\xi_k)$ is orthonormal.  Then $\rho(T_k) =
e_1 \rho(T_k) e_k$, so that $S^{-1} \rho(T_k) S = \lambda_k | \xi_1
\rangle \langle \xi_k |$ for some scalars $\lambda_k$. There is a constant such that $\Vert T_k
\Vert \leq C |\lambda_k |$. Similarly, $S^{-1} \rho(T_k^{-1}) S =
\mu_k | \xi_k \rangle \langle \xi_1 |$, and $\Vert T_k^{-1} \Vert
\leq D |\mu_k|$.  Thus $\Vert T_k \Vert \Vert T^{-1}_k \Vert \leq C
D |\lambda_k \mu_k|$.   However since $q_1 = T_k T_k^{-1}$ we have
 $$S^{-1} \rho(q_1) S =
| \xi_1 \rangle \langle \xi_1 | = \lambda_k \mu_k | \xi_1 \rangle \langle \xi_1 |,$$
so that $\lambda_k \mu_k = 1$.  Hence $\Vert T_k \Vert \Vert T^{-1}_k \Vert
\leq C D$.

 Suppose that $\Vert T_k \Vert \Vert T^{-1}_k \Vert \leq M^2$
for each $k$.  By multiplying by appropriate constants, we may assume
that $\Vert T_k \Vert = \Vert T^{-1}_k \Vert \leq M$ for all $k$.
 Let $x = {\rm diag} \{T_1, T_2, \cdots \}$, and $x^{-1} = {\rm diag}
\{T_1^{-1}, T_2^{-1}, \cdots \}$.  The map $\theta(a) = x a x^{-1}$
is a complete isomorphism from $A$ onto $\Kdb(\ell^2)$, and
$\theta$ is isometric if $M = 1$.

If $A$ were isomorphic to a $C^*$-algebra $B$, then $B$ is a
topologically simple $C^*$-algebra with dense socle, so is a dual
algebra in the sense of Kaplansky. Since it is topologically
 simple, $B =
\Kdb(\ell^2)$.  It is a simple consequence of similarity theory that
a bicontinuous  isomorphism from $\Kdb(\ell^2)$ is a complete
isomorphism.
\end{proof}

An operator algebra will be called a {\em subcompact
$1$-matricial  algebra}, if it is
(completely isometrically isomorphic to) a
$1$-matricial algebra with
the space $K$ in the definition of a $1$-matricial algebra (the
second paragraph of \ref{dfm}) being finite dimensional.

 \begin{lemma} \label{n6}  A $1$-matricial algebra $A$ is subcompact
iff $A$ is completely isometrically                                              
        isomorphic to a subalgebra of $\Kdb(\ell^2)$, and 
iff its   $C^*$-envelope is an annihilator $C^*$-algebra.
In this case, $A$ is an ideal in its bidual, and
$q_k A$ (resp.\ $A
q_k$) is linearly completely isomorphic to a row (resp.\ column)
Hilbert space.  Here $q_k$ is as in
 Definition {\rm \ref{dfm}}.  Indeed, if a
$1$-matricial  algebra $A$ is bicontinuously
(resp.\ isometrically) isomorphic to
a subalgebra of $\Kdb(\ell^2)$, then $A$ is bicontinuously
(resp.\ isometrically)
isomorphic to a subcompact $1$-matricial algebra.
 \end{lemma}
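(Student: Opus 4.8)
The plan is to prove the chain of equivalences by circling through the three stated conditions, then deriving the structural consequences. First I would show that ``subcompact'' implies ``completely isometrically embeddable in $\Kdb(\ell^2)$'': if $K$ is finite dimensional, say $\dim K = d$, then $H = K^{(\infty)}$ has a natural orthonormal basis indexed by $\Ndb \times \{1,\dots,d\}$, and each matrix unit $T_{ij} = E_{ij} \otimes T_i^{-1} T_j$ is a finite rank operator; since the $T_{ij}$ span a dense subspace of $A$, and finite rank operators are dense in $\Kdb(H) \cong \Kdb(\ell^2)$, we get $A \subset \Kdb(H)$, and this inclusion is a complete isometry since it is a restriction of a $*$-representation realized on $H$. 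For the converse, suppose $A \subset \Kdb(\ell^2)$ completely isometrically. Then $q_1 = T_1$ is a rank-one idempotent (it is algebraically minimal in $A$, and idempotents in $\Kdb(\ell^2)$ have finite rank; an algebraically minimal one must be rank one by the argument already used in Lemma \ref{chgk2}). Realize $A$ nondegenerately on $H_0$; then as in the proof of Theorem \ref{chgk1}, $K = q_1 H_0$, and $q_1 \in \Kdb$ of rank one forces $\dim K = 1$ initially --- but more carefully, the right object is $K_k = q_k(H_0)$, and since each $q_k$ is rank one in $\Kdb(\ell^2)$, each $K_k$ is one dimensional; then the unitary rearrangement in the proof of Theorem \ref{chgk1} exhibits $A$ as unitarily (hence completely isometrically) isomorphic to a $1$-matricial algebra built over a one dimensional $K$, which is certainly subcompact (finite dimensional $K$).

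Next I would connect the $C^*$-envelope condition. The $C^*$-envelope $C^*_e(A)$ is the $C^*$-algebra generated by the image of $A$ under the embedding into $\Kdb(\ell^2)$ just constructed; since $A$ has dense socle and is topologically simple (Lemma \ref{n4}), $C^*_e(A)$ is a topologically simple $C^*$-algebra with dense socle, hence $C^*_e(A) = \Kdb(\ell^2)$, which is an annihilator $C^*$-algebra. Conversely, if $C^*_e(A)$ is an annihilator $C^*$-algebra, then by topological simplicity it equals $\Kdb(\mathcal H)$ for some Hilbert space $\mathcal H$, and $A \subset \Kdb(\mathcal H) \cong \Kdb(\ell^2)$ completely isometrically, which by the previous paragraph makes $A$ subcompact. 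This closes the three-way equivalence.

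For the structural consequences: once $A \subset \Kdb(\ell^2)$, $A$ is an ideal in its bidual --- indeed $q_1 A$ is then a closed subspace of $\Kdb(\ell^2) q_1$, which is a (finite dimensional-fibered, hence reflexive) Hilbert space, so $q_1 A$ is reflexive, and by Corollary \ref{n5} $A$ is a two-sided ideal in $A^{**}$. More precisely, with $K$ finite dimensional, $q_k A = \overline{\mathrm{Span}}\{T_{kj} : j\}$ and the computation in Remark 2) after Corollary \ref{n5} (the monotone Schauder basis estimate) shows $q_k A$ is, as an operator space, completely isomorphic to a row Hilbert space (the norm of $\sum \alpha_j T_{kj}$ is controlled above and below by $(\sum |\alpha_j|^2)^{1/2}$ up to the constants $\Vert T_j\Vert, \Vert T_j^{-1}\Vert$ which over finite dimensional $K$ need not be bounded --- so only \emph{completely isomorphic}, not isometric, which matches the statement). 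Similarly $Aq_k$ is completely isomorphic to a column Hilbert space. Finally, the ``Indeed'' clause: if $A$ is merely bicontinuously (resp.\ isometrically) isomorphic to a subalgebra of $\Kdb(\ell^2)$, one applies the similarity trick from the start of Section 4 (or, in the isometric case, the unitary rearrangement) to the image, conjugating the mutually orthogonal finite rank idempotents $\rho(q_k)$ into rank-one projections; this transported copy of $A$ then sits completely isometrically (resp.\ after the bicontinuous change of variable) inside $\Kdb(\ell^2)$ with the $q_k$ as rank-one projections, i.e.\ is subcompact, and the isomorphism $A \to $ (this subcompact algebra) is bicontinuous (resp.\ isometric) by construction.

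The main obstacle I anticipate is the bookkeeping in identifying $\dim K$ with the rank of $q_1$ and pushing the unitary rearrangement of Theorem \ref{chgk1} through cleanly in the embedded setting --- in particular making sure that ``$A \subset \Kdb(\ell^2)$ completely isometrically'' really does force all the $q_k$ to be finite rank simultaneously and with a common finite bound on the rank (here algebraic minimality gives rank one, so the bound is trivially $1$), and that the rearrangement preserves complete isometry rather than just isometry. The row/column Hilbert space identification is routine given the Schauder basis estimate already recorded, and the $C^*$-envelope step is a short appeal to the structure of topologically simple $C^*$-algebras with dense socle.
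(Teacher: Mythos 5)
There is a genuine gap, and it sits at the central step. You claim that if $A$ sits completely isometrically inside $\Kdb(\ell^2)$ then each $q_k$ is a rank-one idempotent, ``by the argument already used in Lemma \ref{chgk2}'', and you then conclude that the space $K$ in Definition \ref{dfm} is one-dimensional. The argument in Lemma \ref{chgk2} uses surjectivity of $\rho$ in an essential way: the idempotents $e_k=\rho(q_k)$ are rank one there because they are algebraically minimal \emph{in $\Kdb(\ell^2)$}, i.e.\ $e_k\Kdb(\ell^2)e_k=\rho(q_kAq_k)=\Cdb e_k$. For a mere subalgebra, algebraic minimality of $q_k$ in $A$ says only that $q_kAq_k=\Cdb q_k$, and $q_k\Kdb(\ell^2)q_k$ can be much larger. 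Example \ref{n7} is a direct counterexample to your conclusion: there $K=\ell^2_2$, the $q_k=E_{kk}\otimes I_2$ are rank-two projections, $A$ is a subcompact algebra of compact operators, and $A$ is not even boundedly isomorphic to $\Kdb(\ell^2)$ --- which is exactly what a $1$-matricial algebra over a one-dimensional $K$ would be, since the matrix units then span all finite-rank operators. All one can, and needs to, extract from $A\subset\Kdb(\ell^2)$ is that the $q_k$ (after the similarity making them projections, in the bicontinuous case) are compact idempotents, hence of \emph{finite} rank, so that $K=q_1(H_0)$ is finite dimensional; that is precisely what subcompactness requires, and it is the route the paper takes via Theorem \ref{chgk1}.

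The error propagates into your treatment of the consequences. Since $K$ need not be one-dimensional, your appeal to the monotone Schauder basis estimate does not identify $q_kA$ with a row Hilbert space: as you yourself note, the constants $\Vert T_j\Vert\,\Vert T_j^{-1}\Vert$ may be unbounded, and then that estimate gives no two-sided comparison with the $\ell^2$ norm. The paper instead observes that $q_kA\cong e_k\theta(A)\subset e_k\Kdb(\ell^2)$, and $e\Kdb(\ell^2)$ for a finite-rank idempotent $e$ is Hilbertian (indeed completely isomorphic to a row Hilbert space, with constant depending on the rank of $e$); closed subspaces of row Hilbert spaces are again row Hilbert spaces, and reflexivity together with Corollary \ref{n5} then gives the ideal-in-bidual statement. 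Finally, for the $C^*$-envelope equivalence you identify $C^*_e(A)$ with the $C^*$-algebra generated by one particular embedding and assert that this generated algebra is topologically simple; neither claim is justified (an arbitrary embedding generates only a $C^*$-cover, which surjects onto the envelope, and topological simplicity of $A$ need not pass to it). What is actually needed, and what the paper leaves as an exercise, is that a separable operator algebra embeds completely isometrically in $\Kdb(\ell^2)$ iff its $C^*$-envelope is an annihilator $C^*$-algebra; one direction uses that $C^*$-subalgebras of $\Kdb(\ell^2)$ and their quotients are annihilator $C^*$-algebras, and the other that separable annihilator $C^*$-algebras embed in $\Kdb(\ell^2)$.
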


\begin{proof} That a subcompact $1$-matricial algebra
is a subalgebra of $\Kdb(\ell^2)$ follows from the definition.
We leave it as an exercise that the separable operator algebras whose
 $C^*$-envelope is an annihilator $C^*$-algebra, are precisely
the subalgebras  of $\Kdb(\ell^2)$.  If $\theta$
was a bicontinuous homomorphism from $A$ onto a subalgebra of
$\Kdb(\ell^2)$, then $e_k = \theta(q_k)$ is a finite rank
idempotent. Hence $e_k \Kdb(\ell^2)$ is Hilbertian. Thus $q_k A$ is
Hilbertian, and similarly $A q_k$ is Hilbertian. These are
reflexive, and so  $A$ is an ideal in its bidual by Corollary
\ref{n5}. If $H_0$ is the closure of $\theta(A)(\ell^2)$, then the
compression of $\theta$ to $H_0$ is a nondegenerate bicontinuous
homomorphism, with range easily seen to be inside $\Kdb(H_0)$. So we
may assume that $\theta$ is nondegenerate from the start.
 As in Theorem \ref{chgk1}, there is an invertible operator $S$ on
 $\ell^2$ with $p_k = S^{-1} e_k S$ mutually orthogonal
projections.  These are compact, so finite dimensional.
Now appeal to Theorem \ref{chgk1} and its proof to see
that $A$  is bicontinuously isomorphic to a subcompact $1$-matricial  algebra.
The other cases are similar. 
   \end{proof}

{\sc Remarks.}  1) \ Suppose that in the last lemma, also $(\Vert T_k \Vert \Vert
T^{-1}_k \Vert)$ is unbounded, so that $A$ is not isomorphic to a
$C^*$-algebra as Banach algebras.  In this case $A$ is not amenable (nor
has the total reduction property of Gifford \cite{Gifford}, 
etc).  For amenability implies the total reduction property, and the
total reduction property for subalgebras of $\Kdb(\ell^2)$, implies
by \cite{Gifford} that $A$ is similar to a $C^*$-algebra. It is
interesting to ask if a $\sigma$-matricial algebra is amenable (or
has the total reduction property, etc) iff it is isomorphic to a
$C^*$-algebra.  Probably no $1$-matricial
algebras are amenable, biprojective, have Gifford's reduction
property, etc, unless it is isomorphic to a $C^*$-algebra,
but this needs to be checked.

\smallskip

2) \ We do not know if 
$1$-matricial algebras are bicontinuously isomorphic iff they are
completely isomorphic.

 \begin{example}  \label{n7}  Let $K = \ell^2_2$, and $T_k = {\rm diag}\{ k, 1/
k \}$. In this case by the above $A$ is an ideal in its bidual, but
is not topologically isomorphic to $\Kdb(\ell^2)$ as Banach
algebras. Here $q_1 A$ is a row Hilbert space and $A q_1$ is a
column Hilbert space. Note that $A$ is not an annihilator algebra by
\cite[Theorem 8.7.12]{Pal}, since $(q_1 A)^*$ is not isomorphic to
$A q_1$ via the canonical pairing.
\end{example}

 \begin{example}  \label{n8}  Let $K = \ell^2$, and $T_k = E_{kk} + \frac{1}{k} I$.
 Claim: $q_1 A$ is not reflexive.  Indeed the Schauder basis $(T_{1k})$
(see Remark 2 after Corollary  \ref{n5}) fails the first part of the
 well known
 two part test for  reflexivity \cite{Meg}, because $\sum_{k=1}^\infty \, T_{k} T_k^*$ converges
 weak* but not in norm.  Or one can see that $q_1 A \cong c_0$ by
  Lemma \ref{sb2} below.
 Here $T_k^{-1}$ has $k$ in all diagonal entries but one, which has
 a positive value $< k$.  It follows that
$A q_1$ is a column Hilbert space.  By Corollary  \ref{n5},  $A$ is a left ideal in its
bidual, but is not a right ideal in its bidual.  This is interesting
since any $C^*$-algebra which is a left ideal in its
bidual is also a right ideal in its bidual \cite{Sharma}.

Note that this is not an annihilator algebra by \cite[Theorem
8.7.12]{Pal}, since $(q_1 A)^*$ is not isomorphic to $A q_1$. Also $A$ is
not bicontinuously isomorphic to a subalgebra of $\Kdb(\ell^2)$
by Lemma \ref{n6}.
\end{example}

 \begin{example}  \label{soni}  
Let $K=\ell^2$ and $T_k=I-\sum _{i=1}^k
(1-\sqrt{\frac{i}{k}})E_{ii}$. It is easy to see that
$q_k A$ is a row Hilbert space
and $Aq_k$ is a column Hilbert space, for all $k$.
Thus $A$ is an ideal in its
bidual. Also $A$ is not isomorphic to $\Kdb(\ell^2)$ since $( \Vert
T_k \Vert \Vert T_k^{-1} \Vert )$ is unbounded. 
Some of the authors are currently using examples such as these to
test conjectures about $1$-matricial algebras.  For example,
an argument by the second and third author in \cite{Sharma2}
gives a negative answer to the question 
`if $q_k A$ is a row Hilbert space and $Aq_k$ is a column Hilbert
space for all $k$, then is $A$  subcompact?' 
Indeed, we show there
that the $C^*$-envelope of the present example is not an annihilator $C^*$-algebra
(so $A$ is not subcompact by Lemma \ref{n6}).
\end{example}

For a $1$-matricial  algebra $A$,
 if we only care about the norms on $q_1A, Aq_1$ (as opposed to $q_2
A$ etc.) then we also may assume that $T_k \geq 0$ for all $k$, by
replacing $T_k$ by $(T_k T_k^*)^{\frac{1}{2}}$. This does not change
the norm on $q_1A$ and $A q_1$. Note that if we are given {\em any}
not necessarily invertible $T_k \geq 0$, for all $k \in \Ndb$, then
we can set $S_k = g_k(T_k)$, where $g_k(t) =
\chi_{[0,\frac{1}{2^k})}(t) + t \chi_{[\frac{1}{2^k},\infty)}(t)$.
Then $S_k$ is a small peturbation of $T_k$ which is invertible.
Using this trick one can build $1$-matricial  algebras such that
$q_1 A$ is `very bad':

\begin{lemma} \label{sb2}    If $T_2, T_3, \cdots$ are arbitrary
elements of norm $\geq 1$ in an operator space $X$,
then there exists a $1$-matricial  algebra $A$ with $q_1 A$
bicontinuously isomorphic to $\overline{{\rm Span}} \{ E_{1k}
\otimes T_k  \} \subset R_\infty(X)$.  (One may suppose without
loss that $X$ contains
$I$, the identity operator on a Hilbert space on which
$X$ is represented, and set $T_1 = I.$) Also, $A$ may be chosen
such that $A q_1$ is a row Hilbert space, if $\sum_{k=2}^n \,
|\alpha_k|^4 \leq \Vert \sum_{k=2}^n \, |\alpha_k|^2  T_k^2 \Vert^2$
for all scalars $\alpha_k$ and $n \in \Ndb$.
\end{lemma}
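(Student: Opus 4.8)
The plan is to build the $1$-matricial algebra directly from the second-paragraph description in Definition~\ref{dfm}, choosing the invertible operators cleverly so that the resulting column space $q_1 A$ is the prescribed subspace of $R_\infty(X)$. First I would represent $X$ completely isometrically on a Hilbert space $L$ containing the identity operator $I$, set $T_1 = I$, and for $k \geq 2$ replace the given $T_k$ by $S_k := g_k(|T_k|)$, where $|T_k| = (T_k^* T_k)^{1/2}$ and $g_k$ is the truncation function displayed just before the lemma statement; since $\|T_k\| \geq 1$, the function $g_k$ only perturbs the part of the spectrum of $|T_k|$ in $[0, 2^{-k})$, so $S_k$ is invertible and $\|S_k - |T_k|\| < 2^{-k}$. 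Crucially, passing from $T_k$ to $|T_k|$ does not change the norm of $E_{1k} \otimes T_k$ (since $\|E_{1k}\otimes a\|$ in $R_\infty(X)$ depends only on $\|a\|$, or more to the point $\|\sum_k E_{1k}\otimes a_k\|^2 = \|\sum_k a_k a_k^*\|$, which is unchanged under $a_k \mapsto |a_k^*|$; one should be a little careful here about $|T_k|$ versus $|T_k^*|$ and possibly use $T_k^* T_k$ versus $T_k T_k^*$, but this is a routine bookkeeping point). Then form $A$ as the closed span of $T_{ij} := E_{ij} \otimes S_i^{-1} S_j$ acting on $H = \ell^2 \otimes^2 L$ as in Definition~\ref{dfm}.

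Next I would identify $q_1 A$. By construction $q_1 A = \overline{\operatorname{Span}}\{T_{1k} : k\} = \overline{\operatorname{Span}}\{E_{1k}\otimes S_k : k\} \subset R_\infty(X)$ (using $S_1 = I$ and that each $S_k \in X$, as $X$ is a subspace closed under continuous functional calculus applied to positive elements — here I am using that $X$ can be taken to contain enough of the relevant von Neumann algebra, or simply that $g_k(|T_k|)$ lies in the norm-closure of polynomials in $T_k^* T_k$, which lands in a C*-algebra we may as well include; again, a small technical point to be handled carefully). Since $\|S_k - |T_k|\| < 2^{-k}$ and the summable perturbation $\sum_k 2^{-k} < \infty$, a standard Schauder-basis perturbation argument (or just the observation that the identity map extends to a bicontinuous isomorphism between the two closed spans, being a small perturbation of the identity on the generating vectors with summable total error) shows $\overline{\operatorname{Span}}\{E_{1k}\otimes S_k\}$ is bicontinuously isomorphic to $\overline{\operatorname{Span}}\{E_{1k}\otimes T_k\}$. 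I expect this perturbation estimate to be the main technical obstacle: one must verify that the basis constant of $(E_{1k}\otimes S_k)$ is controlled (it is monotone by Remark~2 after Corollary~\ref{n5} since the $S_k$ are positive and invertible) and that the perturbation series converges in the right operator-space sense; invoking the monotone-basis fact from that remark keeps this clean.

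Finally, for the last assertion about $A q_1$ being a row Hilbert space: recall that the norm on $\sum_k \alpha_k T_{k1} = \sum_k \alpha_k (E_{k1}\otimes S_k)$ computes as $\|\sum_k |\alpha_k|^2 S_k^2\|$ (a column-type formula, giving $A q_1$ its operator-space structure). We want this to equal $\sum_k |\alpha_k|^2$, i.e. we want $A q_1$ to be the column Hilbert space — wait, the lemma says row; I would double-check the row/column conventions against Lemma~\ref{n4} and Remark~2 after Corollary~\ref{n5}, where $q_k A$ is a column sum's summand and $A q_k$ a row sum's; in any case the point is that under the hypothesis $\sum_{k=2}^n |\alpha_k|^4 \leq \|\sum_{k=2}^n |\alpha_k|^2 T_k^2\|^2$ together with the reverse inequality $\|\sum_k |\alpha_k|^2 T_k^2\| \leq (\sum_k |\alpha_k|^2 \|T_k\|^2)$ being no obstruction after normalizing $\|T_k\| = 1$ (which we may arrange by scaling, as in Lemma~\ref{chgk2}, without affecting $q_1 A$ up to bicontinuous isomorphism), one gets that the relevant quadratic form is comparable to $\sum_k |\alpha_k|^2$, forcing the Hilbertian structure. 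So I would: normalize the $S_k$ to have norm $1$; observe the hypothesis on the $T_k$ transfers (up to the summable perturbation, hence up to constants) to the $S_k$; and then read off that $A q_1$ carries the operator space structure of a (row) Hilbert space because its defining quadratic form is equivalent to $\ell^2$, after which one can if desired further perturb/rescale to make it isometrically a row Hilbert space. I would present this last part briefly, since once the norm formula and the hypothesis are in hand it is a short computation, and flag the row-versus-column convention as the only thing needing care.
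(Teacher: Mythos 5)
Your treatment of the first assertion is essentially the paper's own argument: replace $T_k$ by $(T_kT_k^*)^{1/2}$ (which leaves the row norm $\Vert \sum_k E_{1k}\otimes a_k\Vert^2 = \Vert \sum_k a_ka_k^*\Vert$ unchanged), then truncate with $g_k$ to get invertible $S_k$ with $0\le S_k - T_k\le 2^{-k}I$, and compare the two closed spans. The only differences are organizational: the paper does not invoke the principle of small perturbations but instead proves the two-sided estimate $\Vert\sum|\alpha_k|^2T_k^2\Vert^{1/2}\le\Vert\sum|\alpha_k|^2S_k^2\Vert^{1/2}\le\Vert\sum|\alpha_k|^2T_k^2\Vert^{1/2}+\sup_k|\alpha_k|$ directly from $S_k^2-T_k^2\le 2^{-k}I$, absorbing the last term using $\Vert T_k\Vert\ge 1$. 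Your perturbation route would also work, but note that with the stated cutoffs the relevant sum $\sum_{k\ge 2}\Vert S_k-T_k\Vert\,\Vert e_k^*\Vert$ comes out to exactly $1$ rather than strictly less than $1$ (the coordinate functionals of the monotone basic sequence have norm at most $2$), so you would need to shrink the cutoffs; the paper's direct estimate avoids this. Also, your worry about whether $S_k$ lies in $X$ is moot: the $1$-matricial algebra is built in $B(\ell^2\otimes^2 K)$ and only the bicontinuous isomorphism with the prescribed subspace of $R_\infty(X)$ is asserted.

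The second assertion is where your proposal has a genuine gap, in fact two. First, the norm formula: $Aq_1$ is spanned by $T_{k1}=E_{k1}\otimes T_k^{-1}T_1=E_{k1}\otimes S_k^{-1}$, so $\Vert\sum_k\alpha_kT_{k1}\Vert^2=\Vert\sum_k|\alpha_k|^2S_k^{-2}\Vert$ --- it involves the \emph{inverses}, not $S_k^2$ as you wrote. Second, and more importantly, the hypothesis $\sum_{k}|\alpha_k|^4\le\Vert\sum_k|\alpha_k|^2T_k^2\Vert^2$ does not make any relevant quadratic form comparable to $\sum_k|\alpha_k|^2$; it cannot, since in Example~\ref{oh} this hypothesis holds with equality and $q_1A\cong\ell^4$ is not even isomorphic to a Hilbert space. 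The hypothesis plays an entirely different role: it licenses a further modification of the generators. The paper sets $R_k=S_k\oplus t_kI$ with $t_k=2^{-k}$. Then $\Vert\sum_k|\alpha_k|^2R_k^2\Vert=\max\{\Vert\sum_k|\alpha_k|^2S_k^2\Vert,\;\sum_k|\alpha_k|^2t_k^2\}$, and by Cauchy--Schwarz $\sum_k|\alpha_k|^2t_k^2\le(\sum_k|\alpha_k|^4)^{1/2}$, which the hypothesis dominates by $\Vert\sum_k|\alpha_k|^2T_k^2\Vert\le\Vert\sum_k|\alpha_k|^2S_k^2\Vert$; so the appended scalar blocks do not change $q_1A$. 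On the inverse side, $S_k^{-1}\le 2^kI=t_k^{-1}I$ forces $\Vert\sum_k|\alpha_k|^2R_k^{-2}\Vert=\sum_k|\alpha_k|^2/t_k^2$ exactly, and the same computation at all matrix levels shows $(\alpha_k)\mapsto[\alpha_1t_1R_1^{-1}:\alpha_2t_2R_2^{-1}:\cdots]$ is a complete isometry from row $\ell^2$ onto $Aq_1$. Without this $R_k=S_k\oplus t_kI$ device your argument has no mechanism for producing the row Hilbert space structure, and rescaling to $\Vert T_k\Vert=1$ does not help (the hypothesis is not scale-invariant in the way you need, and the conclusion is a complete isometry, not merely a Banach-space isomorphism).
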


\begin{proof}   Assume that $I \in X \subset B(K)$.
As above, we may assume $T_k \geq 0$, and form $S_k$ as described.
Also,  $T_k \leq S_k$, $\frac{1}{2^k} \, I \leq
S_k$, and $S_k - T_k \leq \frac{1}{2^k} I$ and $S^2_k - T^2_k \leq
\frac{1}{2^k} I$. Then $$\sum_{k=1}^n \, |\alpha_k|^2  T_k^2 \leq
\sum_{k=1}^n \, |\alpha_k|^2  S_k^2 \leq \sum_{k=1}^n \,
|\alpha_k|^2 T_k^2 + \sum_{k=1}^n \, \frac{|\alpha_k|^2}{2^k}  I
\leq \sum_{k=1}^n \, |\alpha_k|^2 T_k^2 + \sup_k  \,  |\alpha_k|^2
I,$$ so that
$$\Vert \sum_{k=1}^n \, |\alpha_k|^2  T_k^2 \Vert^{\frac{1}{2}}
\leq \Vert \sum_{k=1}^n \, |\alpha_k|^2  S_k^2 \Vert^{\frac{1}{2}}
\leq \Vert \sum_{k=1}^n \, |\alpha_k|^2 T_k^2 \Vert^{\frac{1}{2}} +
\sup_k |\alpha_k| .$$   If $1 \leq \Vert T_k \Vert$ then
the right hand side is
dominated by $2 \Vert \sum_{k=1}^n \, |\alpha_k|^2  T_k^2
\Vert^{\frac{1}{2}}$, so that 
$$\overline{{\rm Span}(\{E_{1k} \otimes S_k : k \in
\Ndb \})} \cong \overline{{\rm
Span}(\{E_{1k} \otimes T_k : k \in \Ndb \})}$$
bicontinuously  (hence they are reflexive or
nonreflexive simultaneously).   Thus we obtain a $1$-matricial  algebra $A$ formed from the
$S_k, S_k^{-1}$, with $q_1 A$
 bicontinuously isomorphic to the closure of the span of
$E_{1k} \otimes T_k$ in $R_\infty(X)$.

Assume that 
$\sqrt{\sum_{k=2}^n \, |\alpha_k|^4} \leq \Vert \sum_{k=2}^n \,
|\alpha_k|^2  T_k^2 \Vert \leq \Vert \sum_{k=2}^n \, |\alpha_k|^2
S_k^2 \Vert$.   Since $\Vert I + T \Vert = 1 + \Vert T \Vert$ if $T
\geq 0$, it is easy to see that we can replace all occurrences of
the symbols `$k=2$', in the last formula,  by `$k=1$'.  Let $R_k =
S_k \oplus t_k I$. Then
$$\Vert \sum_{k=1}^n \, |\alpha_k|^2  R_k^2 \Vert = \max \{
\Vert \sum_{k=1}^n \, |\alpha_k|^2  S_k^2 \Vert ,
\sum_{k=1}^n \, |\alpha_k|^2 t_k^2 \} .$$
The last sum is dominated by 
$\sqrt{\sum_{k=1}^n \, |\alpha_k|^4}$,
if $\sum_k \, t_k^4 \leq 1$, and so there is no change in the
norm on $q_1A$: $\Vert \sum_{k=1}^n \, |\alpha_k|^2  R_k^2 \Vert =
\Vert \sum_{k=1}^n \, |\alpha_k|^2  S_k^2 \Vert$.
 Moreover, $S_k^{-1} \leq 2^k I$, and so
$\Vert \sum_{k=1}^n \, |\alpha_k|^2  S_k^{-2} \Vert \leq
 \sum_{k=1}^n \, |\alpha_k|^2 (2^k)^2$.  Therefore
$\Vert \sum_{k=1}^n \, |\alpha_k|^2  R_k^{-2} \Vert =
\sum_{k=1}^n \, |\alpha_k|^2/t_k^2$ if $t_k \leq 1/2^k$.
A similar fact holds at the matrix level.
This forces $A q_1$ to be a row Hilbert space,
 since the map $(\alpha_1, \alpha_2 \cdots) \mapsto
[ \alpha_1 t_1 R_{1}^{-1} : \alpha_2 t_2 R_{2}^{-1} :
\cdots ]$ is a complete isometry from the finitely supported
elements in $\ell^2$, with its row operator space structure,
 into $A q_1$.  Thus for example
we may take $t_k = 1/2^k$, and obtain a $1$-matricial  algebra $A$ formed from the
$R_k, R_k^{-1}$, with $A q_1$ a row
Hilbert space, and $q_1 A$
 bicontinuously isomorphic to the closure of the span of
$E_{1k} \otimes T_k$ in $R_\infty(X)$.
\end{proof}

 \begin{example}  \label{oh}
An example of a $1$-matricial  algebra $A$ with $q_1 A$  reflexive (isomorphic
to $\ell^4$) but not isomorphic to a Hilbert space, and  $A q_1$  a
row Hilbert space, is obtained from Lemma \ref{sb2} by taking $T_2,
T_3, \cdots$ to be
 the canonical basis for $O\ell^2$.  Here $O\ell^2$ is Pisier's
operator Hilbert space \cite{Pis}, and $T_1 = I_H$ is the identity operator
 on $H$ where $O\ell^2 \subset B(H)$.    Here
$\Vert \sum_{k=2}^n \, |\alpha_k|^2  T_k^2 \Vert^{\frac{1}{2}} =
(\sum_{k=2}^n \, |\alpha_k|^4)^{\frac{1}{4}}$.

One may vary this example by replacing $O\ell^2$ with other
`classical' operator spaces, to obtain $1$-matricial algebras with
other interesting features.  \end{example}

{\sc Remarks.} 1) \ For a $C^*$-algebra $A$, it is well known that
every minimal left or right ideal is a Hilbert space (since it is a
$C^*$-module over $\Cdb e \cong \Cdb$), as is $A/J$ for a maximal
left or right ideal $J$ (any maximal left ideal is the left kernel
of a pure state $\varphi$, and then $A/J \subset A^{**} (1-p)$ where
$1-p$ is a minimal projection in $A^{**}$ (see p.\ 87 in
\cite{Ped}).  For a minimal projection $q$ in a $W^*$-algebra $M$,
$qMq$ is one dimensional since it is projectionless, so $Mq$ is a
Hilbert space as in the minimal  ideal argument above).

The above gives, in contrast,
 very nice (semisimple, etc)
approximately unital operator algebras $A$ with  an r-ideal
 $J$ (resp.\ $K$) which is maximal (resp.\  minimal)
amongst all the right ideals, such that $A/J$ (resp.\ $K$) is not Hilbertian, indeed
is not reflexive, or is reflexive but is not Hilbertian.   For if $A$ is
one of the $1$-matricial  algebras in our examples, one can show 
that $J = \overline{\sum_{k \neq 1} \, q_k A}$ is
 maximal amongst all the right ideals, and $A/J \cong q_1A$ and $A/(q_1A)
\cong J$.   The latter is because for example $L_{q_1} : A \to q_1A$ is
a complete quotient map with kernel $J$.

\smallskip

2) \ For a $1$-matricial  algebra $A$, one may consider the
associated Haagerup tensor product  $Aq_1 \otimes_h q_1A$, which is
also an operator algebra. Such algebras are considered in the
interesting paper \cite{Ari}.  Immediately several questions arise,
which may be important, such as if there are some useful sufficient
conditions for when this algebra is isomorphic to $A$ (if it is, then
$A$ is completely isomorphic to $\Kdb(\ell^2)$). 

\bigskip

\begin{proposition}  \label{n99}  Let $A$ be a $\sigma$-matricial algebra.
Then $A$ is a $\Delta$-dual algebra.
\end{proposition}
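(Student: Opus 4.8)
The plan is to reduce to a single $1$-matricial algebra and then compute its diagonal by hand.  If $A=\oplus^{c_0}_j A_j$ is $\sigma$-matricial, one checks immediately that $\Delta(A)=\oplus^{c_0}_j\Delta(A_j)$, since for $(x_j)\in A$ one has $(x_j)^*=(x_j^*)\in A$ iff each $x_j^*\in A_j$.  A $c_0$-sum of annihilator $C^*$-algebras is again one, and a $c_0$-sum of countably many algebras each with a positive cai again has a positive cai; so, using the equivalence (recalled just before Proposition \ref{dd}) of ``$A$ has a positive cai'' with ``$\Delta(A)$ acts nondegenerately on $A$'', it suffices to prove the Proposition when $A$ is $1$-matricial, with matrix units $T_{ij}=E_{ij}\otimes T_i^{-1}T_j$ and diagonal projections $q_k=T_{kk}$ as in Definition \ref{dfm}.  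For such $A$ the nondegeneracy is free: each $q_k$ is a selfadjoint idempotent, hence lies in $\Delta(A)$, and $(\sum_{k=1}^n q_k)$ is a cai for $A$ by Lemma \ref{n4}, so $A$ has a positive cai.  It remains to show that the $C^*$-algebra $\Delta(A)$ is a $c_0$-sum of elementary $C^*$-algebras.

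For this I would first identify $\Delta(A)$.  By $q_iAq_j=\Cdb T_{ij}$ (Lemma \ref{n4}), every $x\in A$ satisfies $q_ixq_j=\alpha_{ij}(x)\,T_{ij}$ for scalars $\alpha_{ij}(x)$, and then $x\in\Delta(A)$ iff $x^*\in A$ iff $q_jx^*q_i=\overline{\alpha_{ij}(x)}\,T_{ij}^{\,*}\in q_jAq_i=\Cdb T_{ji}$ for all $i,j$.  A direct computation with $T_{ij}=E_{ij}\otimes T_i^{-1}T_j$ shows that $T_{ij}^{\,*}\in\Cdb T_{ji}$ holds exactly when $T_iT_i^{\,*}$ and $T_jT_j^{\,*}$ are positive scalar multiples of one another; writing $i\sim j$ in that case defines an equivalence relation on $\Ndb$.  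Combining this with the identity $x=\lim_n(\sum_{k\le n}q_k)x(\sum_{k\le n}q_k)$ in norm, one obtains
\[
\Delta(A)=\{\,x\in A:\ q_ixq_j=0\ \text{whenever}\ i\not\sim j\,\}.
\]

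Next I would decompose this along the equivalence classes $\{C_\alpha\}$.  Put $P_\alpha=\sum_{k\in C_\alpha}q_k$; each $P_\alpha$ lies in $M(A)$ (since $P_\alpha A,AP_\alpha\subset A$), the $P_\alpha$ are mutually orthogonal with $\sum_\alpha P_\alpha=1$ strictly, and each is central in the multiplier algebra of $\Delta(A)$ because members of $\Delta(A)$ are ``$\sim$-block diagonal''.  Hence $\Delta(A)=\oplus^{c_0}_\alpha P_\alpha\Delta(A)$, the $c_0$-decay of $(\Vert P_\alpha x\Vert)_\alpha$ being routine from the fact that every element of $A$ is a norm limit of finitely supported combinations of the $T_{ij}$.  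Finally $P_\alpha\Delta(A)=P_\alpha A P_\alpha=\overline{\spn}\{T_{ij}:i,j\in C_\alpha\}$ is a hereditary $C^*$-subalgebra of $\Delta(A)$ which is topologically simple (by the argument of Lemma \ref{n4} applied to the matrix units $\{T_{ij}\}_{i,j\in C_\alpha}$) and has nonzero socle, hence is $*$-isomorphic to $\Kdb(\ell^2(C_\alpha))$, or to a matrix algebra if $C_\alpha$ is finite, as in the proof of Lemma \ref{chgk2}.  Thus $\Delta(A)$ is a $c_0$-sum of elementary $C^*$-algebras, completing the argument.

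The main obstacle is really just the bookkeeping in the last two steps: the verification that $\Delta(A)$ equals the indicated $c_0$-sum (equivalently, that it contains no ``diagonal'' projection $\sum_\alpha q_{k_\alpha}$ when there are infinitely many classes).  An alternative route that bypasses the corners entirely: a $1$-matricial algebra is a compact algebra by Lemma \ref{n4}, compactness evidently passes to closed subalgebras, so $\Delta(A)$ is a compact $C^*$-algebra, and a compact $C^*$-algebra is exactly an annihilator $C^*$-algebra.
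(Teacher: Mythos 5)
Your proof is correct and follows essentially the same route as the paper's: reduce to the $1$-matricial case, observe that $q_ixq_j\neq 0$ for $x\in\Delta(A)$ forces $T_iT_i^*$ and $T_jT_j^*$ to be positive scalar multiples of one another, and decompose $\Delta(A)$ as a $c_0$-sum over the resulting equivalence classes of selfadjoint $1$-matricial blocks, each $*$-isomorphic to an algebra of compact operators. Your closing alternative (compactness of $A$ passes to the closed subalgebra $\Delta(A)$, and a compact $C^*$-algebra is an annihilator $C^*$-algebra) is also sound---it is essentially the shortcut used in the proof of Theorem \ref{n13cc}---but it only yields the annihilator half of $\Delta$-duality, so the positive cai argument for nondegeneracy is still needed, as you note.
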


\begin{proof} 
 Clearly $A$ has a positive cai.  
 Also
$\Delta(\oplus^0_k \, A_k) = \oplus^0_k \, \Delta(A_k)$, so that we
may assume that $A$ is a $1$-matricial  algebra.  If $x \in
\Delta(A)$ then so is $q_i xq_j$ for any $i,j$.  Note that $q_i xq_j
\neq 0$ iff $T_i T_i^* \in \Cdb T_j T_j^*$.  If we assume, as we may
without loss of generality, that $\Vert T_k \Vert = 1$ for all $k$,
then the latter is equivalent to $T_i T_i^* = T_j T_j^*$.   This
gives an equivalence relation $\sim$ on $\Ndb$, and we may partition
into equivalence classes, $E_k$ say, each consisting of natural
numbers. Let $B_k$ be the closure of the span of the $E_{ij} \otimes
T_i^{-1} T_j$, for $i, j \in E_k$. These are $1$-matricial algebras,
which are selfadjoint, hence $C^*$-algebras.
 Thus $B_k \cong \Kdb(H_k)$ for a Hilbert space $H_k$.
Note that the relation $T_i T_i^* = T_j T_j^*$ above implies that
$\Vert T_i \Vert$ is constant on $E_k$, and by taking inverses we
also have $\Vert T_i^{-1} \Vert$ constant on $E_k$.
  Since $q_i xq_j = 0$ if $i$ and $j$ come from distinct equivalence
classes, $\Delta(A)$ decomposes as a $c_0$-sum $\Delta(A) =
\oplus^{0}_k \, B_k$.  Indeed, clearly $B_k \subset \Delta(A)$, and
any $x \in \Delta(A)_{\rm sa}$ is approximable by a selfadjoint
finitely supported matrix in $\Delta(A)_{\rm sa}$ , and hence by a
finite sum  of elements from the $B_k$. Hence $\Delta(A)$ is an
annihilator $C^*$-algebra.
\end{proof}

A pleasant feature of $1$-matricial  algebras, is that their second
duals have a simple form:

\begin{lemma} \label{sdm}  If $A$ is a $1$-matricial  algebra
defined by 
a system of  matrix units $\{ T_{ij} \}$ in $B(K^{(\infty)})$ as in
 Definition  {\rm \ref{dfm}}, then $$A^{**}
\cong \{ T \in B(K^{(\infty)}) : q_i T q_j \in \Cdb T_{ij} \; \;
\forall \, i, j \}.$$

Thus  $A^{**}$ is the collection of infinite matrices $[\beta_{ij}
T_i^{-1} T_j]$, for scalars $\beta_{ij}$, which are bounded
operators on $K^{(\infty)}$.  \end{lemma}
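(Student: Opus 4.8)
The plan is to identify $A^{**}$ concretely inside $B(K^{(\infty)})$ using the column decomposition $A = \oplus^c_k \, q_k A$ from Lemma \ref{n4}, and the fact (also from Lemma \ref{n4}) that $A$ is an HSA in its bidual, so $q_i A^{**} q_j \subset \Cdb T_{ij}$. Write $N = \{ T \in B(K^{(\infty)}) : q_i T q_j \in \Cdb T_{ij} \text{ for all } i,j \}$, where I regard $q_i = T_{ii}$ as the orthogonal projection of $K^{(\infty)}$ onto its $i$-th copy of $K$ composed with multiplication by $T_i^{-1}T_i = I$; concretely $q_i T q_j$ picks out the $(i,j)$ matrix entry of $T$ (as an operator $K \to K$) sandwiched appropriately. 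First I would check that $N$ is a weak*-closed (hence dual) operator algebra containing $A$: it is the intersection over all pairs $(i,j)$ of the weak*-closed subspaces $\{T : q_i T q_j \in \Cdb T_{ij}\}$, and it is closed under products since $q_i (ST) q_j = \sum_k q_i S q_k \, q_k T q_j$ converges appropriately (using that $\sum_k q_k = I$ in the strong operator topology on $K^{(\infty)}$), with each term a scalar multiple of $T_{ik}T_{kj} = T_{ij}$.

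Next I would produce the canonical completely contractive homomorphism $A^{**} \to N$. Since $A$ acts nondegenerately on $K^{(\infty)}$ and the strong-* (hence weak*) topology makes the representation normal, the universal property of the bidual gives a weak*-continuous completely contractive homomorphism $\Phi : A^{**} \to B(K^{(\infty)})$ extending the inclusion $A \hookrightarrow B(K^{(\infty)})$; here one uses that $A^{**}$ is a dual operator algebra and that the second dual of the inclusion, followed by the normal representation $B(K^{(\infty)})^{**} \to B(K^{(\infty)})$, is the desired map. Because $q_i A q_j = \Cdb T_{ij}$ is already weak*-closed and finite dimensional, $q_i A^{**} q_j = q_i A q_j = \Cdb T_{ij}$ (the HSA property from Lemma \ref{n4} gives $q_i A^{**} q_j \subset \Cdb T_{ij}$, and the reverse inclusion is trivial), so $\Phi(A^{**}) \subset N$.

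The main work is to show $\Phi$ is a (completely isometric) surjection onto $N$. For surjectivity, given $T \in N$, for each $i$ the column $q_i T = \sum_j q_i T q_j$ lies in $\overline{\spn}\{T_{ij} : j\}^{\,w*} = (q_i A)^{\perp\perp}$ (using Remark 2 after Corollary \ref{n5}, the monotone Schauder basis, to identify the weak*-closure of $q_i A$ inside $B(K^{(\infty)})$ with the functionals that are bounded on finite truncations); summing over $i$ in the column-sum sense and using $A = \oplus^c_k q_k A$ together with $A^{**} = \oplus^c_k (q_k A)^{\perp\perp}$ (bidual of a column sum, cf.\ the references to \cite{Bghm}) exhibits $T$ as $\Phi$ of an element of $A^{**}$. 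For injectivity and isometry: $\Phi$ is weak*-continuous between dual operator spaces and restricts to a complete isometry on $A$; a predual/Krein–Smulian argument, or simply the fact that $A^{**}$ is the unique weak*-closed operator-space completion of $A$ and $N$ is such a completion (by the surjectivity just shown and $N \cap$ weak*-density of $A$), forces $\Phi$ to be a complete isometry onto $N$. The final "thus" sentence is then immediate: every $T \in N$ has $q_i T q_j = \beta_{ij} T_{ij} = \beta_{ij} E_{ij} \otimes T_i^{-1} T_j$ for unique scalars $\beta_{ij}$, so $T = [\beta_{ij} T_i^{-1} T_j]$ as an infinite matrix, and conversely any such matrix that happens to be a bounded operator lies in $N = A^{**}$.

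I expect the genuine obstacle to be the surjectivity/density step — namely, verifying carefully that an arbitrary $T \in N$, which a priori is only a bounded Hilbert-space operator with prescribed "scalar" matrix entries, actually lies in the weak*-closure of $\spn\{T_{ij}\}$ computed inside $A^{**}$ (equivalently, that the column-sum decomposition $A^{**} = \oplus^c_k (q_k A)^{\perp\perp}$ correctly captures all of $N$ and not a proper weak*-closed subalgebra). This is where the monotone Schauder basis estimate from Remark 2 after Corollary \ref{n5}, and the behaviour of biduals under column sums, do the real work; the algebra and module bookkeeping around $\sum_k q_k = 1$ strictly (Lemma \ref{n4}) should be routine by comparison.
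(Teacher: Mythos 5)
Your overall architecture matches the paper's: both directions are proved by showing $q_iA^{**}q_j=\Cdb T_{ij}$ (giving $A^{**}\subset N$) and then arguing that every $T\in N$ lies in the weak* closure of $A$. The inclusion $A^{**}\subset N$ and the identification of $N$ as weak* closed are fine as you have them. The problem is in the step you yourself flag as the genuine obstacle. Your surjectivity argument rests on the identity $A^{**}=\oplus^c_k\,(q_kA)^{\perp\perp}$, and with $\oplus^c$ as defined in this paper (the \emph{norm} closure of the finitely supported columns) that identity is false. Already for $A=\Kdb(\ell^2)$ one has $q_kA^{**}=q_kB(\ell^2)$, and the norm-closed column sum $\oplus^c_k\,q_kB(\ell^2)$ consists only of those $x\in B(\ell^2)$ with $\Vert(1-\sum_{k\le n}q_k)x\Vert\to 0$, which is a proper subspace of $B(\ell^2)=A^{**}$. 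In general the bidual of a column sum is a \emph{weak*}-closed column sum, not a norm-closed one, so the decomposition you invoke captures only a proper weak*-dense subalgebra of $N$ and does not by itself exhibit an arbitrary $T\in N$ as an element of $A^{**}$. The same issue infects the intermediate claim that $q_iT=\sum_j q_iTq_j$ lies in $\overline{\spn}\{T_{ij}:j\}$ computed via the Schauder basis: that span is a norm closure, and $q_iT$ need only be a weak* limit of its truncations.

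The repair is both simpler and is exactly what the paper does: for $T\in N$ put $P_n=\sum_{k=1}^n q_k$ and $x_n=P_nTP_n=\sum_{i,j\le n}q_iTq_j$. This is a \emph{finite} sum of elements of $\Cdb T_{ij}\subset A$, hence lies in $A$, the sequence is bounded by $\Vert T\Vert$, and $x_n\to T$ in the WOT, hence weak*; therefore $T\in\overline{A}^{w*}=A^{**}$. This two-sided truncation bypasses the column-sum bookkeeping entirely and needs no Schauder basis estimate. One further point that both you and the paper treat lightly, but which you should at least acknowledge: one must know that the identity representation of $A$ on $K^{(\infty)}$ induces a completely isometric weak* homeomorphism of $A^{**}$ onto the weak* closure of $A$ in $B(K^{(\infty)})$ (so that ``$\overline{A}^{w*}=A^{**}$'' is legitimate); your Krein--Smulian remark gestures at this but the injectivity of the normal extension itself follows from the same truncation argument applied inside $A^{**}$, using $q_i\eta q_j\in\Cdb T_{ij}\subset A$. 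With the truncation substituted for the column-sum step, your proof is correct and coincides with the paper's.
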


\begin{proof}  Write $N$ for the space on the right
of the last displayed equation.  This is weak* closed.  Suppose that
$A$ is represented nondegenerately on a Hilbert space $H$ in such a
way that $I_H = 1_{A^{**}} \in A^{**} \subset B(H)$, the latter as a
weak* closed subalgebra, with the $\sigma$-weak topology agreeing on
 $A^{**}$ with the weak* topology of $A^{**}$.
 Then we have $q_i A^{**} q_j =
\Cdb T_{ij}$.   That is,  $A^{**} \subset N$ completely
isometrically.  If $x \in N$, and if $x_n = (\sum_{k=1}^n \, q_k) x
(\sum_{k=1}^n \, q_k)$, then $x_n \in A$, and $x_n \to x$ WOT, hence
weak*.  Thus $x \in \overline{A}^{w*} = A^{**}$.
\end{proof}

\begin{lemma} \label{sdu} Let $A$ be a $\sigma$-matricial algebra.
  If $p$ is a projection in the second dual of $A$,
then $p$ lies in $M(A)$ and in $M(\Delta(A))$, and is thus open in
the sense of {\rm \cite{BHN}}. Hence $A$ is nc-discrete. Also,
$$\Delta(A^{**}) = \Delta(A)^{**} = M(\Delta(A)) = \Delta(M(A)) .$$
\end{lemma}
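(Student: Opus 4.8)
The plan is to reduce everything to the single equality $\Delta(A^{**}) = \Delta(A)^{**}$, after which the rest follows formally. First I would reduce to the case that $A$ is $1$-matricial: writing $A = \oplus^0_k A_k$ one has $A^{**} = \oplus^{\infty}_k A_k^{**}$ (the $\ell^\infty$-direct sum), $\Delta(A) = \oplus^0_k \Delta(A_k)$ (as in the proof of Proposition \ref{n99}), hence $\Delta(A^{**}) = \oplus^{\infty}_k \Delta(A_k^{**})$ and $\Delta(A)^{**} = \oplus^{\infty}_k \Delta(A_k)^{**}$; also a projection in $A^{**}$ is precisely a tuple of projections $p_k \in A_k^{**}$. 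So take $A$ to be $1$-matricial, with matrix units $T_{ij} = E_{ij} \otimes T_i^{-1} T_j$ on $K^{(\infty)}$ and $q_k = T_{kk}$, and (as in Proposition \ref{n99}) normalise $\Vert T_k \Vert = 1$ for all $k$. Retain from the proof of that proposition the relation $i \sim j$ (i.e.\ $T_i T_i^* = T_j T_j^*$), its classes $E_m$, the projections $P_m = \sum_{i \in E_m} q_i$, and the $C^*$-subalgebras $B_m = \overline{\spn}\{ T_{ij} : i,j \in E_m \} \cong \Kdb(H_m)$, so that $\Delta(A) = \oplus^0_m B_m$ is an annihilator $C^*$-algebra.

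Granting $\Delta(A^{**}) = \Delta(A)^{**}$, the conclusions come quickly. Since $\Delta(A)$ is an annihilator $C^*$-algebra, $\Delta(A)^{**} = M(\Delta(A))$; and since $A$ is $\Delta$-dual by Proposition \ref{n99}, the algebra $\Delta(A)$ acts nondegenerately on $A$, so Proposition \ref{dd} gives $M(\Delta(A)) = \Delta(M(A))$. Chaining these yields the displayed equalities $\Delta(A^{**}) = \Delta(A)^{**} = M(\Delta(A)) = \Delta(M(A))$. If $p$ is a projection in $A^{**}$, then $p = p^*$, so $p \in \Delta(A^{**}) = M(\Delta(A)) = \Delta(M(A)) \subseteq M(A)$; thus $p \in M(A) \cap M(\Delta(A))$, and $p$ is open since every projection in $M(A)$ is open \cite{BHN}. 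As every projection in $A^{**}$ --- in particular every open projection --- then lies in $M(A)$, while conversely every projection in $M(A)$ is open, condition (ii) of Proposition \ref{n95} holds and $A$ is nc-discrete.

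Thus the real work is to prove $\Delta(A^{**}) = \Delta(A)^{**}$. By Lemma \ref{sdm}, $A^{**} = N := \{ T \in B(K^{(\infty)}) : q_i T q_j \in \Cdb T_{ij} \ \forall \, i,j \}$, so $\Delta(A^{**}) = N \cap N^*$. A short computation with the matrix units --- using that, for the normalised $T_k$, $T_{ij}^* = T_{ji}$ exactly when $i \sim j$ (indeed $T_j^*(T_i^*)^{-1} = T_j^{-1}T_i$ once $T_iT_i^* = T_jT_j^*$), and that $\Cdb T_{ij} \cap \Cdb T_{ji}^* = (0)$ when $i \not\sim j$ --- shows that $N \cap N^*$ is the set $M$ of all $T \in B(K^{(\infty)})$ with $q_i T q_j \in \Cdb T_{ij}$ for $i \sim j$ and $q_i T q_j = 0$ for $i \not\sim j$. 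Applying Lemma \ref{sdm} to each $B_m$, viewed as a $1$-matricial algebra on $P_m K^{(\infty)}$, identifies $B_m^{\perp \perp}$ with $P_m M P_m$; and since $q_i T q_j = 0$ whenever $i,j$ lie in distinct classes, every $T \in M$ has $T = \sum_m P_m T P_m$ in the strong operator topology, with $P_m T P_m \in B_m^{\perp \perp}$. The truncations $(\sum_{m \le N} P_m) T (\sum_{m \le N} P_m) = \sum_{m \le N} P_m T P_m$ then lie in the linear span of $\bigcup_m B_m^{\perp \perp}$, hence in $\Delta(A)^{\perp \perp}$, and converge weak* to $T$ since the increasing projections $\sum_{m \le N} P_m$ converge strongly to $1_{A^{**}} = I$. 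So $M \subseteq \Delta(A)^{\perp \perp}$; the reverse containment is immediate as each $B_m \subseteq \Delta(A^{**}) = M$ and $M$ is weak* closed. Hence $\Delta(A^{**}) = M = \Delta(A)^{\perp \perp} = \Delta(A)^{**}$.

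The main obstacle is this last paragraph: pinning down the concrete form of $\Delta(A^{**})$ from Lemma \ref{sdm}, verifying $N \cap N^* = M$, re-using Lemma \ref{sdm} blockwise to recognise the $B_m^{\perp \perp}$ inside $A^{**}$, and justifying the weak* approximation of a general element of $M$ by finitely supported block-diagonal elements. None of these is deep, but together they carry the argument; the rest is bookkeeping with results already established.
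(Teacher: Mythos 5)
Your argument is correct, and its skeleton is the paper's: reduce to the $1$-matricial case, establish $\Delta(A^{**})=\Delta(A)^{\perp\perp}$ by a weak* truncation argument resting on Lemma \ref{sdm}, then chain through $\Delta(A)^{**}=M(\Delta(A))=\Delta(M(A))$ using Proposition \ref{n99} (annihilator diagonal, nondegenerate action) and Proposition \ref{dd}, and finally read off the statements about projections and nc-discreteness. Where you genuinely diverge is in the central inclusion $\Delta(A^{**})\subset \Delta(A)^{\perp\perp}$, and there your route is much longer than necessary. The paper simply takes $x\in \Delta(A^{**})_{\rm sa}$ and truncates by the finite partial sums $p_n=\sum_{k=1}^n q_k$: by Lemma \ref{sdm} each $q_i x q_j$ lies in $\Cdb T_{ij}\subset A$, so $p_n x p_n$ is a selfadjoint element of $A$, hence lies in $\Delta(A)$, and $p_n x p_n\to x$ WOT, hence weak*. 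That is the entire argument; no identification of $\Delta(A^{**})$ with the block-diagonal set $M$, no verification that $N\cap N^*=M$, no blockwise application of Lemma \ref{sdm} to the $B_m$, and no appeal to the equivalence relation $\sim$ is needed. The block structure you compute (that $q_iTq_j=0$ for $T\in\Delta(A^{**})$ when $i\not\sim j$) is true --- it is essentially the content of the Remark the paper places \emph{after} the lemma --- but it is a corollary of the result, not a prerequisite for it. So what you identify as ``the main obstacle'' is an obstacle of your own construction: bypassing it shortens the proof and spares you the (surmountable but fiddly) bookkeeping of matching $B_m^{\perp\perp}$ with $P_mMP_m$ inside $A^{**}$ and of justifying weak* convergence of the truncations by the possibly infinite-rank projections $\sum_{m\le N}P_m$ rather than by the finite sums of the $q_k$.
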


\begin{proof}  We may assume that $A$ is a $1$-matricial algebra.
Let $x \in \Delta(A^{**})_{\rm sa}$.  If $x_n = (\sum_{k=1}^n \,
q_k) x (\sum_{k=1}^n \, q_k)$, then $x_n \in \Delta(A)$, and $x_n
\to x$ WOT, hence weak*. Thus $x \in \Delta(A)^{\perp \perp}$. Hence
$$\Delta(A^{**}) \subset \Delta(A)^{**} = \Delta(M(A)) = M(\Delta(A))
,$$ using Proposition  \ref{dd}.  Therefore all of these sets are equal
since $\Delta(A)$ and hence $\Delta(A)^{\perp \perp}$, are subsets
of $\Delta(A^{**})$. Thus any projection $p \in A^{**}$ is in
$M(A)$, and hence is open by 2.1 in \cite{BHN}.
\end{proof}

{\sc Remark.}  By the above, and using also  the notation in the
proof of Proposition  \ref{n99}, for any $1$-matricial algebra $A$
we have $\Delta(A) = \oplus^0_k \, B_k$, where $B_k$ are
$C^*$-subalgebras of $\Delta(A)$ corresponding to the equivalence
relation $\sim$ on $\Ndb$, and $B_k \cong \Kdb(H_k)$ for a Hilbert
space $H_k$.  It follows by Lemma \ref{sdu} that $$\Delta(A^{**}) =
\Delta(A)^{**} = \oplus^\infty_k \, B_k^{**} \cong \oplus^\infty_k
\, B(H_k) .$$  Recall, by Lemma \ref{sdm}, we may write any element
of $A^{**}$ as a matrix $[\beta_{ij} T_i^{-1} T_j]$, for scalars
$\beta_{ij}$.  One may ask what this matrix looks like if $x \in
\Delta(A^{**})$.  In this case, $\beta_{ij} = 0$ if $i$ and $j$ are
in different equivalence classes for the relation $\sim$ discussed
in the proof of Proposition \ref{n99}. Indeed if $x = x^*$ then it
is easy to see
that
$$\beta_{ij} \, T_j T^{*}_j = \overline{\beta_{ji}} \,  T_i
T_i^* , \qquad \forall i, j .$$  Assume, as we may, that $\Vert T_k
\Vert = 1$ for all $k$.  Taking norms we see that $|\beta_{ij}| =
|\beta_{ji}|$.    It follows that $\beta_{ij} =
\overline{\beta_{ji}}$; and also, if $\beta_{ij} \neq 0$ then
 $i \sim j$.   Thus 
$\beta_{ij} = 0$ if $i$ and $j$ are in different equivalence
classes.

\begin{proposition}  \label{n97}  Let $A$ be an operator algebra
such that for every nonzero  projection $p$ in $A$, $pq \neq 0$ for
some  algebraically minimal projection $q \in A$.   Then every
$*$-minimal projection in $A$ is algebraically minimal. This holds
in particular for $\sigma$-matricial algebras.
\end{proposition}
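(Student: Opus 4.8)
The plan is to fix a nonzero $*$-minimal projection $p$ in $A$ (if $p=0$ there is nothing to prove), to use the hypothesis to choose an algebraically minimal projection $q \in A$ with $pq \neq 0$, and then to deduce $pAp = \Cdb p$.

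First I would extract two scalar relations. Since $qpq$ lies in $qAq = \Cdb q$, write $qpq = \lambda q$; the scalar $\lambda$ is real because $qpq$ is selfadjoint, and $\lambda \neq 0$ because $qp = (pq)^{*} \neq 0$ forces $qpq = (qp)(qp)^{*} \neq 0$ (in fact $\lambda > 0$, but only $\lambda \neq 0$ is needed). A direct computation then gives $(pqp)^{2} = p(qpq)p = \lambda\,pqp$, so $e := \lambda^{-1} pqp$ is an idempotent in $pAp \subseteq A$; it is selfadjoint since $\lambda$ is real and $(pqp)^{*} = pqp$, hence a projection, and $pe = ep = e$ forces $e \leq p$. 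Since $pqp = (pq)(pq)^{*} \neq 0$, the projection $e$ is nonzero. As $p$ is $*$-minimal, $e = p$, that is, $pqp = \lambda p$.

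Next I would use the pair $pqp = \lambda p$ and $qpq = \lambda q$ to collapse $pAp$. The linear map $pAp \to \Cdb q$, $x \mapsto qxq$, is injective: if $x \in pAp$ and $qxq = 0$ then, using $x = pxp$,
$$\lambda^{2} x = (pqp)\,x\,(pqp) = pq\,(qp\,x\,pq)\,qp = pq\,(qxq)\,qp = 0 ,$$
so $x = 0$. Hence $\dim pAp \leq \dim \Cdb q = 1$, and since $0 \neq p \in pAp$ we conclude $pAp = \Cdb p$; that is, $p$ is algebraically minimal.

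For the $\sigma$-matricial case I would verify that the hypothesis holds. Write $A = \oplus^{0}_{k} A_{k}$ with each $A_{k}$ a $1$-matricial algebra with matrix units $\{T^{(k)}_{ij}\}$ and diagonal projections $q^{(k)}_{j} := T^{(k)}_{jj}$. Each $q^{(k)}_{j}$ is algebraically minimal in $A$: the summands being mutually orthogonal, $q^{(k)}_{j} A q^{(k)}_{j} = q^{(k)}_{j} A_{k} q^{(k)}_{j} = \Cdb q^{(k)}_{j}$ by Lemma \ref{n4}. If $p \in A$ is a nonzero projection, its components $p_{k}$ are projections in $A_{k}$ and some $p_{k} \neq 0$; since $\sum_{j} q^{(k)}_{j} = 1$ strictly in $A_{k}$ we get $\sum_{j} q^{(k)}_{j} p_{k} = p_{k} \neq 0$, so $q^{(k)}_{j} p_{k} \neq 0$ for some $j$, whence $p_{k} q^{(k)}_{j} = (q^{(k)}_{j} p_{k})^{*} \neq 0$ and therefore $p\,q^{(k)}_{j} \neq 0$ in $A$. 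So the first part applies. The only step requiring genuine care is the verification that $e = \lambda^{-1} pqp$ is a bona fide nonzero projection dominated by $p$ — getting the idempotent identity $(pqp)^{2} = \lambda\,pqp$ and the reality and nonvanishing of $\lambda$ exactly right; once $pqp = \lambda p$ is in hand, both the collapse of $pAp$ and the $\sigma$-matricial check are routine.
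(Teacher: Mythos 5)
Your proof is correct and follows essentially the same route as the paper: both hinge on showing that $\tfrac{1}{\lambda}pqp$ is a nonzero projection dominated by $p$, hence equal to $p$ by $*$-minimality. The paper gets there a touch faster by observing directly that $\tfrac{1}{\lambda}pqp$ is already algebraically minimal (since $pq\,a\,qp \in p(qAq)p = \Cdb\, pqp$), whereas you first deduce $pqp=\lambda p$ and then collapse $pAp$ by a separate injectivity argument; your extra verification of the hypothesis for $\sigma$-matricial algebras, which the paper leaves implicit, is also correct.
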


\begin{proof}    If $p$ is $*$-minimal, $p q \neq 0$ as above, then $\frac{1}{t} p
q p$ is an algebraically minimal projection for some $t > 0$, and thus equals $p$.  Hence
$p$ is algebraically minimal.
\end{proof}

We now give some `Wedderburn type' structure theorems. See e.g.\
\cite{Hele,Kat} for some other operator algebraic `Wedderburn type'
results in the literature.

\begin{theorem}  \label{n13}  Let $A$ be an approximately unital
semiprime operator algebra.  The following are equivalent:
\begin{itemize} \item [(i)]  $A$ is completely isometrically isomorphic
 to a $\sigma$-matricial algebra.
\item [(ii)]  $A$ is the closure of $\sum_k \, q_k A$ for mutually orthogonal
algebraically minimal  projections $q_k \in A$.
   \item [(iii)]  $A$ is the closure of the joint span
of the minimal right ideals which are also 
r-ideals (these are the $q A$, for algebraically minimal projections
$q \in A$).
\item [(iv)]  $A$ is $\Delta$-dual, and every $*$-minimal
projection in $A$ is algebraically minimal. 
\item [(v)]  $A$ is $\Delta$-dual, and every nonzero projection in
$A$ dominates a nonzero algebraically minimal projection in $A$.
\item [(vi)]  $A$ is nc-discrete, and every nonzero
projection in $M(A)$ dominates a nonzero algebraically minimal
 projection in $A$. 
\item [(vii)] $A$ is nc-discrete, and every nonzero HSA $D$ in $A$
containing no nonzero projections of $A$ except possibly an identity
for $D$, is one-dimensional.
 \end{itemize}  
 \end{theorem}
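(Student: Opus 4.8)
The plan is to prove Theorem \ref{n13} by establishing a cycle of implications, roughly in the order (i) $\Rightarrow$ (ii) $\Rightarrow$ (iii) $\Rightarrow$ (i), and then weaving the characterizations (iv)--(vii) into this cycle using the structural results already proved (Proposition \ref{n99}, Lemma \ref{sdu}, Proposition \ref{n97}, and the nc-discreteness machinery from Section 2).

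\medskip

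First I would handle the `concrete' equivalences. (i) $\Rightarrow$ (ii) is essentially the definition together with Lemma \ref{n4}: in a $\sigma$-matricial algebra the diagonal matrix units $q_k = T_{kk}$ are mutually orthogonal algebraically minimal projections, and $A$ is the closure of $\sum_k q_k A$ by Proposition \ref{n0} (the cai is $(\sum_{k=1}^n q_k)$, and we are in the approximately unital case). For (ii) $\Rightarrow$ (iii), note each $q_k A$ is a minimal right ideal since $q_k$ is algebraically minimal and $A$ is semiprime (as recalled in the introduction, $Ae$, resp.\ $eA$, is a minimal one-sided ideal iff $e$ is algebraically minimal), and $q_k A = q_k A^{**} \cap A$ is also an r-ideal because $q_k \in A \subset M(A)$ is an open projection. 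The harder direction is (iii) $\Rightarrow$ (i): here I would first observe that the algebraically minimal projections $q$ with $qA$ an r-ideal generate (the closure of their span being) $A$, then extract a maximal mutually orthogonal family $(q_k)$ of such projections --- using a Zorn/separability argument --- show $\sum_k q_k = 1$ strictly (if not, the support projection $p$ of the complementary HSA is nonzero open, and by semiprimeness and the hypothesis it must dominate some algebraically minimal $q$ orthogonal to all $q_k$, contradicting maximality), decompose $A = \oplus^c_k q_k A$, and finally partition $\Ndb$ according to when $q_i A q_j \neq 0$ to see $A$ splits as a $c_0$-sum of topologically simple pieces, each of which is $1$-matricial by Theorem \ref{chgk1}.

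\medskip

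Next I would connect (iv)--(vi) to the above. Proposition \ref{n99} gives that any $\sigma$-matricial algebra is $\Delta$-dual, and Proposition \ref{n97} gives that $*$-minimal projections are algebraically minimal there, so (i) $\Rightarrow$ (iv). For the reverse arc among (iv), (v), (vi): since $A$ is $\Delta$-dual, $\Delta(A)$ is an annihilator $C^*$-algebra acting nondegenerately, hence a $c_0$-sum of $\Kdb(H_\lambda)$'s whose minimal projections are $*$-minimal in $A$; combined with the given hypothesis these are algebraically minimal, and their (closed) span is $A$ because $\Delta(A)$ acts nondegenerately --- giving (iii), hence (i). I'd derive (v) $\Leftrightarrow$ (iv) by noting a nonzero projection in a $\Delta$-dual algebra dominates a nonzero projection in $\Delta(A)$, hence a $*$-minimal one. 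For (vi) I'd use Corollary \ref{n13c}: an nc-discrete algebra in which every projection in $M(A)$ dominates a nonzero algebraically minimal projection of $A$ satisfies condition (iii) there (such a projection is positive and nonzero), so $A$ is $\Delta$-dual, reducing (vi) to (v); conversely $\Delta$-dual plus the minimal-domination condition plus the fact (from (v) $\Rightarrow$ (i) $\Rightarrow$ Lemma \ref{sdu}) that we land in the nc-discrete world gives (vi).

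\medskip

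Finally (vii). Assuming (i)--(vi), an nc-discrete $A$ has all its HSA's of the form $eAe$ for $e \in M(A)$ a projection (Proposition \ref{n95}); if such a $D = eAe$ contains no nonzero projections of $A$ besides possibly its own identity, then since $e$ dominates an algebraically minimal $q \in A$ we must have $q = e$ and $D = eAe = \Cdb e$ is one-dimensional. Conversely, assuming (vii): given a nonzero projection $p \in M(A)$, the HSA $pAp$ is nonzero, and by an exhaustion argument (pass to a minimal nonzero sub-HSA via the support-projection/annihilator lattice, as in the proof of Proposition \ref{n95}(iv)) one produces a nonzero HSA inside it containing no nonzero projections except an identity, which by (vii) is one-dimensional, forcing its identity $q$ to be an algebraically minimal projection of $A$ dominated by $p$; this is (vi). \textbf{The main obstacle I anticipate} is the passage from the `abstract' conditions (the span of minimal r-ideals is dense, or the domination conditions) to actually producing a \emph{countable} maximal orthogonal family of algebraically minimal projections whose partial sums form the cai and whose `support pattern' $q_i A q_j \neq 0$ organizes $A$ into a genuine $c_0$-direct sum of $1$-matricial blocks --- i.e.\ checking that the algebraic/Banach decomposition is actually a completely isometric $c_0$-sum and that Theorem \ref{chgk1} applies to each block. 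Separability is what makes the Zorn step give a sequence rather than an uncountable family, and the semiprimeness + nc-discreteness are what prevent `leftover' HSA with no algebraically minimal subprojections.
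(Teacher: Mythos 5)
Your overall architecture is close to the paper's (the paper closes the cycle as (i)$\Rightarrow$(ii)$\Rightarrow$(iii)$\Rightarrow$(v)$\Rightarrow$(ii)$\Rightarrow$(i), together with (v)$\Leftrightarrow$(iv), (iii)$\Rightarrow$(vi)$\Rightarrow$(v), and (vi)$\Rightarrow$(vii)$\Rightarrow$(iv)), and most of your arcs are sound, if compressed. The one genuine gap is your closing arc for (vii), namely (vii)$\Rightarrow$(vi) via an ``exhaustion argument'' that is supposed to produce, inside $pAp$, a minimal nonzero sub-HSA containing no nonzero projections of $A$ except an identity. Zorn's lemma does not apply here: a decreasing chain of nonzero HSA's need not have a nonzero lower bound (already in $c_0$ the HSA's $\{f : f(k)=0 \text{ for } k<n\}$ decrease to $(0)$), and the alternative of iteratively passing to strictly smaller projections $q > q' > q'' > \cdots$ need neither terminate nor converge to a nonzero projection. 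The existence of minimal projections below a given one is precisely the structural fact at stake, so positing a ``minimal sub-HSA'' is circular.

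The paper sidesteps this by proving (vii)$\Rightarrow$(iv) instead: for a nonzero projection $p \in M(A)$, either the HSA $pAp$ is one-dimensional (so $p$ dominates its identity, a nonzero positive element of $A$), or by the contrapositive of (vii) it contains a nonzero projection of $A$; either way Corollary \ref{n13c} (iii) applies and $A$ is $\Delta$-dual. Then for a $*$-minimal projection $e \in A$, the HSA $eAe$ satisfies the hypothesis of (vii) automatically (any nonzero projection of $A$ in $eAe$ is dominated by $e$, hence equals $e$), so $eAe$ is one-dimensional and $e$ is algebraically minimal, which is (iv). The ``descent to a minimal projection'' is thus delegated to the structure theory of annihilator $C^*$-algebras (every nonzero projection in $M(\Delta(A)) = \Delta(A)^{**}$ dominates a minimal projection of $\Delta(A)$), which is exactly what $\Delta$-duality buys and what your direct approach lacks. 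A milder instance of the same issue occurs in your direct (iii)$\Rightarrow$(i): the claims that the complement of $e = \sum_k q_k$ dominates an algebraically minimal projection orthogonal to all the $q_k$, and that $(1-e)q(1-e)$ lies in $A$ at all, both require routing through the annihilator $C^*$-algebra $\Delta(A)$ (or $\Delta$-soc$(A)$) and its multiplier algebra --- essentially the paper's detour (iii)$\Rightarrow$(v)$\Rightarrow$(ii)$\Rightarrow$(i) --- but those details are recoverable, whereas the (vii)$\Rightarrow$(vi) exhaustion is not.
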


\begin{proof}  Clearly (i) implies (ii), and (ii) implies (iii), and
(v) implies (iv).
Also fairly obvious are that (vi)  implies (v) (using Corollary  \ref{n13c} (iii)) and
(vii).

(ii) $\Rightarrow$ (i) \  By Proposition \ref{n0} we have $\sum_k \,
q_k = 1$ strictly.  We partition the $(q_k)$ into equivalence
classes $I_j$, according to whether $q_k A \cong q_j A$ or not. Note
that $q_k A q_j = (0)$ if $j, k$ come from distinct classes, by the
idea in the proof of Theorem \ref{chgk1} above. If $j, k$ come from
the same class then $q_k A q_j$ is one dimensional, $q_k A q_j =
\Cdb T_{kj}$ say.
 Let $e_j = \sum_{k \in I_j} \, q_k$.
Then either $e_j T_{pq} = 0 = T_{pq} e_j$  (if $j$ is in a different class to $p,q$) or $e_j
T_{pq} = T_{pq} = T_{pq} e_j$ (if $j$ is in
 the same class as $p,q$). So $e_j$ is in the center
of $M(A)$.  Then $B = e_j A$ is an ideal in $A$, and for $b \in B,
\sum_{k \in I_j} \, q_k b = \sum_{k}  \, q_k b = b$, and similarly
$b \sum_{k \in I_j} \, q_k = b$.  As in earlier proofs $B$ is
generated by a set of matrix units $T_{ij}$ which it contains, and
hence is topologically simple.  By Theorem \ref{chgk1}, $B = e_j A$ is a $1$-matricial
algebra.  The map $A \to \oplus^0_{j \in J} \, e_j A$ is a
  completely isometric isomorphism, since any $a \in A$ is approximable in norm
by finite sums of term of the form $q_j a q_k$, each contained in
some $e_jA$.

(iii) $\Rightarrow$ (v) \
 Given (iii), the joint support of all the algebraically minimal
 projections is $1$ (e.g.\ as in the proof of (v)  $\Rightarrow$ (ii)
below).  Thus the
 closure of the sum of the $q \Delta(A)$ for all the algebraically minimal
 projections $q$, is $\Delta(A)$ (since the weak* closure in the second
 dual contains $1$).  So by \cite[Exercise 4.7.20 (ii)]{Dix},
$\Delta(A)$ is an annihilator $C^*$-algebra with
 support projection $1$, and    hence $A$ is  $\Delta$-dual.   Let $e$ be a
nonzero projection in $M(A)$. Then since the joint support of all
algebraically minimal projections is $1$,
 $e q \neq 0$ for an
algebraically minimal projection $q$. 
We have $(e q e)^2 = e q e q e = t e q e$ for some $t
> 0$, so that $\frac{1}{t} e q e$ is an algebraically minimal
projection dominated by $e$.

 (v)  $\Rightarrow$ (ii) \
Let $(e_k)$ be a maximal family of mutually orthogonal algebraically
minimal projections in $A$, and let $e = \sum_k \, e_k \in A^{**}$.
If $A$ is $\Delta$-dual, and  $1$ is 
the identity in $A^{**}$, then $1$ is also the identity of
$M(\Delta(A)) = \Delta(A)^{**}$, so that $1 - e \in M(\Delta(A))$. 
Hence if $e \neq 1$ then  $1-e$ dominates a nonzero $*$-minimal
projection in $\Delta(A)$, which in turn dominates a nonzero
algebraically minimal projection in $A$, 
contradicting maximality of $(e_k)$.  So $\sum_k \, e_k
= 1$.  The closure $L$ of $\sum_k \, e_k A$ is $A$.  This is because
 $L^{\perp \perp}$
is the weak* closure of sums of the $e_k A^{**}$ by e.g.\ A.3 in \cite{BZ},
which contains $1$ and hence equals $A^{**}$.  So $L = A \cap L^{\perp \perp}
= A$.

(iii) $\Rightarrow$ (vi) \  We have by the above that (iii) implies
(i), which implies by Lemma \ref{sdu} that $A$ is nc-discrete. The
proof of (iii) implies (v) also gives the other part of (vi).

 (iv) $\Rightarrow$ (v) \ Given a
projection $e \in M(A)$, we have $e \in \Delta(M(A)) =
M(\Delta(A))$ by Proposition  \ref{dd}.
 Since $\Delta(A)$ is an annihilator $C^*$-algebra,
$e$ majorizes a nonzero $*$-minimal projection (since this is
true for algebras of compact operators), which by (iv) is
algebraically minimal.

(vii) $\Rightarrow$ (iv) \ 
As we said in Proposition \ref{n95}, $\Delta(A)$ is an
annihilator $C^*$-algebra.  Given a nonzero projection
$p \in M(A) \setminus A$, then either the HSA $pAp$ is one-dimensional,
in which case $p$ dominates the identity of $pAp$,
or it is not one-dimensional, in which case $p$ dominates
a nonzero projection in $A$ by (vii).  Thus
$A$ is $\Delta$-dual by Corollary \ref{n13c} (iii). Now (iv) is
clear.

 That (iii) is equivalent to (i) also follows
from Theorem \ref{n133}.
\end{proof}

{\sc Remark.}
If $A$ is a one-sided ideal in $A^{**}$, then
$A$ is nc-discrete by Proposition \ref{elli}.  In this case, one may
remove the condition `$A$ is nc-discrete' in (vi)--(vii), and one may
replace `$A$ is $\Delta$-dual' by `$\Delta(A)$ acts nondegenerately
on $A$' in (iv) and (v).

\bigskip

The following is another characterization of $\sigma$-matricial
algebras.

\begin{theorem}  \label{n13cc}  Let $A$ be an approximately unital
semiprime operator algebra such that $\Delta(A)$ acts
nondegenerately on $A$. 
 Suppose also that every $*$-minimal projection $p \in A$ is also minimal among
 all idempotents (that is, there are no nontrivial idempotents in $pAp$). The following are equivalent:
\begin{itemize} \item [(i)]  $A$ is completely isometrically isomorphic
 to a $\sigma$-matricial algebra.
\item [(ii)]  $A$ is compact.
\item [(iii)]  $A$ is a modular annihilator algebra.
\item [(iv)]  The socle of $A$ is dense.
\item [(v)]  $A$ is semisimple and the spectrum of every  element in $A$ has
no nonzero limit point. 
\end{itemize}
 \end{theorem}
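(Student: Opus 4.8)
The plan is to prove (i) $\Rightarrow$ (ii), (iii), (iv), (v) directly, and then --- under the standing hypotheses that $A$ is approximately unital and semiprime, $\Delta(A)$ acts nondegenerately on $A$, and $pAp$ has no nontrivial idempotents for every $*$-minimal projection $p \in A$ --- to show that each of (ii)--(v) forces condition (ii) of Theorem \ref{n13}, whence (i). For the forward direction, Lemma \ref{n4} gives that each $1$-matricial block of a $\sigma$-matricial algebra is compact, is a modular annihilator algebra, has dense socle, and is semisimple with the spectrum of every element accumulating only at $0$; all five of these properties pass to $c_0$-direct sums, the last one because in a tuple $(a_k)$ one has $\|a_k\| \to 0$, so for each $\epsilon > 0$ only finitely many summands contribute spectrum of modulus $\geq \epsilon$.

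The next step is to deduce from any one of (ii)--(v) that $A$ is compact and that $\Delta(A)$ is an annihilator $C^*$-algebra with $1_{\Delta(A)^{**}} = 1_{A^{**}}$. For this I would combine classical material from \cite[Chapter 8]{Pal} with our hypotheses: for a semiprime Banach algebra, dense socle implies compactness, since for $a$ in a minimal one-sided ideal the map $x \mapsto axa$ is rank one (by algebraic minimality of the associated idempotent), so for $a$ in the closed socle it is a norm limit of finite-rank maps; and (iii) and (v) likewise yield compactness via \cite[Chapter 8]{Pal} (for (iii) also using the nondegeneracy of $\Delta(A)$, and with (v) already supplying semisimplicity). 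Then $\Delta(A)$ is a $C^*$-subalgebra of $A$ on which $x \mapsto axa$ restricts to a compact operator, hence a compact, so annihilator, $C^*$-algebra by the classical theory of $C^*$-algebras of compact operators (e.g.\ \cite[Exercise 4.7.20]{Dix}), and $1_{\Delta(A)^{**}} = 1_{A^{**}}$ because $\Delta(A)$ acts nondegenerately on $A$.

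Granting this, fix a maximal (necessarily countable) family $(p_\alpha)$ of mutually orthogonal minimal projections of $\Delta(A)$, so that $\sum_\alpha p_\alpha = 1_{\Delta(A)^{**}} = 1_{A^{**}}$. Each $p_\alpha$ is $*$-minimal in $A$, since a projection of $A$ below $p_\alpha$ must lie in $\Delta(A)$; so by hypothesis $p_\alpha A p_\alpha$ has no nontrivial idempotents, and since $A$ is compact the identity map $x \mapsto p_\alpha x p_\alpha$ of $p_\alpha A p_\alpha$ is compact, forcing $p_\alpha A p_\alpha$ to be finite-dimensional. A finite-dimensional idempotent-free algebra is local, $p_\alpha A p_\alpha = \mathbb{C} p_\alpha \oplus N_\alpha$ with $N_\alpha$ nilpotent; if $N_\alpha \neq 0$, choose $0 \neq n$ in the top nonzero power of $N_\alpha$, so that $n^2 = 0$ and $n N_\alpha n = 0$, whence $n(p_\alpha A p_\alpha)n = 0$ and $nAn = 0$, so $\overline{AnA}$ is a nonzero closed ideal with zero square (it contains $n$ because $A$ has a cai), contradicting semiprimeness. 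Hence $p_\alpha A p_\alpha = \mathbb{C} p_\alpha$, each $p_\alpha$ is an algebraically minimal projection of $A$, and since $\sum_\alpha p_\alpha = 1_{A^{**}}$ the r-ideal $\overline{\sum_\alpha p_\alpha A}$ has support projection $1_{A^{**}}$ and hence equals $A$. This is exactly condition (ii) of Theorem \ref{n13}, which then delivers (i).

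I expect the main obstacle to be the step extracting from (ii)--(v) that $A$ is compact and that $\Delta(A)$ is an annihilator $C^*$-algebra acting nondegenerately: this is where the semiprimeness hypothesis and, crucially, the nondegeneracy of $\Delta(A)$ must be brought to bear on the classical theory of \cite[Chapter 8]{Pal}. The semiprimeness argument above that pins down $p_\alpha A p_\alpha = \mathbb{C} p_\alpha$, turning on the choice of $n$ in the top nonzero power of $N_\alpha$, is the other point requiring care.
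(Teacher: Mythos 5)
Your overall strategy (push everything through Theorem \ref{n13}) is the same as the paper's, and most of the pieces are sound: the passage of (ii)--(v) to $c_0$-sums, the observation that a projection of $A$ under a minimal projection of $\Delta(A)$ must be trivial, and the semiprimeness argument killing the nilpotent part $N_\alpha$ (the paper gets the same conclusion slightly differently, via Proposition \ref{spp} applied to the HSA $pAp$ plus Wedderburn). The endgame, $\sum_\alpha p_\alpha = 1$ forcing $\overline{\sum_\alpha p_\alpha A} = A$ and hence Theorem \ref{n13}(ii), is exactly the paper's.

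The genuine gap is the claim that (iii) and (v) ``yield compactness via \cite[Chapter 8]{Pal}.'' No such implication is in Palmer: for semiprime Banach algebras the hierarchy is dense socle $\Rightarrow$ compact $\Rightarrow$ modular annihilator, and neither arrow reverses; likewise (v) is (essentially) just a reformulation of modular annihilator for semisimple algebras, so it too does not give compactness. Your parenthetical appeal to nondegeneracy of $\Delta(A)$ does not repair this, because under the standing hypotheses the only visible route from (iii) or (v) to compactness of $A$ is to first prove (i) --- which is what you are trying to establish --- so your argument is circular at this point. Compactness of $A$ is used in two essential places downstream: to show $\Delta(A)$ is an annihilator $C^*$-algebra, and to show $p_\alpha A p_\alpha$ is finite dimensional. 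The paper avoids the problem by extracting from (iii)/(v) only these two weaker facts directly: $\Delta(A)$ is an annihilator $C^*$-algebra via the spectral condition of (v) together with spectral permanence (${\rm Sp}_{\Delta(A)}(x)\setminus\{0\} = {\rm Sp}_A(x)\setminus\{0\}$ for $x\in\Delta(A)_{\rm sa}$) and \cite[4.7.20 (vii)]{Dix}; and $pAp$ finite dimensional for every projection $p$ from the structure theory of modular annihilator algebras (\cite[8.5.4, 8.6.4]{Pal}, e.g.\ idempotents lie in the socle). If you substitute those two steps for your compactness claim, the rest of your argument goes through.
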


\begin{proof}   We first point out a variant of (iv) in the
last theorem: if  $A$ is a  $\Delta$-dual algebra
which is semiprime, and if
every nonzero $*$-minimal
projection $p$ is minimal among the idempotents in $A$,
and $pAp$ is finite dimensional (or equivalently, $p$ is in the socle,
or is `finite rank'),
then $A$ is completely isometrically isomorphic
 to a $\sigma$-matricial algebra.  To see this, note that
by
Proposition \ref{spp}, $pAp$ is semiprime, hence is
one-dimensional, or equivalently $p$ is algebraically minimal (for
if not, then  by Wedderburn's theorem $pAp$ contains nontrivial
idempotents, contradicting the hypothesis).  Thus Theorem
\ref{n13} (iv) holds.

If (ii), (iii), (iv), or  (v) hold,
then it is known that $pAp$ is finite dimensional for every
projection $p \in A$ (some of these follow from the ideas in 8.6.4
and 8.5.4 in \cite{Pal}). Suppose that $p$ is $*$-minimal. By the
last paragraph, we will be done
if we can show that any one of (ii)--(v) imply that
$\Delta(A)$ is an annihilator $C^*$-algebra. If $A$ is compact then so is
$\Delta(A)$, hence it is an annihilator $C^*$-algebra. By 8.7.6 in
\cite{Pal}, (iv) implies (ii) and (iii).
 If $A$ is a modular
annihilator algebra then the spectrum condition in (v) holds by
8.6.4 in \cite{Pal}. If the spectrum condition in (v) holds, then by
the spectral permanence theorem, if $x \in \Delta(A)_{\rm sa}$ and
$B$ is a $C^*$-algebra generated by $A$, then ${\rm
Sp}_{\Delta(A)}(x) \setminus \{ 0 \}  = {\rm Sp}_{A}(x) \setminus \{
0 \}  = {\rm Sp}_{B}(x) \setminus \{ 0 \}$, which has no nonzero
limit point.  So $\Delta(A)$ is an annihilator $C^*$-algebra by
\cite[4.7.20 (vii)]{Dix}.
\end{proof}

 \begin{example}  In the last theorems, most of
the hypotheses seem fairly sharp, as one may see by considering
examples such as the disk algebra, 
or the following example (or Example \ref{n10}). Let $B
= R D R^{-1}$, where $D$ is the diagonal copy of $c_0$ in
$B(\ell^2)$, and $R = I + \frac{1}{2} S$ where $S$ is the backwards
shift.  Indeed $R$ could be any invertible operator such that the
commutant of $R^* R$ contains no nontrivial projections in $D$. This
example has most of the properties in Theorem \ref{n13cc}: its
second dual is isometrically identifiable with
$R \overline{D}^{w*} R^{-1}$ in $B(\ell^2)$, which is unital, and so $B$ is
approximately unital; $B$ is semiprime and satisfies (ii)--(v) in
Theorem  \ref{n13cc}, since $D$ does. Moreover, $B$ has no
nontrivial projections. Indeed,
 if $q = R p R^{-1}$ is  a projection then $p$ is an idempotent in
$D$, hence is a projection.  That $q = q^*$ implies that $p$ is in
the commutant of $R^* R$, which forces $p = 0 = q$. On the other
hand, $B$ does not satisfy (i) of Theorem \ref{n13cc}, hence has no
positive cai.  Thus it is  not $\Delta$-dual although it is
nc-discrete, indeed it is an ideal in its bidual, and its diagonal
$C^*$-algebra is an annihilator $C^*$-algebra. Thus this example
illustrates the importance of the condition that $\Delta(A)$ acts
nondegenerately on $A$ in the last theorem. One may vary this
example by letting $A = B \oplus \Cdb$, where $B = R D R^{-1}$ as
 above.  This has exactly one nontrivial  projection.
Variants of this example are also useful to illustrate
hypotheses in others of our results, such as replacing
 $D$ by the diagonal copy of $\ell^\infty$ in $B(\ell^2)$.
\end{example}

\begin{corollary}  \label{n12}  Let $A$ be a semiprime
left or right essential operator algebra, containing  
algebraically minimal idempotents $(q_k)_{k=1}^\infty$ with $q_j q_k
= 0$ for $j \neq k$, and $\sum_k \, q_k = 1$ strictly. Then
$A$ is semisimple and $A$ is  completely isomorphic to a
$\sigma$-matricial algebra.
\end{corollary}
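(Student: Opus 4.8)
The plan is to reduce to Theorem~\ref{n13} after replacing $A$ by a similar algebra in which the idempotents $q_k$ become orthogonal projections. Since $\sum_k q_k = 1$ strictly and $A$ is left or right essential, the preliminary discussion at the start of this section shows that $(\sum_{k=1}^n q_k)$ is a bai for $A$ and, crucially, that the partial sums $\{\sum_{k\in J} q_k : J\subseteq\Ndb\text{ finite}\}$ are uniformly bounded. Hence the similarity trick from that same discussion yields an invertible operator $S$ with each $e_k := S^{-1}q_k S$ a projection, the $e_k$ mutually orthogonal, and $B := S^{-1}AS$ a norm closed operator algebra with cai $(\sum_{k=1}^n e_k)$.

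Next I would verify that $B$ satisfies condition (ii) of Theorem~\ref{n13}. It is approximately unital by the previous step; it is semiprime because semiprimeness is an algebra-isomorphism invariant; and each $e_k$ is an algebraically minimal projection in $B$ since $e_k B e_k = S^{-1}(q_k A q_k)S = \Cdb e_k$. Conjugating the strict convergence $\sum_k q_k = 1$ by $S$ gives $\sum_k e_k = 1$ strictly in $B$, so $B$ is the closure of $\sum_k e_k B$ (directly, or by Proposition~\ref{n0}). Theorem~\ref{n13} ((ii)$\Rightarrow$(i)) then shows that $B$ is completely isometrically isomorphic to a $\sigma$-matricial algebra $C$.

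To conclude, conjugation $b\mapsto SbS^{-1}$ is a completely bounded algebra isomorphism of $B$ onto $A$ with completely bounded inverse (conjugation by an invertible operator has cb norm at most $\Vert S\Vert\,\Vert S^{-1}\Vert$ in each direction), so composing with the complete isometry $B\cong C$ shows $A$ is completely isomorphic to the $\sigma$-matricial algebra $C$. Finally, $\sigma$-matricial algebras are semisimple: each $1$-matricial summand is semisimple by Lemma~\ref{n4}, and the Jacobson radical of a $c_0$-direct sum is the $c_0$-direct sum of the radicals since each coordinate projection is an onto homomorphism. As the Jacobson radical is an algebra-isomorphism invariant and $A\cong C$, the algebra $A$ is semisimple.

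I do not anticipate a real obstacle here; the one point that must be checked with some care is that the similarity trick genuinely applies, which rests on the uniform boundedness of the partial sums $\sum_{k\in J}q_k$ (already secured in the preamble to this section), together with the fact that semiprimeness, algebraic minimality of the $q_k$, and strict convergence of $\sum_k q_k$ all pass to $B$ — each of which is either purely algebraic or follows immediately by conjugating by $S$.
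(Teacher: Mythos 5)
Your proposal is correct and follows essentially the same route as the paper's own proof: apply the similarity trick from the start of Section 4 to replace the $q_k$ by mutually orthogonal projections, then invoke Theorem~\ref{n13}~(ii)$\Rightarrow$(i). You simply fill in the details the paper leaves implicit (that similarity preserves the hypotheses, that conjugation is a complete isomorphism, and that semisimplicity of $\sigma$-matricial algebras transfers back to $A$), all of which are handled correctly.
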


\begin{proof}    By the similarity trick from the start
of this section, we can assume that the
$q_k$ are projections, and $A$ is the closure of
$\sum_k \, q_k A$.  We may then appeal to Theorem \ref{n13} (ii).
  \end{proof}

We now consider a class of
algebras which are a commutative variant of matricial operator
algebras, and are ideals in their bidual.

\begin{proposition}  \label{n9}  Let $A$ be a commutative operator algebra
with no nonzero annihilators in $A$, and possessing
a sequence of nonzero
algebraically minimal idempotents $(q_k)$ with $q_j q_k = 0$ for $j
\neq k$, and $\overline{\sum_k \, A q_k} = A$.   Then $A$ is a
semisimple annihilator algebra with dense socle, and $A$ is an ideal in
its bidual. 
If further the $q_k$ are projections (resp.\ $\sum_k \, q_k = 1$ strictly), and
if $A$  is left essential, then $A \cong c_0$
isometrically (resp.\ $A \cong c_0$ isomorphically).
\end{proposition}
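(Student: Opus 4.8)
The plan is to build the structure in stages, mirroring the matricial case but exploiting commutativity. First I would observe that since $A$ is commutative, each $q_k$ is central in $A$, so $q_k A q_k = q_k A$ is one-dimensional, i.e.\ $q_k A = \Cdb q_k$, and moreover $q_j A q_k = q_j q_k A = (0)$ for $j \neq k$. Thus with $E_{kl}$-style matrix units collapsing to the diagonal, $A$ is the closed span of the mutually orthogonal rank-one pieces $\Cdb q_k$, and $\overline{\sum_k A q_k} = A$ says precisely that $A = \overline{\sum_k \, \Cdb q_k}$. The minimal ideals $A q_k = \Cdb q_k$ are then minimal right (and left) ideals, so $A$ has dense socle; semisimplicity follows since the Jacobson radical annihilates every minimal ideal, hence is zero as $A$ has no nonzero annihilators. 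For the annihilator property: any proper closed ideal $J$ must miss some $q_k$ (else it contains the dense span), and then $q_k$ lies in the annihilator of $J$ since $q_k J \subset q_k A \cap J = \Cdb q_k \cap J = (0)$; symmetrically $A$ is a left annihilator algebra. That $A$ is an ideal in its bidual then follows from \cite[Corollary 8.7.14]{Pal} (semisimple annihilator algebras are ideals in their bidual), or directly by the argument in Proposition \ref{n1} once reflexivity of $q_k A = \Cdb q_k$ is noted (it is one-dimensional, hence reflexive).

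Next, for the isometric statement, suppose the $q_k$ are projections and $A$ is left essential. I would represent $A$ nondegenerately on a Hilbert space $H$; then by the left-essential hypothesis the finite partial sums $p_n = \sum_{k=1}^n q_k$ form a bounded approximate identity, and since the $q_k$ are mutually orthogonal projections, $p_n$ is an \emph{increasing} sequence of projections, so $\Vert p_n \Vert = 1$ and $(p_n)$ is a contractive approximate identity. Thus $A$ is approximately unital with a positive cai, and $A^{**}$ has identity $1 = \lim p_n$ (weak*). The map sending $(\alpha_k) \in c_0$ to $\sum_k \alpha_k q_k$ is then the natural candidate isometry $c_0 \to A$: it is a $*$-closed (diagonal) construction, and because the $q_k$ are mutually orthogonal projections one has $\Vert \sum_{k=1}^n \alpha_k q_k \Vert = \max_{k \le n} |\alpha_k|$, exactly the $c_0$-norm; density of $\sum_k \Cdb q_k$ gives surjectivity onto $A$. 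Hence $A \cong c_0$ isometrically. In the variant where one only assumes the $q_k$ are algebraically minimal idempotents (not projections) with $\sum_k q_k = 1$ strictly, I would invoke the similarity trick from the start of Section 4 (Lemma XV.6.1 of \cite{DS} applied to the bounded abelian group $\{1 - 2\sum_{k \in E} q_k\}$) to conjugate the $q_k$ to mutually orthogonal projections, obtaining an operator algebra bicontinuously isomorphic to $A$ to which the isometric case applies; this yields $A \cong c_0$ isomorphically.

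The main obstacle I anticipate is the bookkeeping around the \emph{strict} convergence hypothesis versus merely $\overline{\sum_k A q_k} = A$, and making sure the left-essential hypothesis is used at exactly the right point: without it, the partial sums $p_n$ need not be bounded, so one cannot immediately conclude they form a (bounded) approximate identity, and the similarity trick requires uniformly bounded partial sums. So the delicate step is verifying that in the projection case ``$\overline{\sum_k A q_k} = A$ plus left essential'' already forces $(p_n)$ to be a cai (here it is automatic because a nested sequence of projections is automatically contractive, so one does not even need uniform boundedness as an extra input), whereas in the idempotent case one genuinely needs strict convergence to get the uniform bound feeding into \cite{DS}. The remaining steps — commutativity forcing $q_k A = \Cdb q_k$, the annihilator computation, and the explicit $c_0$ identification — are routine once this is in place.
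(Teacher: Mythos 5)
Your proof is correct, and for the first batch of assertions (semisimplicity, the annihilator property, dense socle, ideal in the bidual) it follows essentially the same route as the paper: the radical meets $\Cdb q_k$ trivially and hence annihilates the dense span $\sum_k A q_k$, a proper closed ideal must omit some $q_k$ which then annihilates it, and Proposition \ref{n1} applies because $q_k A = \Cdb q_k$ is reflexive. The $c_0$ identification is left as an exercise in the paper, and your completion of it --- the norm identity $\Vert \sum_{k \leq n} \alpha_k q_k \Vert = \max_k |\alpha_k|$ for mutually orthogonal projections in the isometric case, and the Section 4 similarity trick (uniform boundedness of the partial sums from left-essentiality plus strict convergence, then Lemma XV.6.1 of \cite{DS}) reducing the idempotent case to the projection case --- is exactly the intended argument, including your accurate observation of where each hypothesis is actually needed.
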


\begin{proof} 
If $x \in J(A)$, the Jacobson radical, then $x q_k \in J(A) \cap
 \Cdb q_k = 0$ for all
$k$, since $J(A)$ contains no nontrivial idempotents.  Thus $x A = 0$
and $x = 0$, and so  $A$ is semisimple.
If $J$ is a closed ideal in $A$ with $q_k J \neq (0)$ for all $k$,
then $q_k \in J$ since $q_k J \subset J \cap \Cdb q_k$.  Thus
$\sum_k \, A q_k \subset J$ and $J = A$.  So $A$ is
an annihilator algebra. 
 The $q_k A$ are minimal ideals and so $A$ has dense
socle. 
By Proposition \ref{n1}, 
$A$ is an ideal in its bidual.   We
leave the other assertions as an exercise.  \end{proof}

{\sc Remark.}  If in addition to the conditions
 in the first sentence of Proposition  \ref{n9}, $\Delta(A)$ acts nondegenerately
on $A$ then $A$ is $\Delta$-dual and nc-discrete (using Proposition  \ref{elli}).

\begin{example}  \label{n10}  The following example illustrates
the distinction between the condition $\overline{\sum_k \, q_k A}
= \overline{\sum_k \, A q_k} = A$,  and the condition
$\sum_k \, q_k  = 1$ strictly (the latter defining algebras
isomorphic or similar to a $\sigma$-matricial algebra by Corollary   \ref{n12}).
 Inside $B = M_2 \oplus^\infty M_2 \oplus^\infty \cdots$, we
consider idempotents $q_{2k} = 0 \oplus \cdots \oplus 0 \oplus e_k
\oplus 0 \oplus \cdots$ and $q_{2k+1}  = 0 \oplus \cdots \oplus 0
\oplus f_k \oplus 0 \oplus \cdots$, where $e_k, f_k$ are idempotents
in $M_2$ with $e_k f_k = 0, e_k + f_k = I_2$, and $\Vert e_k \Vert,
\Vert   f_k \Vert  \to \infty$.   For example, consider the rank one
operators $e_k = [1 : 1] \otimes [-k : k+1]$ and $f_k = [(k+1)/k :
1] \otimes [k : -k]$ in $M_2$. Let $A$ be the closure of the span of
these idempotents $(q_{k})$, which has cai, and may be viewed as a
subalgebra of $\Kdb(\ell^2)$.  The algebra $A$ is of the type
discussed in the last result, and the remark after it.  Indeed
it is a `dual Banach algebra' in the sense of Kaplansky.  However
$A$ is not isomorphic to a
$\sigma$-matricial algebra, indeed is not isomorphic to $c_0$, since
the algebraically minimal idempotents in $A$ are not
uniformly bounded, whereas they are in $c_0$.
\end{example}

Let $A$ be any operator algebra.  If $e_1, \cdots , e_n$ are
algebraically minimal  projections in $A$, set $e = e_1 e_2 \cdots
e_n$.  Then $e^2 = t e_1 e_2 \cdots e_n = t e$, for some $t$ with
$|t| \leq  1$.  Note that $t=0$ iff $e$ is nilpotent, whereas if $t
\neq 0$ then 
$\frac{1}{t} e$ is an algebraically minimal idempotent. The
set $E$ of linear combinations of such products is a $*$-subalgebra
of $\Delta(A)$, and so $\bar{E}$ is the $C^*$-subalgebra $B$ of
$\Delta(A)$ generated by the algebraically minimal projections in
$A$.  Note that the sum of all minimal right ideals of $B$ is dense
in $B$, so that $B$ is an
 annihilator $C^*$-algebra. 
  We write $B$ as $\Delta$-soc$(A)$.

For an operator algebra $A$, define the {\em r-socle} r-soc$(A)$ to be the closure
of the sum of r-ideals of the form $eA$ for algebraically minimal
 projections $e$. This is an r-ideal, with support
projection $f$ equal to  the `join' of all the algebraically minimal projections.
Thus r-soc$(A) = f
A^{**} \cap A$.  Note that $f \in M(B) = B^{**}$ where $B = \Delta$-soc$(A)$.
 Similarly, $\ell$-soc$(A) = A^{**} f \cap A$ is the closure of the
sum of $\ell$-ideals of the form $Ae$ for such $e$, and h-soc$(A)$
is the matching HSA $fA^{**} f \cap A$. We say that the i-socle
exists if r-soc$(A) = \ell$-soc$(A)$, an approximately unital ideal,
which also equals h-soc$(A)$ in this case.  Note that r-soc$(A) \cap
J(A) = (0)$ by p.\ 671 of \cite{Pal}, hence h-soc$(A) \cap J(A) =
(0)$.

\begin{theorem}  \label{n133}  Let $A$ be a
semiprime operator algebra. 
 Then h-soc$(A)$ is a $\sigma$-matricial algebra. \end{theorem}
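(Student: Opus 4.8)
The plan is to realize $D := \mathrm{h\text{-}soc}(A) = fA^{**}f \cap A$ (where $f$ is the join of all algebraically minimal projections of $A$) as an operator algebra satisfying the hypotheses of Theorem~\ref{n13}, and then quote that theorem. Since $A$ is semiprime and $D$ is an HSA in $A$, Proposition~\ref{spp} gives that $D$ is semiprime, and $D$ is approximately unital by definition of an HSA; so the standing hypotheses of Theorem~\ref{n13} are met. It then suffices to verify one of the equivalent conditions (i)--(vii) of that theorem for $D$. The natural target is condition (ii): $D$ should be the closed sum $\overline{\sum_k q_k D}$ for a family of mutually orthogonal algebraically minimal projections $q_k \in D$.

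First I would fix notation: let $\{e_\alpha\}$ be the collection of \emph{all} algebraically minimal projections of $A$, so that by definition $\mathrm{r\text{-}soc}(A) = \overline{\sum_\alpha e_\alpha A}$ has support projection $f = \bigvee_\alpha e_\alpha \in M(B) = B^{**}$, where $B = \Delta\text{-}\mathrm{soc}(A)$ is an annihilator $C^*$-algebra (this is exactly the content of the paragraph preceding the statement). The key structural input is that $B$, being an annihilator $C^*$-algebra, is a $c_0$-direct sum of elementary $C^*$-algebras $\mathbb{K}(H_j)$; hence $B$ has a maximal family $\{q_k\}$ of mutually orthogonal minimal projections with $\sum_k q_k = 1_{B^{**}} = f$ in the strict/weak* sense of $B^{**}$. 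Each such $q_k$ is a minimal projection of $B$, hence $q_k B q_k = \mathbb{C}q_k$; and since $B \subseteq \Delta(A)$, Remark~1 after Proposition~\ref{prop} (together with $q_k A q_k = q_k B q_k$, which I would need to check) shows each $q_k$ is algebraically minimal \emph{in $A$}, hence lies in $D$ and is algebraically minimal in $D$. I would then argue that $\sum_k q_k = f = 1_{D^{**}}$ strictly in $D$: indeed $q_k \in D \subseteq fA^{**}f$, the partial sums $\sum_{k=1}^n q_k$ are contractive and increase weak* to $f$, and $f$ is the identity of $D^{**}$ because $f$ is the support projection of the HSA $D$. By Proposition~\ref{n0}, $\sum_k q_k = 1$ strictly in $D$ forces $D = \overline{\sum_k q_k D}$ (and also $= \overline{\sum_k D q_k}$, using that $D$ is approximately unital). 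This is precisely Theorem~\ref{n13}(ii) for $D$, so $D$ is completely isometrically isomorphic to a $\sigma$-matricial algebra.

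The main obstacle I anticipate is the bookkeeping linking the $C^*$-algebra $B = \Delta\text{-}\mathrm{soc}(A)$ to the HSA $D = \mathrm{h\text{-}soc}(A)$: I must check (a) that the minimal projections of $B$ are exactly (up to the equivalence of generating the same HSA) the algebraically minimal projections of $A$ lying in $\Delta(A)$, and that \emph{every} algebraically minimal projection $e_\alpha$ of $A$ has support dominated by $f$ with $f$ still the join of a \emph{mutually orthogonal} subfamily; (b) that $q_k A q_k = \mathbb{C}q_k$ and not merely $q_k B q_k = \mathbb{C}q_k$ --- this should follow since $q_k$ is a projection in $A$ that is $*$-minimal in $B$, and then one argues as in Proposition~\ref{n97}/the remark after Proposition~\ref{prop} that $q_k A q_k \subseteq q_k \Delta(A) q_k$ is forced to be one-dimensional by semiprimeness of $q_kAq_k$ (Proposition~\ref{spp}) plus $*$-minimality; (c) that $f$ is genuinely the weak* limit of $\sum_{k=1}^n q_k$ in $A^{**}$, i.e.\ the maximal orthogonal family in $B$ sums to $1_{B^{**}}$, which is standard for $c_0$-sums of $\mathbb{K}(H_j)$. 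Once these points are nailed down, the rest is a direct appeal to Propositions~\ref{n0} and~\ref{spp} and Theorem~\ref{n13}.
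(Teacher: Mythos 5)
Your overall strategy coincides with the paper's: reduce to condition (ii) of Theorem \ref{n13} for $D = fA^{**}f \cap A$ by exhibiting mutually orthogonal algebraically minimal projections of $D$ whose sum is $f$, strictly. The paper sources the family differently: it takes a maximal mutually orthogonal family $(f_k)$ of algebraically minimal projections in $D$ itself and shows $e = \sum_k f_k$ equals $f$ by contradiction --- if $f - e \neq 0$ then $(f-e)g \neq 0$ for some algebraically minimal $g \in A$, and $t(f-e)g(f-e)$ is an algebraically minimal projection of $A$ lying in $B = \Delta\text{-soc}(A)$ (because $f, e \in M(B)$), hence in $D$, contradicting maximality. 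You instead take a maximal orthogonal family of minimal projections of the annihilator $C^*$-algebra $B$, which buys you $\sum_k q_k = 1_{B^{**}} = f$ for free but costs you the extra step of upgrading minimality in $B$ to algebraic minimality in $A$. Both routes ultimately rest on the same compression trick, so I would call this the same proof with the bookkeeping rearranged.

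Two places in your writeup need repair. First, the containment $q_k A q_k \subseteq q_k \Delta(A) q_k$ is unjustified and false in general, and ``semiprime with no nontrivial idempotents plus $*$-minimality forces one-dimensionality'' is not a valid inference (the disk algebra is a unital semiprime operator algebra with no nontrivial idempotents; compare Theorem \ref{n13cc}, where finite-dimensionality of $pAp$ is needed before Wedderburn applies). The correct argument is exactly the Proposition \ref{n97} compression you allude to: since the algebraically minimal projections of $A$ generate $B$ as a $C^*$-algebra, there is such a $g$ with $q_k g \neq 0$; then $gq_kg = tg$ with $t>0$, so $\frac{1}{t}q_k g q_k$ is an algebraically minimal projection of $A$ lying in $B$ and dominated by $q_k$, hence equal to $q_k$ by minimality of $q_k$ in $B$; thus $q_k A q_k \subseteq \frac{1}{t^2}q_k (gAg) q_k = \Cdb q_k$. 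Second, weak* convergence of the contractive partial sums $\sum_{k \le n} q_k$ to $1_{D^{**}}$ does not by itself yield strict (norm) convergence on $D$; you need an extra step, e.g.\ Mazur's theorem (each $d \in D$ lies in the norm closure of the convex hull of $\{(\sum_{k\le n}q_k)d\}$, which sits inside $\sum_k q_k D$, giving \ref{n13}(ii) directly, after which Proposition \ref{n0} returns the strictness), or, as the paper does, first identify $fA^{**}\cap A = \oplus^c_k \, q_k A$ and invoke the HSA/cai theory of \cite{BHN}. With these two repairs your argument goes through.
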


\begin{proof} We use the notation above.
 Let $B = \Delta$-soc$(A)$, an annihilator
$C^*$-algebra.  Let $D = \, $ h-soc$(A) = fA^{**} f \cap A$,
 an approximately unital
semiprime operator algebra by Proposition \ref{spp}.  Set $J
= fA^{**}  \cap A$. 
Let  $(f_k)_{k \in E}$ be a maximal family of mutually orthogonal
algebraically minimal projections in $D$, and set $e = \sum_{k \in
E} \, f_k \in M(B)$.  Note that $e \leq f$.  Suppose that  $f \neq e$.
Then $(f-e) g \neq 0$ for some
algebraically minimal projection $g$ in $A$ (or else $(f-e) f = 0$,
which is false). Then
$p =  t (f-e) g (f-e)$ is an algebraically minimal projection for
some $t > 0$, which lies in $B$ since $f,e \in M(B)$.   Thus $p \in
A \cap f A f \subset D$, contradicting the maximality of the family.
So $f = \sum_{k \in E} \, f_k$, and $J = \oplus^c_{k \in E} \, f_k
A$ by the argument that  (v) implies (ii) in Theorem \ref{n13} ($f \in
(\oplus^c_{k \in E} \, f_k A)^{\perp \perp}$ so $J^{\perp \perp} = f
A^{**} = (\oplus^c_{k \in E} \, f_k A)^{\perp \perp}$).
 The partial sums of $\sum_{k \in E}
\, f_k$ are a positive left  cai for $J$, so they are a cai for $D$
\cite{BHN}. So $\sum_{k \in E} \, f_k = f$ strictly on $D$. By
Theorem \ref{n13}, $D$ is a $\sigma$-matricial algebra.
\end{proof}

 If $A$ is a $\sigma$-matricial algebra, then the r-ideals, $\ell$-ideals,
and HSA's of $A$ are of a very nice form: 

\begin{proposition}  \label{n17}  If $A$ is a $\sigma$-matricial algebra,
then for every r-ideal (resp.\ $\ell$-ideal) $J$ of $A$, 
there exist mutually orthogonal
algebraically minimal projections $(f_k)_{k \in I}$ in $A$ with
$\sum_k \, f_k = 1$ strictly on $A$, and $J = \oplus^c_{k \in E} \,
f_k A$ (resp.\ $J = \oplus^r_{k \in E} \, A f_k$), for some set $E
\subset I$. 
\end{proposition}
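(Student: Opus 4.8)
The plan is to treat the r-ideal case in detail; the $\ell$-ideal case follows by the symmetric argument, using the `other-handed' form of nc-discreteness noted in the Remark after Proposition~\ref{n95}. Since a $\sigma$-matricial algebra is nc-discrete (Lemma~\ref{sdu}), Proposition~\ref{n95}~(iii) lets us write the given r-ideal as $J = eA = eA^{**}\cap A$ for a projection $e \in M(A)$, and $A$ satisfies all the equivalent conditions of Theorem~\ref{n13}.

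The construction proceeds in two nested stages. First, by Zorn's lemma, fix a family $(f_k)_{k\in E}$ that is maximal among families of mutually orthogonal algebraically minimal projections of $A$ dominated by $e$. The partial sums $\sum_{k\in F} f_k$ ($F\subseteq E$ finite) form an increasing net of projections in $A^{**}$, hence converge weak* to a projection $e' \le e$; by Lemma~\ref{sdu}, $e'\in M(A)$, so $e - e'$ is again a projection in $M(A)$. If $e - e' \neq 0$, then Theorem~\ref{n13}~(vi) yields a nonzero algebraically minimal projection $g\in A$ with $g \le e - e'$; since $f_k \le e'$ and $e'(e-e')=0$, $g$ is orthogonal to every $f_k$, and $g \le e$, contradicting maximality. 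Hence $e' = e$. Consequently $J^{\perp\perp} = eA^{**}$ is the weak* closure of $\sum_{k\in E} f_k A^{**}$, which is $(\oplus^c_{k\in E} f_k A)^{\perp\perp}$; as $\oplus^c_{k\in E} f_k A$ (in the sense of Lemma~\ref{n4}) is a closed subspace of $J$ with the same bidual, we get $J = \oplus^c_{k\in E} f_k A$ --- this is exactly the mechanism of the `(v)$\Rightarrow$(ii)' step of Theorem~\ref{n13} and of the proof of Theorem~\ref{n133}.

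Second, extend $(f_k)_{k\in E}$, again by Zorn, to a family $(f_k)_{k\in I}$ with $E\subseteq I$ that is maximal among families of mutually orthogonal algebraically minimal projections of $A$ containing $\{f_k:k\in E\}$. Repeating the same argument with $1_{A^{**}}\in M(A)$ in place of $e$ shows $\sum_{k\in I} f_k = 1$ weak*, so $A = \overline{\sum_{k\in I} f_k A}$. By Proposition~\ref{n0}, the partial sums of $\sum_{k\in I} f_k$ then form a left cai for $A$; since $A$ is approximately unital (Lemma~\ref{n4}) they form a (two-sided) cai, i.e.\ $\sum_{k\in I} f_k = 1$ strictly on $A$. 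Together with the first stage this produces the desired family.

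The argument is essentially a maximality argument built on the structure theory already developed, so there is no single hard obstacle; the point that needs care is simply arranging one family $(f_k)_{k\in I}$ that simultaneously witnesses $J = \oplus^c_{k\in E} f_k A$ and $\sum_k f_k = 1$ strictly --- handled by the two-stage construction --- together with the repeated use of Lemma~\ref{sdu} to guarantee that the relevant sums of projections lie in $M(A)$, so that Theorem~\ref{n13}~(vi) can be applied to their complements.
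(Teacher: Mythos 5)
Your proposal is correct and follows essentially the same route as the paper: write $J=eA$ for a projection $e\in M(A)$ (the paper invokes Proposition \ref{elli}, you invoke Lemma \ref{sdu} and Proposition \ref{n95}~(iii), which is if anything the cleaner citation here since a $\sigma$-matricial algebra need not be an ideal in its bidual), take a maximal orthogonal family of algebraically minimal projections dominated by $e$, show its sum is $e$ by the argument of Theorem \ref{n133} and the (v)$\Rightarrow$(ii) step of Theorem \ref{n13}, and then enlarge the family by a second maximality argument to one summing strictly to $1$. The two-stage Zorn construction and the use of $M(A)$-membership of the partial sums are exactly the paper's mechanism, so no further comment is needed.
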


\begin{proof}
By Proposition \ref{elli}, every r-ideal $J$ equals $f A$ for a
projection $f \in M(A)$. Then $D = f A f$ is the HSA corresponding
to $J$, and $A f$ is the corresponding $\ell$-ideal. 
 We follow the proof of Theorem  \ref{n133}.   Let  $(f_k)_{k \in E}$ be
 as in that proof, then  $e = \sum_{k \in E} \, f_k \in
M(\Delta(A)) = \Delta(M(A))$.  If $f \neq e$ then $f-e \in M(A)$, so
$f-e$ dominates a nonzero algebraically minimal projection in $D$,
producing a contradiction. So $f = \sum_{k \in E} \, f_k$, and $J =
\oplus^c_{k \in E} \, f_k A$ as before.  A similar argument shows
that $Af =  \oplus^r_{k \in E} \, A f_k$.

 By a maximality argument (similar to  the proof of (v) implies
(ii) in Theorem \ref{n13}), we can enlarge $(f_k)_{k \in E}$ to a
set $(f_k)$ in $A$ with $A = \oplus^c_k f_k A$.  Then $\sum_k \, f_k
= 1$ strictly on $A$.  
\end{proof}

\begin{corollary}  \label{n17n}  Every HSA in a $1$-matricial algebra (resp.\ in a
$\sigma$-matricial algebra)  is a $1$-matricial algebra (resp.\ a
$\sigma$-matricial algebra).
\end{corollary}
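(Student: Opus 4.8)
The plan is to show that an arbitrary HSA $D$ in a $\sigma$-matricial algebra $A$ satisfies condition (ii) of Theorem \ref{n13}, and then to invoke that theorem. First I would use Lemma \ref{sdu}: since $A$ is $\sigma$-matricial it is nc-discrete and every projection in $A^{**}$ lies in $M(A)$. An HSA has the form $D = fA^{**}f \cap A$ for an open projection $f \in A^{**}$; hence $f \in M(A)$, so $fAf \subseteq A$ and $D = fAf$. Moreover $D$ is approximately unital (being an HSA) and semiprime by Proposition \ref{spp}, and the r-ideal corresponding to $D$ is $fA$, with support projection $f$.

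Next I would apply Proposition \ref{n17} to the r-ideal $fA$: there exist mutually orthogonal algebraically minimal projections $(f_k)_{k \in E}$ in $A$ with $f = \sum_{k \in E} f_k$ (the support projection of $fA$), $fA = \oplus^{c}_{k \in E} f_k A$, and $Af = \oplus^{r}_{k \in E} A f_k$. Since each $f_k \le f$ we have $f_k = f f_k f \in fAf = D$, and by the first Remark after Proposition \ref{prop} each $f_k$ is algebraically minimal in $D$. Enumerating $E$, the partial sums $u_n = \sum_{k \le n} f_k$ satisfy $u_n d \to d$ in norm for $d \in D \subseteq fA = \oplus^{c}_{k} f_k A$ (finitely supported tuples being dense in a column sum), and $d u_n \to d$ in norm using $D \subseteq Af = \oplus^{r}_{k} A f_k$; moreover each $u_n$ is a positive element of $D$ of norm $1$, being a sum of mutually orthogonal (orthogonal) projections. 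Thus $(u_n)$ is a positive cai for $D$, i.e. $\sum_{k \in E} f_k = f$ strictly on $D$, and so by Proposition \ref{n0}, $D$ is the closure of $\sum_{k \in E} f_k D$.

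Now $D$ is an approximately unital semiprime operator algebra which is the closure of $\sum_{k} f_k D$ for mutually orthogonal algebraically minimal projections $f_k \in D$, i.e. $D$ satisfies Theorem \ref{n13}(ii); hence by that theorem $D$ is completely isometrically isomorphic to a $\sigma$-matricial algebra. Finally, if $A$ is $1$-matricial then it is topologically simple by Lemma \ref{n4}, so $D$ is topologically simple by Proposition \ref{spp}; a $c_0$-direct sum $\oplus^{0}_{j} D_j$ of $1$-matricial algebras is topologically simple only when there is a single summand (otherwise any summand is a nontrivial closed ideal), so $D$ is $1$-matricial. I expect the only point needing real care is the transfer of Proposition \ref{n17}'s decomposition of the r-ideal $fA$ to the HSA $D = fAf$ — checking that the $f_k$ lie in $D$ and that their partial sums form a genuine norm cai for $D$ rather than merely a weak* approximate identity; the rest is direct assembly of the cited results.
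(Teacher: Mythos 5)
Your argument is correct, and it runs on the same engine as the paper's proof: both extract the family $(f_k)_{k \in E}$ of mutually orthogonal algebraically minimal projections from Proposition \ref{n17} applied to the r-ideal $fA$, and both reduce everything to the strict convergence of $\sum_{k \in E} f_k$ to $f$ on $D = fAf$. The differences are in the packaging. The paper reduces at the outset to the $1$-matricial case, cites the proof of Theorem \ref{n133} for the strictness, and closes with Proposition \ref{spp} plus Theorem \ref{chgk1} (which requires topological simplicity). You instead work with a general $\sigma$-matricial $A$, verify the cai claim directly from the decompositions $fA = \oplus^c_{k\in E} f_kA$ and $Af = \oplus^r_{k\in E} Af_k$ --- a worthwhile piece of bookkeeping the paper leaves implicit --- and close with Theorem \ref{n13} (ii), which needs only semiprimeness; the $1$-matricial case then drops out because $D$ is topologically simple (Proposition \ref{spp} and Lemma \ref{n4}) and a $c_0$-sum with more than one nonzero summand never is. Your ordering buys you a clean treatment of the $\sigma$-matricial case without the (unstated) reduction of an HSA of a $c_0$-sum to a $c_0$-sum of HSAs of the summands; the paper's route is shorter because Theorem \ref{chgk1} applies immediately once simplicity is in hand. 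In substance the two proofs are the same.
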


\begin{proof}    We may assume that $A$ is a $1$-matricial algebra.
Continuing the proof of Proposition \ref{n17}: as in the proof of
Theorem \ref{n133}, $\sum_{k \in E} \, f_k = f$ strictly on $D$. By
Proposition \ref{spp}, $D$ is topologically simple. By Theorem
\ref{chgk1}, $D$ is a $1$-matricial algebra.
\end{proof}

\section{CHARACTERIZATIONS
OF $C^*$-ALGEBRAS OF COMPACT OPERATORS}

An interesting question is whether every approximately unital operator algebra
with the property that
all closed right ideals have a  left cai (and/or similarly for
left ideals), is a $C^*$-algebra.   The following is a partial result
along these lines:

\begin{theorem}  \label{who}  Let $A$ be a semiprime
 approximately unital operator algebra.  The following are equivalent:
 \begin{itemize} \item [(i)]  Every minimal
right ideal of $A$ has a left cai (or equivalently by Lemma {\rm
\ref{aus}}, equals $pA$ for a projection $p \in A$).
\item [(ii)]  Every
algebraically minimal idempotent in $A$ has range projection in $A$.
\end{itemize}
If either of these hold, and if $A$ has dense socle, 
 then $A$ is completely
isometrically isomorphic to an annihilator $C^*$-algebra.
\end{theorem}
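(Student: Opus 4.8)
The plan is to settle the equivalence of (i) and (ii) first, and then to derive the $C^*$-conclusion from (i)/(ii) together with dense socle, by way of Theorem \ref{n133}.

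\emph{The equivalence of (i) and (ii).} In a semiprime Banach algebra every minimal right ideal is closed and has the form $eA$ for an algebraically minimal idempotent $e$, with $e$ a left identity for $eA$. If $eA$ has a left cai then it is an $r$-ideal with a left identity, so by Lemma \ref{aus} it has a left identity $p$ of norm $1$; then $p \in eA$ is a norm-one idempotent in the operator algebra $A$, hence a projection, and $eA = pA$. Conversely $pA$ has the constant net $(p)$ as a left cai, which proves the equivalence asserted parenthetically in (i). Moreover, for a projection $p \in A$ and an idempotent $e$, a short verification gives that $eA = pA$ if and only if $ep = p$ and $pe = e$, and this in turn holds if and only if $p$ is the range projection of $e$ (the orthogonal projection onto $\mathrm{ran}(e)$). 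Hence (i) says that every algebraically minimal idempotent $e$ has $eA = pA$ where $p$ is the range projection of $e$, which in particular forces that projection into $A$ --- that is (ii); conversely, (ii) places $p$ in $A$ and then $eA = pA$ carries a left cai, giving (i).

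\emph{Proof of the last assertion.} Assume (i)/(ii) and that the socle is dense. By (i), each algebraically minimal idempotent $e$ has $eA = p_e A$ for an algebraically minimal projection $p_e$, so $\mathrm{soc}(A)$, the sum of the minimal right ideals, equals the sum of the $pA$ over algebraically minimal projections $p$, and density of the socle gives r-soc$(A) = A$. Its support projection is then $1_{A^{**}}$, so h-soc$(A) = A$, and by Theorem \ref{n133} $A$ is a $\sigma$-matricial algebra: $A = \oplus^0_j \, A_j$ with each $A_j$ a $1$-matricial algebra built, as in Definition \ref{dfm}, from invertible operators $T^{(j)}_1 = I, T^{(j)}_2, \dots$ on a Hilbert space $K_j$, with matrix units $T^{(j)}_{il} = E_{il} \otimes (T^{(j)}_i)^{-1} T^{(j)}_l$ and diagonal projections $q_k = T^{(j)}_{kk}$. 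The decisive step is to show that each $T^{(j)}_k$ is a scalar multiple of a unitary. Fix $j$ and an index $k \geq 2$ (if there is none, then $A_j \cong \Cdb$ and there is nothing to prove), suppress the superscript, and put $e = T_{11} + T_{k1}$. The matrix-unit relations give at once $e^2 = e$ and $e A_j e = \Cdb e$, so $e$ is algebraically minimal in $A_j$; since $A_j$ is an approximately unital ideal --- hence an HSA --- of $A$, the remark after Proposition \ref{prop} shows $e$ is algebraically minimal in $A$ as well. By (ii) the range projection $P$ of $e$ lies in $A$, and as $\mathrm{ran}(e) \subseteq K_j^{(\infty)}$ while the summands of $A$ act on mutually orthogonal subspaces, in fact $P \in A_j$. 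Now $\mathrm{ran}(e)$ is the graph of $T_k^{-1}$ inside the span of the first and $k$-th copies of $K_j$, so $q_1 P q_1 = E_{11} \otimes (I_{K_j} + (T_k T_k^*)^{-1})^{-1}$, while $q_1 A_j q_1 = \Cdb q_1 = \Cdb(E_{11}\otimes I_{K_j})$ forces $(I_{K_j} + (T_k T_k^*)^{-1})^{-1}$ to be a scalar multiple of $I_{K_j}$; hence $T_k T_k^*$ is a positive scalar multiple of $I_{K_j}$, i.e.\ $T_k = c_k V_k$ with $c_k > 0$ and $V_k$ unitary. Conjugating $A_j$ by the block-diagonal unitary $D_j$ on $K_j^{(\infty)}$ with $k$-th diagonal block $V_k$ sends $T_{il}$ to $c_i^{-1} c_l \, E_{il} \otimes I_{K_j}$, so $D_j A_j D_j^*$ is the closed linear span of the $E_{il} \otimes I_{K_j}$, a $C^*$-algebra $*$-isomorphic to $\Kdb(\ell^2)$. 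Thus each $A_j$ is, completely isometrically, an elementary $C^*$-algebra, and so $A = \oplus^0_j \, A_j$ is completely isometrically isomorphic to a $c_0$-direct sum of elementary $C^*$-algebras, that is, to an annihilator $C^*$-algebra.

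I expect this last step to be the main obstacle. The reduction to the $\sigma$-matricial case is immediate once dense socle is used to identify $A$ with h-soc$(A)$; the real work is to squeeze, out of the rather weak hypothesis (ii), the rigidity that each $T^{(j)}_k$ is a scalar times a unitary --- and this rests on choosing the right algebraically minimal idempotents $T_{11} + T_{k1}$ and on the observation that (ii) forces their range projections into the single summand $A_j$, where they must respect $q_1 A_j q_1 = \Cdb q_1$.
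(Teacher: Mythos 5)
Your proof is correct, and while the equivalence of (i) and (ii) is handled exactly as in the paper (via Lemma \ref{aus} and the relations $pe=e$, $ep=p$ characterizing the range projection), the derivation of the $C^*$-conclusion takes a genuinely different route at both of its stages. For the reduction to a $\sigma$-matricial algebra, the paper verifies condition (iv) of Theorem \ref{n13} directly: it first shows $e_\Delta = 1$ (so $A$ is $\Delta$-dual) by an argument with the right ideal $e_\Delta A^{**}\cap A$ absorbing all range projections, and then shows every $*$-minimal projection is algebraically minimal using \cite[Theorem 8.4.5 (h)]{Pal}; you instead observe that (ii) makes the socle coincide with the r-socle, so dense socle gives h-soc$(A)=A$ and Theorem \ref{n133} applies at once --- a shortcut the paper itself hints is available. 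For the crucial rigidity step inside a single $1$-matricial summand, the paper takes the algebraically minimal idempotent $\frac12(q_i+q_j+T_{ij}+T_{ji})$, rules out by an approximation argument ($xa_n\to e$ forcing $ue=e$) that the $i$-$j$ entry of its range projection vanishes, and then invokes the equivalence-relation analysis from the proof of Proposition \ref{n99} to conclude $T_iT_i^*=T_jT_j^*$; you take the idempotent $T_{11}+T_{k1}$ and compute its range projection explicitly as the graph projection of $T_k^{-1}$, so that $q_1Pq_1\in\Cdb q_1$ forces $T_kT_k^*$ to be a scalar outright. Your computation is more self-contained (it does not lean on Proposition \ref{n99}) and yields the unitaries $V_k$ and the conjugating block-diagonal unitary $D_j$ concretely, at the cost of a slightly longer explicit calculation; the paper's version is shorter because it recycles structure already established for $\Delta(A)$ of a $1$-matricial algebra. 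Both arguments are sound.
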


\begin{proof}  (i) $\Rightarrow$ (ii) \  If 
$e$ is an algebraically minimal idempotent in $A$, then $eA$ is a
minimal right ideal, hence an r-ideal by (i).  By Lemma \ref{aus},
$eA = pA$ for a projection $p \in A$.  We have $p e = e, ep = p$,
which forces $p$ to be the range projection of $e$.

(ii)  $\Rightarrow$ (i) \ Every minimal
right ideal equals $eA$ for an algebraically minimal idempotent $e$.
 The range projection $p$ of $e$ satisfies
$p e = e, ep = p$, so that $eA = pA$.

Suppose that these hold, and $A$ has dense socle.  We will argue
that $A$ satisfies Theorem \ref{n13} (iv), hence is a
$\sigma$-matricial algebra.  As in the proof of Theorem \ref{n13cc},
$A$ is a semisimple
 modular annihilator algebra, and $\Delta(A)$ is a
nonzero  annihilator $C^*$-algebra. 
Let $e_\Delta$ be the identity of $\Delta(A)^{**}$, and set 
$J = e_\Delta A^{**} \cap A$, a closed right ideal in $A$.  If $e$
is an algebraically minimal idempotent not in $J$, and if $q$ is its
range projection, which is in $A$, then $q$ is not in $J$, or else
$e  = q e$ would be in $J$. However $q \leq e_\Delta$ since $q \in
\Delta(A)$, so that $q \in J$. Thus every algebraically minimal
idempotent in $A$ is in $J$. Hence $J$ contains the socle, so that
$J = A$ and $e_\Delta = 1$. Therefore $A$ is $\Delta$-dual. If $p$
is a $*$-minimal projection in $A$, then $pA$ contains a nonzero
algebraically minimal idempotent $e \in A$, by \cite[Theorem 8.4.5
(h)]{Pal}.  By the hypothesis, $eA = fA$ for a projection $f \in A$.
Clearly $f$ is algebraically minimal, and $f \in eA \subset pA$, so
that $pf = f = fp$.  Thus $p = f$ is algebraically minimal. 
By Theorem \ref{n13} (iv), $A$ is a $\sigma$-matricial algebra.

It remains to show that every $1$-matricial algebra $A$ satisfying the given
condition,  is a $C^*$-algebra of compact operators. To this end,
 fix integers $i \neq j$
and let $x = q_i + q_j + T_{ij} + T_{ji} \in (q_i + q_j) A (q_i +
q_j)$.  Then $x A = eA$ for a projection $e \in A$,
since $x$ is a scalar multiple of an algebraically minimal idempotent. 
We have $(q_i + q_j) e = e = e (q_i + q_j)$.
Suppose that the $i$-$j$ entry of $e$ is zero, which forces $e$ to
be $q_i, q_j,$ or $q_i + q_j$.
  Let $u = T_{ij}
+ T_{ji}$, then $u x = x$. There exists $(a_n) \subset A$ with $x
a_n \to e$, and it follows that $u e = e$, which is false.  This
contradiction shows that the $i$-$j$ entry of $e$ is nonzero.  By
the proof of Proposition \ref{n99}, $T_i T_i^* = T_j T_j^*$.  Since $i, j$
were arbitrary, it follows as in Proposition \ref{n99} that $A$ is
selfadjoint, and $A \cong \Kdb(H)$ for a Hilbert space $H$. 
\end{proof}

We say that a left ideal  in $A$ is {\em $A$-complemented} if it is the
range of a bounded idempotent left $A$-module map.

\begin{lemma} \label{hihoho} Let $A$ be an operator algebra with a bounded
right approximate identity.
 \begin{itemize}
\item [{\rm (1)}]   Every $A$-complemented
closed left ideal $J$ in $A$ has a right bai, and also a nonzero right
annihilator.   Indeed $J = Ae$ for some idempotent $e \in A^{**}$.
\item [{\rm (2)}]  If every closed left ideal in $A$ is $A$-complemented, then $A$
is a semiprime right annihilator algebra.
\item [{\rm (3)}]  If $A$ has a bai, and if
$J$ is a two-sided ideal in $A$ which is both right and left
complemented, then $J = eA$ for an idempotent $e$ in the center of
$M(A)$.  Also, $J$ has a bai.
\item [{\rm (4)}] If $A$ has a right cai, then every contractively $A$-complemented
closed left ideal $J$ in $A$ has a right  cai, and the $e$ in {\rm (1)}
may be chosen to be a contractive projection in $M(A)$.
\end{itemize}
\end{lemma}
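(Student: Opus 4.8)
The plan is to transpose everything to the bidual. Operator algebras are Arens regular, so multiplication on $A^{**}$ is separately weak*-continuous; hence if $P\colon A\to A$ is a bounded idempotent left $A$-module map with $J=P(A)$, a routine double-limit argument shows that $P^{**}\colon A^{**}\to A^{**}$ is a weak*-continuous idempotent left $A^{**}$-module map. A right bai for $A$ gives a right identity $\rho$ for $A^{**}$; put $e:=P^{**}(\rho)$. Since $\xi\rho=\xi$ for all $\xi\in A^{**}$, the module identity gives $\xi e=\xi P^{**}(\rho)=P^{**}(\xi\rho)=P^{**}(\xi)$, so $e$ is an idempotent, $A^{**}e=P^{**}(A^{**})=J^{\perp\perp}$, and $P(a)=P^{**}(a)=ae$ for $a\in A$; thus $Ae=P(A)=J$. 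Applying $P^{**}$ to the right bai $(e_t)$ shows $e_te=P^{**}(e_t)\to P^{**}(\rho)=e$ weak*, while $e_te\to\rho e$ weak* by weak*-continuity of right multiplication, so $\rho e=e$; together with $e\rho=e$ this makes $f:=\rho-e$ an idempotent with $ef=fe=0$. Finally $J$, being a closed left ideal, is a closed subalgebra, so $J^{**}$ is identified isometrically and algebraically with $J^{\perp\perp}=A^{**}e$, which has right identity $e$; since an operator algebra has a right bai iff its bidual has a right identity, $J$ has a right bai. This gives the bulk of (1).

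For the remaining assertion of (1), assume $J\ne A$. Then $P\ne\mathrm{id}$, so the complementary closed left ideal $\ker P=A(\rho-e)=\{a\in A:ae=0\}$ is nonzero, while $Jf=Ae(\rho-e)=A\cdot e(\rho-e)=(0)$. Using the right bai of $J$ just constructed, one checks that $R(J)=\{a\in A:ea=0\}$, and then extracts a nonzero element of this set from the complement. Statement (2) follows: applying the previous paragraph to each proper closed left ideal shows $A$ is a right annihilator algebra, and for semiprimeness a closed ideal $J$ with $J^{2}=(0)$ is in particular a closed left ideal, hence $J=Ae$ as above; separate weak*-continuity of the product forces $(J^{\perp\perp})^{2}=(A^{**}e)^{2}=(0)$, so $eA^{**}e=(0)$, whence $e=e\rho e=0$ and $J=(0)$.

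For (3), the bai is now two-sided, so $\rho$ is a two-sided identity of $A^{**}$. If the two-sided ideal $J$ is both left- and right-$A$-complemented, then (1) and its right-handed analogue give $J=Ae_{1}=e_{2}A$ for idempotents $e_{1},e_{2}\in A^{**}$, so $A^{**}e_{1}=J^{\perp\perp}=e_{2}A^{**}$. Chasing $e_{1}\in e_{2}A^{**}$ and $e_{2}\in A^{**}e_{1}$ gives $e_{2}e_{1}=e_{1}$ and $e_{2}e_{1}=e_{2}$, so $e_{1}=e_{2}=:e$; then $A^{**}e=eA^{**}$ forces $e\xi=e\xi e=\xi e$ for every $\xi\in A^{**}$, i.e.\ $e$ is central in $A^{**}$. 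Since $eA=Ae=J\subseteq A$, we get $e\in M(A)$, central; and $J^{**}=A^{**}e$ has the two-sided identity $e$, so $J$ has a bai.

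For (4), with a right cai $\rho$ may be taken of norm one, and if $\|P\|\le 1$ then $\|e\|=\|P^{**}(\rho)\|\le 1$; since $J^{**}=A^{**}e$ then has a right identity of norm $\le 1$, $J$ has a right cai. Moreover $e$ is a norm-one idempotent in $A^{**}\subseteq B(H)$, hence a self-adjoint projection, with $Ae=J\subseteq A$, so $e$ lies in $M(A)$ (invoking Proposition \ref{idl} and its left-handed analogue to also obtain $eA\subseteq A$ when $A$ is approximately unital). I expect the one genuine obstacle to be the nonzero-right-annihilator claim in (1)--(2): unlike $\ker P=\{a:ae=0\}$, the relevant set $R(J)=\{a\in A:ea=0\}$ concerns the left action on $A$ of the idempotent $e\in A^{**}$, which is precisely what a one-sided approximate identity does not control, so that step will need a careful argument, which I would base on the complementary ideal $\ker P$ and the orthogonality $ef=fe=0$.
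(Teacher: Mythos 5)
Your construction of $e=P^{**}(\rho)$ as a weak* limit of $(P(e_t))$, the identities $P(a)=ae$, $J=Ae$, $J^{\perp\perp}=A^{**}e$, the right bai for $J$, the semiprimeness argument in (2), and your treatments of (3) and (4) are correct and follow essentially the paper's route (the paper writes $r$ for your $e$, obtains it as a weak* cluster point of $P(e_t)$, and gets the right bai for $J$ even more directly, by noting $xP(e_t)=P(xe_t)\to P(x)=x$ for $x\in J$, so that $(P(e_t))$ is itself a right bai for $J$ without passing to the bidual; your argument for (3) is in fact more detailed than the paper's one-line proof). One small caveat in (4): concluding $eA\subseteq A$, hence $e\in M(A)$, from $Ae=J\subseteq A$ via Proposition \ref{idl} requires the left-handed version of that proposition, i.e.\ a left cai, which is not among the hypotheses; your parenthetical ``when $A$ is approximately unital'' silently strengthens the assumptions.

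The genuine gap is the one you flagged yourself: the nonzero right annihilator in (1), and with it the ``right annihilator algebra'' half of (2), is never proved. Your reduction $R(J)=\{a\in A: ea=0\}$ is fine, but ``extract a nonzero element from the complement'' fails as stated, because $\ker P=\{a\in A: ae=0\}$ controls the wrong side of $e$ --- exactly the left-versus-right obstruction you describe in your last sentence --- and a write-up whose one hard step is deferred to ``a careful argument'' to be supplied later is not complete. For comparison, the paper disposes of this point in two lines: if $ArA=(0)$ then $ar^2=ar=0$ for all $a\in A$, whence $r=0$; that is, $JA\neq(0)$ whenever $J\neq(0)$. You should examine that step critically when you consult it: what it literally establishes is $R(J)\neq A$ for nonzero $J$, whereas the lemma needs $R(J)\neq(0)$ for proper $J$, and bridging the two is precisely the handedness issue you isolated. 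In any case, as submitted your proof does not establish the annihilator assertions of (1) and (2).
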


\begin{proof} (1) \ Let $P : A \to J$ be the projection, with $\Vert P \Vert \leq
K'$. If $(e_t)$ is the right bai, then $x P(e_t) = P(x e_t) \to P(x) = x$ for $x \in J$.
Thus $J$ has a right bai.  There is a weak* convergent subnet
$P(e_{t_\mu}) \to r$ weak* in $A^{**}$. Then $ar = \; $w*-lim$_\mu
\, P(ae_{t_\mu}) = P(a) \in J$ for all $a \in A$. So $r \in RM(A)
\cap J^{\perp \perp}$.  Also, $xr = x$ for $x \in J$. Hence 
$J = Ar$, and $\Vert r \Vert
\leq K K'$, if $K$ is a bound for the right bai. If $Ar A = (0)$ then
$ar^2 = ar = 0$ for all $a \in A$, so that $r = 0$. Thus if $J$ is
nontrivial then $J$ has a nonzero right annihilator. Also, $J^{\perp
\perp}$ is a weak* closed left ideal containing $r$ so $A^{**}r =
J^{\perp \perp}$. 

(2)  \ 
In this case, $A$ is a right annihilator algebra, by (1). Also $A$ is
semiprime, since if $J$ were a two-sided ideal with $J^2 = (0)$,
then $ArAr = (0)$, and $r^4 = r = 0$.

(3) \ In this case, $J$ has a right and a left bai, hence a bai \cite{Pal}.

(4) \ This is a slight modification of 
the proof of (1). 
\end{proof}

\begin{corollary}  \label{evc}  Let $A$ be a semisimple approximately unital
 operator algebra such that  every closed left ideal  in  $A$ is contractively
$A$-complemented, or equivalently equals $J = Ap$ for a projection
$p \in M(A)$.  Then   $A$ is completely isometrically isomorphic to
an  annihilator  $C^*$-algebra.
 \end{corollary}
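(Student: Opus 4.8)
The plan is to reduce everything to the left-handed form of Theorem~\ref{who} (its proof adapts by the evident left-right swap, and annihilator $C^*$-algebras are selfadjoint): one must check that $A$ is semiprime (indeed it is semisimple), approximately unital, that every minimal left ideal of $A$ has a right cai, and that $A$ has dense socle. The equivalence asserted in the statement is easy --- one implication is Lemma~\ref{hihoho}(4), and for the other, if $p$ is a projection in $M(A)$ then $x\mapsto xp$ is a completely contractive idempotent left $A$-module map of $A$ onto $Ap$ --- so assume henceforth that $A$ has the stated property. By Lemma~\ref{hihoho}(2), $A$ is a semiprime right annihilator algebra; since every $\ell$-ideal is a closed left ideal, the left-handed form of Proposition~\ref{n95}(iii) gives that $A$ is nc-discrete, so $\Delta(A)$ is an annihilator $C^*$-algebra. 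Moreover every minimal left ideal, being of the form $Ap$ with $p$ a projection in $M(A)$, is an $\ell$-ideal and so has a right cai. Hence the only remaining point is that $A$ has dense socle.

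To produce a first algebraically minimal projection, fix a maximal modular left ideal $M$ of $A$ (one exists since $A$ is semisimple and nonzero). As $A$ is a right annihilator algebra, $R(M)\neq (0)$; pick $0\neq a\in R(M)$. If $\overline{Aa}\subseteq M$ then $a$ lies in the left ideal $M\cap R(M)$, which has square zero and hence is $(0)$ by semiprimeness, a contradiction; so $\overline{Aa}+M=A$. Now $N:=\overline{Aa}\cap M$ is a closed left ideal, so $N=Ar$ for a projection $r\in M(A)$, and $x\mapsto xr$ is a bounded idempotent left $A$-module map of $A$ onto $N$. Its restriction to $\overline{Aa}$ splits off a closed left ideal isomorphic as a module to $\overline{Aa}/N\cong A/M$, which is simple; thus $A$ has a minimal left ideal $Af$, with $f$ algebraically minimal. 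By hypothesis $Af=Ap$ with $p$ a projection in $M(A)$, and then $pf=p$, $fp=f$, so the right action of $pAp$ on $Af$ embeds $pAp$ into ${\rm End}_{A}(Af)=\Cdb$ (Schur); since $pe_tp\to p$ weak* for a cai $(e_t)$, this forces $pAp=\Cdb p$, i.e.\ $p$ is algebraically minimal.

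For the exhaustion, let $\{f_i\}$ be a maximal family of mutually orthogonal algebraically minimal projections of $A$, nonempty by the above. Each $f_i$ is a minimal projection of $\Delta(A)$, and conversely every minimal (rank-one) projection of the annihilator $C^*$-algebra $\Delta(A)$ is algebraically minimal in $A$; so $\{f_i\}$ is maximal among orthogonal families of minimal projections of $\Delta(A)$, whence its weak* sum is the identity $e_\Delta$ of $\Delta(A)^{**}$, and $\ell$-soc$(A)=Ae_\Delta$ (here $e_\Delta\in M(A)$ because $e_\Delta$ is open and $A$ is nc-discrete). Suppose $e_\Delta\neq 1_{A^{**}}$; put $q_0:=1-e_\Delta\in M(A)$ and $D:=q_0Aq_0$, a nonzero HSA with $\Delta(D)=\Delta(A)\cap D=(0)$ by Proposition~\ref{prop}. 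The key claim is that $D$ inherits the complementation property: for a closed left ideal $L$ of $D$ one checks $L=\overline{q_0(AL)q_0}$, while $\overline{AL}=Ar$ for a projection $r\in M(A)$, and compressing a right cai of the $\ell$-ideal $Ar$ by $q_0$ produces a right cai of $L$; thus $L$ is an $\ell$-ideal of the nc-discrete algebra $D$, hence equals $Dp$ for a projection $p\in M(D)$. Now apply the second paragraph to $D$ (it is semiprime by Proposition~\ref{spp}, approximately unital hence possessing a maximal modular left ideal, and a right annihilator algebra by Lemma~\ref{hihoho}(2)): $D$ contains an algebraically minimal idempotent $g$, which is also algebraically minimal in $A$, so $Ag=Ap'$ with $p'$ an algebraically minimal projection satisfying $p'q_0=p'$ and therefore (as $p'=p'^*$) $p'\le q_0$; but then $p'$ is orthogonal to every $f_i$, contradicting maximality. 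Hence $e_\Delta=1_{A^{**}}$, so $\ell$-soc$(A)=A$, and the left-handed Theorem~\ref{who} finishes the argument.

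I expect the dense-socle step to be the main obstacle --- in particular the verification that the residual hereditary subalgebra $D$ (with trivial diagonal) still carries the complementation property; the rest is routine bookkeeping with $\Delta$-soc and the structure of the annihilator $C^*$-algebra $\Delta(A)$. An alternative ending: once $e_\Delta=1_{A^{**}}$ is known, $A$ is $\Delta$-dual and nc-discrete, hence $\sigma$-matricial by Theorem~\ref{n13}(vi), and one then checks, as in the last paragraph of the proof of Theorem~\ref{who}, that each $1$-matricial summand (which inherits the property) must be selfadjoint.
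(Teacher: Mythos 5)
Your overall route coincides with the paper's: Lemma \ref{hihoho} (2) makes $A$ a semiprime right annihilator algebra, minimal left ideals are seen to be $\ell$-ideals, the socle is shown to be dense, and then the left-handed form of Theorem \ref{who} is invoked. The paper disposes of the dense-socle step in one line by citing \cite[Proposition 8.7.2]{Pal} (a semisimple right annihilator Banach algebra has dense socle); you instead try to prove it by hand, and that is where the difficulties lie. Your second paragraph (producing the first algebraically minimal projection) is a correct adaptation of the classical argument, and the statement's asserted equivalence is handled properly.

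Two steps in your third paragraph are genuine gaps. First, you assert that ``every minimal (rank-one) projection of the annihilator $C^*$-algebra $\Delta(A)$ is algebraically minimal in $A$.'' This is exactly the delicate implication ($*$-minimal $\Rightarrow$ algebraically minimal) that the paper's proof of Theorem \ref{who} derives \emph{from} the density of the socle (via \cite[Theorem 8.4.5 (h)]{Pal}), so using it as an ingredient in the proof that the socle is dense is circular unless you give an independent argument. (One can be given --- show that for a $*$-minimal projection $q$ the unital corner $qAq$ inherits the complementation property, hence has no nonzero proper closed left ideals, and apply Gelfand--Mazur --- but nothing of the sort appears in your write-up.) Second, when you rerun the second-paragraph argument inside the corner $D=q_0Aq_0$, you need a maximal modular left ideal of $D$, and you justify its existence by ``approximately unital hence possessing a maximal modular left ideal.'' That implication is false: radical operator algebras with contractive approximate identities exist, and --- as the paper explicitly remarks after Proposition \ref{spp} --- it is not known whether semisimplicity passes to HSA's, so you cannot import it from $A$. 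Until $D$ is shown to be non-radical (or the exhaustion is reorganized to avoid $D$ altogether), the final contradiction does not go through. The simplest repair is the paper's: quote the classical theorem that a semisimple right annihilator Banach algebra has dense socle, and proceed directly to Theorem \ref{who}.
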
 \begin{proof} 
 As we said above, $A$ is a right annihilator algebra.  If it is semisimple
 then it has dense socle  
by \cite[Proposition 8.7.2]{Pal}.
 Now apply Theorem \ref{who}.
\end{proof}

{\sc Remarks.} 1) \ This result is related to theorems of Tomiuk and
Alexander (see e.g.\ \cite{Tom,Alex} concerning `complemented Banach
algebras', but the proofs and conclusions are quite different.
\smallskip

\smallskip 

2) \  We imagine that a
semisimple approximately unital
 operator algebra such that  every closed left ideal  in  $A$ is $A$-complemented,
is isomorphic to an annihilator $C^*$-algebra. Certainly in this
case $A$ is a right annihilator algebra by the last lemma. It therefore
has dense socle by \cite[Proposition 8.7.2]{Pal}, and an argument
from \cite{Kap} shows that $A= \overline{\sum_{i} \, A e_i}$ for
mutually orthogonal algebraically minimal idempotents $e_i$ in $A$. 
Also by
 \cite{Kap}, we have a family $\{ S_i : i \in I \}$ of
two-sided closed ideals of $A$, with $S_i S_j =
(0)$ if $i \neq j$, and whose union is dense in $A$.   If we assume
that ideals are {\em uniformly} $A$-complemented, then there is
a constant $K$, and idempotents $f_i$  with $\Vert f_i \Vert \leq K$
and $S_i = A f_i$ for each $i$.   For any finite $J
\subset I$ we have $A (\sum_{i \in J} \, f_i)$ is complemented too,
so that $\Vert \sum_{i \in J} \, f_i \Vert \leq K$.  We may thus use
a similarity as at the start of Section 4 to reduce to the case that $f_i$ are projections.  The
canonical  map $\oplus^f_{i \in I} \, S_i \to A$ is an isometric
homomorphism with respect to the $\infty$-norm on the direct sum, so
that $A \cong \oplus^0_{i \in I} \, S_i$.  Thus if we are assuming
that closed ideals are uniformly complemented then
 we have reduced the question
 to the case that $A$ is a topologically simple annihilator algebra.

\bigskip

We now give a characterization amongst the $C^*$-algebras, of
$C^*$-algebras consisting of compact operators. There are many such
characterizations in the literature, however we have not seen the
one below, in terms of the following notions introduced by Hamana.
If $X$ contains a subspace $E$ then we say that $X$ is an {\em
essential extension} (resp.\ {\em rigid extension}) of $E$ if any
complete contraction with domain $X$ (resp.\ from $X$ to $X$) is
completely isometric (resp.\ is the identity map) if it is
completely isometric (resp.\ is the identity map) on $E$.  If $X$ is
injective then it turns out that it is rigid iff it is essential,
and in this case we say $X$ is an {\em injective envelope} of $E$,
and write $X = I(E)$.   See e.g.\ 4.2.3 in \cite{BLM}, or the works
of Hamana; or \cite{PnWEP} for some related topics.

\begin{theorem} \label{subcom} If $A$ is
a $C^*$-algebra,  the following are equivalent: \begin{itemize}
\item [{\rm (i)}]  $A$ is an annihilator $C^*$-algebra.
\item [{\rm (ii)}] $A^{**}$ is an essential extension of $A$.
\item [{\rm (iii)}] $A^{**}$ is an injective envelope of $A$.
\item [{\rm (iv)}] $I(A^{**})$ is an injective envelope of $A$.
\item [{\rm (v)}]
  Every surjective complete isometry $T : A^{**} \to
A^{**}$ maps $A$ onto $A$. 
\item [{\rm (vi)}]  $A$ is nuclear and $A^{**}$ is a rigid extension of $A$.
  \end{itemize}
\end{theorem}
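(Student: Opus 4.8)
The strategy is to show a cycle of implications, using the characterization of annihilator $C^*$-algebras as $C^*$-algebras which are ideals in their bidual, together with the structure theory of injective envelopes. First I would observe that (i) $\Rightarrow$ (iii): if $A$ is an annihilator $C^*$-algebra then $A^{**} = M(A)$, and $A^{**}$ is a von Neumann algebra, hence injective. For essentiality, one uses that $A = \oplus^0_i \, \Kdb(H_i)$, so that $A^{**} = \oplus^\infty_i \, B(H_i)$; any complete contraction on $A^{**}$ which is completely isometric on $A$ restricts to a complete isometry on each $\Kdb(H_i)$, hence (by a standard argument, e.g.\ that $B(H_i) = I(\Kdb(H_i))$ together with essentiality of $B(H_i)$ over $\Kdb(H_i)$) is completely isometric on each $B(H_i)$, and then on the $c_0$-sum and its weak* closure. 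This simultaneously gives (iii) $\Rightarrow$ (iv) trivially, since $I(A^{**}) = A^{**}$ once (iii) holds, and gives (iii) $\Rightarrow$ (ii) and (iii) $\Rightarrow$ (vi) immediately (nuclearity of an annihilator $C^*$-algebra being classical). The implications (iv) $\Rightarrow$ (ii) and (vi) $\Rightarrow$ (ii) are also essentially formal: in both cases $A^{**}$, sitting inside the injective envelope $I(A)$, is forced to be an essential extension of $A$ because a rigid (or injective-envelope) extension is essential, and essentiality is inherited by intermediate subspaces.

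The key remaining step, and the one I expect to be the main obstacle, is (ii) $\Rightarrow$ (i), i.e.\ deducing that $A$ is an ideal in $A^{**}$ from the bare assumption that $A^{**}$ is an essential extension of $A$. The plan is contrapositive: suppose $A$ is not an annihilator $C^*$-algebra. Then $A^{**}$ is not equal to $M(A)$, equivalently $A$ is not an ideal in $A^{**}$; by the classical theory this means $A$ is not a $c_0$-direct sum of elementary $C^*$-algebras, so either some summand in the central decomposition of $A^{**}$ is a non-type-I factor, or a type-I part fails to be the compacts, or the sum fails to be a $c_0$-sum. In each case I want to manufacture a unital complete contraction $\varphi : A^{**} \to A^{**}$ that is the identity on $A$ but not completely isometric — violating essentiality. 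The natural candidate is to exploit a central projection $z \in A^{**}$ which is \emph{not} open (not in $M(A)$), equivalently $zA$ is not closed under the relevant multiplications, and use a slice or conditional-expectation-type map onto a corner. Concretely, if $z$ is a central projection in $A^{**}$ with $z \notin M(A)$, one considers whether the map $a \mapsto za + (1-z)\pi(a)$ for a suitable $*$-homomorphism or the "doubling" map onto $A^{**} \oplus zA^{**}$ can be made completely contractive, agreeing with the identity on $A$ but not isometric on $A^{**}$; alternatively one uses that a $C^*$-algebra $A$ fails to be an ideal in $A^{**}$ exactly when there is an irreducible representation $\pi$ of $A$ on some $H$ with $\pi(A) \not\supset \Kdb(H)$, and then builds the offending map from the reducibility of $\pi^{**}$.

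To make that last construction precise I would lean on the following concrete dichotomy, which I'd state as a lemma if needed: a $C^*$-algebra $A$ is an annihilator $C^*$-algebra iff for every irreducible representation $(\pi, H)$ of $A$ one has $\Kdb(H) \subseteq \pi(A)$ and the spectrum $\hat{A}$ is discrete. If $A$ is \emph{not} such an algebra, pick $(\pi, H)$ irreducible with $\pi(A) \cap \Kdb(H) = (0)$ (the CCR case being handled separately via a non-isolated point of the spectrum). Extend $\pi$ weak*-continuously to $\tilde\pi : A^{**} \to B(H) = \pi(A)''$; because $A$ acts with trivial compact part, there is a unit vector $\xi \in H$ and the vector state composed with $\tilde\pi$ together with the identity gives a unital complete contraction $\Phi : A^{**} \to A^{**} \oplus_\infty \Cdb$, $\Phi(x) = (x, \langle \tilde\pi(x)\xi, \xi\rangle)$, which is completely isometric on $A$ (since $A$ is an essential extension of itself in the obvious sense — here one needs that the state does not "see" any extra structure on $A$, which holds because $\pi$ restricted to $A$ already achieves its norm) but fails to be completely isometric on $A^{**}$, because on $A^{**}$ one can find $x$ with $\|x\| = 1$ but $\langle \tilde\pi(x)\xi,\xi\rangle$ and $\|x\|$ jointly strictly less than what an isometry would force — the slack coming precisely from the weak* limit of an approximate unit of $\ker\pi|_A$. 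This contradicts (ii). The delicate point, and where I expect to spend the most care, is verifying that $\Phi$ is genuinely not completely isometric on $A^{**}$ while remaining completely isometric on $A$; this is where the hypothesis "$A$ is not an ideal in its bidual" must be converted into a quantitative gap, and it is plausible one must instead run this argument with a rigid-extension violation (building a nonidentity complete contraction $A^{**} \to A^{**}$ fixing $A$) rather than merely an essential-extension violation, in which case (ii) $\Rightarrow$ (i) would be deduced together with, or via, (vi), using nuclearity of $A$ to pass between the two. I would present the final write-up as the cycle (i) $\Rightarrow$ (iii) $\Rightarrow$ (iv) $\Rightarrow$ (ii) $\Rightarrow$ (i) with (iii) $\Rightarrow$ (v) $\Rightarrow$ (i) and (iii) $\Leftrightarrow$ (vi) inserted as short detours, so that the only substantial labor is the single implication discussed in this paragraph.
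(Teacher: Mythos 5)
Your outer framework is fine: (i) $\Rightarrow$ (iii) $\Rightarrow$ (iv) $\Rightarrow$ (ii) by standard facts and diagram chasing, (iii) $\Leftrightarrow$ (vi) via nuclearity of $A$ iff injectivity of $A^{**}$, and the equivalence with (v) via unitaries $1-2p$ for projections $p\in A^{**}$ (though you never actually supply the (v) $\Leftrightarrow$ (i) arguments). The problem is the implication you yourself identify as the crux, (ii) $\Rightarrow$ (i), where your construction cannot work as stated. The map $\Phi : A^{**} \to A^{**} \oplus_\infty \Cdb$, $\Phi(x) = (x, \langle \tilde\pi(x)\xi,\xi\rangle)$, has the identity in its first coordinate, so $\Vert \Phi(x) \Vert = \max\{ \Vert x \Vert, |\langle \tilde\pi(x)\xi,\xi\rangle| \} = \Vert x \Vert$ for every $x \in A^{**}$ (and likewise at each matrix level). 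It is therefore automatically a complete isometry on all of $A^{**}$, and can never witness a failure of essentiality. This is not a ``delicate point'' to be checked more carefully; the shape of the map rules it out. Any witness must genuinely lose information on $A^{**}\setminus A$ while preserving it on $A$ (a compression, conditional expectation, or non-faithful normal representation), and producing such a map in every non-annihilator case is exactly the hard content you have not supplied. You also defer the liminal case with non-discrete spectrum (e.g.\ $A = C[0,1]$, where no irreducible representation misses the compacts) with no indication of an argument.

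The paper's proof of (ii) $\Rightarrow$ (i) runs in the opposite, ``positive'' direction and avoids any case analysis. Essentiality of $A^{**}$ over $A$ forces the normal extension $\tilde\pi_a$ of the atomic representation $\pi_a$ to be (completely) isometric, hence faithful, on $A^{**}$; since $\tilde\pi_a$ is weak* continuous with weak* closed range, $A^{**} \cong \pi_a(A)'' \cong \oplus^\infty_i B(H_i)$, which is injective. Thus $\pi_a(A)''$ is an injective envelope of $A$, and Hamana's result \cite[Lemma 3.1 (iii)]{HamC} then yields $\pi_a(A) \supseteq \oplus^0_i \Kdb(H_i)$. Setting $B = \pi_a^{-1}(\oplus^0_i \Kdb(H_i)) \subseteq A$, one has $\tilde\pi_a(B^{\perp\perp}) = \pi_a(A)''$, whence $B^{\perp\perp} = A^{**}$ and $A = B$ is an annihilator $C^*$-algebra. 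If you want to salvage your contrapositive strategy you would essentially have to reprove Hamana's lemma; I recommend instead citing it and restructuring (ii) $\Rightarrow$ (i) along the lines above.
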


\begin{proof} Clearly (i) $\Rightarrow$ (iii) (from e.g.\ \cite[Lemma 3.1 (ii)]{HamC}),
and (iii) $\Rightarrow$ (iv) $\Rightarrow$ (ii) by standard diagram chasing.
Since $A$ is nuclear iff $A^{**}$ is
injective, we have (vi) iff (iii).
 
Item  (ii) implies that the normal extension of every faithful
$*$-representation of $A$ is faithful on $A^{**}$. This implies that
$A^{**}$ is injective, the latter since $\pi_a(A)'' \cong
\oplus^\infty_i \; B(H_i)$ is injective (see \cite[Lemma
4.3.8]{Ped}), where $\pi_a$ is the atomic representation of $A$.  So
(ii) $\Leftrightarrow$ (iii). Moreover, in the notation above, these
imply that $\pi_a(A)''$ is an injective envelope of $A$, and hence
$\pi_a(A)$ contains $\oplus^0_i \; K(H_i)$ by \cite[Lemma 3.1
(iii)]{HamC}. Thus $A$ has a subalgebra $B$ with $\pi_a(B) =
\oplus^0_i \; K(H_i)$, and $\tilde{\pi}_a(B^{\perp \perp}) =
\pi_a(A)''$.  Hence  $B^{\perp \perp} = A^{**}$, and so $A = B$. So
(ii) $\Rightarrow$ (i).

From e.g.\ \cite[Proposition 3.3]{Sharma}, (i) $\Rightarrow$ (v).
 Conversely, if $p$ is a projection in $A^{**}$ then $u = 1 - 2p$ is
unitary, and so $(1 - 2p) A \subset A$ if (v) holds.  Indeed clearly
$p \in M(A)$, so that $A^{**} \subset M(A)$. Thus $A$ is an ideal in
$A^{**}$, which implies (i)  \cite{HWW}.
\end{proof}

{\sc Remark.}  We are not sure if  in (vi) one may drop the
nuclearity condition. By standard diagram chasing, for
nonselfadjoint algebras (or operator spaces), (ii) is equivalent to
(iv), and to $A^{**} \subset I(A)$ unitally.  Also (i) $\Rightarrow$
(ii) for nonselfadjoint algebras of compact operators, indeed if $A$
is an operator algebra with cai, which is a left or right ideal in
its bidual, then we have
 (ii) (since $LM(A)$ and $RM(A)$ may be viewed in $I(A)$, see e.g.\
 \cite[Chapter 4]{BLM}),
 and also (v) (by \cite[Proposition 3.3]{Sharma}).
 Also (ii) implies that $A^{**}$ is a rigid
extension (since $I(A)$ is), and this works for operator spaces 
too.  It is easy to see that $A^{**}$ being a rigid extension of
  an operator space $A$, implies that every
surjective complete isometry $T : A^{**} \to A^{**}$ is weak*
continuous (a property enjoyed by all $C^*$-algebras).  To see this,
let $\widetilde{T_{\vert A}} : A^{**} \to A^{**}$ be the weak*
continuous extension of $T_{\vert A}$, then $T^{-1} \circ
\widetilde{T_{\vert A}} = I_{A^{**}}$ by rigidity. 

In  \cite{HamR,HamC}, Hamana defines the notion of a {\em regular}
extension of a  $C^*$-algebra.   It is not hard to see that
$A^{**}$ is an essential extension of $A$ iff it is a regular
extension. This uses the fact that (ii) is equivalent to $A^{**}
\subset I(A)$, and the fact that the regular monotone completion of
$A$ from \cite{HamR}, resides inside $I(A)$ (see \cite{Sharma2}
for more details if needed). 

\section{ONE-SIDED IDEALS IN TENSOR PRODUCTS OF OPERATOR ALGEBRAS}

Amongst other things, in this section we extend several known
results about the Haagerup tensor products of $C^*$-algebras (mainly
from \cite{Bgeo,BZ}), to general operator algebras, and give some
applications.  For example, we investigate the one-sided $M$-ideal
structure of the  Haagerup tensor products of nonselfadjoint
operator algebras.

We will write $M\otimes^{\sigma h} N$ for the {\em $\sigma$-Haagerup
tensor product} (see e.g.\ \cite{ERc,EK,BM,BLM}). We will repeatedly
use the fact that for operator spaces $X$ and $Y$, we have $(X
\otimes_h Y)^{**} \cong X^{**} \otimes^{\sigma h} Y^{**}$ (see e.g.\
1.6.8 in \cite{BLM}).  We recall from \cite[Section 3]{BM} that the
Haagerup tensor product and $\sigma$-Haagerup tensor product of
unital operator algebras is a unital operator space (in the sense of
\cite{BM}), and also is a unital Banach algebra. We write ${\rm
Her}(D)$ for the hermitian elements in a unital space $D$ (recall
that $h$ is hermitian iff $\varphi(h) \in \Rdb$ for all $\varphi \in
{\rm Ball}(D^*)$ with $\varphi(1) = 1$).

\begin{lemma} \label{heha}
If $A$ and $B$ are unital operator spaces then ${\rm
Her}(A\otimes_{h} B) =A_{\rm sa}\otimes 1+1\otimes B_{\rm sa}$ and
$\Delta(A\otimes_{h} B)= \Delta(A) \otimes 1+1\otimes \Delta(B)$.
Similarly, if  $M$ and $N$ are unital dual operator algebras, then
${\rm Her}(M\otimes^{\sigma h} N) =M_{\rm sa}\otimes 1+1\otimes
N_{\rm sa}$ and $\Delta(M\otimes^{\sigma h} N)= \Delta(M) \otimes
1+1\otimes \Delta(N)$.
\end{lemma}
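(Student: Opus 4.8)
The plan is to prove the two statements about ${\rm Her}$ first, since the statements about $\Delta$ follow immediately: $\Delta(D) = {\rm Her}(D) + i\,{\rm Her}(D)$ for a unital operator space $D$, and thus if ${\rm Her}(A\otimes_h B) = A_{\rm sa}\otimes 1 + 1\otimes B_{\rm sa}$, then
$$\Delta(A\otimes_h B) = (A_{\rm sa}\otimes 1 + 1\otimes B_{\rm sa}) + i(A_{\rm sa}\otimes 1 + 1\otimes B_{\rm sa}) = \Delta(A)\otimes 1 + 1\otimes \Delta(B),$$
using that $\Delta(A) = A_{\rm sa} + iA_{\rm sa}$ and similarly for $B$. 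So the whole lemma reduces to the hermitian assertions. Moreover, since $(A\otimes_h B)^{**} \cong A^{**}\otimes^{\sigma h} B^{**}$ (quoted from \cite[1.6.8]{BLM}), and hermitian elements of a unital operator space are detected inside the bidual, the $\sigma$-Haagerup statement follows from the Haagerup statement applied to the dual algebras $M$, $N$ (which are themselves unital operator spaces); alternatively one argues the $\sigma h$ case verbatim the same way. So I will concentrate on proving ${\rm Her}(A\otimes_h B) = A_{\rm sa}\otimes 1 + 1\otimes B_{\rm sa}$.

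For the inclusion $\supseteq$: if $a\in A_{\rm sa}$ then $a\otimes 1$ is hermitian in $A\otimes_h B$. Indeed, $A\otimes 1$ and $1\otimes B$ are unital-completely-isometric copies of $A$ and $B$ inside the unital operator space $A\otimes_h B$ (the inclusions $A \to A\otimes_h B$, $a\mapsto a\otimes 1$, and similarly for $B$, are unital complete isometries — a standard property of the Haagerup tensor product of unital spaces, see \cite[Section 3]{BM}). Hermitian elements are preserved by unital complete isometries (a hermitian element of a unital-completely-isometric subspace containing $1$ is hermitian in the big space, since the numerical range is computed by states $\varphi$ with $\varphi(1)=1$, which restrict appropriately). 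So $A_{\rm sa}\otimes 1$ and $1\otimes B_{\rm sa}$ consist of hermitian elements, and since ${\rm Her}$ is a real-linear subspace, $A_{\rm sa}\otimes 1 + 1\otimes B_{\rm sa} \subseteq {\rm Her}(A\otimes_h B)$.

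The hard direction is $\subseteq$, and this is where the main work lies. The plan is to use the Christensen–Effros–Sinclair / Paulsen–Smith representation of the Haagerup tensor product: realize $A\otimes_h B$ concretely as $\pi(A)\cdot\rho(B) \subseteq B(H)$ for commuting completely isometric unital representations $\pi$ of $A^1$-type and $\rho$ on a common Hilbert space, with $1\otimes 1 = I_H$; then hermitian elements of a unital subspace $X$ of $B(H)$ containing $I_H$ are exactly the $x\in X$ with $x = x^*$ (by the standard fact that ${\rm Her}(X) = X\cap B(H)_{\rm sa}$ when $I_H\in X$). So I must show: if $u = \sum_i \pi(a_i)\rho(b_i)\in A\otimes_h B$ (or a norm-limit of such) satisfies $u = u^*$, then $u\in A_{\rm sa}\otimes 1 + 1\otimes B_{\rm sa}$. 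I would first reduce to the unital-$C^*$ case by passing to $C^*$-envelopes or to generated $C^*$-algebras $C^*(\pi(A))$ and $C^*(\rho(B))$: a hermitian element of $A\otimes_h B$ lives in the hermitian part of $C^*_e(A)\otimes_h C^*_e(B)$ (using that $A\otimes_h B\hookrightarrow C^*_e(A)\otimes_h C^*_e(B)$ completely isometrically and unitally), and by Blecher–Paulsen or the cited results (\cite{Bgeo,BZ}) the Haagerup tensor product of two unital $C^*$-algebras has its hermitian/diagonal part already known — namely $\Delta(C\otimes_h D) = C\otimes 1 + 1\otimes D$ for unital $C^*$-algebras $C, D$. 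Granting that $C^*$-algebra fact (which is essentially in \cite{Bgeo} or follows from the module/multiplier description there), one gets $u\in (C^*_e(A)\otimes 1 + 1\otimes C^*_e(B)) \cap (A\otimes_h B)$, and a short argument (slicing by states, or applying $(\varphi\otimes{\rm id})$ and $({\rm id}\otimes\psi)$ for states $\varphi$ on $C^*_e(A)$, $\psi$ on $C^*_e(B)$) pins down that the $C^*_e(A)$-component lies in $A$ and the $C^*_e(B)$-component in $B$, hence $u\in A_{\rm sa}\otimes 1 + 1\otimes B_{\rm sa}$. I expect the decomposition step — writing a hermitian element of the $C^*$-algebraic Haagerup tensor product uniquely as $c\otimes 1 + 1\otimes d$ and checking $c\in A_{\rm sa}$, $d\in B_{\rm sa}$ — to be the one requiring care about well-definedness (the decomposition is unique only modulo $\Cdb(1\otimes 1)$, so one should normalize, e.g. by subtracting a scalar so that a fixed state vanishes on the $A$-part).
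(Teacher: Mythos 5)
Your treatment of the norm Haagerup tensor product is essentially the paper's own argument: embed $A\otimes_{h} B$ unitally and completely isometrically into $C^*(A)\otimes_{h} C^*(B)$, invoke the result from \cite{Bgeo} that a hermitian element of the Haagerup tensor product of unital $C^*$-algebras has the form $h\otimes 1+1\otimes k$ with $h,k$ selfadjoint, and then slice with functionals in $A^\perp$ and $B^\perp$ to force $h\in A$, $k\in B$. The modulo-scalar ambiguity you worry about is harmless: since $1\in A$ and $1\in B$, replacing $h$ by $h+t1$ and $k$ by $k-t1$ does not affect whether $h\in A$ or $k\in B$. The reverse inclusion and the passage from ${\rm Her}$ to $\Delta$ by taking spans are also exactly as in the paper.

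The gap is in the $\sigma$-Haagerup half, and both of your proposed reductions fail. First, ${\rm Her}(M\otimes^{\sigma h}N)$ does not follow from ${\rm Her}(M\otimes_{h}N)$: the $\sigma$-Haagerup tensor product strictly contains the norm Haagerup tensor product (its elements are weak* convergent, typically infinite, sums $\sum_i m_i\otimes n_i$, not norm limits of finite tensors), so knowing the hermitians of the smaller space says nothing about hermitian elements of $M\otimes^{\sigma h}N$ that lie outside $M\otimes_{h}N$. The bidual identification $(X\otimes_{h}Y)^{**}\cong X^{**}\otimes^{\sigma h}Y^{**}$ runs the logic in the wrong direction: it lets one deduce facts about $\otimes_{h}$ from facts about $\otimes^{\sigma h}$ of the biduals (this is precisely how Corollary \ref{chtp} is derived from Theorem \ref{ha} in the paper), not conversely, and in any case it would only produce $M^{**}\otimes^{\sigma h}N^{**}$ rather than $M\otimes^{\sigma h}N$. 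Second, the argument is not ``verbatim the same'': the key input in the norm case is the \cite{Bgeo} description of hermitians in $C\otimes_{h}D$ for unital $C^*$-algebras $C,D$, and the $\sigma h$ analogue of that input is exactly what is missing. The paper supplies it by a genuinely different route: reduce to $M=N$ via $R=M\oplus N$, then to the case of a von Neumann algebra via $W^*_{\rm max}(M)$, then identify ${\rm Her}(M\otimes^{\sigma h}M)$ inside ${\rm Her}(CB_{M'}(B(H)))\subset {\rm Her}(CB(B(H)))$ using the Effros--Kishimoto theorem, and apply the Sinclair--Sakai result that the hermitian elements of $CB(B(H))$ are the maps $x\mapsto hx+xk$ with $h,k\in B(H)_{\rm sa}$, before slicing back down to $M$. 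You need this von Neumann algebraic machinery (or some substitute for it); it cannot be bypassed by formal dualization from the norm case.
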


\begin{proof}
If $A$ and $B$ are unital operator spaces then $A\otimes_{h} B$ is a
unital operator space (see \cite{BM}), and ${\rm Her}(A\otimes_{h}
B) \subset {\rm Her}(C^*(A) \otimes_{h} C^*(B)).$ By  a result in
\cite{Bgeo}, it follows that if $u \in {\rm Her}(A\otimes_{h} B)$
then there exist $h \in C^*(A)_{\rm sa}, k \in C^*(B)_{\rm sa}$ such
that $u = h \otimes 1+1\otimes k$.    It is easy to see that this
forces $h \in A, k \in B$. For example if $\varphi$ is a functional
in $A^\perp$ then $0 = (\varphi \otimes I_B)(u) = \varphi(h) 1$, so
that $h \in (A^\perp)_\perp = A$. Conversely, it is obvious that
$A_{\rm sa}\otimes 1+1\otimes B_{\rm sa} \subset {\rm
Her}(A\otimes_{h} B)$.  Indeed the canonical maps from $A$ and $B$
into $A\otimes_{h} B$ must take hermitians to hermitians.  This
gives the first result, and taking spans gives the second.

Now let $M$ and $N$ be unital dual operator algebras. Again it is
obvious that $M_{\rm sa}\otimes 1+1\otimes N_{\rm sa} \subset {\rm
Her}(M\otimes^{\sigma h} N)$. For the other direction, we may assume
that $M = N$ by the trick of letting
 $R=M\oplus N$.  It is easy to argue that
$M\otimes ^{\sigma h}N \subset R\otimes ^{\sigma h}R$, since $M$ and $N$ are
appropriately complemented in $R$. 
If $W^*_{\rm max}(M)$ is the `maximal  von Neumann algebra'
generated by $M$, then by Theorem 3.1 (1) of \cite{BM}  we have
 $M\otimes^{\sigma h} M \subset W^*_{\rm max}(M) \otimes^{\sigma h}
W^*_{\rm max}(M)$.  So (again using the trick in the first paragraph
of our proof) we may assume that $M$ is a von Neumann algebra. By a
result of Effros and Kishimoto \cite[Theorem 2.5]{EK}, ${\rm
Her}(M\otimes^{\sigma h} M)$ equals $${\rm
Her}(CB_{M'}(B(H)))\subset {\rm
Her}(CB(B(H))) = \{h\otimes 1+1\otimes
k:\;
 h,k\in B(H)_{\rm sa}\},$$
  the latter by a result of Sinclair and Sakai (see
e.g.\ \cite[Lemma 4.3]{BSZ}). By a small modification of the
argument in the first paragraph of our proof it follows that
 $h,k \in M$.   The final result again follows by taking the span. 
\end{proof}

 \begin{theorem} \label{ha}
Let $M$ and $N$ be unital dual
operator algebras.  If $\Delta(M)$ is not one-dimensional then $\Delta(M) \cong
\mathcal{A}_\ell(M\otimes^{\sigma h}N)$.  If $\Delta(N)$ is not one-dimensional then
$\Delta(N) \cong \mathcal{A}_r(M\otimes^{\sigma h}N).$
If $\Delta(M)$ and $\Delta(N)$ are one-dimensional then
$$\mathcal{A}_\ell(M\otimes^{\sigma h}N) = \mathcal{A}_r(M\otimes^{\sigma h}N) =
\Cdb I .$$    \end{theorem}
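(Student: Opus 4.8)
The plan is to compute $\mathcal{A}_\ell$ and $\mathcal{A}_r$ directly from a concrete realization of $X := M\otimes^{\sigma h}N$, taking Lemma~\ref{heha} as the central input. First I would realize $X$ completely isometrically as a (weak* closed) operator space inside a $C^*$-algebra $\mathcal{C}$ in which the canonical unital images of $M$ and $N$ sit in \emph{free position} (they do not commute); this is exactly the realization underlying the proof of Lemma~\ref{heha} via the result of \cite{Bgeo,BM}, where $X$ is the (weak* closed) span of the products $mn$. In this picture $X^*$ is the closed span of the products $nm$, so that $\Delta(X)=X\cap X^*$ collapses to $\Delta(M)\otimes1+1\otimes\Delta(N)$, recovering Lemma~\ref{heha}. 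I would then invoke the unital-case multiplier theory of \cite[Chapter 4]{BLM}: for a unital operator space sitting unitally in its injective envelope, $\mathcal{A}_\ell(X)=\{a:\,aX\subseteq X\text{ and }a^*X\subseteq X\}$ acting by left multiplication. Since $1\in X$ this gives $a=a\cdot1\in X$ and $a^*\in X$, whence $\mathcal{A}_\ell(X)\subseteq X\cap X^*=\Delta(X)$; symmetrically $\mathcal{A}_r(X)\subseteq\Delta(X)$ via right multiplication.

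Next I would record the two obvious inclusions and then close the gap. Left multiplication by $d\otimes1$ for $d\in\Delta(M)$ sends $mn\mapsto(dm)n$, hence maps $X$ into $X$, and is adjointable with adjoint $L_{d^*\otimes1}$, so $\Delta(M)\otimes1\subseteq\mathcal{A}_\ell(X)$; symmetrically $1\otimes\Delta(N)\subseteq\mathcal{A}_r(X)$. For the final assertion, if $\Delta(M)$ and $\Delta(N)$ are both one-dimensional then Lemma~\ref{heha} forces $\Delta(X)=\Cdb1$, and the containments $\mathcal{A}_\ell(X),\mathcal{A}_r(X)\subseteq\Delta(X)=\Cdb1$ together with $I\in\mathcal{A}_\ell(X)\cap\mathcal{A}_r(X)$ immediately give $\mathcal{A}_\ell(X)=\mathcal{A}_r(X)=\Cdb I$. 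For the first assertion, take $a=d\otimes1+1\otimes e\in\mathcal{A}_\ell(X)\subseteq\Delta(X)$; since $\Delta(M)$ is not one-dimensional there is a non-scalar self-adjoint $h\in\Delta(M)$, and $h\in X$. Computing $a\cdot h=dh+eh$ in $\mathcal{C}$, the term $dh$ lies in $M\subseteq X$, whereas $eh$ has a genuine reduced ``$NM$'' two-letter component, so $eh\in X$ only if $e\in\Cdb1$. As membership $a\in\mathcal{A}_\ell(X)$ forces $a\cdot h\in X$, we conclude $e\in\Cdb1$, hence $a\in\Delta(M)\otimes1$ and $\mathcal{A}_\ell(X)=\Delta(M)\otimes1$. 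The map $d\mapsto L_{d\otimes1}$ is then a faithful surjective $*$-homomorphism, giving $\Delta(M)\cong\mathcal{A}_\ell(M\otimes^{\sigma h}N)$; the second assertion is identical with $M,N$ (and left/right) interchanged, using a non-scalar $h\in\Delta(N)$.

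The main obstacle is justifying the ``reduced word'' step rigorously in the dual operator algebra / $\sigma$-Haagerup setting. Concretely I must (a) produce the correct completely isometric weak* realization of $M\otimes^{\sigma h}N$ in which $M$ and $N$ are in free position, so that the involution is the ``twisted'' one underlying Lemma~\ref{heha} and $X\cap X^*=\Delta(M)\otimes1+1\otimes\Delta(N)$, and (b) verify the linear independence of the $NM$-words from the span of the $MN$-words that powers the computation $eh\notin X$. Both are exactly where \cite{Bgeo,BM,EK} enter: the first is essentially the input already exploited in the proof of Lemma~\ref{heha}, and the second is the defining structural feature of the free product. A secondary technical point is confirming that the unital-space identification $\mathcal{A}_\ell(X)\subseteq\Delta(X)$ survives passage to the $\sigma$-Haagerup product (a dual operator space), which should follow from the weak* multiplier theory. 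Once these are in place, the dimension hypotheses enter \emph{only} through the existence of the non-scalar test element $h$ in the appropriate diagonal, and this is precisely what excludes the ``wrong-handed'' contributions that genuinely appear in degenerate cases such as $M=\Cdb$ (where $X=N$ and $\mathcal{A}_\ell(X)=\Delta(N)$).
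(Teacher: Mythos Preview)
Your opening coincides with the paper's: embed $\mathcal{A}_\ell(X)$ into $X$ via the unital complete isometry $\theta(T)=T(1)$, so that its range lands in $\Delta(X)=\Delta(M)\otimes 1+1\otimes\Delta(N)$ by Lemma~\ref{heha}; the third assertion drops out immediately, and $\Delta(M)\otimes 1\subseteq\mathcal{A}_\ell(X)$ is clear.

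Where you diverge is in eliminating the $1\otimes\Delta(N)$ contribution. You test against a single non-scalar $h\in\Delta(M)$ and appeal to linear independence of $NM$-words from $MN$-words in the free realization. The paper instead subtracts the $\Delta(M)$-part to get $S$ with $S(1)=1\otimes k$, uses \cite[Proposition~1.3.11]{BLM} to compute $S(a\otimes 1)=(1\otimes k)*(a\otimes 1)$, observes via the involution that this equals $(a\otimes k)^*$, hence lies in $X\cap X^*=\Delta(X)\subset\Delta(M)\otimes\Delta(N)$; it then shows the whole subspace $\Delta(M)\otimes_h\Delta(N)$ is $S$-invariant and invokes the already-known $C^*$-algebra case \cite[Theorem~5.42]{BZ} on that subspace. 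So the paper trades your free-product independence argument for a reduction to the norm-closed Haagerup tensor product of the $C^*$-algebras $\Delta(M),\Delta(N)$, where the result is on the shelf.

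The obstacle you flag is genuine and is exactly what the paper's detour circumvents. For your route you need (i) that the multiplier really acts as left multiplication in the ambient algebra on $h\otimes 1$---this is fine, since $h\otimes 1=\theta(L_{h\otimes 1})$ and $\theta$ is multiplicative on $\mathcal{A}_\ell$ by \cite[Proposition~1.3.11]{BLM}---and (ii) that a reduced $NM$-word $e_0h_0$ with non-scalar $e_0\in\Delta(N),h_0\in\Delta(M)$ cannot lie in $\Delta(M)\otimes 1+1\otimes\Delta(N)$ inside the $\sigma$-Haagerup realization. Point~(ii) is a free-product fact that is standard for $C^*$-algebras but needs to be checked in the dual-operator-algebra setting of \cite{BM}; the paper's invariant-subspace reduction to \cite[Theorem~5.42]{BZ} is precisely a packaged version of that check.
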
 
\begin{proof}   We just prove the first and the last assertions.  Let $M$ and $N$ be unital dual
operator algebras, and let $X = M\otimes^{\sigma h} N$.  The map
$\theta : {\mathcal A}_{\ell}(X) \to X$ defined by $\theta(T) =
T(1)$ is a unital complete isometry (see the end of the notes
section for 4.5 in \cite{BLM}).   Hence, by  \cite[Corollary
1.3.8]{BLM} and Lemma \ref{heha}, it maps into $\Delta(X) =
\Delta(M) \otimes 1 + 1 \otimes \Delta(N)$.    The last assertion is now clear.
For the first, if we can show that
Ran$(\theta) \subset \Delta(M) \otimes 1$, then we will be done.
There is a copy of $\Delta(M)$ in ${\mathcal A}_{\ell}(X)$ via the
embedding $a \mapsto L_{a \otimes 1}$, 
and this is a $C^*$-subalgebra. Note that $\theta$ restricts to a
$*$-homomorphism from this $C^*$-subalgebra into the free product $M
* N$ discussed in \cite{BM}. Let $T \in {\mathcal A}_{\ell}(X)_{\rm sa}$,
then $\theta(T) \in X_{\rm sa}$.
  By Lemma \ref{heha}, $T(1 \otimes 1) = h \otimes 1 + 1 \otimes k$, with $h \in \Delta(M)_{\rm sa},
k \in \Delta(N)_{\rm sa}$. It suffices to show that $\theta(T - L_{h
\otimes 1}) = 1 \otimes k \in \Delta(M) \otimes 1$. So let $S = T -
L_{h
\otimes 1}$.
By \cite[Proposition 1.3.11]{BLM} we have for $a \in \Delta(M)_{\rm
sa}$ that
$$S(a \otimes 1) = \theta(S L_{a \otimes 1}) = \theta(S) * (a \otimes 1) =
(1 \otimes k) * (a \otimes 1) .$$ The involution in $M * N$, applied
to the last product, yields $a * k = a \otimes k \in M
\otimes^{\sigma h} N$.  Hence $$S(a \otimes 1) \in
\Delta(M\otimes^{\sigma h} N) =  \Delta(M) \otimes 1 + 1 \otimes
\Delta(N) \subset \Delta(M) \otimes \Delta(N).$$ Since left and
right multipliers of an operator space automatically commute, we have that
 $\rho(\Delta(N))$ commutes with $S$, where $\rho: \Delta(N) \to
{\mathcal A}_r(M \otimes^{\sigma h} N)$ is the canonical injective
$*$-homomorphism. Thus for $b \in \Delta(N)$ we have
\[
 S(a \otimes b) = S(\rho(b)(a \otimes 1)) = \rho(b)(S(a \otimes 1))
 = S(a \otimes 1) (1 \otimes b) \in \Delta(M) \otimes \Delta(N) .
 \]
 By linearity this is true for any $a \in
\Delta(M)$ too.  It follows that $\Delta(M) \otimes_{h} \Delta(N)$
is a subspace of $M\otimes^{\sigma h} N$ which is invariant under
$S$.  Since  $S$  is selfadjoint, it follows from \cite[Proposition
5.2]{BZ} that the restriction of $S$ to $\Delta(M) \otimes_{h}
\Delta(N)$ is adjointable, and selfadjoint. Hence by \cite[Theorem
5.42]{BZ} we have that there exists an $m \in \Delta(M)$ with $S(1
\otimes 1) = m \otimes 1 = 1 \otimes k$.  Thus $1 \otimes k \in
\Delta(M) \otimes 1$ as desired.
\end{proof}

\begin{corollary}  \label{chtp}
Let $A$ and $B$ be approximately unital operator
algebras.  If $\Delta(A^{**})$ is not one dimensional
then    $\Delta(M(A))\cong {\mathcal A}_{\ell}(A \otimes_h B)$.
If $\Delta(B^{**})$ is not one dimensional
then   $\Delta(M(B))
\cong {\mathcal A}_{r}(A \otimes_h B)$.  If
$\Delta(A^{**})$ and $\Delta(B^{**})$ are one dimensional
then ${\mathcal A}_{\ell}(A \otimes_h B)
= {\mathcal A}_{r}(A \otimes_h B) = \Cdb I$.
  \end{corollary}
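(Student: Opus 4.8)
The plan is to deduce Corollary \ref{chtp} from Theorem \ref{ha} by passing to second duals. First I would record the standard preliminaries: if $A$ and $B$ are approximately unital operator algebras, then so is $A\otimes_h B$ (a cai is obtained from cais of $A$ and $B$, since $\Vert e_\alpha\otimes f_\beta\Vert_h\le 1$ and $(e_\alpha\otimes f_\beta)(x\otimes y)\to x\otimes y$), all operator algebras are Arens regular, and $(A\otimes_h B)^{**}\cong A^{**}\otimes^{\sigma h}B^{**}$ completely isometrically (e.g.\ 1.6.8 in \cite{BLM}), where $M:=A^{**}$ and $N:=B^{**}$ are unital dual operator algebras. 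Applying Theorem \ref{ha} to $M$ and $N$ then computes the adjointable multiplier algebras of the bidual: if $\Delta(A^{**})$ is not one-dimensional then ${\mathcal A}_{\ell}((A\otimes_h B)^{**})=\{L_{h\otimes 1}:h\in\Delta(A^{**})\}\cong\Delta(A^{**})$; if $\Delta(B^{**})$ is not one-dimensional then ${\mathcal A}_{r}((A\otimes_h B)^{**})\cong\Delta(B^{**})$ via $k\mapsto R_{1\otimes k}$; and if both are one-dimensional then both of these equal $\Cdb I$.

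Next I would invoke a general principle from one-sided multiplier theory (\cite[Chapter 4]{BLM}): for any operator space $X$ the natural map ${\mathcal M}_{\ell}(X)\to{\mathcal M}_{\ell}(X^{**})$, $S\mapsto S^{**}$, identifies ${\mathcal M}_{\ell}(X)$ with $\{T\in{\mathcal M}_{\ell}(X^{**}):T(X)\subseteq X\}$ (here one uses that every multiplier of a dual operator space is weak* continuous, so the restriction of such a $T$ to the invariant subspace $X$ is a multiplier of $X$ whose bidual is $T$). Passing to diagonals, ${\mathcal A}_{\ell}(X)=\{T\in{\mathcal A}_{\ell}(X^{**}):T(X)\subseteq X\ \text{and}\ T^{*}(X)\subseteq X\}$, and symmetrically for ${\mathcal A}_{r}$. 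Specializing to $X=A\otimes_h B$, and using that $L_{h\otimes 1}^{*}=L_{h^{*}\otimes 1}$, we see that ${\mathcal A}_{\ell}(A\otimes_h B)$ consists exactly of the $L_{h\otimes 1}$ with $h\in\Delta(A^{**})$ for which $(h\otimes 1)(A\otimes_h B)\subseteq A\otimes_h B$ and $(h^{*}\otimes 1)(A\otimes_h B)\subseteq A\otimes_h B$.

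To convert these conditions on $A\otimes_h B$ into conditions on $A$, I would use slice maps: for $\psi\in B^{*}$ the map $I_A\otimes\psi$ is completely bounded from $A\otimes_h B$ to $A$, so $(h\otimes 1)(A\otimes_h B)\subseteq A\otimes_h B$ forces $hx\in A$ for all $x\in A$ (evaluate on $x\otimes y$ with $\psi(y)=1$); conversely $hA\subseteq A$, together with boundedness of $L_{h\otimes 1}$ and density of the elementary tensors, gives $(h\otimes 1)(A\otimes_h B)\subseteq A\otimes_h B$. Hence ${\mathcal A}_{\ell}(A\otimes_h B)\cong\{h\in\Delta(A^{**}):hA\subseteq A,\ h^{*}A\subseteq A\}={\mathcal A}_{\ell}(A)=\Delta(M(A))$, using the standard facts that ${\mathcal M}_{\ell}(A)=LM(A)$ and ${\mathcal A}_{\ell}(A)=\Delta(M(A))$ for an approximately unital operator algebra. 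The assertion about ${\mathcal A}_{r}$ is symmetric, yielding $\Delta(M(B))$. Finally, when $\Delta(A^{**})$ and $\Delta(B^{**})$ are both one-dimensional, ${\mathcal A}_{\ell}(A\otimes_h B)$ and ${\mathcal A}_{r}(A\otimes_h B)$ both contain the identity multiplier and are contained in ${\mathcal A}_{\ell}((A\otimes_h B)^{**})={\mathcal A}_{r}((A\otimes_h B)^{**})=\Cdb I$, so they equal $\Cdb I$.

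The hard part will be the descent from ${\mathcal A}_{\ell}$ of the bidual to ${\mathcal A}_{\ell}$ of $A\otimes_h B$ itself: namely, checking cleanly that the adjointable left multipliers of $A\otimes_h B$ are precisely the (restrictions of the) $L_{h\otimes 1}$ with $hA\subseteq A$ and $h^{*}A\subseteq A$, and that the resulting $C^{*}$-algebra is exactly $\Delta(M(A))$ and not something a priori larger such as $\Delta(LM(A))$. This is where I would lean most heavily on the multiplier machinery of \cite[Chapter 4]{BLM}; in the $C^{*}$-algebra case treated in \cite{Bgeo,BZ} the corresponding conclusion takes the cleaner form ${\mathcal A}_{\ell}(A\otimes_h B)\cong M(A)$.
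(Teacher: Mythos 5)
Your proposal is correct and follows essentially the same route as the paper: both pass to $(A\otimes_h B)^{**}\cong A^{**}\otimes^{\sigma h}B^{**}$, apply Theorem \ref{ha} there, use the second-adjoint embedding of adjointable multipliers (\cite[Proposition 5.16]{BZ}) together with a slice-map/evaluation argument to see that the resulting $h\in\Delta(A^{**})_{\rm sa}$ satisfies $hA\subseteq A$, and then identify the answer with $\Delta(LM(A))=\Delta(M(A))$ via \cite[Proposition 5.1]{BHN}. The "hard part" you flag at the end is exactly this last identification, which the paper disposes of by working with hermitian elements (for which $hA\subseteq A$ and $h^*A\subseteq A$ coincide) and quoting that equality.
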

\begin{proof} We just prove the first and last relations.
Let $\rho: \Delta(LM(A))\to {\mathcal A}_{\ell}(A \otimes_{h} B)$
be the injective $*$-homomorphism given by $S\mapsto S\otimes I_{B}$. If $T\in
{\mathcal A}_{\ell}(A \otimes_{h} B)_{\rm sa}$, then by Proposition 5.16
 from \cite{BZ}, we have
$T^{**}\in {\mathcal A}_{\ell}(A^{**} \otimes^{\sigma h} B^{**})_{\rm sa}$. By the last
theorem, $T^{**}(a\otimes b)= T(a\otimes b)= L_{h\otimes 1}(a\otimes b)$,
 for some $h \in A ^{**}_{\rm sa}$ and for all $a\in A$, $b \in B$.
Since $T(a\otimes b)$ is in $A \otimes_{h} B$, so is $L_{h\otimes
1}(a\otimes b)$ for all $a\in A$,
 $b \in B$. Also $L_{h\otimes 1}(a\otimes 1)=ha\otimes 1 \in A \otimes_h B$ for all $a\in A$.
So $ha \in A$ for all $a \in A$. Thus $L_{h}\in \Delta(LM(A)_{\rm
sa})$. This shows that $\rho$ is surjective, since selfadjoint
elements span ${\mathcal A}_{\ell}(A \otimes_h B)$. Thus
$\Delta(LM(A))\cong {\mathcal A}_{\ell}(A \otimes_h B)$.  By
the proof of \cite[Proposition 5.1]{BHN}, we have
$\Delta(LM(A))=\Delta(M(A))$.  This proves the first relation.  If
$\Delta(A^{**})$ and $\Delta(B^{**})$ are one dimensional, then so is $\Delta(M(A))$, and so
is ${\mathcal A}_{\ell}(A^{**} \otimes^{\sigma h} B^{**})$, by the
theorem.  Hence the $T$ above is in $\Cdb I$, and this proves the
last assertion.     \end{proof}

{\sc Remark.}  For $A, B, M, N$ as in the last results, it is probably true that
we have $\Delta(M(A))
\cong {\mathcal A}_{\ell}(A \otimes_h B)$,
and similarly that
$\Delta(M) \cong {\mathcal A}_{\ell}(M \otimes^{\sigma h} N)$,
 if $A$ and $M$ are not one-dimensional, with no other restrictions.
We are able to prove this if $B = N$ is a finite dimensional $C^*$-algebra.

\bigskip

The following is a complement to  \cite[Theorem 5.38]{BZ}:

 \begin{theorem} \label{mihtp}   Let $A$ and $B$ be approximately unital operator
algebras, and suppose that
 $\Delta(A^{**})$ is not one-dimensional.  Then the
right $M$-ideals (resp.\ right $M$-summands) in $A \otimes_{h} B$ are
precisely the subspaces of the form $J \otimes_{h} B$, where $J$ is
a closed right ideal in $A$ having a left cai (resp.\ having form
$eA$ for a projection $e \in M(A)$).
\end{theorem}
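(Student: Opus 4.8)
The plan is to deduce this from the already-established description of one-sided $M$-ideals in terms of left/right $M$-projections on the bidual, together with the computation of $\mathcal{A}_\ell$ of a Haagerup tensor product in Corollary \ref{chtp}. Recall that $J$ is a right $M$-ideal in $X := A \otimes_h B$ iff $J^{\perp\perp}$ is the range of a left $M$-projection on $X^{**}$, i.e.\ a projection in $\mathcal{A}_\ell(X^{**})$, and $J$ is a right $M$-summand iff $J$ itself is the range of such a projection in $\mathcal{A}_\ell(X)$. Using $X^{**} \cong A^{**} \otimes^{\sigma h} B^{**}$ (which is a unital dual operator algebra when we pass to the unitizations, or one can work directly), Theorem \ref{ha} identifies $\mathcal{A}_\ell(A^{**} \otimes^{\sigma h} B^{**})$ with $\Delta(A^{**})$ via $a \mapsto L_{a \otimes 1}$, since $\Delta(A^{**})$ is not one-dimensional by hypothesis. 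Likewise Corollary \ref{chtp} identifies $\mathcal{A}_\ell(A \otimes_h B)$ with $\Delta(M(A))$ via $S \mapsto S \otimes I_B$.

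First I would handle the $M$-summand case, which is cleaner. Given a right $M$-summand $J$ of $X$, there is a projection $P \in \mathcal{A}_\ell(X)$ with $P(X) = J$; by Corollary \ref{chtp}, $P = L_{e \otimes 1}$ for a projection $e \in \Delta(M(A))$. Then $J = (e \otimes 1)(A \otimes_h B) = eA \otimes_h B$ (using that left multiplication by $e \otimes 1$ on the Haagerup tensor product corresponds to left multiplication by $e$ in the first variable, and that $\overline{eA \otimes_h B} = \overline{eA} \otimes_h B$ since $eA$ is already closed — $e$ being a projection in $M(A)$ means $eA$ is an $r$-ideal). Since $e$ is a projection in $M(A)$, $J = eA$ is exactly of the asserted form. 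Conversely, if $J = eA$ for a projection $e \in M(A)$, then $e \in \Delta(M(A))$ automatically (a projection is selfadjoint), so $L_{e \otimes 1} \in \mathcal{A}_\ell(A \otimes_h B)$ is an $M$-projection with range $eA \otimes_h B$, exhibiting $J \otimes_h B$ as a right $M$-summand.

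For the $M$-ideal case, given a right $M$-ideal $J$ of $X$, we have a left $M$-projection $Q$ on $X^{**}$ with range $J^{\perp\perp}$; by Theorem \ref{ha} applied to $A^{**}, B^{**}$ (unitized), $Q = L_{p \otimes 1}$ for a projection $p \in \Delta(A^{**})$. Now $p$ is an open projection in $A^{**}$: indeed $I := pA^{**} \cap A$ should be recovered as $J^{\perp\perp} \cap X$ pushed down to the first coordinate; one argues that $J = I \otimes_h B$ where $I$ is the $r$-ideal with support projection $p$. The direction "$J \otimes_h B$ is a right $M$-ideal when $J$ is a closed right ideal with left cai" goes via \cite[Theorem 5.38]{BZ} or directly: the support projection $p$ of $J$ is open in $A^{**}$, $L_{p\otimes 1}$ is a left $M$-projection on $X^{**}$, and its range is $(J \otimes_h B)^{\perp\perp} = J^{\perp\perp} \otimes^{\sigma h} B^{**}$. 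The main obstacle I anticipate is the bookkeeping identifying $J^{\perp\perp}$ inside $X^{**} = A^{**} \otimes^{\sigma h} B^{**}$ with $pA^{**} \otimes^{\sigma h} B^{**}$, and then intersecting back with $X = A \otimes_h B$ to get $J = pA^{**} \cap A \otimes_h B = I \otimes_h B$ — this requires knowing that the weak\*-closure of $I \otimes_h B$ in $X^{**}$ is $pA^{**} \otimes^{\sigma h} B^{**}$ and a slice-map / separation argument that $X \cap (pA^{**} \otimes^{\sigma h} B^{**}) = I \otimes_h B$. I expect this to follow from the Haagerup-tensor slice map properties used in \cite{BZ} (cf.\ the proof of \cite[Theorem 5.38]{BZ}), so the write-up would cite that machinery rather than redo it.
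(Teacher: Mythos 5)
Your proposal follows essentially the same route as the paper's proof: the summand case via Corollary \ref{chtp}, one direction of the ideal case via \cite[Theorem 5.38]{BZ}, and the converse by passing to $(A\otimes_h B)^{**}\cong A^{**}\otimes^{\sigma h}B^{**}$, invoking Theorem \ref{ha} to write the relevant $M$-projection as left multiplication by $e\otimes 1$ for a projection $e\in A^{**}$, and then a slice-map argument to identify the $M$-ideal as $(eA^{**}\cap A)\otimes_h B$. The paper carries out in detail the two steps you defer, namely the slice-map identification $(eA^{**}\otimes^{\sigma h}B^{**})\cap(A\otimes_h B)=(eA^{**}\cap A)\otimes_h B$ (via \cite[Corollary 4.8]{Smi}) and the verification that $eA^{**}\cap A$ really has a left cai (by a separation argument showing its weak* closure is all of $eA^{**}$), but your outline correctly anticipates both.
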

 \begin{proof}  The summand case follows immediately from Corollary
 \ref{chtp}.
The one direction of the $M$-ideal case is \cite[Theorem 5.38]{BZ}.
For the other, suppose that $I$ is a right $M$-ideal in $A
\otimes_{h} B$. View
 $(A\otimes_h B)^{**} =
  A^{**}\otimes^{\sigma h}B^{**}$.  Then $I^{\perp\perp}$
is a right $M$-summand in $\A^{**} \otimes^{\sigma h} \B^{**}$. By
Theorem \ref{ha} we have  $I^{\perp \perp} = e A^{**}\otimes^{\sigma
h}B^{**}$ for a projection $e \in A^{**}$. 
 Let $J=e A^{**} \cap A$, a closed right ideal in $A$.
 We claim that $I=J\otimes_h B$.
 Since $I= I^{\perp\perp}\cap(A\otimes_h B)$, we need to show that
$(e A^{**}\otimes^{\sigma h}B^{**})\cap (A \otimes_h B) = (e A^{**}\cap A)\otimes_h B$.
By injectivity of $\otimes_h$, it is clear that
 $(e A^{**}\cap A)\otimes_h B \subset (e A^{**}\otimes^{\sigma h}B^{**})\cap (A \otimes_h B)$.
 For the other containment, we let $u \in (e A^{**}\otimes^{\sigma h}B^{**})\cap (A \otimes_h B)$,
and use a slice map argument.
By \cite[Corollary 4.8]{Smi}, we need to show that for all $\psi\in
B^*$, $(1\otimes \psi)(u)\in e A^{**}\cap A=J$. Let $\psi\in B^*$,
then $\langle \tilde{u} , 1\otimes \psi \rangle = (1\otimes \psi)(u)
\in A$,
 where $\tilde{u}$
is $u$ regarded as an element in $A^{**}\otimes^{\sigma h}B^{**}$.
Since $u\in e A^{**}\otimes^{\sigma h}B^{**}$, we have
$\langle \tilde{u} , 1\otimes \psi \rangle \in e A^{**}$. So $(1\otimes \psi)(u)\in e
A^{**}\cap A=J$, and so  $u\in J\otimes_h B$ as desired.

Next we show that $J$ has
a left cai. It is clear that  $J^{\perp\perp}=\overline{J}^{w^*} \subset e A^{**}$.
Suppose that there is $x\in e A^{**}$ such that $x \notin J^{\perp\perp}$.
Then there exists  $\phi\in J^{\perp}$ such
that $x(\phi)\neq 0$. Since $I=J\otimes_h B$ and $\phi\in J^{\perp}$,
we have $\phi\otimes \psi \in I^{\perp}$ for all states $\psi$ on $B$.
So $I^{\perp\perp}$ annihilates $\phi\otimes \psi$, and
 in particular $0=(x\otimes 1)(\phi\otimes \psi)=x(\phi)$, a contradiction.
Hence $J^{\perp\perp}=e A^{**}$, and it follows from basic principles
about approximate identities
that  $J$ has a left cai.
\end{proof}

\begin{theorem}  \label{ihtp}
Let $M$ and $N$ be unital (resp.\ unital dual) operator algebras,
with neither $M$ nor $N$ equal to $\mathbb{C}$. Then the operator
space centralizer algebra $Z(M \otimes_h N)$ (resp.\ $Z(M
\otimes^{\sigma h} N)$) (see {\rm \cite[Chapter 7]{BZ}}) is
one-dimensional.
  \end{theorem}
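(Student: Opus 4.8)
The plan is to compute $Z(X)$ directly, where $X = M \otimes_h N$ (resp.\ $X = M \otimes^{\sigma h} N$), leaning on the identifications of $\mathcal{A}_{\ell}(X)$ and $\mathcal{A}_{r}(X)$ obtained in Corollary \ref{chtp} (resp.\ Theorem \ref{ha}). Recall $Z(X) = \mathcal{A}_{\ell}(X) \cap \mathcal{A}_{r}(X)$, a $C^*$-subalgebra of $CB(X)$ containing $\Cdb I_X$; the point is that it contains nothing more. Write $\theta \colon \mathcal{A}_{\ell}(X) \to X$ and $\theta' \colon \mathcal{A}_{r}(X) \to X$ for the two unital complete isometries $T \mapsto T(1 \otimes 1)$ from the proof of Theorem \ref{ha}; both have range inside $\Delta(X) = \Delta(M) \otimes 1 + 1 \otimes \Delta(N)$ by that proof and Lemma \ref{heha}. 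The key structural fact I would use is: since $M \neq \Cdb$, the range of $\theta$ actually lies in $\Delta(M) \otimes 1$, and since $N \neq \Cdb$, the range of $\theta'$ lies in $1 \otimes \Delta(N)$.

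Granting this, the argument is immediate. If $T \in Z(X)$ then $T(1 \otimes 1) = \theta(T) = \theta'(T)$ lies in $(\Delta(M) \otimes 1) \cap (1 \otimes \Delta(N))$, and this intersection is $\Cdb(1 \otimes 1)$: the canonical maps $M \hookrightarrow X$, $m \mapsto m \otimes 1$, and $N \hookrightarrow X$, $n \mapsto 1 \otimes n$, are completely isometric (injectivity of the Haagerup and $\sigma$-Haagerup tensor products), and if $a \otimes 1 = 1 \otimes b$ then applying the slice map $I_M \otimes \psi$ for $\psi \in N^*$ with $\psi(1) = 0$ gives $\psi(b)\, 1 = 0$, whence $b \in \Cdb 1$, and symmetrically $a \in \Cdb 1$. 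Thus $\theta$ carries $Z(X)$ into $\Cdb (1 \otimes 1)$; since $\theta$ is injective and $I_X \in Z(X)$, this forces $Z(X) = \Cdb I_X$, which is what we want. (The non-dual case is handled identically, with Corollary \ref{chtp} replacing Theorem \ref{ha}.)

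It remains to justify the key structural fact, and this is where I expect the real work, and the only use of the hypotheses $M \neq \Cdb$, $N \neq \Cdb$, to lie. When neither $\Delta(M)$ nor $\Delta(N)$ is one-dimensional it is exactly the content of Theorem \ref{ha} (resp.\ Corollary \ref{chtp}), since there $\mathcal{A}_{\ell}(X) \cong \Delta(M)$ via $a \mapsto L_{a \otimes 1}$, so $\theta(\mathcal{A}_{\ell}(X)) = \Delta(M) \otimes 1$, and dually for $\theta'$; when both are one-dimensional it is the last assertion of those results. In the two remaining "mixed" cases --- say $\Delta(M)$ not one-dimensional but $\Delta(N)$ one-dimensional, with $N \neq \Cdb$ --- Theorem \ref{ha} does not literally apply to $\mathcal{A}_{r}(X)$, and one must instead rerun its proof: for self-adjoint $T \in \mathcal{A}_{r}(X)$, locate $T(1 \otimes 1)$ in $\Delta(M) \otimes 1 + 1 \otimes \Delta(N)$ via Lemma \ref{heha}, use that $T$ commutes with every left operator space multiplier to see that $\Delta(M) \otimes_h \Delta(N)$ is $T$-invariant, and then invoke the adjointability results \cite[Propositions 5.2 and 5.16, Theorem 5.42]{BZ} to conclude that $T(1 \otimes 1) \in 1 \otimes \Delta(N)$. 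Checking that this chain of reasoning does not secretly need the one-dimensionality hypothesis that Theorem \ref{ha} imposes --- and seeing clearly why it breaks down when $M$ or $N$ equals $\Cdb$ (then one tensor leg is trivial and all the multipliers migrate to the other leg) --- is the delicate part of the proof.
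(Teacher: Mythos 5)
Your reduction of the theorem to the ``key structural fact'' is clean, and in the two extreme cases ($\Delta(M)$ and $\Delta(N)$ both one-dimensional, or both not one-dimensional) your argument works and is arguably slicker than the paper's, which passes through projections in $Z$. The problem is the mixed case, which you correctly identify as the delicate part but then dispose of too quickly. Suppose $\Delta(N) = \Cdb 1$ but $N \neq \Cdb$. Then $1\otimes\Delta(N) = \Cdb(1\otimes 1)$, so your claim that $\theta'$ maps $\mathcal{A}_r(X)$ into $1\otimes\Delta(N)$ is literally the assertion $\mathcal{A}_r(M\otimes^{\sigma h}N) = \Cdb I$. By the left--right symmetry of the Haagerup tensor product this is the mirror image of the conjecture recorded in the Remark after Corollary \ref{chtp}: computing the adjointable multiplier algebra on the side whose diagonal is one-dimensional is exactly what the authors say they can only do when the \emph{other} factor is a finite-dimensional $C^*$-algebra. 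Your proposed ``rerun'' of the proof of Theorem \ref{ha} breaks at its last step: for selfadjoint $T\in\mathcal{A}_r(X)$ one gets $T(1\otimes 1) = h\otimes 1$ with $h\in\Delta(M)_{\rm sa}$ (the $1\otimes k$ term is scalar and can be absorbed), and the invariant subspace $\Delta(M)\otimes_h\Delta(N)$ collapses to $\Delta(M)\otimes 1\cong\Delta(M)$, on which $T$ restricts to right multiplication by $h$. That is a perfectly good selfadjoint adjointable right multiplier of $\Delta(M)$ for \emph{any} $h\in\Delta(M)_{\rm sa}$, so the appeal to \cite[Theorem 5.42]{BZ} yields no contradiction and no way to force $h\in\Cdb 1$. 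To rule out nonscalar $h$ one must exploit the action of $T$ on elements $a\otimes y$ with $y\notin\Delta(N)$, and nothing in your outline does that.

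The paper's treatment of the mixed case is genuinely different and uses precisely the two-sidedness you discard by analyzing $\mathcal{A}_\ell$ and $\mathcal{A}_r$ separately. Given a putative nontrivial projection $P = L_{e\otimes 1}\in Z(X)$ with $e\in\Delta(M)$, it restricts $P$ to the copy of $\ell^\infty_2\otimes_h N$ spanned by $e\otimes N$ and $(1-e)\otimes N$; there $P$ is a \emph{complete} (not merely one-sided) $M$-projection, which forces the norm identity $\Vert e_1\otimes 1 + e_2\otimes y\Vert_h = 1$ for every norm-one $y\in N$. A factorization argument with the Haagerup norm (a pair of contractions with product $1$ must be adjoints of one another) then shows any such $y$ lies in $\Cdb 1_N$, i.e.\ $N = \Cdb$, a contradiction. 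Note that the hypothesis actually used there is $N\neq\Cdb$, not $\Delta(N)\neq\Cdb 1$ --- exactly the information your multiplier-algebra computation cannot see. So the mixed case of your proposal has a genuine gap and needs this (or some other) additional argument.
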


 \begin{proof}  First we consider the dual case.  If $\Delta(M)$ and $\Delta(N)$ are both
one-dimensional then $Z(M \otimes^{\sigma h} N) \subset
{\mathcal A}_{\ell}(M \otimes^{\sigma h} N)  = \Cdb I$, and we are done.
If $\Delta(M)$ and $\Delta(N)$ are both not one-dimensional, let
$P$ be a projection in $Z(M \otimes^{\sigma h} N)$. 
 By the theorem,
 $Px=ex=xf$, for all $x\in M \otimes^{\sigma h} N$, for some projections $e\in M$ and $f\in N$.
 Then \[ e^{\perp} \otimes f=e^{\perp}\otimes f f= P(e^{\perp} \otimes f) =
e e^{\perp} \otimes f=0, \]
which implies that either $e^{\perp}=0$ or $f=0$. Hence $P=0$ or $P=I$. So $Z(M \otimes^{\sigma h} N
)$ is
a von Neumann algebra with only
trivial projections, hence it is trivial.

Suppose that $\Delta(N)$ is one-dimensional, but $\Delta(M)$ is not.
Again it suffices to show that any projection $P \in Z(M
\otimes^{\sigma h} N)$ is trivial. By Theorem \ref{ha}, $P$ is of
the form $Px=ex$ for a projection $e\in M$. Assume that $e$ is not
$0$ or $1$. If $D = {\rm Span}\{ e,1-e \}$, and $X$ is the copy of
$D \otimes N$ in $M \otimes^{\sigma h} N$, then $P$ leaves $X$
invariant.  Note that $X = D \otimes_h N$, since $\otimes_h$ is
known to be completely isometrically contained in $\otimes^{\sigma
h}$ (see \cite{ERc}).    Hence by Section 5.2 in \cite{BZ} we have
that the restriction of $P$ to $X$ is in ${\mathcal A}_\ell(X) \cap
{\mathcal A}_r(X) = Z(X)$.  Thus we may assume without loss of
generality that $M = D = \ell^\infty_2$, and $P$ is left
multiplication by $e_1$, where $\{ e_1, e_2 \}$ is the canonical
basis of $\ell^\infty_2$.   Since $P$ is an $M$-projection,  $\Vert
e_1 \otimes x + e_2  \otimes y \Vert_h = \max \{ \Vert x \Vert ,
\Vert y \Vert \}$, for all $x, y \in N$.  Set $x = 1_N$, and let $y
\in N$ be of norm $1$. Then $\Vert e_1 \otimes 1 + e_2 \otimes y
\Vert_h = 1$.  If we can show that $y \in \Cdb 1_N$ then we will be
done: we will have contradicted the fact that $N$ is not
one-dimensional, hence $e$, and therefore $P$, is trivial.  By the
injectivity of the Haagerup tensor product, we may replace $N$ with
Span$\{ 1,y \}$.  By basic facts about the Haagerup tensor product,
there exist $z_1, z_2 \in \ell^\infty_2$ and $v, w \in N$ with $e_1
\otimes 1 + e_2 \otimes y = z_1 \otimes v + z_2 \otimes w$, and with
$\Vert [z_1 \; z_2 ] \Vert^2 = \Vert v^* v + w^* w \Vert = 1$.
Multiplying by $e_1 \otimes 1$ we see that $z_1(1) v + z_2(1) w =
1$, so that
$$1 \leq (|z_1(1)|^2 + |z_2(1)|^2) \Vert v^* v + w^* w \Vert =
|z_1(1)|^2 + |z_2(1)|^2 \leq \Vert [z_1 \; z_2 ] \Vert^2 = 1 .$$
From basic operator theory, if a pair of contractions have product
$I$, then the one is the adjoint of the other. 
Thus $v, w$, and hence $y$, are in $\Cdb 1$.

A similar argument works if $\Delta(M)$ is one-dimensional, but $\Delta(N)$ is not.

In the `non-dual case', use \cite[Theorem 7.4 (ii)]{BZ}
to see that $Z(M \otimes_h N) \subset Z(M^{**} \otimes^{\sigma h} N^{**}) = \mathbb{C} I$.
\end{proof}

 \begin{corollary}
Let $A$ and $B$ be approximately unital operator
algebras, with neither being one-dimensional. Then $A\otimes_h B$
contains no non-trivial complete $M$-ideals.
\end{corollary}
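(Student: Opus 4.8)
The plan is to reduce the statement to the Haagerup tensor product of unital operator algebras, where Theorem~\ref{ihtp} applies directly. Recall that a complete $M$-ideal in an operator space $X$ is a subspace $J$ with $J^{\perp\perp}$ the range of a complete $M$-projection on $X^{**}$, and that these complete $M$-projections are exactly the projections in the centralizer algebra $Z(X^{**})$. Since $(A\otimes_h B)^{**}\cong A^{**}\otimes^{\sigma h}B^{**}$, a complete $M$-ideal $I$ in $A\otimes_h B$ gives rise to a projection $P\in Z(A^{**}\otimes^{\sigma h}B^{**})$ with $I^{\perp\perp}=P(A^{**}\otimes^{\sigma h}B^{**})$. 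So the core point is to show that $Z(A^{**}\otimes^{\sigma h}B^{**})$ has no nontrivial projections.

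First I would dispose of the unitization issue. The algebras $A^{**}$ and $B^{**}$ are unital dual operator algebras (since $A$, $B$ are approximately unital, their biduals have identities, and they are dual operator algebras as biduals of operator algebras). Thus Theorem~\ref{ihtp}, applied with $M=A^{**}$ and $N=B^{**}$, says that $Z(A^{**}\otimes^{\sigma h}B^{**})$ is one-dimensional, \emph{provided} neither $A^{**}$ nor $B^{**}$ equals $\Cdb$. But $A^{**}=\Cdb$ forces $A=\Cdb$ (since $A$ embeds isometrically in $A^{**}$), which is excluded by hypothesis; similarly for $B$. Hence the hypotheses of Theorem~\ref{ihtp} are met.

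Therefore $Z((A\otimes_h B)^{**})=Z(A^{**}\otimes^{\sigma h}B^{**})=\Cdb I$, which contains only the trivial projections $0$ and $I$. Consequently the only complete $M$-projections on $(A\otimes_h B)^{**}$ are $0$ and the identity, so the only complete $M$-ideals in $A\otimes_h B$ are $(0)$ and $A\otimes_h B$ itself. This is exactly the assertion.

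I do not anticipate a genuine obstacle here, since the real work is already carried out in Theorem~\ref{ihtp}; the only things to be careful about are (i) correctly invoking the identification $(A\otimes_h B)^{**}\cong A^{**}\otimes^{\sigma h}B^{**}$ together with the fact that complete $M$-ideals are governed by the centralizer algebra of the bidual, and (ii) checking that $A^{**},B^{**}\neq\Cdb$ follows from $A,B\neq\Cdb$. One mild subtlety worth a sentence is that $Z(X)$ is a (unital, abelian or not) $C^*$-algebra and a one-dimensional $C^*$-algebra has no nontrivial projections, so the passage from ``$Z$ is one-dimensional'' to ``no nontrivial complete $M$-ideals'' is immediate.
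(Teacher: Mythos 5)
Your proposal is correct and follows essentially the same route as the paper's own proof: pass to $(A\otimes_h B)^{**}\cong A^{**}\otimes^{\sigma h}B^{**}$, observe that the complete $M$-projection onto $J^{\perp\perp}$ lies in $Z(A^{**}\otimes^{\sigma h}B^{**})$, and apply Theorem~\ref{ihtp} to conclude this centralizer algebra is trivial. Your extra remarks (that $A^{**}=\Cdb$ would force $A=\Cdb$, and that a one-dimensional $C^*$-algebra has only trivial projections) are correct housekeeping that the paper leaves implicit.
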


 \begin{proof}
Suppose that $J$ is a complete $M$-ideal in $A\otimes _h B$.
The complete $M$-projection onto $J^{\perp\perp}$
is in $Z((A\otimes_h B)^{**}) = Z(A^{**}\otimes^{\sigma h}B^{**})$,
and hence is trivial by Theorem \ref{ihtp}.
\end{proof}

{\sc Remark.}  The ideal structure of the Haagerup tensor product of
$C^*$-algebras has been studied in \cite{SSm} and elsewhere.

\begin{proposition}\label{example1}
Let $A$ and $B$ be approximately unital operator algebras with $A$ not a
reflexive Banach space, $B$
finite dimensional and $B\ne \mathbb{C}$. If $A$ is a right ideal in $A^{**}$, then
$A \otimes_{h} B$ is a right $M$-ideal in its second dual,
and it is not a left $M$-ideal in its second dual.
\end{proposition}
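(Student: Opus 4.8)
The plan is to reduce everything to two iterated biduals and to Theorem~\ref{ihtp}. Using the operator space identity $(X\otimes_h Y)^{**}\cong X^{**}\otimes^{\sigma h}Y^{**}$ together with the fact that $B$ is finite dimensional (so $B^{**}=B$, and $\otimes^{\sigma h}$ with the finite dimensional factor $B$ coincides with $\otimes_h$), one gets $(A\otimes_h B)^{**}\cong A^{**}\otimes^{\sigma h}B$ and, applying the identity once more, $(A\otimes_h B)^{(4)}\cong A^{(4)}\otimes^{\sigma h}B$; under these identifications the canonical copy of $A\otimes_h B$ inside its bidual, and then inside its fourth dual, is the obvious one $A\otimes B\subseteq A^{(4)}\otimes B$. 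Since $A$ is approximately unital and a right ideal in $A^{**}$, it is an $r$-ideal of $A^{**}$, so (viewing $A\subseteq A^{**}\subseteq A^{(4)}$) its weak$^*$-closure $\widetilde A$ in $A^{(4)}$ equals $pA^{(4)}$ for the support (open) projection $p\in\Delta(A^{(4)})$ of that $r$-ideal; moreover $p=1_{A^{(4)}}$ would force $A$ to be weak$^*$-dense in $A^{(4)}$, i.e.\ reflexive, so under our hypothesis $p\neq 1_{A^{(4)}}$. Because $B$ is finite dimensional the weak$^*$-closure of $A\otimes B$ in $A^{(4)}\otimes^{\sigma h}B$ is taken coordinatewise, so
\[
(A\otimes_h B)^{\perp\perp}=\widetilde A\otimes^{\sigma h}B=pA^{(4)}\otimes^{\sigma h}B \qquad\text{inside}\quad (A\otimes_h B)^{(4)}=A^{(4)}\otimes^{\sigma h}B .
\]

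For the first assertion I would observe that $p\otimes 1_B\in\Delta(A^{(4)})\otimes 1\subseteq\Delta(A^{(4)}\otimes^{\sigma h}B)$ (Lemma~\ref{heha}), and that under the canonical $C^*$-embedding $\Delta(A^{(4)})\hookrightarrow {\mathcal A}_{\ell}(A^{(4)}\otimes^{\sigma h}B)$, $a\mapsto L_{a\otimes 1}$ (the one used in the proof of Theorem~\ref{ha}), the projection $p$ goes to the left $M$-projection $L_{p\otimes 1_B}$ on $A^{(4)}\otimes^{\sigma h}B$, whose range is exactly $pA^{(4)}\otimes^{\sigma h}B=(A\otimes_h B)^{\perp\perp}$. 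Hence $A\otimes_h B$ is a right $M$-ideal in its second dual. (Alternatively: $A\otimes_h B$ carries the cai $(e_\lambda\otimes 1_B)$ coming from a cai $(e_\lambda)$ of $A$, and it is a right ideal in its bidual because left multiplications on it are weakly compact, $L_a$ being weakly compact on $A$ and $B$ being finite dimensional; thus it is an $r$-ideal in the dual operator algebra $(A\otimes_h B)^{**}$, hence a right $M$-ideal there.)

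For the second assertion I argue by contradiction. If $A\otimes_h B$ were also a left $M$-ideal in its second dual, then $(A\otimes_h B)^{\perp\perp}$ would be simultaneously a left $M$-summand of $A^{(4)}\otimes^{\sigma h}B$ (by the previous paragraph) and a right $M$-summand of it; but a subspace that is both a left and a right $M$-summand is a complete $M$-summand, i.e.\ the range of a projection in $Z(A^{(4)}\otimes^{\sigma h}B)$ (see \cite{BEZ}). Now $A^{(4)}$ and $B$ are unital dual operator algebras with neither equal to $\Cdb$ --- $B\neq\Cdb$ by hypothesis and $A^{(4)}\neq\Cdb$ since $A$, being nonreflexive, is infinite dimensional --- so Theorem~\ref{ihtp} gives $Z(A^{(4)}\otimes^{\sigma h}B)=\Cdb I$. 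Therefore $(A\otimes_h B)^{\perp\perp}$ is either $(0)$ or all of $A^{(4)}\otimes^{\sigma h}B$: the first is impossible since $A\neq(0)$, and the second, by the displayed formula, forces $pA^{(4)}=A^{(4)}$ (apply the elementary slice map $1\otimes\psi$ with $\psi(1_B)=1$), hence $p=1_{A^{(4)}}$ and $A$ reflexive, contrary to hypothesis. So $A\otimes_h B$ is not a left $M$-ideal in its second dual.

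The main obstacle is the first paragraph: pinning down the two bidual identifications and, in particular, the formula $(A\otimes_h B)^{\perp\perp}=pA^{(4)}\otimes^{\sigma h}B$ --- that is, that passing to weak$^*$-closures commutes with the finite dimensional tensor factor --- together with the check that the support projection $p$ genuinely lies in the diagonal $\Delta(A^{(4)})$, which is exactly what turns $L_{p\otimes 1_B}$ into a left $M$-projection. Once that is in place the rest is bookkeeping plus the two structural inputs: the left-plus-right $\Rightarrow$ complete $M$-summand principle, and the triviality of the operator space centre of a $\sigma$-Haagerup tensor product of two non-$\Cdb$ unital dual operator algebras (Theorem~\ref{ihtp}).
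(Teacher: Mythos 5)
Your proposal is correct and follows essentially the same route as the paper: identify $(A\otimes_h B)^{**}$ with $A^{**}\otimes_h B$ using the finite dimensionality of $B$, and for the negative half combine ``left $+$ right $M$-ideal $\Rightarrow$ complete $M$-ideal'' with the triviality of $Z(A^{(4)}\otimes_h B)$ from Theorem \ref{ihtp} to force reflexivity of $A$, a contradiction. The only (harmless) divergence is in the positive half, where you build the left $M$-projection $L_{p\otimes 1_B}$ by hand from the support projection of $A$ in $A^{(4)}$ (or observe that $A\otimes_h B$ is an $r$-ideal in its unital bidual), whereas the paper simply invokes \cite[Proposition 5.38]{BZ} to tensor the right $M$-ideal $A\subset A^{**}$ with $B$.
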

\begin{proof}
 Since $A$ is a right $M$-ideal in $A^{**}$,
$A\otimes_{h} B$ is a right $M$-ideal in $A^{**} \otimes_{h} B$ by
\cite[Proposition 5.38]{BZ}. Since $B$ is finite dimensional,
$(A\otimes_{h} B)^{**}= A^{**} \otimes_{h} B$ (see e.g.\
\cite[1.5.9]{BLM}). Hence $A \otimes_{h} B$ is a right $M$-ideal in
its bidual. Suppose that it is also a left $M$-ideal.  Then it is a
complete $M$-ideal in its bidual, and therefore corresponds to a
projection in $Z(A^{(4)}\otimes_{h} B)$. However, the latter is
trivial by Theorem  \ref{ihtp}.  This forces $A \otimes_{h} B$,
and hence $A$,  to be reflexive,  which is a contradiction. So
$A\otimes_{h} B$ is not a left $M$-ideal in its bidual.
\end{proof}

{\sc Remark.}   A similar argument (see \cite{Sharma}) shows that if
$Y$ is a non-reflexive operator space which is a right $M$-ideal in
its bidual and if $X$ is any finite dimensional operator space, then
$Y \otimes_{h} X$ is  a right $M$-ideal in its bidual. Further, if
$Z(Y^{(4)}\otimes_{h} X) \cong \mathbb{C}I$ then $Y \otimes_{h} X$ is
not  a left $M$-ideal in its bidual.

\bigskip

The last few paragraphs, and Corollary  \ref{n5} and Example
\ref{n8}, provide natural examples of spaces which are right but not
left $M$-ideals in their second dual. Their duals will be left but
not right $L$-summands in their second dual, by the next result.  We
refer to \cite{BEZ,BZ} for notation.

\begin{lemma} If an operator space $X$ is a right but not a left $M$-ideal in its second dual,
then $X^*$ is a left but not a right $L$-summand  in its second dual.
\end{lemma}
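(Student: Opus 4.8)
The plan is to deduce this from the one-sided analogue, developed in \cite{BEZ,BZ}, of the classical Alfsen--Effros duality between $M$-ideals in a space and $L$-summands in its dual. The precise fact I would invoke is: a closed subspace $J$ of an operator space $Y$ is a right (resp.\ left) $M$-ideal in $Y$ if and only if its annihilator $J^{\perp}$ is a \emph{left} (resp.\ \emph{right}) $L$-summand in $Y^{*}$. The left/right reversal appearing here is precisely the content of the lemma, so I would quote this duality with the handedness conventions written out explicitly.

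First I would record the elementary fact that if $P$ is a left (resp.\ right) $L$-projection on an operator space $Z$, then $1-P$ is one too: the defining condition is that $z\mapsto(Pz,(1-P)z)$ is a complete isometry of $Z$ into $C_{2}(Z)$ (for the left version) or $R_{2}(Z)$ (for the right version), and interchanging the two entries is a complete isometry of $C_{2}(Z)$, resp.\ $R_{2}(Z)$; so this condition is symmetric in $P$ and $1-P$. Consequently a closed subspace of $Z$ is a left (resp.\ right) $L$-summand exactly when a complementary closed subspace is. Next I would apply the duality above with $Y=X^{**}$ and $J=X$, embedded in $X^{**}$ via the canonical map $\iota_{X}$. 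Then $Y^{*}=X^{***}$, and $J^{\perp}$ is the kernel of the contractive projection $\iota_{X}^{*}\colon X^{***}\to X^{*}$ whose range is the canonical copy of $X^{*}$; hence $X^{*}$ and $J^{\perp}$ are complementary summands of $X^{***}=(X^{*})^{**}$. Combining the two facts:
\[
X\text{ is a right }M\text{-ideal in }X^{**}\iff X^{*}\text{ is a left }L\text{-summand in }(X^{*})^{**},
\]
and similarly with the roles of left and right interchanged throughout. The hypothesis provides the left side of the first equivalence together with the failure of the corresponding left--right-interchanged statement; so $X^{*}$ is a left, but not a right, $L$-summand in its bidual.

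The step needing the most care is purely a matter of conventions: one must verify that the duality as actually set up in \cite{BEZ,BZ} pairs right $M$-ideals with \emph{left} $L$-summands (and not with right ones), and that the canonical image of $X^{*}$ in $X^{***}$ really is the natural $L$-complement of $X^{\perp}$ there. Once the cited statement is pinned down with the handedness explicit, the remainder is a two-line deduction.
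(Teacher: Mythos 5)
Your overall strategy (reduce everything to the one-sided Alfsen--Effros duality) is the right one and is close in spirit to the paper's proof, but there is a genuine gap at the decisive step. The auxiliary claim that ``a closed subspace of $Z$ is a left (resp.\ right) $L$-summand exactly when a complementary closed subspace is'' is false for an arbitrary topological complement: what is true is that if $P$ is a left $L$-projection then so is $1-P$, so the \emph{kernel of that particular $L$-projection} is again a left $L$-summand. (Already in $\ell^1_2$ the diagonal is a closed complement of the first coordinate axis, which is an $L$-summand, while the diagonal is not.) Consequently, knowing that $X^\perp$ is a left $L$-summand of $X^{***}$ only tells you that $\ker Q$ is one, where $Q$ is the left $L$-projection with range $X^\perp$; to say anything about the canonical copy of $X^*$ you must identify $\ker Q$ with $i_{X^*}(X^*)$, and, for the contrapositive half, you must show that a hypothetical right $L$-projection onto $i_{X^*}(X^*)$ has kernel $X^\perp$. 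You flag this as ``purely a matter of conventions,'' but it is the substantive content of the lemma. In the classical Banach-space setting the identification is made using the additivity $\Vert \Lambda \Vert = \Vert P\Lambda \Vert + \Vert (1-P)\Lambda \Vert$ enjoyed by $L$-projections; one-sided $L$-projections do not satisfy this in general (consider the predual, acting on the trace class, of left multiplication by a nontrivial projection on $B(H)$), so that argument does not port over.

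The paper sidesteps the kernel problem by working with ranges and by actually using the hypothesis. By \cite[Proposition 2.3]{Sharma}, the assumption that $X$ is a right $M$-ideal in $X^{**}$ yields that the \emph{canonical} projection $i_{X^*}\circ (i_X)^*$ is a left $L$-projection onto $i_{X^*}(X^*)$. If $i_{X^*}(X^*)$ were also a right $L$-summand, then, since a subspace that is simultaneously a left and a right $L$-summand is a complete $L$-summand and one-sided $L$-projections onto a given subspace are unique (the `bullets' on p.\ 8 of \cite{BZ}, together with \cite[Proposition 4.8.4]{BLM}), the hypothetical right $L$-projection would have to coincide with the canonical projection; applying \cite[Proposition 2.3]{Sharma} in its left-handed form then makes $X$ a left $M$-ideal in $X^{**}$, a contradiction. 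Note that this pins down the unknown right $L$-projection by comparing it with the known left one, whereas your two independent ``iff'' statements never use the hypothesis for that purpose. To repair your argument you would need either to prove the kernel identification directly (in effect reproving \cite[Proposition 2.3]{Sharma} without the classical norm additivity) or to insert the uniqueness step above.
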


\begin{proof}
We first remark that a subspace $J$ of  operator space $X$ is a
complete $L$-summand of $X$  if and only if  it is both a  left and
a right $L$-summand.  This follows e.g.\ from the matching statement
for $M$-ideals \cite[Proposition 4.8.4]{BLM}, and the second
`bullet' on p.\ 8 of \cite{BZ}. By \cite[Proposition 2.3]{Sharma},
$X^*$ is a left  $L$-summand in $X^{***}$, via the canonical
projection $i_{X^*} \circ (i_X)^*$. Thus if $X^*$ is both a  left
and a right $L$-summand in its second dual, then $i_{X^*} \circ
(i_X)^*$ is a left $L$-projection by the third `bullet' on p.\ 8 of
\cite{BZ}. Hence by \cite[Proposition 2.3]{Sharma}, $X$ is a left
$M$-ideal in its second dual, a contradiction.
\end{proof}

We end with some remarks complementing some other results in
\cite{Sharma}.

1) \  Theorem 3.4  (i) of \cite{Sharma} can be improved in the case
that $X$ is an approximately unital operator algebra $A$. Theorem
3.4 (i) there, is valid for all one-sided $M$-ideals, both right and
left.   This follows from Proposition \ref{elli} and Proposition
\ref{n95}.  

2) \  Theorem 3.4  (iii) of \cite{Sharma} can also be improved in
the case that $X$ is an operator algebra $A$.
 If $A$ is
an operator algebra with right cai which is a left ideal in
$A^{**}$ (or equivalently, if $A$ is a
left $M$-ideal in its bidual), and if $J$ is a right ideal in $A^{**}$, then
$J A \subset J \cap A$.  Hence if $J \cap A = (0)$ then
$J A  = (0)$.  Thus $J A^{**} = (0)$, and hence $J = (0)$, since
$A^{**}$ has a right identity.   Thus the case $J \cap A = (0)$
will not occur in
the conclusion of Theorem 3.4 (iii) of \cite{Sharma}, in the case
that $X$ is an approximately unital operator algebra.

3) \ 
One further result on $L$-structure: 
If  an operator
space $X$ is a right $L$-summand in its bidual, then any right
$L$-summand $Y$ of $X$ is a right $L$-summand in $Y^{**}$.  
Indeed if $X$ is the range of a left $L$-projection $P$ on $X^{**}$,
and if $Y$ is the range of a left $L$-projection $Q$ on $X$,
 then $Q^{**}$ and $P$ are in the left Cunningham algebra of $X^{**}$
 \cite[p.\ 8--9]{BZ}.
Note that $Q^{**} P = P Q^{**} P$ (since Ran$(Q^{**} P) \subset Y
\subset X$).  Since we are dealing with projections in a
$C^*$-algebra, we deduce that $P Q^{**} = Q^{**} P$. It follows that
$P(Y^{\perp \perp}) \subset Y$, and so $Y$ is a right $L$-subspace
of $X$ in the sense of \cite[Theorem 4.2]{Sharma}.  By that result,
$Y$ is a right $L$-summand in its bidual.    

\bigskip

{\bf Acknowledgements.}
We are grateful to the referee for  useful comments.

Note added November 2010: The questions posed in Section 3
have now been solved, with perhaps one exception, mostly as a consequence
of a deep recent theorem due to Read.  For more  details see  
``Operator algebras with contractive approximate identities"
by the second author and C. J. Read (arXiv:1011.3797).  We also remark 
that there is an obvious variant of Theorem 4.18 in terms of HSA's:
a separable operator algebra $A$ is $\sigma$-matricial iff $A$ is semiprime, a HSA in its bidual,
and every  HSA $D$ in $A$ with ${\rm dim}(D) > 1$, contains a nonzero
projection which is not an identity for $D$.

\end{document}